\newtheorem{theorem}{Theorem}[section]
\newtheorem{lemma}[theorem]{Lemma}
\theoremstyle{definition}
\newtheorem{definition}[theorem]{Definition}
\numberwithin{equation}{section}
\begin{document}

\title []{Random perturbations for the chemotaxis-fluid model with fractional dissipation: Global pathwise weak solutions}

\author{Lei Zhang}
\address{School of Mathematics and Statistics, Hubei Key Laboratory of Engineering Modeling  and Scientific Computing, Huazhong University of Science and Technology,  Wuhan 430074, Hubei, P.R. China.}
\email{lei\_zhang@hust.edu.cn (L. Zhang)}

\author{Bin Liu}
\address{School of Mathematics and Statistics, Hubei Key Laboratory of Engineering Modeling  and Scientific Computing, Huazhong University of Science and Technology,  Wuhan 430074, Hubei, P.R. China.}
\email{binliu@mail.hust.edu.cn (B. Liu)}

\keywords{Chemotaxis-Navier-Stokes system; Weak solutions; Global existence.}

\date{\today}

\begin{abstract}
This paper considers a stochastically perturbed Keller-Segel-Navier-Stokes (KS-SNS) system arising from the biomathematics in two dimensions, where the diffusion of fluid is expressed by a fractional Laplacian with an exponent in $[1/2,1]$. Our main result demonstrates that, under appropriate assumptions, the Cauchy problem of the KS-SNS system has a unique global probabilistically  strong and analytically weak solution, which also confirms that the quadratic logistic source authentically contributes to the global existence of solutions. First, a three-layer approximate system is introduced, this system seems to be new in the studying of chemotaxis-fluid model and it enables one to construct approximate solutions in regular Hilbert spaces $H^s(\mathbb{R}^2)$. Second, to accomplish the convergence progressively, a series of crucial entropy-energy inequalities for the approximations are derived, whose derivation strongly depends on the fine structure of the system and requires a novel strategy to adapt  to the appearance of fractional dissipation and the unboundedness of domain. Third, based on these uniform bounds, we establish the existence of global martingale weak solutions by virtue of a stochastic compactness method. Finally, by applying the Littlewood-Paley theory, we show that the pathwise uniqueness holds for these martingale solution. As a by-product, a few interpolation inequalities in Besov spaces are obtained, which seems to be new and may have their own interest.
\end{abstract}

\maketitle
 \section{Introduction}

Chemotaxis refers to the directional movement of cells, such as bacteria, in response to chemical signals. A notable example is that bacteria often swim towards areas with higher concentrations of oxygen for survival.  One of the most renowned models in chemotaxis is the Keller-Segel (KS) model, which was developed by Keller and Segel \cite{keller1970,keller1971}. It has since become one of the most extensively studied models in mathematical biology. The interplay between cells and the surrounding fluid, where chemical substances are consumed, has been acknowledged in \cite{fujikawa1989fractal,dombrowski2004self,tuval2005}. These studies confirm that the density of bacteria and chemoattractants change with the motion of fluid. Consequently, the velocity field of fluid is influenced by both moving bacteria and external body forces.  To describe such a coupled biological phenomena, Tuval et al. \cite{tuval2005} introduced the following Keller-Segel-Navier-Stokes (KS-NS) system with rigorous precision:
\begin{equation}\label{dns}
\left\{
\begin{aligned}
&\partial_t n +u\cdot \nabla n   = \Delta n  - \textrm{div}(n \nabla c) ,\\
&\partial_t c+ u\cdot \nabla c    = \Delta c - nc ,\\
&\partial_t u+  (u\cdot \nabla) u  +\nabla P  = \Delta u + n\nabla \phi+f ,\\
& \textrm{div}  u  =0.
\end{aligned}
\right.
\end{equation}
In \eqref{dns}, the unknowns consist of $n=n(t, x): \mathbb{R}^{+} \times \mathbb{R}^d \rightarrow \mathbb{R}^{+}$, $c=c(t, x): \mathbb{R}^{+} \times \mathbb{R}^d \rightarrow \mathbb{R}^{+}$, $u(t, x): \mathbb{R}^{+} \times \mathbb{R}^d \rightarrow \mathbb{R}^d$ and $P=P(t, x): \mathbb{R}^{+} \times \mathbb{R}^d \rightarrow \mathbb{R}$, $d=2,3,$ which respectively denote the density of bacteria, concentration of substrate, velocity field of fluid and associated pressure.  The term  $n\nabla \phi$ in \eqref{dns}$_3$ describes the external force exerted by bacteria on fluid through a given gravitational potential $\phi=\phi(x)$. Additionally, $f = f(t,x)$ represents an external force that can be generated by various physical mechanisms such as gravity or centripetal forces.

Extensive research has been conducted on the qualitative characteristics of solutions to the KS-NS system \eqref{dns} in both bounded and unbounded domains, owing to its significant applications in biomathematics \cite{hillen2009user,arumugam2021keller}. To provide a comprehensive overview, we would like to highlight several works that are closely related to the study of \eqref{dns}.

In the context of bounded domains, Lorz \cite{lorz2010coupled} was the first to establish the local well-posedness of weak solutions to \eqref{dns} in both 2D and 3D cases. Since then, a series of significant results have been obtained, as seen in the works of Winkler \cite{winkler2012global,winkler2012global,winkler2016global}, Black and Winkler \cite{black2022global}, Ding and Lankeit \cite{ding2022generalized}, and the references cited therein. Recently, Winkler \cite{winkler2017far} proved that the weak solution constructed in \cite{winkler2016global} possesses further regularity properties and satisfies the concept of eventual energy solutions after some relaxation time. Furthermore, the same author demonstrated that the possibility of singularities occurring on small time-scales for weak energy solutions only arises on sets of measure zero \cite{winkler2022does}.

When studying bacteria and chemicals in large bodies of water, it is typically assumed that the spatial domain is unbounded, usually $\mathbb{R}^2$ or $\mathbb{R}^3$. Significant contributions have been made in this area by Duan, Lorz and Markowich \cite{duan2010global}, Liu  and  Lorz \cite{liu2011coupled}, Chae, Kang  and  Lee \cite{chae2014global}, Kang and  Kim \cite{kang2017existence}, Zhang and Zheng \cite{zhang2014global,zhang2021global}, Diebou Yomgne \cite{diebou2021well}, Lei et al. \cite{lei2022large}  and the works listed therein. Recently, Jeong and Kang \cite{jeong2022well} investigated local well-posedness and blow-up phenomena in Sobolev spaces for both partially and fully inviscid KS-NS system in $\mathbb{R}^d $ $(d\geq 2)$. Therein, a new weighted Gagliardo-Nirenberg-Sobolev type inequality is used as their main analytical tool.

Incorporating stochastic effects is crucial in creating mathematical models for complex phenomena in science that involve uncertainty. For instance, the evolution of viscous fluids is not only affected by the external force $n\nabla \phi$ caused by bacteria, but also by random sources from the environment. The presence of randomness can significantly impact the overall evolution of the viscous fluid \cite{flandoli2008introduction,duan2014effective,breit2018stochastically}. Consequently, numerous studies have been conducted on the stochastic Navier-Stokes equations, as evidenced in \cite{bensoussan1995stochastic,flandoli1995martingale,brzezniak20132d,dong2011ergodicity,breit2018local,
chen2019martingale,hofmanova2019non,hofmanova2021ill,chen2022sharp} and their cited references. Due to the widespread applications of random fluctuations in hydrodynamics, developing a stochastic theory for the KS-NS system coupled with perturbed momentum equations by random forces is essential. This paper will mainly focus on this subject.

To our knowledge, the first result in this direction was obtained by Zhai and Zhang \cite{zhai20202d}, in which the authors introduced and studied the Cauchy problem for the following stochastic Keller-Segel-Navier-Stokes (KS-SNS) system:
\begin{equation} \label{dns1}
\left\{
\begin{aligned}
&\mathrm{d} n +u\cdot \nabla n \textrm{d}t = \Delta n \textrm{d}t- \textrm{div}(n \nabla c)\textrm{d}t ,&\textrm{in}~\mathbb{R}_+\times D,\\
&\mathrm{d} c+ u\cdot \nabla c  \textrm{d}t = \Delta c\textrm{d}t-n c\textrm{d}t ,&\textrm{in}~\mathbb{R}_+\times D,\\
&\mathrm{d} u+  (u\cdot \nabla) u \textrm{d}t+\nabla P \textrm{d}t = \Delta u\textrm{d}t + n\nabla \phi\textrm{d}t+ f(t,u) \mathrm{d} W_t ,&\textrm{in}~\mathbb{R}_+\times D,\\
& \textrm{div}  u  =0, &\textrm{in}~\mathbb{R}_+\times D,\\
&n|_{t=0}=n_0,~c|_{t=0}=c_0,~ u|_{t=0}=u_0,&\textrm{in}~ D,
\end{aligned}
\right.
\end{equation}
where  {$\mathbb{R}_+=[0,\infty)$} and $D\subseteq \mathbb{R}^2$ is a bounded convex domain. More precisely, Zhai and Zhang \cite{zhai20202d} established the existence and uniqueness of global mild solutions and weak solutions in appropriate function spaces for the KS-SNS system \eqref{dns1} with proper conditions on the initial data. Later in \cite{zhang2022global}, Zhang and Liu proved the existence of global martingale weak solutions to the KS-SNS system \eqref{dns1} in three spacial dimensions, subject to a general L\'{e}vy-type process. Recently, Hausenblas et al. \cite{hausenblas2023existence} established the existence and uniqueness of probability strong solutions to the two-dimensional KS-SNS system \eqref{dns1} in a bounded domain, with an additional transport-type noise on the $c$-equation.

Compared to its deterministic counterpart \eqref{dns}, the literature on the stochastically perturbed KS-NS system (such as \eqref{dns1}) is still relatively scarce, with the exception of the three aforementioned works.  {The primary objective of this paper is to make a further effort to comprehend the stochastic system that describes the interactions between chemotaxis and fluid.} To be more precise, we are interested in the existence and uniqueness of global solutions to the following Cauchy problem of KS-SNS system:
\begin{equation}\label{KS-SNS}
\left\{
\begin{aligned}
&\mathrm{d} n +u\cdot \nabla n \textrm{d}t = \Delta n \textrm{d}t- \textrm{div}(n \nabla c)\textrm{d}t+(  n-  n^2) \textrm{d}t , &\textrm{in}~\mathbb{R}_+\times \mathbb{R}^2,\\
&\mathrm{d} c+ u\cdot \nabla c  \textrm{d}t = \Delta c\textrm{d}t-n c\textrm{d}t,& \textrm{in}~\mathbb{R}_+\times \mathbb{R}^2,\\
&\mathrm{d} u+  (u\cdot \nabla) u \textrm{d}t+\nabla P \textrm{d}t = - (-\Delta)^\alpha u\textrm{d}t + n\nabla \phi\textrm{d}t+ f(t,u) {\mathrm{d} W_t} ,&\textrm{in}~\mathbb{R}_+\times \mathbb{R}^2,\\
& \textrm{div}  u  =0, &\textrm{in}~\mathbb{R}_+\times \mathbb{R}^2,\\
&n|_{t=0}=n_0,~c|_{t=0}=c_0,~ u|_{t=0}=u_0,&\textrm{in}~ \mathbb{R}^2.
\end{aligned}
\right.
\end{equation}
Here $ (-\Delta)^\alpha $ denotes the fractional Laplacian defined by Fourier transform:
\begin{equation}\label{1.2}
\begin{split}
 \widehat{(-\Delta)^\alpha f}(\xi)=(2 \pi |\xi|)^{2 \alpha} \widehat{f }(\xi).
 \end{split}
\end{equation}
Throughout this work, we assume that
\begin{equation}\label{dissipation}
\begin{split}
 \frac{1}{2}\leq \alpha \leq 1.
 \end{split}
\end{equation}

Let us highlight the new ingredients involved in the structure of the system \eqref{KS-SNS}.
\begin{enumerate}
\item [1.] One of the most important contribution of this work is to consider the stochastic Navier-Stokes equations \eqref{KS-SNS} with a fractional dissipation  $(-\Delta)^\alpha u$, by extending the previous works \cite{zhai20202d,hausenblas2023existence,zhang2022global}. Indeed, the fractional dissipation appears naturally in the hydrodynamics and certain combustion models \cite{resnick1996dynamical,woyczynski2001levy}, and it is a generalization of the classical Navier-Stokes equations (i.e. $\alpha = 1$). This model is applicable for describing fluids with internal friction interaction \cite{mercado2013analysis}, where the parameter $0<\alpha\leq1$ represents  the order of the diffusion and measures the strength of viscous effects. Since Lion's pioneering work \cite{lions1959quelques}, stochastic Navier-Stokes equations with fractional dissipation have received much attention, such as \cite{rockner2014local,debbi2016well,li2018stochastic,yamazaki2022remarks,yin2022well,
yamazaki2023non,rehmeier2023nonuniqueness} and the references cited therein. 

\item [2.] A new addition to our model is the quadratic logistic source on $n$-equation, i.e., $l(n):=  n- n^2$. This treatment is motivated by two reasons. First, the logistic source has been extensively utilized in KS models for characterizing proliferation-death mechanism (cf. \cite{tuval2005,fujikawa1989fractal,hillen2009user}), making it reasonable to incorporate this mechanism into our model, and we refer to \cite{winkler2022reaction} for such an application. Second, the presence of the weak dissipation $(-\Delta)^\alpha u$  makes it challenging to establish uniform bounds for approximations similar to \cite{zhai20202d}, thereby rending the tightness of approximations uncertain. However, the additional logistic source $l(n)$ enhances the $L^2$-integrability of $n^\epsilon$, which facilitates obtaining a priori estimates by introducing new ideas, see Remark (\textbf{b}) below.

\item [3.] Another new ingredient in \eqref{KS-SNS} is that the domain is assumed to be unbounded. The deterministic KS-NS system \eqref{dns} in unbounded domain has received much attention during last several years. However, as far as we ware, few relevant results for the stochastic counterpart could be available in the literatures, and this inspires us to make a first attempt to consider \eqref{KS-SNS} in unbounded domain. It is worth pointing out that the investigation of Cauchy problem for the KS-SNS system in unbounded domain is more subtle than the deterministic KS-NS system \cite{zhang2014global,lorz2010coupled,duan2010global} and the KS-SNS system \eqref{dns1} in bounded domain \cite{zhai20202d}. Therefore, some new ideas has to be applied.
\end{enumerate}

To state the main result of this work, let us start with the basic stochastic setting. Assume that $(\Omega,\mathcal {F},(\mathcal {F}_t)_{t\geq0},\mathbb{P})$ is a given complete filtered probability space on which a cylindrical Wiener process $\{W(t)\}_{t\geq 0}$ is defined. Formally, the stochastic process $W(t)$ is expanded as
\begin{equation} \label{1.4}
\begin{split}
W(t,\omega)=\sum_{k\geq 1}W^k(t,\omega)e_k,\quad  {\textrm{for all}~(t, \omega) \in[0, T] \times \Omega,}
 \end{split}
\end{equation}
where $\{W^k(t)\}_{k\geq1}$ is a family of independent one-dimensional standard Wiener processes, and $\{e_k\}_{k\geq1}$ is the complete orthogonal basis in a separable Hilbert space $U$.
To make sense of the expansion \eqref{1.4}, we introduce an auxiliary space $U_0$ via
\begin{equation}
\begin{split}
U_0=\left\{v=\sum_{k\geq 1}\alpha_k e_k;~\sum_{k\geq 1} \frac{\alpha_k^2}{k^2}<\infty\right\}\supset U,
 \end{split}
\end{equation}
which is endowed with the norm $
\|v\|_{U_0}^2=\sum_{k\geq 1}  \alpha_k^2/k^2$, for any $ v=\sum_{k\geq 1}\alpha_k e_k \in U_0$.
Note that the embedding from $U$ into $U_0$ is Hilbert-Schmidt (cf. \cite[Appendix C]{da2014stochastic}), and the trajectory of $W(t)$ belongs to $C([0,T];U_0)$, $\mathbb{P}$-a.s.

The main assumptions in the work are presented as follows.

\begin{itemize}[leftmargin=0.95cm]
\item [\textbf{(H1)}] 1) $\phi\in W^{1,\infty}(\mathbb{R}^2;\mathbb{R})$;

2) The initial data $(n_0,c_0,u_0)$ satisfies:  {$n_0>0$, $c_0>0$} on $\mathbb{R}^2$, and
\begin{equation*}
\begin{split}
  &\sqrt{1+|x|^2}n_0 \in L^1(\mathbb{R}^2),~n_0\in L^2(\mathbb{R}^2);~~ c_0\in L^1(\mathbb{R}^2)\cap L^\infty(\mathbb{R}^2),\\
  &\nabla \sqrt{c_0}\in L^2(\mathbb{R}^2);~~u_0 \in H^1(\mathbb{R}^2)\cap \dot{W}^{1,\frac{4}{3}}(\mathbb{R}^2).
 \end{split}
\end{equation*}

\item [\textbf{(H2)}]  {For any $s \geq 0$,} there exists a constant $C >0$ such that
\begin{equation*}
\begin{split}
 \|f(t,u)\|_{L_2(U;H^{s})}^2\leq  C \left(1+\|u\|_{H^{s}}^2 \right),
 \end{split}
\end{equation*}
\begin{equation*}
\begin{split}
 \|f(t,u_1)-f(t,u_2)\|_{L_2(U;H^{s})} \leq  C\|u_1-u_2\|_{H^{s}},
 \end{split}
\end{equation*}
for any $t>0$, and $u ,u_1,u_2\in  H^s(\mathbb{R}^2)$.

\item [\textbf{(H3)}]  For any $s \geq 1$, there exists a constant $C>0$ such that
$$
 \|\nabla \wedge f(t,u)\|_{L_2(U;H^{s-1})}^2\leq  C \left(1+\| u\|_{H^{s}}^2\right) ,
$$
$$
  \|\nabla \wedge f(t,u_1)-\nabla \wedge f(t,u_2)\|_{L_2(U;H^{s-1})}^2\leq  C\|u_1- u_2\|_{H^{s}}^2,
$$
for any $t>0$, and $u_1,u_2,u\in H^{s}(\mathbb{R}^2)$. Moreover, we assume that
$$
\left\|\frac{\nabla \wedge f(t,u)}{|\nabla\wedge u|^{\frac{1}{3}}}\textbf{1}_{\{\nabla\wedge u\neq 0\}} \right\|_{L_2(U;L^2)}^2\leq  C \left(1+\|\nabla\wedge u\|_{L^{\frac{4}{3}}}^{\frac{4}{3}}\right),
$$
for any $t>0$, and $u\in \dot{W}^{1,\frac{4}{3}}(\mathbb{R}^2)$.
\end{itemize}

Here the homogeneous Sobolev space $\dot{W}^{1,4/3}(\mathbb{R}^2)$  is defined as the closure of $C_0^\infty (\mathbb{R}^2)$ (infinity smooth, compactly supported  functions) with respect to the norm $$\|u\|_{\dot{W}^{1,4/3}}:= \|\nabla u\|_{L^{4/3}}.$$ A basic example of $f$ satisfying the assumptions (H2)-(H3) is $f(t,u)= \lambda u$ for some intensity $\lambda>0$, which means that the noise in \eqref{KS-SNS} reduces to the linear multiplicative noise.

Based on the above assumptions, we shall prove that the Cauchy problem \eqref{KS-SNS} admits a unique global-in-time solution. The following theorem shows the  {main result} in this article.

\begin{theorem}\label{main}
Assume that $(\Omega,\mathcal {F},(\mathcal {F}_t)_{t\geq0},\mathbb{P})$ is a fixed stochastic basis with a complete right-continuous filtration, and $W(t)$ is a $\mathcal {F}_t$-adapted cylindrical Wiener process defined by \eqref{1.4}. Then under the hypothesises (H1)-(H3) and \eqref{dissipation}, there exists a unique global pathwise weak solution $(n,c,u)$ to the Cauchy problem \eqref{KS-SNS}, such that the following statements hold:
\begin{itemize}[leftmargin=0.7cm]
\item [(1)] For any $T>0$, the triple $(n,c,u)$ satisfies $\mathbb{P}$-a.s.
 \begin{equation*}
\begin{split}
& n \in L^\infty \left(0,T; L^1(\mathbb{R}^2)\cap L^2 (\mathbb{R}^2)\right)\cap L^2 \left(0,T; H^1(\mathbb{R}^2)\right)\cap L^3 \left(0,T; L^3(\mathbb{R}^2) \right),\\
&c \in L^\infty \left(0,T; L^1(\mathbb{R}^2)\cap L^\infty (\mathbb{R}^2)\cap H^1(\mathbb{R}^2) \right)\cap L^2 \left(0,T; H^2(\mathbb{R}^2)\right),\\
& u \in L^\infty \left(0,T; H^1(\mathbb{R}^2)\cap \dot{W}^{1,\frac{4}{3}} (\mathbb{R}^2)\right)\cap L^2 \left(0,T; H^{1+\alpha}(\mathbb{R}^2)\right).
\end{split}
\end{equation*}

\item [(2)] The following relationships hold $\mathbb{P}$-a.s.
\begin{equation*}
\begin{split}
 (n(t),\varphi_1)_{L^2}=&(n_0,\varphi_1)_{L^2}+\int_0^t(u n-\nabla n+ n \nabla c ,\nabla\varphi_1)_{L^2}\textrm{d}r +\int_0^t\left(n-  n^2 ,\varphi_1\right)_{L^2} \textrm{d}r,   \\
 (c(t),\varphi_2)_{L^2}=&(c_0,\varphi_2)_{L^2}+\int_0^t (u c-\nabla c, \nabla\varphi_2)_{L^2} \textrm{d}r -\int_0^t\left(n c,\varphi_2\right)_{L^2}\textrm{d}r,\\
 (u(t),\varphi_3)_{L^2} =&(u_0,\varphi_3)_{L^2} + \int_0^t  ( u\otimes u,\nabla\varphi_3)_{L^2} \textrm{d}r -\int_0^t\left ( (-\Delta)^\frac{\alpha}{2}u,(-\Delta)^\frac{\alpha}{2}\varphi_3 \right)_{L^2}\textrm{d}r \\
 & + \int_0^t(n\nabla \phi,\varphi_3)_{L^2}\textrm{d}r+ \sum_{k\geq 1}\int_0^t(f(r,u)e_k ,\varphi_3)_{L^2}\mathrm{d} W^k_r,
 \end{split}
\end{equation*}
for all $t\in [0,T]$, $\varphi_1,\varphi_2\in C^\infty_0(\mathbb{R}^2;\mathbb{R})$ and $\varphi_3\in C^\infty_{0 }(\mathbb{R}^2;\mathbb{R}^2)$ with $\textrm{div} \varphi_3=0$.
\end{itemize}
\end{theorem}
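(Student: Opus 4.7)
The overall plan is the standard four-stage strategy for constructing global probabilistically strong solutions of singular SPDEs: approximate, uniformly estimate, pass to the limit via stochastic compactness, and upgrade using pathwise uniqueness through the Yamada--Watanabe principle. Concretely, I would first introduce a three-parameter regularization of \eqref{KS-SNS} depending on $(\varepsilon,\delta,R)$: an $R$-truncation of the superlinear terms $(u\cdot\nabla)\cdot$, $\mathrm{div}(n\nabla c)$, $n\nabla\phi$ and $n^2$ via cut-offs of the $H^s$-norm to ensure Lipschitz coefficients; a $\delta$-layer of artificial hyperviscosity $-\delta(-\Delta)^{s_0}$ added to each equation for some $s_0$ large enough so that a standard Banach fixed-point argument (or a Galerkin scheme) in $L^2_\omega C_T H^s(\mathbb{R}^2)$ yields unique, smooth approximate solutions for which It\^o's formula is fully justified; and an $\varepsilon$-mollification through a Friedrichs mollifier applied to the transport/drift couplings so that the three layers can be peeled off one after another.

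Next I would derive a hierarchy of entropy--energy bounds uniform in $(\varepsilon,\delta,R)$. The cornerstone estimates are: (i) the mass/logistic identity obtained by testing the $n$-equation against $1$, which gives $\mathbb{E}\|n(t)\|_{L^1}+\mathbb{E}\int_0^T\|n\|_{L^2}^2\,dr\le C$ thanks precisely to the $-n^2$ term; (ii) the classical chemotaxis entropy $\int n\log n+\int|\nabla\sqrt{c+\eta}|^2$, exploiting the cancellation between $\int\nabla n\cdot\nabla c$ and the dissipation coming from the $\log c$-functional; (iii) an $L^\infty$-bound on $c$ via maximum principle combined with heat-semigroup regularization; and (iv) stochastic energy estimates for $u$ in $H^1\cap\dot W^{1,4/3}$ using It\^o's formula applied to $\|u\|_{H^1}^2$ and to the vorticity $\nabla\wedge u$ in $L^{4/3}$, where (H2)--(H3) are tailored exactly to absorb the martingale brackets. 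The weight $\sqrt{1+|x|^2}$ on $n_0$ prevents mass escaping to infinity in the unbounded domain, while the $L^2$-integrability of $n$ from the logistic source is essential because the soft fractional dissipation $(-\Delta)^\alpha$ with $\alpha\ge 1/2$ alone does not close the velocity estimate.

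Armed with these uniform bounds and Aldous-type fractional-time regularity from the Burkholder--Davis--Gundy inequality, I would establish tightness of the laws of $(n^\varepsilon,c^\varepsilon,u^\varepsilon,W)$ on a path space built from $L^2_{\mathrm{loc}}$-type topologies plus the natural weak-$*$ ones. A Jakubowski--Skorokhod representation (since the target is only quasi-Polish) furnishes a new stochastic basis and a.s.-convergent copies, on which strong local convergence suffices to pass to the limit in every nonlinearity (local Lions--Aubin compactness plus tightness on the tail via the weighted $L^1$-bound on $n$). The stochastic integral is identified by the Debussche--Glatt-Holtz--Ziane martingale representation lemma, yielding a global martingale weak solution satisfying the regularity announced in part (1).

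Finally, pathwise uniqueness is addressed by writing the system for the difference of two martingale solutions $(n_1-n_2,c_1-c_2,u_1-u_2)$ and estimating it via a Littlewood--Paley/Besov decomposition, since the available velocity regularity $u\in L^\infty_TH^1\cap L^2_TH^{1+\alpha}$ with $\alpha$ possibly equal to $1/2$ is too weak for a pure $L^2$-energy argument on the differences. Combining Bony's paraproduct calculus, commutator estimates and the interpolation inequalities in Besov spaces advertised in the abstract, I would reach a stochastic Gronwall inequality whose integrating factor is finite $\mathbb{P}$-a.s., forcing coincidence of the two solutions. Yamada--Watanabe then upgrades the martingale solution to the probabilistically strong, pathwise unique solution claimed. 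The main obstacle is precisely this last step in the borderline case $\alpha=1/2$: the fractional dissipation barely controls the transport terms, and the coupling $\mathrm{div}(n\nabla c)$ in the $n$-equation (with $c$ only in $L^2_TH^2$) forces the uniqueness argument to be carried out in a delicately chosen Besov space, which is exactly where the new interpolation inequalities enter.
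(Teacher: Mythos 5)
Your proposal follows essentially the same route as the paper: a three-layer regularization, entropy--energy estimates hinging on the $L^2$-integrability of $n$ supplied by the quadratic logistic term, stochastic compactness with a Skorokhod representation, and a Littlewood--Paley/Besov uniqueness argument (delicate precisely in the borderline case $\alpha=\tfrac{1}{2}$) upgraded to a pathwise solution via Yamada--Watanabe. The only differences are interchangeable standard devices: for the middle regularization layer the paper uses Friedrichs--Fourier frequency truncations $\mathcal{J}_k$ together with a cut-off of the $\mathbf{W}^{1,\infty}$-norm (so that the truncated system is a genuine SDE in $\mathbf{H}^s(\mathbb{R}^2)$ handled by the Leha--Ritter theorem) rather than artificial hyperviscosity with an $H^s$-based cut-off, and the limiting stochastic integral is identified by Bensoussan's mollification method rather than a martingale-representation lemma.
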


\noindent
We make the following remarks related to Theorem \ref{main}:

\begin{itemize}[leftmargin=0.69cm]
\item [\textbf{(a)}]   Novelties in present work:
\begin{itemize}[leftmargin=0.43cm]
\item [$\bullet$]   The first new ingredient in our work is the three-layer approximate system \eqref{3.3}. At first glance, the system differs from the one approximated by using the Galerkin method. However, it is exactly efficient for constructing approximate solutions to  system \eqref{KS-SNS}, and we believe that the framework of approximate system used in this paper can be applied to other stochastic chemotaxis-fluid models. Indeed, our approximate system is motivated by the works for deterministic systems and the theory of SDEs in infinite dimensions. In this work, in order to overcome the difficulty caused by the low-regularity terms, such as $\textrm{div} (n(\nabla c) )$ in the $n$-equation, we first regularized the nonlinear terms by using standard mollifiers, which is essential for establishing the well-posedness of the approximate solutions, especially for the uniqueness of approximations and a series of entropy-energy estimates. Then, to prove the existence of approximate solutions to this regularized system, we transform the system into a class of SDEs in infinite dimensions by introducing proper Fourier truncations and cut-off techniques in \eqref{3.1}.  Please see the Remark (b) below for detailed  explanations.

\item [$\bullet$]   The second novelty of this work lies in considering the stochastic chemotaxis-Navier-Stokes system with fractional dissipation $\Delta ^\alpha u$, $1/2\leq \alpha\leq1$, which involves the system considered in \cite{zhai20202d,hausenblas2023existence} in the case of $\alpha=1$. As it is mentioned before, the stochastic fluid model with fractional dissipation appears naturally in the hydrodynamics and certain combustion models, which has been studied extensively during past decades. Therefore from the mathematical standpoints, it is important to investigate the Cauchy problem of the stochastic chemotaxis-Navier-Stokes system with fractional Laplacian. Moreover, the appearance of the fractional diffusion with $1/2\leq \alpha<1$ brings new difficulties in deriving several key a priori estimates, which makes the estimations in existing works unapplicable to the current problem. To overcome this difficulty, we use a different strategy to derive the entropy and energy estimates, which depends on the structure of the system and the use of the microlocal analysis in Littlewood-Paley theory. As a  by-product, the Sobolev interpolation inequalities obtained in Lemma \ref{lem1.6} seems to be new and also have their own interest.

\item [$\bullet$]  The third novelty is related to the problem caused by the fractional dissipation. More precisely, since it is impossible to obtain the estimates $L^p(\Omega,L^2(0,T;H^1(\mathbb{R}^2)))$ for $u^\epsilon$ as $\alpha<1$, the methods in \cite{zhai20202d,hausenblas2023existence} can not be applied to control the cross terms, which makes the stochastic entropy-energy inequality in \cite{zhai20202d,hausenblas2023existence} invalid here. Due to the randomness of system, the classical methods for the deterministic systems are invalid here too.  Our strategy for overcoming this difficulty is inspired by the fact that the subquadratic death terms may be sufficient to suitably counteract possible cross-diffusive destabilization, see e.g. \cite{Viglialoro2016Very,lankeit2016long,Winkler2020The}. Indeed, it is found that the logistic source $l(n)=n-n^2$ is sufficient to ensure the global existence of solutions to the system. Notice that this is the first time to explore the influence of logistic term in the stochastic chemotaxis-fluid model, whether the sub-quadratic logistic term has the similar effect is still an open problem. Due to the appearance of logistic term, we are able to use the strategy described in Remark (b) to derive a series of entropy and energy estimates, which guarantees the tightness of the approximation $u^\epsilon$.
    \end{itemize}
 The above three novelties make our work different from the existing ones, and we believe that the framework and mathematical tools used in the paper have a great inspiration for studying other stochastic chemotaxis-fluid systems.

\item [\textbf{(b)}]   {Difficulties  and strategies in the proof:}
\begin{itemize}[leftmargin=0.43cm]
\item [$\bullet$]
  {(\textbf{Approximate system}) Our approximate system is inspired by the treatment for both deterministic and stochastic PDEs. First, in order to deal with the low-regularity terms such as $\textrm{div} (n(\nabla c) )$ in the $n$-equation, we introduce the approximate system \eqref{3.1} in the stochastic setting. As we shall see later, this regularization with parameter $\epsilon$ is essential in establishing the well-posedness of the approximate solutions $\textbf{u}^\epsilon$, especially for the uniqueness result and deriving a series of entropy-energy estimates. Indeed, because of the special structure in the CNS system (which is connected to Boltzmann equations and semiconductor equations), the authors in \cite{liu2011coupled,chae2014global,duan2010global} are motivated to use the powerful entropy-energy inequality to establish the global results. To obtain such a Lyapunov functional inequality, the authors regularized the system by mollifiers similar to \eqref{3.1}. However, different from the deterministic cases and the existed works for stochastic counterparts, it is not an easy work to show the existence of solutions to the approximate system \eqref{3.1}, at least it is not obvious. To treat this problem, our basic idea is to use the regularization method in \cite{majda2002vorticity}, i.e., we further regularize the system \eqref{3.1} by using Friedrichs projectors (with parameter $k$), resulting in a nonlinear SDE in Hilbert space $\textbf{H}^s(\mathbb{R}^2)$. At this stage, it seems that the known theories for SDEs in infinite dimensions  (cf. \cite{prevot2007concise,kallianpur1995stochastic,da2014stochastic}) can not be applied directly. As a matter of fact, when establishing the well-posedness for such SDEs, a new difficulty arises, i.e., the estimates such as
$$
\mathbb{E}\int_0^{t\wedge \textbf{t}_\eta} (\Lambda^s\textbf{u}^{k,\epsilon},\Lambda^s\mathcal {J}_k\textbf{B}(\mathcal {J}_k\textbf{u}^{k,\epsilon} ))
_{\textbf{L}^2} \textrm{d}r \leq C \mathbb{E}\int_0^{t\wedge \textbf{t}_\eta} (1+ \|\textbf{u}^{k,\epsilon} \|_{\textbf{H}^s}^2) \textrm{d}r
$$
holds only up to a sequence of stopping times $\textbf{t}_\eta :=\inf\{t>0;~ \|\textbf{u}^{k,\epsilon}(t)\|_{\textbf{W}^{1,\infty}}\geq \eta\}$.  How to bound $\textbf{t}_\eta$ from below, uniformly in $\eta$, is still an open problem. Therefore only using stopping times seems not enough to obtain the estimate uniformly in $k$. Motivated by the truncation technique in SPDEs (cf. \cite{rockner2014local,breit2018stochastically,du2020local}), we further add a smooth cut-off function (with the parameter $R$) depending on the size of $\|\textbf{u}^{k,\epsilon} \|_{\textbf{W}^{1,\infty}}$ in front of the nonlinear term to obtain \eqref{3.3}. This is one of the differences compared with the existing papers on stochastic chemotaxis-fluid models. For this three-layer approximate system, one can derive a priori uniform estimates for the approximations $\textbf{u}^{k,R,\epsilon} $, and the existence and uniqueness of approximations is ensured by the classical Leha-Ritter Theorem.}
%

\item [$\bullet$]   {(\textbf{Taking limits  $k\rightarrow\infty$ and $R\rightarrow\infty$}) The main difficulty for taking the limit $k\rightarrow\infty$ lies in the fact that one can not prove the following property
    $$
    \theta_R(\|\textbf{u}^{k,R,\epsilon}\|_{W^{1,\infty}})
\rightarrow\theta_R(\|\textbf{u}^{R,\epsilon}\|_{W^{1,\infty}})  ~~ \textrm{as}~~k\rightarrow\infty
$$
directly, since the value of $\theta_R(\|\textbf{u}^{k,R,\epsilon}\|_{W^{1,\infty}})$ depends on the information of $\textbf{u}^{k,R,\epsilon}$ over the whole space, and the uniform bound in Lemma \ref{lem3} only ensures the local strong convergence $\textbf{u}^{k,R,\epsilon}\rightarrow\textbf{u}^{ R,\epsilon}$ in $\textbf{H}^{s} (K)$, for compact subset $K\subseteq \mathbb{R}^2$. We overcome this difficulty by adopting some ideas from \cite{li2021stochastic}, where the key step is to demonstrate the convergence of
$\textbf{u}^{k,R,\epsilon}$ in $C([0, T], \textbf{H}^{s-3}(\mathbb{R}^2))$ through a careful analysis of the approximation scheme, and then elevate the spatial regularity of the solution in $\textbf{H}^{s}(\mathbb{R}^2)$ (cf. Lemmas \ref{lem3.5}-\ref{lem3.4} below). Note that in bounded domain, the aforementioned convergence is a direct consequence of Lemma \ref{lem3} and Sobolev embedding theorem. With the Lemmas \ref{lem3.5}-\ref{lem3..4} at hand, we are able to take the first limit $k\rightarrow \infty$ to construct a local pathwise solution $\textbf{u}^{R,\epsilon}$ for the system \eqref{3.1} with cut-off functions $\theta_R(\|\textbf{u}^{R,\epsilon}\|_{\textbf{W}^{1,\infty}})$. Based on a uniqueness result for system \eqref{3.1} (cf. Lemma \ref{lem3.7}) and the stopping techniques, one can  show that \eqref{3.1} has a unique local maximal pathwise solution in $C([0,\widetilde{\textbf{t}^\epsilon}); \textbf{H}^s(\mathbb{R}^2))$ by taking the limit $R\rightarrow\infty$. Finally, we prove that $\mathbb{P}\{\widetilde{\textbf{t}^\epsilon}=+\infty\}=1$, i.e., the local pathwise solution to system \eqref{3.1} exists globally. The difficulty lies in dealing with the fluid equation, that is, we need to introduce proper stopping times and apply the localization technique in Littlewood-Paley theory to estimate the vorticity $\nu^\epsilon = \textrm{curl}~ u^\epsilon$. }

\item [$\bullet$]   {(\textbf{Entropy-energy estimates \& taking limit $\epsilon\rightarrow 0$}) Having obtained the approximate solutions $(\textbf{u}^\epsilon)_{\epsilon>0}$ to the system \eqref{3.1}, it remains to take the limit $\epsilon\rightarrow 0$ to construct global martingale solutions to  system \eqref{KS-SNS}. The proof of existence is based on the stochastic compactness method, which typically relies heavily on a series of a priori estimates for $ \textbf{u}^\epsilon $. The entropy-energy inequality  is widely utilized in this regard, see for example \cite{winkler2012global,zhai20202d,nie2020global,hausenblas2023existence}. However, the existing entropy-energy inequalities appear to be unsuitable for addressing the current problem due to the lower diffusion $\Delta^\alpha$ ($1/2\leq\alpha<1$) and unboundedness of domain. Thus, some new ideas have to be introduced. Our strategy for building a priori estimates can be briefly described as follows. We start with establishing the $L^2$-integrability of $n^\epsilon$ by using the structure of the $n$-equation,  which plays an important role in  deriving high-order momentum estimates for $u^\epsilon$.  Based on this, we then investigate the uniform bound for the quantity $\sqrt{c^{\epsilon}}$, which in turn enables us to construct an entropy-type inequality for the $n^\epsilon$-component. Please refer to Lemmas \ref{4.5}-\ref{4.6} for details. With these findings in hand, one can introduce an event set $(\Omega^\epsilon_N)_{N\geq 1}$ associated to the quantities $\sqrt{c^{\epsilon}}$ and $u^{\epsilon}$, which has high probability as $N$ large enough. Over this event set, we are able to derive an estimate for the term $\sup_{t\in[0,T]}\|n^\epsilon\|_{L^2}^2$ (cf. \eqref{4.28}).  Finally, we develop higher regularity for $c^\epsilon$ in $H^1(\mathbb{R}^2)$ and the vorticity $v^\epsilon$ in $L^{4/3}(\mathbb{R}^2)$ by using the structure of the vorticity equation. Based on the aforementioned uniform bounds, it is possible to prove the tightness of $\{\textbf{u}^\epsilon\}_{\epsilon>0}$ and apply the Skorokhod Representation Theorem to prove the existence of global martingale solutions. Finally, to verify that the solutions is strong in probability, we prove a pathwise uniqueness result for the martingale solutions. As a  by-product, the Sobolev interpolation inequalities obtained in Lemma \ref{lem1.6} seems to be new and also have their own interest.}

\end{itemize}

\item [\textbf{(c)}]  {Some interesting problems:}
\begin{itemize}[leftmargin=0.43cm]

\item [$\bullet$]  Our research might be seen as a continuation and extension of Zhai and Zhang's work \cite{zhai20202d}, so we use the same random noise. It will be of great interest to explore the KS-SNS system \eqref{KS-SNS} with general random noises, such as $f(t,n,c,u)\textrm{d} W_t$ instead of $f(t,u)\textrm{d} W_t$. Using an argument similar to Section 3, it is possible to construct approximations $\textbf{u}^{k,R,\epsilon}$ with global Lipschitz conditions on $f$. However, due to the nonlinear interaction between the stochastic force and the $n$- and $c$-equations, it is quite difficult to derive proper entropy-energy estimates as obtained in Section 4, which are crucial for taking the limit $\epsilon\rightarrow0$ to prove the existence of martingale weak solutions (cf. Lemma \ref{lem4.7} below). Therefore, an alternative novel proof strategy needs to be explored.

\item [$\bullet$]  Noted that the KS model is a macroscopic representation derived from the limiting behaviour of its microscopic counterpart through balance laws and Fick's law of diffusion \cite{stevens2000derivation}. Taking into account molecular fluctuations, a stochastic version of the KS model can be obtained as demonstrated in \cite{mayorcas2021blow,huang2021microscopic,hausenblas2022one,hausenblas2022uniqueness,misiats2022global}. Inspired by this, it is reasonable to investigate the stochastic KS model in conjunction with stochastic Navier-Stokes equations in both bounded and unbounded domains. However, only one study by Hausenblas et al. \cite{hausenblas2023existence} is available, where random noises affect the first and third components of \eqref{dns} in a bounded domain $D\subseteq \mathbb{R}^2$. Due to the complexities involved in the system, investigating \eqref{dns} with random noises on three components poses a significant challenge, requiring extensive research efforts.

\end{itemize}
\end{itemize}

The paper is organized as follows.  In Section 2, we will review fundamental concepts of the Littlewood-Paley theory and establish several key bilinear estimates in Besov spaces through the utilization of the Bony paraproduct decomposition. Section 3 is dedicated to introducing the approximation system \eqref{3.3} and proving the existence and uniqueness of smooth solutions to \eqref{3.1}. In Section 4, a series of crucial entropy and energy estimates are derived, which enable us to construct a unique weak solution to \eqref{KS-SNS} via stochastic compactness method. Section 5 is devoted to proving several useful basic inequalities applied in the argument.

\section{The Littlewood-Paley theory}
This section intends to review the definition of specific function spaces that will be frequently utilized in the subsequent sections, and offer a concise overview of essential concepts associated with the Littlewood-Paley theory. We denote by $W^{s,p}(\mathbb{R}^2)$, $s\in \mathbb{R}$, $1\leq p \leq \infty$, the Bessel potential space with the norm
$
\|f\|_{W^{s,p}}= \|(1-\Delta)^{\frac{s}{2}}f\|_{L^p}.
$
The norm of the homogenous space $\dot{W}^{s,p}(\mathbb{R}^2)$ is given by
$
\|f\|_{\dot{W}^{s,p}}= \|\Delta^\frac{s}{2} f\|_{L^p}.
$
For each $n\geq 2$, we define
$$
\textbf{W}^{s,p}(\mathbb{R}^2):=    \underbrace{W^{s,p}(\mathbb{R}^2) \times \cdots \times  W^{s,p}(\mathbb{R}^2)}_{\textrm{n-terms}},
$$
which is endowed with the norm
$
\| (f_1,\cdots,f_n)\|_{\textbf{W}^{s,p}}=\sum_{i=1}^n\|f_i\|_{W^{s,p}} .
$
For any $s\in\mathbb{R} $, we introduce the divergence-free space
$$
 \textbf{H}^s(\mathbb{R}^2):=\{(n,c,u)\in H^s(\mathbb{R}^2)\times H^s(\mathbb{R}^2) \times(H^s(\mathbb{R}^2))^2;~ \textrm{div} u=0\},
$$
 with the norm
$$
\|(n,c,u)\|_{\textbf{H}^s}= \|n\|_{H^s}+\|c\|_{H^s}+\|u\|_{H^s}.
$$
When there is no confusion, for $p=2$, we also write $\textbf{W}^{s,2}(\mathbb{R}^2)$ as $\textbf{H}^s(\mathbb{R}^2)$ to avoid superfluous notations. In the sequel, $C(a,b,...)$ denotes the positive constants depending only on $a,b,...$, which may change from line to line.

We will now review some fundamental concepts related to the Littlewood-Paley theory. For more comprehensive details, please refer to \cite[Chapter 2]{bahouri2011fourier} and \cite[Chapter 1]{miao2012littlewood}.  Define
$
\mathcal{C}=\{\xi \in \mathbb{R}^2; \frac{3}{4} \leq| \xi \mid \leq \frac{8}{3}\}$, $B(0, \frac{4}{3})=\{\xi \in \mathbb{R}^2; | \xi |\leq \frac{4}{3}\}$.
Then there are two radial functions $\varphi\in C_0^\infty(\mathcal{C})$ and $\chi\in C_0^\infty(B(0, \frac{4}{3}))$ such that
$$\sum_{j \in \mathbb{Z}} \varphi\left(2^{-j} \xi\right)=1, ~~~~\textrm{for all} ~\xi \in \mathbb{R}^2 \backslash\{\textbf{0}\},
$$
$$
  \operatorname{supp} \chi (\cdot)\cap \operatorname{supp} \varphi (2^{-j} \cdot )=\emptyset, ~~~~ \textrm{for all}~j \geq 1,
$$
and
$$
\operatorname{supp} \varphi (2^{-j}\cdot ) \cap \operatorname{supp} \varphi (2^{-j^{\prime}} \cdot)=\emptyset, ~~~~\textrm{for all}~ |j-j^{\prime} |\geq 2.
$$
The homogeneous  Littlewood-Paley blocks $\dot{\triangle}_j$ and low-frequency operators $\dot{S}_j$ are defined by
$$
\begin{aligned}
\dot{\triangle}_j u=\varphi\left(2^{-j} D\right) u  ,~~~ \dot{S}_j u=\chi\left(2^{-j} D\right) u=\sum_{j'\leq j-1}\dot{\triangle}_{j'} u.
\end{aligned}
$$
Let $\mathcal {S}'_h(\mathbb{R}^2)$ be the space of distributions $u$ such that {$\lim_{\lambda \rightarrow\infty}\|\theta(\lambda D)u\|_{L^\infty}=0$}, for all $\theta\in C_0^\infty(\mathbb{R}^2)$. The homogeneous Littlewood-Paley decomposition of a distribution $u$ is given by
$$
u=\sum_{j \in \mathbb{Z}}\dot{\triangle}_j u, ~~~~\textrm{for any}~ u \in\mathcal {S}'_h(\mathbb{R}^2).
$$

\begin{definition} For any $s \in \mathbb{R}$,  $1 \leq p,r \leq\infty$, the homogeneous Besov space $\dot{B}_{p, r}^s(\mathbb{R}^2)$ consists of all tempered distributions $u$ such that
$$
\|u\|_{\dot{B}_{p,r}^s}=
\begin{cases}
\bigg(\sum\limits_{j \in \mathbb{Z}}2^{jrs }\|\dot{\triangle}_ju\|_{L^p}^r\bigg)^{1/r}<\infty,&\textrm{if}~r<\infty,\\
\sup\limits_{j \in \mathbb{Z}}2^{js }\|\dot{\triangle}_ju\|_{L^p}<\infty,& \textrm{if}~ r=\infty.
\end{cases}
$$
\end{definition}

In contrast to the nonhomogeneous case, the homogenous Besov space does not possess monotonic property in $s\in\mathbb{R}$. However, the following significant embeddings are applicable.

\begin{lemma} {\cite[Proposition 2.20]{bahouri2011fourier}}
If $s \in \mathbb{R}$, $1 \leq p \leq \infty$, $1 \leq r_1 \leq r_2 \leq \infty$, then
\begin{equation}\label{2.2}
\begin{split}
\dot{B}_{p, r_1}^s(\mathbb{R}^2) \hookrightarrow \dot{B}_{p, r_2}^s(\mathbb{R}^2).
\end{split}
\end{equation}
If $1 \leq q \leq \infty$, $-\infty<s_2 \leq s_1<\infty$, $1 \leq p_1 \leq p_2 \leq \infty$ and $s_1- \frac{2}{p_1}=s_2-\frac{2}{p_2}$, then
\begin{equation}\label{2.3}
\begin{split}
\dot{B}_{p_1, r}^{s_1}(\mathbb{R}^2) \hookrightarrow \dot{B}_{p_2, r}^{s_2}(\mathbb{R}^2).
\end{split}
\end{equation}
\end{lemma}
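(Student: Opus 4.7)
The plan is to establish the two embeddings separately by passing to the dyadic side and invoking two standard facts: monotonicity of $\ell^r$ norms for \eqref{2.2} and the Bernstein inequality for \eqref{2.3}. In both cases the homogeneous hypothesis $u\in\mathcal{S}'_h(\mathbb{R}^2)$ already built into the definition of $\dot{B}^s_{p,r}$ is what legitimises the use of the decomposition $u=\sum_{j\in\mathbb Z}\dot{\triangle}_j u$ on which every estimate will rest.

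For \eqref{2.2}, I would set $a_j := 2^{js}\|\dot{\triangle}_j u\|_{L^p}$ so that $\|u\|_{\dot{B}^s_{p,r_k}}=\|(a_j)_{j\in\mathbb Z}\|_{\ell^{r_k}}$ for $k=1,2$, thereby reducing the claim to the elementary sequence inequality $\|(a_j)\|_{\ell^{r_2}}\leq \|(a_j)\|_{\ell^{r_1}}$ whenever $r_1\leq r_2$. For finite $r_2$ this follows from $|a_j|\leq \|(a_j)\|_{\ell^{r_1}}$ together with
\begin{equation*}
\sum_{j}|a_j|^{r_2}\;=\;\sum_{j}|a_j|^{r_1}|a_j|^{r_2-r_1}\;\leq\;\|(a_j)\|_{\ell^{r_1}}^{r_2-r_1}\sum_{j}|a_j|^{r_1},
\end{equation*}
while $r_2=\infty$ is immediate by inspection. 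This delivers \eqref{2.2} with embedding constant one.

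For \eqref{2.3}, the key tool is the Bernstein inequality: if $\widehat{f}$ is supported in an annulus of size $\sim 2^j$ and $1\leq p_1\leq p_2\leq\infty$, then $\|f\|_{L^{p_2}}\lesssim 2^{2j(\frac{1}{p_1}-\frac{1}{p_2})}\|f\|_{L^{p_1}}$, which applied to each $\dot{\triangle}_j u$ and multiplied by $2^{js_2}$ gives
\begin{equation*}
2^{js_2}\|\dot{\triangle}_j u\|_{L^{p_2}}\;\lesssim\; 2^{j(s_2+\frac{2}{p_1}-\frac{2}{p_2})}\|\dot{\triangle}_j u\|_{L^{p_1}}\;=\;2^{js_1}\|\dot{\triangle}_j u\|_{L^{p_1}},
\end{equation*}
where the last equality is precisely the scaling hypothesis $s_1-\frac{2}{p_1}=s_2-\frac{2}{p_2}$. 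Taking the $\ell^r(\mathbb Z)$ norm in $j$ on both sides (with the convention $\ell^\infty=\sup_j$ when $r=\infty$) yields $\|u\|_{\dot{B}^{s_2}_{p_2,r}}\lesssim \|u\|_{\dot{B}^{s_1}_{p_1,r}}$, which is \eqref{2.3}. I do not foresee a genuine obstacle: both embeddings are dyadic reformulations of classical inequalities (Jensen/H\"older on $\ell^r$ and Bernstein on $L^p$), and the only mildly delicate point is tracking the $r=\infty$ endpoint, for which sums are simply replaced by suprema without any new idea.
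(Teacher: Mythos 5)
Your proof is correct. The paper does not prove this lemma itself but simply cites \cite[Proposition 2.20]{bahouri2011fourier}, and your argument --- the nesting of $\ell^{r}$ norms applied to the sequence $a_j=2^{js}\|\dot{\triangle}_j u\|_{L^p}$ for \eqref{2.2}, and the Bernstein inequality on each annulus-supported block $\dot{\triangle}_j u$ combined with the scaling relation $s_1-\tfrac{2}{p_1}=s_2-\tfrac{2}{p_2}$ for \eqref{2.3} --- is precisely the standard proof given in that reference.
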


The following bilinear estimates in Besov spaces are crucial for the discussion of Section 4.

\begin{lemma}\label{lem1.6}
For any $\alpha \in (\frac{1}{2},1]$, $1\leq p,r\leq\infty$, we have
\begin{equation}\label{2.4}
\begin{split}
\|f\cdot\nabla g\|_{\dot{B}_{p,r}^{-\alpha}}\leq \|f\|_{\dot{B}_{p,r}^{1-2\alpha+\frac{2}{p}}}\|g\|_{\dot{B}_{p,r}^{\alpha}}.
\end{split}
\end{equation}
In the case of $\alpha= \frac{3}{4}$ and $\alpha= \frac{1}{4}$, there hold
\begin{align}
 &\|f\cdot\nabla g\|_{\dot{B}_{p,r}^{- \frac{3}{4}}}\leq \|f\|_{\dot{B}_{p,r}^{ -\frac{1}{4}+\frac{2}{p}}}\|g\|_{\dot{B}_{p,r}^{\frac{1}{2}}},\label{2.5}\\
 &\|f\cdot\nabla g\|_{\dot{B}_{p,r}^{- \frac{1}{4}}}\leq \|\nabla g\|_{L^\infty}\|f\|_{\dot{B}_{p,r}^{ \frac{3}{4}}}.\label{2.6}
\end{align}
\end{lemma}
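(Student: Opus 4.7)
The plan is to prove \eqref{2.4}--\eqref{2.6} via Bony's paraproduct decomposition
$$f \cdot \nabla g = T_f(\nabla g) + T_{\nabla g}(f) + R(f, \nabla g),$$
where $T$ denotes the paraproduct and $R$ the remainder, and to bound each of the three pieces in the target Besov space by combining the standard paraproduct/remainder estimates with the homogeneous Besov embeddings \eqref{2.2}--\eqref{2.3}. All three inequalities are scaling-consistent, and this dictates the choice of auxiliary exponents at each step.

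For \eqref{2.4}, I would estimate the two paraproducts using the standard bound
$$\|T_uv\|_{\dot{B}^{s+t}_{p,r}} \leq C\|u\|_{\dot{B}^t_{\infty,\infty}}\|v\|_{\dot{B}^s_{p,r}}, \qquad t<0.$$
For $T_f(\nabla g)$, apply this with $u=f$, $v=\nabla g\in \dot{B}^{\alpha-1}_{p,r}$, $s=\alpha-1$ and $t=1-2\alpha$; the strict inequality $\alpha>1/2$ is precisely what guarantees $t<0$. The hypothesis $f\in \dot{B}^{1-2\alpha}_{\infty,\infty}$ then follows from the two-step chain $\dot{B}^{1-2\alpha+2/p}_{p,r}\hookrightarrow \dot{B}^{1-2\alpha}_{\infty,r}\hookrightarrow \dot{B}^{1-2\alpha}_{\infty,\infty}$ provided by \eqref{2.3} and \eqref{2.2}. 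For $T_{\nabla g}(f)$ I would use the same paraproduct bound with $u=\nabla g$, $v=f$, $s=1-2\alpha+2/p$ and $t=\alpha-1-2/p<0$ (using $\alpha\leq 1$), combined with $\dot{B}^{\alpha-1}_{p,r}\hookrightarrow \dot{B}^{\alpha-1-2/p}_{\infty,\infty}$.

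The remainder is the delicate piece and I expect it to be the main obstacle. The textbook dyadic bound
$$\|R(u,v)\|_{\dot{B}^{s_1+s_2-2/p}_{p,r}} \leq C\|u\|_{\dot{B}^{s_1}_{p,r}}\|v\|_{\dot{B}^{s_2}_{p,r}}, \qquad s_1+s_2>0,$$
applied with $s_1=1-2\alpha+2/p$ and $s_2=\alpha-1$ yields the target index $-\alpha$ exactly, but the positivity constraint becomes $-\alpha+2/p>0$, i.e.\ $p<2/\alpha$. For $p$ in the complementary range I would unravel $R(f,\nabla g)=\sum_j \dot{\triangle}_j f\cdot \sum_{|k-j|\leq 1}\dot{\triangle}_k\nabla g$, apply H\"older with two integrability exponents whose reciprocals sum to $2/p$, recover the resulting loss by Bernstein's inequality on the frequency-localized factors, and then carry out the dyadic summation; the surviving factor $2^{2j/p}$ is exactly what is absorbed by combining \eqref{2.3} with the weight $2^{-j\alpha}$ coming from the target norm. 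Verifying that this summation actually converges in $\ell^r$ is where the fine structure of the indices $(\alpha,p,r)$ really has to be checked.

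For the specialized bounds, the same Bony decomposition applies with different auxiliary exponents. In \eqref{2.5} I would re-run the three estimates with $s_2=-1/2$ (so $\nabla g\in \dot{B}^{-1/2}_{p,r}$) and $s_1=-1/4+2/p$, tracking again that the paraproduct negativity conditions are satisfied and that the remainder step closes via \eqref{2.3}. For \eqref{2.6}, the pointwise assumption $\nabla g\in L^\infty$ provides a shortcut: the paraproduct $T_f(\nabla g)$ is controlled directly by
$\|T_uv\|_{\dot{B}^s_{p,r}} \leq C\|u\|_{L^\infty}\|v\|_{\dot{B}^s_{p,r}}$
applied with $u=\nabla g$, and the remaining paraproduct $T_{\nabla g}f$ and the remainder $R(f,\nabla g)$ are bounded by pulling $\|\nabla g\|_{L^\infty}$ out of every dyadic block of $\nabla g$ and then estimating the blocks of $f$ alone, with the relatively weak target index $-1/4$ leaving sufficient room to absorb the dyadic weights without any positivity condition being invoked.
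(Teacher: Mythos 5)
Your overall route --- the Bony decomposition, the continuity bound $\|\dot T_uv\|_{\dot B^{s+t}_{p,r}}\lesssim\|u\|_{\dot B^{t}_{\infty,\infty}}\|v\|_{\dot B^{s}_{p,r}}$ for $t<0$, and the embeddings \eqref{2.3}, \eqref{2.2} to place $f$ in $\dot B^{1-2\alpha}_{\infty,\infty}$ and $\nabla g$ in $\dot B^{\alpha-1-2/p}_{\infty,\infty}$ --- is exactly the paper's treatment of the two paraproduct terms, including the role of $\alpha>\tfrac12$ and $\alpha\leq 1$ in keeping the relevant indices negative. Where you part ways with the paper is the remainder. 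The paper never invokes the $\dot B^{s_1}_{p,r}\times\dot B^{s_2}_{p,r}\to\dot B^{s_1+s_2-2/p}_{p,r}$ form of the remainder estimate: it pairs $\dot{\triangle}_qf$ in $L^\infty$ against $\dot{\triangle}_{q-\nu}\partial_ig$ in $L^p$, i.e.\ it measures $f$ in $\dot B^{1-2\alpha}_{\infty,\infty}$ exactly as for the paraproducts, so there is no $2/p$ loss and no case split at $p=2/\alpha$. Your second branch ($p\geq 2/\alpha$) is where the genuine gap sits: the blocks $\dot{\triangle}_jf\,\tilde{\triangle}_j\nabla g$ of the remainder are supported in balls $\{|\xi|\lesssim 2^j\}$ rather than annuli, so $\dot{\triangle}_q\dot R(f,\nabla g)$ collects contributions from all $j\geq q-N_0$, and the dyadic summation you defer produces a factor $\sum_{j\geq q}2^{(q-j)(s_1+s_2)}$ that converges only when the total regularity $s_1+s_2$ is strictly positive. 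With your exponents in the range $p\geq 2/\alpha$ this total is $\leq 0$ (and with the paper's $L^\infty$-based exponents it equals $-\alpha<0$), so the ``check that the $\ell^r$ summation closes'' would in fact fail; some additional input is required --- e.g.\ $\mathrm{div}\,f=0$, which gives $\dot R(f,\nabla g)=\mathrm{div}\,\dot R(f\otimes g)$ and the missing derivative --- and neither your sketch nor a verbatim copy of the paper's $B_3$ computation supplies it.

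A second, more local problem: your opening claim that all three inequalities are scaling-consistent fails for \eqref{2.6}, whose right-hand side carries a full extra derivative on $f$. Your own argument for \eqref{2.6} (pulling $\|\nabla g\|_{L^\infty}$ out of every block) naturally lands on $\|\nabla g\|_{L^\infty}\|f\|_{\dot B^{-1/4}_{p,r}}$, and upgrading this to $\|f\|_{\dot B^{3/4}_{p,r}}$ would require the embedding $\dot B^{3/4}_{p,r}\hookrightarrow\dot B^{-1/4}_{p,r}$, which is false for homogeneous spaces --- precisely the non-monotonicity the paper warns about just before the lemma. So the sketch for \eqref{2.6} cannot be completed as written.
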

\begin{proof}
Let us recall that for any $u, v \in \mathcal{S}_h^{\prime}(\mathbb{R}^2)$, {the Bony paraproduct  decomposition} (cf. \cite[Section 2.6]{bahouri2011fourier} or \cite[Section 10]{miao2012littlewood}) of $uv$ is given by
$$
u v=\dot{T}_u v+\dot{T}_v u+\dot{R}(u, v),
$$
where
$$
\dot{T}_u v=\sum_{j \in \mathbb{Z}} \dot{S}_{j-1} u \dot{\triangle}_j v ,~~  \dot{T}_v u=\sum_{j \in \mathbb{Z}} \dot{S}_{j-1} v \dot{\triangle}_j u~~ \textrm{and}~~
\dot{R}(u, v)=\sum_{j \in \mathbb{Z}} \dot{\triangle}_j u \tilde{\triangle}_j v =\sum_{j \in \mathbb{Z}} \sum_{|j-k| \leq 1}\dot{\triangle}_j u  \dot{\triangle}_k v.
$$

We only need to prove \eqref{2.4}, since the inequalities \eqref{2.5}-\eqref{2.6} can be treated in a similar manner. By applying the Bony paraproduct decomposition to $f^i  \partial_i g$, $i=1,2$, we get
\begin{equation}\label{2.7}
\begin{split}
f \cdot \nabla g= \sum_{i=1}^2f^i\partial_i g&= \sum_{i=1}^2 \dot{T}_{f^i}\partial_i g + \sum_{i=1}^2\dot{T}_{\partial_i g}f^i+\sum_{i=1}^2 \dot{R}(f^i,\partial_i g):= B_1+B_2+B_3.
\end{split}
\end{equation}
For $B_1$, since $1-2\alpha <0$, we get by applying the continuity of the homogeneous paraproduct operator $\dot{T}$ and the remainder operator $\dot{R}$ (cf. \cite[Theorem 2.47 and Theorem 2.52]{bahouri2011fourier}) that
\begin{equation*}
\begin{split}
 \|B_1\|_{\dot{B}_{p,r}^{-\alpha}}
 &\leq C\sum_{i} \bigg(\sum_{q \in \mathbb{Z}}2^{-qr\alpha}\|\dot{S}_{q-1} f^i \dot{\triangle}_q \partial_ig \|_{L^p}^r\bigg)^{1/r}\\
 &\leq C\sum_{i=1}^2 \bigg (\sum_{q \in \mathbb{Z}}2^{ qr(1-2\alpha)}\|\dot{S}_{q-1} f^i\|_{L^\infty}^r 2^{ qr( \alpha-1)}\|\dot{\triangle}_q \partial_ig \|_{L^p}^r\bigg)^{1/r}\\
 &\leq C\sum_{i=1}^2\bigg[ 2^{ 1-2\alpha } \sup_{q \in \mathbb{Z}}2^{ (q-1) (1-2\alpha)}\|\dot{S}_{q-1} f^i\|_{L^\infty} \bigg(\sum_{q \in \mathbb{Z}}2^{ qr( \alpha-1)}\|\dot{\triangle}_q \partial_ig \|_{L^p}^r\bigg)^{1/r} \bigg]\\
 & = C\sum_{i=1}^2\bigg(2^{ 1-2\alpha }\| f^i\|_{\dot{B}_{\infty,\infty}^{1-2\alpha}}     \| \partial_ig \|_{\dot{B}_{p,r}^{\alpha-1} }\bigg)\leq C \|f \|_{B_{p,r}^{1-2\alpha+\frac{2}{p}}}   \| g \|_{\dot{B}_{p,r}^{\alpha } }.
\end{split}
\end{equation*}
For $B_2$, we have
\begin{equation*}
\begin{split}
 \|B_2\|_{\dot{B}_{p,r}^{-\alpha}}
 &\leq C\sum_{i=1}^2 \bigg(\sum_{q \in \mathbb{Z}}2^{ qr( \alpha-1-\frac{2}{p})}\|\dot{S}_{q-1} \partial_ig\|_{L^\infty}^r 2^{ qr( 1-2\alpha+\frac{2}{p})}\|\dot{\triangle}_q f^i \|_{L^p}^r\bigg)^{1/r}\\
 &\leq  C\sum_{i=1}^2 \bigg[2^{ \alpha-1-\frac{2}{p} }\sup_{q \in \mathbb{Z}}2^{ (q-1) ( \alpha-1-\frac{2}{p})}\|\dot{S}_{q-1} \partial_ig\|_{L^\infty}  \bigg(\sum_{q \in \mathbb{Z}}  2^{ qr( 1-2\alpha+\frac{2}{p})}\|\dot{\triangle}_q f^i \|_{L^p}^r\bigg)^{1/r}\bigg]\\
 &\leq  C\sum_{i=1}^2 \bigg( \|\partial_ig\|_{\dot{B}_{\infty,\infty}^{\alpha-1-\frac{2}{p}}} \|f^i\|_{\dot{B}^{1-2\alpha+\frac{2}{p}}_{p,r}}\bigg) \\
&\leq C\sum_{i=1}^2  \bigg( \| g\|_{\dot{B}_{\infty,\infty}^{\alpha -\frac{2}{p}}} \|f^i\|_{\dot{B}^{\alpha-1-\frac{2}{p}}_{p,r}}\bigg)\leq C \|f \|_{\dot{B}^{\alpha-1-\frac{2}{p}}_{p,r}}\| g\|_{\dot{B}_{p,r}^{\alpha}}.
\end{split}
\end{equation*}
For $B_3$, there holds
\begin{equation*}
\begin{split}
 \|B_3\|_{\dot{B}_{p,r}^{-\alpha}}
 &\leq  C\sum_{i=1}^2 \bigg(\sum_{q\in \mathbb{Z}} \sum_{|\nu|\leq1}2^{-qr \alpha} \|\dot{\triangle}_q f^i \dot{\triangle}_{q-\nu} \partial_i g  \|_{L^p}^r \bigg)^{1/r}\\
 &\leq  C\sum_{i=1}^2\bigg(\sum_{q\in \mathbb{Z}} \sum_{|\nu|\leq1}2^{ qr (1-2\alpha)} \|\dot{\triangle}_q f^i\|_{L^\infty}^r 2^{ qr (\alpha-1)} \|\dot{\triangle}_{q-\nu} \partial_i g  \|_{L^p}^r \bigg)^{1/r}\\
 &\leq  C  \sum_{i=1}^2\sum_{|\nu|\leq1}\bigg[ 2^{\nu (\alpha-1)}\sup_{q \in \mathbb{Z}}2^{ q (1-2\alpha)} \|\dot{\triangle}_q f^i\|_{L^\infty} \bigg(\sum_{q\in \mathbb{Z}} 2^{(q-\nu)r (\alpha-1)} \|\dot{\triangle}_{q-\nu} \partial_i g  \|_{L^p}^r \bigg)^{1/r}\bigg]\\
 & \leq C \sum_{i=1}^2 \bigg(\|f^i\|_{\dot{B}^{1-2\alpha}_{\infty,\infty}}\|\partial_i g\|_{\dot{B}_{p,r}^{\alpha-1} }\bigg) \leq C \|f \|_{\dot{B}^{1-2\alpha+\frac{2}{p}}_{p,r}}\| g\|_{\dot{B}_{p,r}^{\alpha} }.
\end{split}
\end{equation*}
Plugging the above estimates for $B_i$, $i=1,2,3$, into \eqref{2.7}, we obtain  \eqref{2.4}.
\end{proof}

We have the following useful property for the fractional Laplacian  $(-\Delta)^{\alpha}$.

\begin{lemma} {\cite[Proposition 2.30]{bahouri2011fourier}}
 For any $s \in \mathbb{R}$, $1 \leq p, r \leq \infty$, there exist a constant $ C> 0$ such that
\begin{equation*}
\begin{split}
 {\frac{1}{C}\|f\|_{\dot{B}_{p, r}^{s+2\alpha}}}\leq\|(-\Delta)^{\alpha} f\|_{\dot{B}_{p, r}^s}\leq C\|f\|_{\dot{B}_{p, r}^{s+2\alpha}}.
\end{split}
\end{equation*}
\end{lemma}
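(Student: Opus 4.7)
The plan is to prove the two-sided estimate by combining the frequency localization of the Littlewood-Paley blocks $\dot{\triangle}_j$ with the scaling properties of the homogeneous symbol $|\xi|^{2\alpha}$. The core observation is that on the dyadic annulus $\mathcal{C}\cdot 2^j$ where $\dot{\triangle}_j$ lives, the symbol $|\xi|^{2\alpha}$ is pointwise comparable to $2^{2j\alpha}$; hence applying $(-\Delta)^\alpha$ to a frequency-localized piece should multiply its $L^p$-norm by a factor on the order of $2^{2j\alpha}$, which is exactly the shift by $2\alpha$ in the Besov index appearing in the statement. This will be established via a Bernstein-type inequality for Fourier multipliers with symbols supported in dyadic annuli.

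For the upper bound, I would view $\dot{\triangle}_j\circ(-\Delta)^\alpha$ as the Fourier multiplier $\psi_j(D)$ with symbol $\psi_j(\xi):=|\xi|^{2\alpha}\varphi(2^{-j}\xi)$, and exploit the scaling identity $\psi_j(\xi)=2^{2j\alpha}g(2^{-j}\xi)$ with $g(\eta):=|\eta|^{2\alpha}\varphi(\eta)$. Because $\varphi\in C_0^\infty(\mathcal{C})$ vanishes near the origin, $g$ lies in $C_0^\infty(\mathcal{C})$, so $\mathcal{F}^{-1}g\in\mathcal{S}(\mathbb{R}^2)\subset L^1(\mathbb{R}^2)$; a dyadic change of variables then shows that the convolution kernel of $\psi_j(D)$ has $L^1$-norm bounded by $C\cdot 2^{2j\alpha}$ uniformly in $j$. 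Choosing an auxiliary cutoff $\widetilde{\varphi}\in C_0^\infty(\mathbb{R}^2)$ with $\widetilde{\varphi}\equiv 1$ on $\operatorname{supp}\varphi$, setting $\widetilde{\dot{\triangle}}_j:=\widetilde{\varphi}(2^{-j}D)$, and using the identity $\psi_j=\psi_j\widetilde{\varphi}(2^{-j}\cdot)$, one writes $\dot{\triangle}_j(-\Delta)^\alpha f=(\mathcal{F}^{-1}\psi_j)*\widetilde{\dot{\triangle}}_j f$. Young's convolution inequality then delivers the Bernstein-type bound
\begin{equation*}
\|\dot{\triangle}_j(-\Delta)^\alpha f\|_{L^p}\leq C\cdot 2^{2j\alpha}\|\widetilde{\dot{\triangle}}_j f\|_{L^p},
\end{equation*}
and multiplying by $2^{js}$, taking $\ell^r(\mathbb{Z})$-norms, and absorbing the finite-overlap contribution of $\widetilde{\dot{\triangle}}_j$ with neighbouring blocks yields $\|(-\Delta)^\alpha f\|_{\dot{B}_{p,r}^s}\leq C\|f\|_{\dot{B}_{p,r}^{s+2\alpha}}$.

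The reverse inequality follows by repeating the same argument with $|\xi|^{-2\alpha}$ in place of $|\xi|^{2\alpha}$. Since $\varphi$ is supported away from the origin, $h(\eta):=|\eta|^{-2\alpha}\varphi(\eta)$ is again smooth and compactly supported in $\mathcal{C}$, so the convolution kernel of $\dot{\triangle}_j\circ(-\Delta)^{-\alpha}$ has $L^1$-norm bounded by $C\cdot 2^{-2j\alpha}$. Writing $\dot{\triangle}_j f=\dot{\triangle}_j(-\Delta)^{-\alpha}(-\Delta)^\alpha f$, which is a legitimate identity in $\mathcal{S}'_h(\mathbb{R}^2)$ because $\dot{\triangle}_j$ annihilates the low-frequency ambiguity of $(-\Delta)^{-\alpha}$, and reapplying Young's inequality produces $\|\dot{\triangle}_j f\|_{L^p}\leq C\cdot 2^{-2j\alpha}\|\dot{\triangle}_j(-\Delta)^\alpha f\|_{L^p}$; multiplying by $2^{j(s+2\alpha)}$ and $\ell^r$-summing closes the estimate. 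The only subtle point, and the only place one has to be careful, is verifying that the rescaled symbols $g$ and $h$ really are smooth, compactly supported functions on an annulus bounded away from the origin — this is immediate from the fact that $\varphi$ vanishes near $\xi=0$, so that the fractional powers $|\eta|^{\pm 2\alpha}$ remain harmless. Since this is precisely Proposition 2.30 of \cite{bahouri2011fourier}, in practice I would simply invoke that reference rather than rewrite the classical Bernstein-type argument.
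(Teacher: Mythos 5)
Your argument is correct and is precisely the classical Bernstein-type multiplier proof of \cite[Proposition 2.30]{bahouri2011fourier}: the paper itself gives no proof and simply cites that reference, whose proof proceeds exactly as you describe (rescaling the symbol $|\xi|^{\pm 2\alpha}\varphi(2^{-j}\xi)$ to a fixed smooth annulus-supported function, bounding the kernel in $L^1$ by $C2^{\pm 2j\alpha}$ via a dyadic change of variables, and applying Young's inequality blockwise before taking $\ell^r$-norms). Nothing further is needed.
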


 \section{Construction of approximate solutions}

\subsection{Approximate system}

Let us introduce the approximation procedure as follows.

\textbf{(I)} For any $\epsilon \in (0,1)$, being inspired by the works \cite{liu2011coupled,chae2014global}, the first  regularized KS-SNS system takes the form of
\begin{equation}\label{3.1}
\left\{
\begin{aligned}
&\mathrm{d} n^{\epsilon} +  u^{\epsilon}\cdot \nabla n^{\epsilon} \textrm{d}t = \Delta n^{\epsilon} \textrm{d}t  -   \textrm{div} (n^{\epsilon}(\nabla c^{\epsilon} *\rho^{\epsilon}) )\textrm{d}t + \left(  n^{\epsilon}-     (n^{\epsilon} )^2\right) \textrm{d}t, \\
&\mathrm{d} c^{\epsilon}+  u^{\epsilon}\cdot \nabla c^{\epsilon}  \textrm{d}t = \Delta c^{\epsilon}\textrm{d}t-  c^{\epsilon}(n^{\epsilon}*\rho^{\epsilon})\textrm{d}t, \\
&\mathrm{d} u^{\epsilon}+ \textbf{P} (u^{\epsilon}\cdot \nabla) u^{\epsilon}\textrm{d}t = - \textbf{P}(-\Delta)^\alpha u^{\epsilon}\textrm{d}t+ \textbf{P}(n^{\epsilon}\nabla \phi)*\rho^{\epsilon}\textrm{d}t+ \textbf{P} f(t,u^{\epsilon})  {\mathrm{d} W_t} ,\\
&(n^{\epsilon},c^{\epsilon}, u^{\epsilon} )|_{t=0}=( n_0*\rho^{\epsilon}, c_0*\rho^{\epsilon}, u_0*\rho^{\epsilon}),
\end{aligned}
\right.
\end{equation}
where $\rho^{\epsilon}(\cdot) $ is a standard mollifier,and the initial data and nonlinear terms in \eqref{3.1} are regularized by the mollifier through the convolution $f* \rho^{\epsilon}:= \int_{\mathbb{R}^2}f(x-y)\rho^{\epsilon}(y) \textrm{d}y$.
$\textbf{P}:L^2(\mathbb{R}^2)\mapsto L^2_\sigma(\mathbb{R}^2) = \{u\in L^2(\mathbb{R}^2);~ \textrm{div} u=0 \}$
denotes the Holmholtz-Leray projection (cf.  {\cite[Proposition 1.16]{majda2002vorticity}}) defined by
$$
 \widehat{\textbf{P} u} (\xi)= \left(\textrm{Id}-\frac{\xi\otimes \xi}{|\xi|^2}\right)\widehat{u} (\xi),~~\textrm{for all}~\xi \in \mathbb{R}^2\setminus \{\textbf{0}\}.
$$

Unlike its deterministic counterpart, it is not obvious to construct smooth solutions to \eqref{3.1} directly.  {To overcome this difficulty, we are inspired by \cite{majda2002vorticity} and \cite{breit2018stochastically} to consider a further regularized system with additional parameters $R$ and $k$.}

\textbf{(II)} Denote
$$
\textbf{u}^\epsilon =\begin{pmatrix}
n^{\epsilon}\\
  c^{ \epsilon}\\
u^{ \epsilon}
\end{pmatrix} , ~~\textbf{A}^\alpha=\begin{pmatrix}
-\Delta&0&0\\
  0&-\Delta&0\\
0&0&\textbf{P}\dot{\Lambda}^{2\alpha}
\end{pmatrix} , ~~\textbf{B}(\textbf{u}^\epsilon)=\begin{pmatrix}
 (u^{\epsilon}\cdot \nabla) n^{\epsilon}  \\
  (u^{\epsilon}\cdot \nabla) c^{\epsilon} \\
\textbf{P}(u^{\epsilon}\cdot \nabla) u^{\epsilon}
\end{pmatrix} ,~~ {\mathcal {W}_t=\begin{pmatrix}
  0 \\
  0\\
  W_t
\end{pmatrix}},
$$
where $\dot{\Lambda}^s:= (-\Delta)^{s/2}$ for $s\in \mathbb{R}$,   {$\mathcal {W}_t$ is a cylindrical Wiener process in $\textbf{U}:=  \mathbb{R} \times \mathbb{R} \times U$}, and
$$
\textbf{F}^\epsilon(\textbf{u}^\epsilon)
=\begin{pmatrix}
- \textrm{div}\left(n^{\epsilon}(\nabla c^{\epsilon}*\rho^{\epsilon})\right)+  n^{\epsilon}-     (n^{\epsilon} )^2\\
 -  c^{\epsilon}(n^{\epsilon}*\rho^{\epsilon})\\
\textbf{P}(n^{\epsilon}\nabla \phi)*\rho^{\epsilon}
\end{pmatrix} ,~~ {\textbf{G}  (t,\textbf{u}^\epsilon)=\begin{pmatrix}
0&0&0\\
  0&0&0\\
0&0& \textbf{P} f(t,u^\epsilon)
\end{pmatrix}}.
$$
Then the system \eqref{3.1} can be reformulated into the following compact form:
\begin{equation}\label{3.2}
\left\{
\begin{aligned}
&\mathrm{d}  \textbf{u}^\epsilon+ \textbf{A}^\alpha\textbf{u}^\epsilon\textrm{d}t+\textbf{B}(\textbf{u}^\epsilon)\textrm{d}t=\textbf{F}^\epsilon (\textbf{u}^\epsilon)\textrm{d}t+\textbf{G}  (t,\textbf{u}^\epsilon)\textrm{d} \mathcal {W}_t,\\
&\textbf{u}^\epsilon(0)=\textbf{u}^\epsilon_0.
\end{aligned}
\right.
\end{equation}

For each $k\in \mathbb{N}^+$, we define the frequency truncation operators $\mathcal {J}_k$ by
$$\widehat{\mathcal {J}_k f} (\xi)= \textbf{1}_{\{\xi\in \mathbb{R}^2;~ \frac{1}{k}\leq |\xi|\leq k\}}(\xi)  \widehat{f}(\xi),$$  where $\textbf{1}_A(\cdot)$ denote the characteristic function on a set $A$.   For any $R>0$,  a smooth cut-off function $\theta_R : [0,\infty)\rightarrow [0,1]$ need to be selected  such that
$\theta_R (x)= 1$ if $0\leq x \leq R$; $\theta_R (x)=0$ if $x \geq 2 R$. Then, the further regularized system with  {cut-off functions} is provided by the following infinite-dimensional SDEs:
\begin{equation}\label{3.3}
\left\{
\begin{aligned}
&\mathrm{d}  \textbf{u}^{k,R,\epsilon} =\widetilde{\textbf{F}}^{k,R,\epsilon}(\textbf{u}^{k,R,\epsilon} )\textrm{d}t+\textbf{G}(\textbf{u}^{k,R,\epsilon} )\textrm{d} \mathcal {W}_t,\\
&\textbf{u}^{k,R,\epsilon}(0)= \textbf{u}^\epsilon_0,
\end{aligned}
\right.
\end{equation}
where
$$
\widetilde{\textbf{F}}^{k,R,\epsilon}(\textbf{u}) := - \mathcal {J}_k^2\textbf{A}^\alpha\textbf{u}-\theta_R(\|\textbf{u}\|_{\textbf{W}^{1,\infty}}) \mathcal {J}_k\textbf{B}(\mathcal {J}_k\textbf{u}) +\theta_R(\|\textbf{u}\|_{\textbf{W}^{1,\infty}})\mathcal {J}_k\textbf{F} ^\epsilon (\mathcal {J}_k\textbf{u}).
$$

The existence and uniqueness of solutions to \eqref{3.3} is guaranteed by the following result.

\begin{lemma}\label{lem1}
Let $s>5$, $k\in\mathbb{N}^+, R\geq1$ and $\epsilon \in (0,1)$. Assume that the conditions (H1)-(H2) hold. Then for any $T>0$, the system \eqref{3.3} admits a unique solution in $C([0,T];\textbf{H}^s(\mathbb{R}^2))$, $\mathbb{P}$-a.s.
\end{lemma}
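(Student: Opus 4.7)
The plan is to view \eqref{3.3} as an infinite-dimensional SDE in the Hilbert space $\textbf{H}^s(\mathbb{R}^2)$ with $s>5$, and to apply a classical existence and uniqueness theorem (the Leha-Ritter theorem as mentioned in the introduction) for SDEs driven by a cylindrical Wiener process whose drift and diffusion are globally Lipschitz with linear growth. The whole point of the three-layer regularization with parameters $k$ and $R$ is precisely to produce such coefficients, at the price of constants that will blow up as $k,R\to\infty$; controlling that blow-up is deferred to subsequent sections.

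First I would exploit the smoothing properties of the frequency truncation $\mathcal{J}_k$: since it projects onto $\{1/k\le|\xi|\le k\}$, one has $\|\mathcal{J}_k \textbf{v}\|_{\textbf{H}^{s+m}} \le k^m \|\textbf{v}\|_{\textbf{H}^s}$ for every $m\ge 0$, so $\mathcal{J}_k^2 \textbf{A}^\alpha$ is a bounded linear operator on $\textbf{H}^s$ of norm $\lesssim k^{2\alpha}$. For the bilinear term $\mathcal{J}_k\textbf{B}(\mathcal{J}_k\cdot)$, Sobolev product estimates (applicable since $s>5\gg 1$) combined with the smoothing action of $\mathcal{J}_k$ give
\begin{equation*}
\|\mathcal{J}_k \textbf{B}(\mathcal{J}_k \textbf{u}) - \mathcal{J}_k \textbf{B}(\mathcal{J}_k \textbf{v})\|_{\textbf{H}^s} \le C(k)\bigl(\|\textbf{u}\|_{\textbf{H}^s}+\|\textbf{v}\|_{\textbf{H}^s}\bigr)\|\textbf{u}-\textbf{v}\|_{\textbf{H}^s}.
\end{equation*}
The source $\mathcal{J}_k\textbf{F}^\epsilon(\mathcal{J}_k\cdot)$ is handled similarly: the convolutions $*\rho^\epsilon$ are smoothing with $\epsilon$-dependent norm; the divergence structure $\textrm{div}(n^\epsilon(\nabla c^\epsilon *\rho^\epsilon))$ becomes, after $\mathcal{J}_k$-truncation, a product of two $\textbf{H}^s$-functions controllable by the Sobolev algebra; and the quadratic logistic term $-(n^\epsilon)^2$ is equally handled by the algebra property.

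The role of the cut-off $\theta_R$ is to upgrade these locally Lipschitz estimates to global ones. Since $s>5$, the Sobolev embedding $\textbf{H}^s(\mathbb{R}^2)\hookrightarrow \textbf{W}^{1,\infty}(\mathbb{R}^2)$ gives
\begin{equation*}
|\theta_R(\|\textbf{u}\|_{\textbf{W}^{1,\infty}})-\theta_R(\|\textbf{v}\|_{\textbf{W}^{1,\infty}})|\le \|\theta_R'\|_\infty\|\textbf{u}-\textbf{v}\|_{\textbf{W}^{1,\infty}}\le C_R\|\textbf{u}-\textbf{v}\|_{\textbf{H}^s},
\end{equation*}
and $\theta_R(\|\textbf{u}\|_{\textbf{W}^{1,\infty}})$ vanishes as soon as $\|\textbf{u}\|_{\textbf{W}^{1,\infty}}>2R$. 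Then the standard algebraic identity $\theta_R(a)A-\theta_R(b)B=\theta_R(a)(A-B)+(\theta_R(a)-\theta_R(b))B$ combined with the support bound on $\theta_R$ yields a global Lipschitz estimate
\begin{equation*}
\|\widetilde{\textbf{F}}^{k,R,\epsilon}(\textbf{u})-\widetilde{\textbf{F}}^{k,R,\epsilon}(\textbf{v})\|_{\textbf{H}^s}\le C(k,R,\epsilon)\|\textbf{u}-\textbf{v}\|_{\textbf{H}^s},
\end{equation*}
together with a matching linear growth bound. The diffusion $\textbf{G}(t,\cdot)$ is Lipschitz from $\textbf{H}^s$ into $L_2(\textbf{U};\textbf{H}^s)$ directly by (H2), using the boundedness of the Helmholtz projection $\textbf{P}$ on Sobolev spaces.

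With drift and diffusion globally Lipschitz with linear growth on $\textbf{H}^s(\mathbb{R}^2)$, the Leha-Ritter theorem delivers a unique $\mathcal{F}_t$-adapted solution $\textbf{u}^{k,R,\epsilon}\in C([0,T];\textbf{H}^s(\mathbb{R}^2))$ almost surely. The main obstacle is the careful interplay between the $\textbf{W}^{1,\infty}$-based cut-off and the ambient $\textbf{H}^s$-topology: one must verify that $\theta_R(\|\textbf{u}\|_{\textbf{W}^{1,\infty}})$ controls both the size and the Lipschitz constant of the quadratic nonlinearities uniformly in $\textbf{u}$, which is the essential reason the regularity index is chosen large enough to ensure $\textbf{H}^s\hookrightarrow \textbf{W}^{1,\infty}$. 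The resulting constants degenerate as $k\to\infty$ and $R\to\infty$, which is the price paid for this double regularization and sets the stage for the non-trivial limit arguments carried out later.
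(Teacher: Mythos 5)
Your overall strategy coincides with the paper's: recast \eqref{3.3} as an SDE in $\textbf{H}^s(\mathbb{R}^2)$, estimate the coefficients, and invoke the Leha--Ritter theorem. However, there is a genuine gap in the central claim that the cut-off $\theta_R$ ``upgrades these locally Lipschitz estimates to global ones.'' It does not. The cut-off constrains only $\|\textbf{u}\|_{\textbf{W}^{1,\infty}}$, and on the unbounded domain $\mathbb{R}^2$ the $\textbf{W}^{1,\infty}$ norm does not control the $\textbf{H}^s$ norm — not even after the frequency truncation $\mathcal{J}_k$, since $L^\infty$ never dominates $L^2$ on $\mathbb{R}^2$. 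Consequently, on the set where $\theta_R\equiv 1$ the $\textbf{H}^s$ norms of $\textbf{u}$ and $\textbf{v}$ are unconstrained, and your own displayed bilinear estimate $\|\mathcal{J}_k\textbf{B}(\mathcal{J}_k\textbf{u})-\mathcal{J}_k\textbf{B}(\mathcal{J}_k\textbf{v})\|_{\textbf{H}^s}\le C(k)(\|\textbf{u}\|_{\textbf{H}^s}+\|\textbf{v}\|_{\textbf{H}^s})\|\textbf{u}-\textbf{v}\|_{\textbf{H}^s}$ is irreducibly a \emph{local} Lipschitz bound, with constant growing in the radius. The asserted global Lipschitz estimate $\|\widetilde{\textbf{F}}^{k,R,\epsilon}(\textbf{u})-\widetilde{\textbf{F}}^{k,R,\epsilon}(\textbf{v})\|_{\textbf{H}^s}\le C(k,R,\epsilon)\|\textbf{u}-\textbf{v}\|_{\textbf{H}^s}$ is therefore false as stated. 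What the cut-off \emph{does} buy, via the Moser and commutator estimates, is a genuine \emph{linear growth} (in fact a one-sided) bound of the form $(\textbf{u},\widetilde{\textbf{F}}^{k,R,\epsilon}(\textbf{u}))_{\textbf{H}^s}\le C(\epsilon,\phi,k,R)(1+\|\textbf{u}\|_{\textbf{H}^s}^2)$, because there the low-order factors $\|u\|_{L^\infty}$, $\|\nabla n\|_{L^\infty}$, etc.\ are the ones tamed by $\theta_R$. The correct hypotheses for the Leha--Ritter theorem as used here are precisely this linear-growth/coercivity bound \emph{together with} a local Lipschitz estimate on balls $\{\|\textbf{u}\|_{\textbf{H}^s}\le r\}$ with constant $C(r,k,R,\epsilon)$; these two ingredients (the paper's \eqref{3.5} and \eqref{3.6}) yield local well-posedness plus non-explosion, hence a global solution. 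You need to replace the global Lipschitz claim by this pair of estimates and check that the theorem you cite accepts them.

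A secondary omission: you assert that the solution lies in $C([0,T];\textbf{H}^s(\mathbb{R}^2))$ without argument. The paper devotes a separate step to this, deriving a fourth-moment bound $\mathbb{E}\sup_t\|\textbf{u}^{k,R,\epsilon}(t)\|_{\textbf{H}^s}^4<\infty$ via It\^o's formula and the BDG inequality, then estimating $\mathbb{E}\|\textbf{u}^{k,R,\epsilon}(t)-\textbf{u}^{k,R,\epsilon}(r)\|_{\textbf{H}^s}^4\le C|t-r|^2$ and invoking the Kolmogorov--\v{C}entsov theorem to obtain a continuous modification. Unless the version of the existence theorem you invoke already guarantees continuous paths in the state space, this step must be supplied.
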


\begin{proof}
\emph{Step 1 (Existence and uniqueness).} Recall that $\textbf{H}^s(\mathbb{R}^2)$ is a Banach algebra for any $s>1$, and
$$
\textrm{supp} \ \widehat{\mathcal {J}_k f} (\cdot)\subseteq \{\xi \in\mathbb{R}^2;~1/k\leq |\xi|\leq k\}.
$$
According to the Bernstein inequality (cf.  {\cite[Lemma 2.1]{bahouri2011fourier}}), there exists  a constant $C>0$ such that
\begin{equation}\label{3.4}
\begin{split}
C^{-(\ell+1)}k^\ell\|\mathcal {J}_k f\|_{L^2}\leq \sup_{|\alpha|=\ell}\|\partial^\alpha \mathcal {J}_k f\|_{L^2} \leq C^{\ell+1}k^\ell \|\mathcal {J}_k f\|_{L^2},\quad \forall \ell \in \mathbb{N}.
\end{split}
\end{equation}
For all $t>0$, we claim that
\begin{equation}\label{3.5}
\begin{split}
 &(\textbf{u},\widetilde{\textbf{F}}^{k,R,\epsilon}(\textbf{u}) )_{\textbf{H}^s} \leq C(\epsilon,\phi,k,R) \left(1+\|\textbf{u}\|_{\textbf{H}^s}^2 \right),\\
 &\|\textbf{G}(t,\textbf{u}) \|_{L_2(\textbf{U};\textbf{H}^s)}^2 \leq C \left(1+\|\textbf{u}\|_{\textbf{H}^s}^2 \right),
\end{split}
\end{equation}
which implies that the mappings $
\widetilde{\textbf{F}}^{k,R,\epsilon}:\textbf{H}^s(\mathbb{R}^2)\mapsto \textbf{H}^s(\mathbb{R}^2)$ and $ \textbf{G}: \textbf{H}^s(\mathbb{R}^2)\mapsto L_2(\textbf{U};\textbf{H}^s(\mathbb{R}^2)) $
are well-defined, and hence \eqref{3.3} can be viewed as SDEs in $\textbf{H}^s(\mathbb{R}^2)$. Indeed, {It suffices to verify the first inequality in \eqref{3.5}, since the second one is a direct consequence of (H2).

It follows from the boundedness of the operators $\textbf{P}$ and $\mathcal {J}_k$ that
\begin{equation} \label{a1}
\begin{split}
 \| \mathcal {J}_k^2\textbf{A}^\alpha\textbf{u}\|_{\textbf{H}^s} \leq C\left(\| \Delta\mathcal {J}_k n\|_{H^s}+\| \Delta\mathcal {J}_k c\|_{H^s}+\|\Delta^\alpha \mathcal {J}_k u\|_{H^s}\right).
\end{split}
\end{equation}
For the first two terms on the R.H.S. of \eqref{a1}, we get from \eqref{3.4} that
\begin{equation*}
\begin{split}
 \| \Delta\mathcal {J}_k n\|_{H^s}+\| \Delta\mathcal {J}_k c\|_{H^s} \leq C k^2 (\|n\|_{H^s}+\|c\|_{H^s}).
\end{split}
\end{equation*}
For the third term, by using the definition of $\mathcal {J}_k$ and the Fourier-Plancherel Formula (cf. \cite[Theorem 1.25]{bahouri2011fourier}), we get $\|\Delta^\alpha \mathcal {J}_k u\|_{H^s} \leq C k^{2\alpha}\|u\|_{H^s}$. Therefore, we get from \eqref{a1} that
\begin{equation} \label{a2}
\begin{split}
 \| \mathcal {J}_k^2\textbf{A}^\alpha\textbf{u}\|_{\textbf{H}^s} \leq C(k^2+k^\alpha) \|\textbf{u}\|_{\textbf{H}^s}.
\end{split}
\end{equation}
By using the Moser-type estimate (cf.  {\cite[Corollary 4.4]{miao2012littlewood}}) and \eqref{3.4}, we have
\begin{equation*}
\begin{split}
\left\|(\mathcal {J}_ku\cdot \nabla) \mathcal {J}_kn\right\|_{H^s} &\leq C(\left\|\mathcal {J}_ku \right\|_{H^s}\left\|\nabla\mathcal {J}_kn\right\|_{L^\infty}+ \left\|\mathcal {J}_ku \right\|_{L^\infty}\left\|\nabla\mathcal {J}_kn\right\| _{H^s} )\\
&\leq C(\left\| u \right\|_{H^s}\left\|\nabla n\right\|_{L^\infty}+ k\left\| u \right\|_{L^\infty}\left\| n\right\| _{H^s} )\\
&\leq C k(\left\| u \right\|_{L^\infty}+\left\|\nabla n\right\|_{L^\infty})(\left\| u \right\|_{H^s}+\left\| n\right\| _{H^s} ).
\end{split}
\end{equation*}
Similarly, we have
\begin{equation*}
\begin{split}
\left\|(\mathcal {J}_ku\cdot \nabla) \mathcal {J}_kc\right\|_{H^s} &\leq  C k(\left\| u \right\|_{L^\infty}+\left\|\nabla c\right\|_{L^\infty})(\left\| u \right\|_{H^s}+\left\|c\right\| _{H^s} ),\\
\left\|(\mathcal {J}_ku\cdot \nabla) \mathcal {J}_ku\right\|_{H^s} &\leq  C k(\left\| u \right\|_{L^\infty}+\left\|\nabla u\right\|_{L^\infty}) \left\|u\right\| _{H^s}.
\end{split}
\end{equation*}
Putting the last three estimates together, we infer that
\begin{equation}\label{a8}
\begin{split}
&\theta_R(\|\textbf{u}\|_{\textbf{W}^{1,\infty}}) \|\mathcal {J}_k\textbf{B}(\mathcal {J}_k\textbf{u})\|_{\textbf{H}^s}
\leq C k R \left\|\textbf{u}\right\| _{\textbf{H}^s}.
\end{split}
\end{equation}
Now let us estimate the term $\mathcal {J}_k\textbf{F} ^\epsilon (\mathcal {J}_k\textbf{u})$. First,
we get from the Young inequality that
\begin{equation*}
\begin{split}
&\left\|- \textrm{div}\left(\mathcal {J}_kn (\nabla \mathcal {J}_kc *\rho^{\epsilon})\right)+  \mathcal {J}_kn -     (\mathcal {J}_kn )^2\right\|_{H^s}\\
 &\quad \leq C(\|\mathcal {J}_kn\|_{H^{s+1}}\| \nabla \mathcal {J}_kc *\rho^{\epsilon} \|_{L^\infty}+ \|\nabla \mathcal {J}_kc *\rho^{\epsilon}\|_{H^{s+1}}\| \mathcal {J}_kn\|_{L^\infty}+ \|\mathcal {J}_kn\|_{H^s}+\|\mathcal {J}_kn \|_{H^s} \| \mathcal {J}_kn\|_{L^\infty})\\
 &\quad \leq C(1+(1+k^2)^{\frac{1}{2}}+  \epsilon^{-2}) (\| n\|_{L^\infty}+\|\nabla c\|_{L^\infty})  (\| n\|_{H^{s}}+\| c \|_{H^{s }} ).
\end{split}
\end{equation*}
Second, we get by the Moser-type estimate that
\begin{equation*}
\begin{split}
\left\|\mathcal {J}_kc(\mathcal {J}_kn*\rho^{\epsilon})\right\|_{H^s} &\leq  C(\| \mathcal {J}_kc\|_{H^{s}}\| \mathcal {J}_kn*\rho^{\epsilon}\|_{L^\infty}+ \| \mathcal {J}_kn*\rho^{\epsilon}\|_{H^{s}}\|\mathcal {J}_kc\|_{L^\infty})\\
& \leq C(\| n\|_{L^\infty}+\| c\|_{L^\infty})  (\| n\|_{H^{s}}+\| c \|_{H^{s }} ).
\end{split}
\end{equation*}
Third, by the estimate $\|\rho^{\epsilon}*u\|_{H^s}\leq C \epsilon^{r-s} \| u\|_{H^r}$ with $s\geq r$ (cf. \cite[Lemma 3.5]{majda2002vorticity}), we have
\begin{equation*}
\begin{split}
\left\|\textbf{P}(\mathcal {J}_kn\nabla \phi)*\rho^{\epsilon}\right\|_{H^s}  \leq  C \epsilon^{-s}\|\nabla \phi\|_{L^\infty}\left\|n\right\|_{H^s}.
\end{split}
\end{equation*}
Therefore, the last three inequalities imply that
\begin{equation}\label{a9}
\begin{split}
&\theta_R(\|\textbf{u}\|_{\textbf{W}^{1,\infty}})\left\|\mathcal {J}_k\textbf{F}^\epsilon(\mathcal {J}_k\textbf{u}^\epsilon)\right\|_{H^s}
\\
&\quad\leq C \theta_R(\|\textbf{u}\|_{\textbf{W}^{1,\infty}})\Big(  (1+(1+k^2)^{\frac{1}{2}}+ C \epsilon^{-2} ) (\| n\|_{L^\infty}+\|\nabla c\|_{L^\infty})  (\| n\|_{H^{s}}+\| c \|_{H^{s }} )\\
&\quad\quad+(\| n\|_{L^\infty}+\| c\|_{L^\infty})  (\| n\|_{H^{s}}+\| c \|_{H^{s }} ) +\epsilon^{-s}\|\nabla \phi\|_{L^\infty}\left\|n\right\|_{H^s}\Big)\\
&\quad\leq C \left( (1+(1+k^2)^{\frac{1}{2}}+  \epsilon^{-2} ) R + \epsilon^{-s}\|\nabla \phi\|_{L^\infty} \right)(1+\left\|\textbf{u}\right\|_{\textbf{H}^s}).
\end{split}
\end{equation}
In view of the definition of $\widetilde{\textbf{F}}^{k,R,\epsilon}(\textbf{u})$ (cf. \eqref{3.3}), we get from \eqref{a2}-\eqref{a9} that
\begin{equation}\label{f1}
\begin{split}
\|\widetilde{\textbf{F}}^{k,R,\epsilon}(\textbf{u}) \|_{\textbf{H}^s} 
& \leq C(k^2+k^\alpha) \|\textbf{u}\|_{\textbf{H}^s} +C k R \left\|\textbf{u}\right\| _{\textbf{H}^s} +C \left( (1+(1+k^2)^{\frac{1}{2}}+   \epsilon^{-2} ) R + \epsilon^{-s}\|\nabla \phi\|_{L^\infty} \right)\left\|\textbf{u}\right\|_{\textbf{H}^s}\\
& \leq C(\epsilon,\phi,k,R)\left(1+\left\|\textbf{u}\right\|_{\textbf{H}^s} \right),
\end{split}
\end{equation}
which implies the first inequality in \eqref{3.5}.}

In a similar manner, one can also verify that for each $r>0$
\begin{equation}\label{3.6}
\begin{split}
&\left\|\widetilde{\textbf{F}}^{k,R,\epsilon}(\textbf{u}_1 ) -\widetilde{\textbf{F}}^{k,R,\epsilon}(\textbf{u}_2 ) \right\|_{\textbf{H}^s} \leq C(r,R,k,\phi,\epsilon)  \left\|\textbf{u}_1 -\textbf{u}_2\right \|_{\textbf{H}^s} ,\\
& \left\|\textbf{G}(\textbf{u}_1 )-\textbf{G}(\textbf{u}_2 ) \right\|_{L_2(\textbf{U};\textbf{H}^s)} \leq C( \epsilon)  \left\|\textbf{u}_1 -\textbf{u}_2 \right\|_{\textbf{H}^s},
\end{split}
\end{equation}
with $\|\textbf{u}_1\|_{\textbf{H}^s}\leq r$ and $\|\textbf{u}_2\|_{\textbf{H}^s}\leq r$.

As a consequence of the estimates \eqref{3.5} and \eqref{3.6}, for each initial data $\mathcal {J}_k \textbf{u}^\epsilon_0 \in \textbf{H}^s(\mathbb{R}^2)$ and $T>0$, one can conclude from the well-known Leha-Ritter Theorem (cf. \cite[Theorems 5.1.1-5.1.2]{kallianpur1995stochastic}) that the SDEs \eqref{3.3} has a unique global strong solution $\textbf{u}^{\epsilon,k,R}$ in $\textbf{H}^s(\mathbb{R}^2)$.

\emph{Step 2 (Continuity in time).} We are going to show that $  \textbf{u} ^{k,R,\epsilon}\in C([0,T];\textbf{H}^s(\mathbb{R}^2))$, $\mathbb{P}$-a.s. Indeed, by using \eqref{3.5} and applying the It\^{o} formula to $\|\textbf{u}^{k,R,\epsilon}(t) \|_{\textbf{H}^s}^4$, we find
\begin{equation}\label{r1}
\begin{split}
 \|\textbf{u}^{k,R,\epsilon}(t) \|_{\textbf{H}^s}^4\leq& \|\textbf{u}^\epsilon_0 \|_{\textbf{H}^s}^4+ C(\epsilon,\phi,k,R) \int_0^t (1+ \|\textbf{u}^{k,R,\epsilon} \|_{\textbf{H}^s} ^4 )\textrm{d}r +4 \int_0^t\|\textbf{u}^{k,R,\epsilon}  \|_{\textbf{H}^s}^2 (\textbf{u}^{k,R,\epsilon},\textbf{G}(\textbf{u}^{k,R,\epsilon} ))_{\textbf{H}^s}\textrm{d} \mathcal {W}_r.
\end{split}
\end{equation}
By employing the Burkholder-Davis-Gundy (BDG) inequality (cf. \cite[Chapter 4]{1991Continuous}) to the stochastic integral on the R.H.S. of \eqref{r1}, we get
\begin{equation*}
\begin{split}
&\mathbb{E}\sup_{r\in [0,t]}\left|\int_0^r\|\textbf{u}^{k,R,\epsilon}  \|_{\textbf{H}^s}^2 (\textbf{u}^{k,R,\epsilon},\textbf{G}(\textbf{u}^{k,R,\epsilon} )\textrm{d} \mathcal {W}_r)_{\textbf{H}^s}\right|\\
&\quad \leq C\mathbb{E} \left(\int_0^t\|\textbf{u}^{k,R,\epsilon}  \|_{\textbf{H}^s}^6 \|\textbf{G}(\textbf{u}^{k,R,\epsilon} )\|_{L_2(\textbf{U};\textbf{H}^s)}^2\textrm{d}r\right)^{1/2}\\
&\quad \leq \frac{1}{8} \mathbb{E}\sup_{r\in [0,t]}\|\textbf{u}^{k,R,\epsilon}(r) \|_{\textbf{H}^s}^4+ C \mathbb{E} \int_0^t (1+ \|\textbf{u}^{k,R,\epsilon} \|_{\textbf{H}^s} ^4 )\textrm{d}r,
\end{split}
\end{equation*}
which combined with \eqref{r1} leads to
\begin{equation*}
\begin{split}
\mathbb{E}\sup_{r\in [0,t]}\|\textbf{u}^{k,R,\epsilon}(r) \|_{\textbf{H}^s}^4\leq& 2\|\textbf{u}^\epsilon_0 \|_{\textbf{H}^s}^4+ C(\epsilon,\phi,k,R) \mathbb{E}\int_0^t (1+ \|\textbf{u}^{k,R,\epsilon} \|_{\textbf{H}^s} ^4 )\textrm{d}r.
\end{split}
\end{equation*}
Thanks to the Gronwall Lemma, we arrive at
\begin{equation}\label{r2}
\begin{split}
\mathbb{E}\sup_{t\in [0,T]}\|\textbf{u}^{k,R,\epsilon}(t) \|_{\textbf{H}^s}^4\leq 2\|\textbf{u}^\epsilon_0 \|_{\textbf{H}^s}^4 \exp\{C(\epsilon,\phi,k,R)T \}.
\end{split}
\end{equation}
According to \eqref{3.3}, for any $r,t\in [0,T]$, we have
\begin{equation*}
\begin{split}
  \mathbb{E}  \|\textbf{u}^{k,R,\epsilon}(t)-\textbf{u}^{k,R,\epsilon}(r) \|_{\textbf{H}^s}^4&\leq 8\mathbb{E}\left\|\int_r^t\widetilde{\textbf{F}}^{k,R,\epsilon}(\textbf{u}^{k,R,\epsilon} )\textrm{d} \varsigma\right\|_{\textbf{H}^s}^4+8\mathbb{E}  \left\|\int_r^t\textbf{G}(\textbf{u}^{k,R,\epsilon}  )\textrm{d} \mathcal {W}_\varsigma\right\|_{\textbf{H}^s}^4\\
&:= \mathcal {A}(t,r)+\mathcal {B}(t,r).
\end{split}
\end{equation*}
For $\mathcal {A}(t,r)$, it follows from \eqref{3.5} and \eqref{f1} that
\begin{equation*}
\begin{split}
\mathcal {A}(t,r)
&\leq C(\epsilon,\phi,k,R)\bigg(1+ \mathbb{E}\sup_{t\in [0,T]} \|\textbf{u}^{k,R,\epsilon} (t)\|_{\textbf{H}^s}^4  \bigg) |t-r|^{4}\\
&\leq C(\epsilon,\phi,k,R)\big(1+2\|\textbf{u}^\epsilon_0 \|_{\textbf{H}^s}^4 \exp\{C(\epsilon,\phi,k,R)T \} \big) |t-r|^{4}.
\end{split}
\end{equation*}
For $\mathcal {B}(t,r)$, by using the BDG inequality, we get from the estimates in \eqref{3.5} and \eqref{r2} that
\begin{equation*}
\begin{split}
\mathcal {B}(t,r)\leq  C\mathbb{E}\left(\int_r^t \|\textbf{G}(\textbf{u}^{k,R,\epsilon}(\varsigma)  ) \|_{L_2(\textbf{U};\textbf{H}^s)}^2\textrm{d} \varsigma\right)^2
\leq  C
\big(1+2\|\textbf{u}^\epsilon_0 \|_{\textbf{H}^s}^4 \exp\{C(\epsilon,\phi,k,R)T \} \big)|t-r|^2 .
\end{split}
\end{equation*}
Therefore, it follows from the last three estimates that
\begin{equation*}
\begin{split}
  \mathbb{E}  \left( \|\textbf{u}^{k,R,\epsilon}(t)-\textbf{u}^{k,R,\epsilon}(r) \|_{\textbf{H}^s}^4 \right)\leq C(\textbf{u}^\epsilon_0,\epsilon,\phi,k,R) |t-r|^{2}.
\end{split}
\end{equation*}
By applying the Kolmogorov-\v{C}entsov Continuity Theorem (cf. \cite[Theorem 2.8]{karatzas1991brownian}), there exists a continuous modification, say $\widetilde{\textbf{u}^{k,R,\epsilon}}$, for the solution $\textbf{u}^{k,R,\epsilon}$ on $[0,T]$. In the following, we still denote this continuous version by $\textbf{u}^{k,R,\epsilon}$ for simplicity. The proof of Lemma \ref{lem1} is now completed.
\end{proof}

\subsection{A priori estimates}

\begin{lemma} \label{lem3}
Let $s>5$, $R>1$, $\epsilon>0$ and $\alpha\in[\frac{1}{2},1]$. Assume that the conditions (H1)-(H3) hold. Let $\textbf{u}^{k,R,\epsilon}$ be the solution to \eqref{3.3} with respect to the initial data $\textbf{u} ^{k, \epsilon}_0$. Then for any $T>0$ and $p\geq2$, there is a positive constant $C $ independent of $k \in \mathbb{N}^+$ such  that
\begin{equation}\label{3.9}
\begin{split}
\sup_{k \in\mathbb{N}^+} \mathbb{E} \sup_{t\in [0,T]}\|\textbf{u}^{k,R,\epsilon}(t)\|^p_{\textbf{H}^s} \leq C,
\end{split}
\end{equation}
and for all $\theta \in (0,\frac{p-2}{2p} )$
\begin{equation}\label{3.10}
\begin{split}
& \sup_{k \in\mathbb{N}^+}\mathbb{E}\left( \|(n^{k,R,\epsilon},c^{k,R,\epsilon})\|^p_{\emph{\textrm{Lip}}([0,T]; \textbf{H}^{s-2})} + \|u^{k,R,\epsilon} \|^p_{C^\theta([0,T]; H^{s-2\alpha} )} \right)  \leq C.
\end{split}
\end{equation}
\end{lemma}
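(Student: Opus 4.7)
The key improvement over Lemma~\ref{lem1} is that the bounds must now be \emph{uniform} in the frequency truncation parameter $k$. The $k$-dependence in the constant $C(\epsilon,\phi,k,R)$ appearing in \eqref{f1} arose from estimating $\|\mathcal{J}_k\textbf{u}\|_{\textbf{W}^{1,\infty}}$ by $Ck\|\textbf{u}\|_{\textbf{H}^s}$ through Bernstein's inequality. My plan is to bypass this wasteful bound by preserving the $\textbf{W}^{1,\infty}$-norm in the estimates and absorbing it using the cut-off $\theta_R(\|\textbf{u}^{k,R,\epsilon}\|_{\textbf{W}^{1,\infty}})$, combined with Kato--Ponce/Moser commutator estimates which only cost one $\textbf{W}^{1,\infty}$ derivative. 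This produces bounds of the shape $CR\|\textbf{u}^{k,R,\epsilon}\|_{\textbf{H}^s}^2$ with $C$ independent of $k$, which Gronwall can handle.

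Concretely, I will apply the It\^o formula to $\|\textbf{u}^{k,R,\epsilon}(t)\|_{\textbf{H}^s}^p$. The dissipative contribution $p\|\textbf{u}^{k,R,\epsilon}\|_{\textbf{H}^s}^{p-2}(\textbf{u}^{k,R,\epsilon},\mathcal{J}_k^2\textbf{A}^\alpha\textbf{u}^{k,R,\epsilon})_{\textbf{H}^s}\geq 0$ carries the right sign and is discarded. For the transport term I exploit the self-adjointness of $\mathcal{J}_k$ to symmetrize $(\textbf{u}^{k,R,\epsilon},\mathcal{J}_k\textbf{B}(\mathcal{J}_k\textbf{u}^{k,R,\epsilon}))_{\textbf{H}^s}=(\mathcal{J}_k\textbf{u}^{k,R,\epsilon},\textbf{B}(\mathcal{J}_k\textbf{u}^{k,R,\epsilon}))_{\textbf{H}^s}$; the divergence-free condition $\textrm{div}(\mathcal{J}_k u^{k,R,\epsilon})=0$ kills the principal part via integration by parts, leaving commutators of the form $[(1-\Delta)^{s/2},\mathcal{J}_k u]\cdot\nabla\mathcal{J}_k n$ (and analogues for $c$ and $u$). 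The Kato--Ponce estimate then yields
\begin{equation*}
\bigl|(\textbf{u}^{k,R,\epsilon},\mathcal{J}_k\textbf{B}(\mathcal{J}_k\textbf{u}^{k,R,\epsilon}))_{\textbf{H}^s}\bigr|\leq C\|\textbf{u}^{k,R,\epsilon}\|_{\textbf{W}^{1,\infty}}\|\textbf{u}^{k,R,\epsilon}\|_{\textbf{H}^s}^2,
\end{equation*}
with $C$ independent of $k$; multiplied by $\theta_R(\cdot)$ this becomes $2CR\|\textbf{u}^{k,R,\epsilon}\|_{\textbf{H}^s}^2$. The mollified source $\mathcal{J}_k\textbf{F}^\epsilon(\mathcal{J}_k\textbf{u}^{k,R,\epsilon})$ is treated by Moser-type product estimates and the continuity of $\rho^\epsilon\ast\cdot$ on $H^s$, producing a constant depending on $(\epsilon,\phi,R)$ but not $k$, while (H2) provides $\|\textbf{G}(r,\textbf{u}^{k,R,\epsilon})\|_{L_2(\textbf{U};\textbf{H}^s)}^2\leq C(1+\|\textbf{u}^{k,R,\epsilon}\|_{\textbf{H}^s}^2)$. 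Taking the supremum in $t$, controlling the martingale by BDG, absorbing $\mathbb{E}\sup_r\|\textbf{u}^{k,R,\epsilon}(r)\|_{\textbf{H}^s}^p$ to the left, and invoking Gronwall delivers \eqref{3.9}.

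For the time regularity \eqref{3.10} I will start from the integral form $\textbf{u}^{k,R,\epsilon}(t)-\textbf{u}^{k,R,\epsilon}(r)=\int_r^t\widetilde{\textbf{F}}^{k,R,\epsilon}(\textbf{u}^{k,R,\epsilon})\,\textrm{d}\varsigma+\int_r^t\textbf{G}(\textbf{u}^{k,R,\epsilon})\,\textrm{d}\mathcal{W}_\varsigma$. Since $(n^{k,R,\epsilon},c^{k,R,\epsilon})$ evolve noise-free, their drift is $k$-uniformly bounded in $\textbf{H}^{s-2}$ (the worst loss being $\Delta n\in H^{s-2}$), giving Lipschitz-in-time control in $\textbf{H}^{s-2}$. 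For $u^{k,R,\epsilon}$ the drift is $k$-uniformly bounded in $H^{s-2\alpha}$ (since $\textbf{P}\dot{\Lambda}^{2\alpha}u\in H^{s-2\alpha}$), and BDG in $H^{s-2\alpha}$ controls the stochastic integral; the Kolmogorov--\v{C}entsov theorem then furnishes H\"older continuity of order $\theta<(p-2)/(2p)$. The main obstacle throughout is the commutator step: without the cut-off $\theta_R$ and without the symmetrization $\mathcal{J}_k^\ast=\mathcal{J}_k$, the derivative loss from $\mathcal{J}_k\textbf{B}(\mathcal{J}_k\textbf{u})$ could not be absorbed uniformly in $k$, and the present strategy would collapse to the $k$-dependent estimate of Lemma~\ref{lem1}; once this single step is executed, the rest of the argument is a standard assembly of It\^o, BDG, Gronwall and Kolmogorov--\v{C}entsov.
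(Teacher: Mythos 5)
Your proposal is correct and follows essentially the same route as the paper: Itô's formula applied to $\|\textbf{u}^{k,R,\epsilon}\|_{\textbf{H}^s}^p$, self-adjointness of $\mathcal{J}_k$ plus the divergence-free condition to reduce the transport term to a commutator, Kato--Ponce/Moser estimates whose $\textbf{W}^{1,\infty}$ factors are absorbed by the cut-off $\theta_R$ (yielding $k$-independent constants), mollifier continuity for $\textbf{F}^\epsilon$, (H2) for the noise, BDG and Gronwall for \eqref{3.9}, and the integral form of the equation together with BDG and Kolmogorov--\v{C}entsov for \eqref{3.10}. The only cosmetic difference is that you pass directly to the $p$-th power, whereas the paper first establishes the $p=2$ case and then iterates via the chain rule; the substance is identical.
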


\begin{proof}  \emph{Step 1 ($L^2$ momentum estimate).}  Applying $\Lambda^s= (1-\Delta)^{\frac{s}{2}}$ ($ s> 2$) to both sides of $n $-equation in \eqref{3.3}, and taking the scalar product with  $\Lambda^s n $ over $\mathbb{R}^2$, we get
\begin{equation}\label{3.11}
\begin{split}
\frac{1}{2}  \textrm{d} \| n\|^2_{H^s}+\|\nabla \mathcal {J}_kn\|^2_{H^s} \textrm{d}t\leq& \|\mathcal {J}_kn\|_{H^s}^2\textrm{d}t -   \theta_R\left(\|\textbf{u}\|_{\textbf{W}^{1,\infty}}\right)\left( \Lambda^s \mathcal {J}_kn,\Lambda^s ( \mathcal {J}_kn( n*\rho^\epsilon ))\right)_{L^2}\textrm{d}t \\
&- \theta_R\left(\|\textbf{u}\|_{\textbf{W}^{1,\infty}}\right)\left( \Lambda^s \mathcal {J}_kn,\Lambda^s \left(\mathcal {J}_k u\cdot \nabla \mathcal {J}_k n \right)\right)_{L^2}\textrm{d}t\\
 &-  \theta_R\left(\|\textbf{u}\|_{\textbf{W}^{1,\infty}}\right)\left( \Lambda^s \nabla \mathcal {J}_k n,\Lambda^s  \left(\mathcal {J}_kn(\nabla  c*\rho^\epsilon)\right)\right)_{L^2}\textrm{d}t\\
:=&  \|\mathcal {J}_kn\|_{H^s}^2 \textrm{d}t + (L_1+L_2+L_3) \textrm{d}t.
\end{split}
\end{equation}
Here and in the sequel, we omit the superscript $k,R$ and $\epsilon$ in $\textbf{u}^{k,R,\epsilon} $ for simplicity.

For $L_1$, we have
\begin{equation}\label{3.12}
\begin{split}
 L_1 &\leq C \theta_R\left(\|\textbf{u}\|_{\textbf{W}^{1,\infty}}\right)\| \mathcal {J}_kn\|_{H^s}\| \mathcal {J}_kn( n*\rho^\epsilon) \|_{H^s} \leq C R\| n \|_{H^s}^2.
\end{split}
\end{equation}
For $L_2$, by using the commutator estimate  {(cf.  {\cite[Lemma 2.99 and Lemma 2.100]{bahouri2011fourier}})}, we get
\begin{equation}\label{3.13}
\begin{split}
 L_2 &\leq \theta_R\left(\|\textbf{u}\|_{\textbf{W}^{1,\infty}}\right)\|[\Lambda^s \left(\mathcal {J}_k u\cdot \nabla \mathcal {J}_k n \right)- \left(\mathcal {J}_k u\cdot \nabla \Lambda^s \mathcal {J}_k n \right)]\|_{L^2}\| \Lambda^s \mathcal {J}_kn\|_{L^2}\\
 &\leq C\theta_R\left(\|\textbf{u}\|_{\textbf{W}^{1,\infty}}\right)\| \mathcal {J}_kn\|_{H^s}(\|\mathcal {J}_k u\|_{H^s}\|\nabla \mathcal {J}_k n\|_{L^\infty}+\|\nabla \mathcal {J}_k u\|_{L^\infty}\|\mathcal {J}_k n\|_{H^s})\\
 &\leq CR \left(\|\mathcal {J}_k u\|_{H^s}^2+\|\mathcal {J}_k n\|_{H^s}^2\right).
\end{split}
\end{equation}
For $L_3$, we get from the Moser-type estimate  that
\begin{equation}\label{3.14}
\begin{split}
 L_3 &\leq C \theta_R\left(\|\textbf{u}\|_{\textbf{W}^{1,\infty}}\right)\left( \|\Lambda^s \nabla \mathcal {J}_k n\| _{L^2}^2+\|\Lambda^s \left(\mathcal {J}_kn( \nabla c *\rho)\right)\|  _{L^2}^2\right)\\
 & \leq  \frac{1}{2} \|\nabla \mathcal {J}_k n\| _{H^s}^2+ C( \epsilon)  \theta_R\left(\|\textbf{u}\|_{\textbf{W}^{1,\infty}}\right) \big(\|n\|_{L^\infty}\|\nabla \mathcal {J}_kc\|_{H^{s-1}} +\|n\|_{H^{s}}\| \nabla \mathcal {J}_kc\|_{L^\infty} \big)^2\\
 & \leq  \frac{1}{2} \|\nabla  \mathcal {J}_kn\| _{H^s}^2+  {C(\epsilon) R^2} \big( \|c\|_{H^s}^2 +\|n\|_{H^s} ^2 \big).
\end{split}
\end{equation}
Plugging the estimates \eqref{3.12}-\eqref{3.14} into \eqref{3.11}, it follows that
\begin{equation}\label{3.15}
\begin{split}
\mathbb{E}\sup_{r\in [0,t]}\| n(r)\|^2_{H^s}+ \mathbb{E}\int_0^t\|\nabla \mathcal {J}_kn\|^2_{H^s}\textrm{d}r \leq \| n_0\|^2_{H^s}+ {C( \epsilon)(R+R^2)}  \mathbb{E}\int_0^t\big( \|c\|_{H^s}^2 +\|n\|_{H^s} ^2 \big)\textrm{d}r.
\end{split}
\end{equation}
In a similar manner, we have
\begin{equation}\label{3.16}
\begin{split}
\mathbb{E}\sup_{r\in [0,t]}\|c(t)\|^2_{H^s}+ \mathbb{E}\int_0^t\|\nabla \mathcal {J}_kc\|^2_{H^s}\textrm{d}r\leq \| c_0\|^2_{H^s}+C(\epsilon, R )  \mathbb{E}\int_0^t\big( \|n\|_{H^s} ^2 +\|c\|_{H^s}^2 \big)\textrm{d}r.
\end{split}
\end{equation}

To deal with the $u$-equation, we 
apply  the It\^{o} formula  to $\|\Lambda^s u(t)\|_{L^2}^2$ to find
\begin{equation}\label{3.17}
\begin{split}
& \| \Lambda^s u(t)\|_{L^2}^2 +2 \int_0^t\|(-\Delta)^\frac{\alpha}{2}  \Lambda^s \mathcal {J}_ku\|_{L^2 }^2\textrm{d}r =\| \Lambda^s u_0\|_{L^2}^2+ \int_0^t(K_1 +K_2 +K_3 ) \textrm{d}r+\int_0^tK_4  \textrm{d} W_r,
\end{split}
\end{equation}
where $ K_1 =    \| \Lambda^s\textbf{P} f (t,u)\|_{L_2(U;L^2)}^2$, $
 K_2 =2 (\Lambda^s \mathcal {J}_k u, \Lambda^s\textbf{P}\mathcal {J}_k((n\nabla \phi)*\rho^\epsilon))_{L^2}$, $
 K_3 =- 2  \theta_R\left(\|\textbf{u}\|_{\textbf{W}^{1,\infty}}\right)(\Lambda^s \mathcal {J}_k u$, $ \Lambda^s\textbf{P}(\mathcal {J}_k u\cdot \nabla \mathcal {J}_k u ))_{L^2}$ and $K_4 =2   (\Lambda^s u,   \Lambda^s\textbf{P} f(t,u))_{L^2}$.

For $K_1 $, we have
\begin{equation}\label{3.18}
\begin{split}
& |K_1 | \leq C \left(1+ \|u\|_{H^s}^2\right) .
\end{split}
\end{equation}
For $K_2 $, we get by the Cauchy inequality that
\begin{equation}\label{3.19}
\begin{split}
  |K_2  | &\leq C  \left(\|\Lambda^s \mathcal {J}_k u\|_{L^2}^2+\| \Lambda^s\textbf{P}\mathcal {J}_k\left((n\nabla \phi)*\rho^\epsilon\right)\|_{L^2}^2\right) \leq C \|\nabla\phi\|_{L^\infty} \left(\|u\|_{H^s}^2+ \|n\|_{H^s}^2\right).
\end{split}
\end{equation}
For $K_3 $, we first note that
$$
\left(\Lambda^s \mathcal {J}_k u,\textbf{P}\Lambda^s\left(\mathcal {J}_k u\cdot \nabla \mathcal {J}_k u \right)\right)_{L^2}= (\Lambda^s \mathcal {J}_k u, \textbf{P}[\Lambda^s,\mathcal {J}_k u \cdot \nabla] \mathcal {J}_k u)_{L^2},
$$
due to the incompressible condition $ \textrm{div} \mathcal {J}_k u = \mathcal {J}_k\textrm{div} u=0$, where $[A,B]=AB-BA$. Then we get by applying the commutator estimates that
\begin{equation}\label{3.20}
\begin{split}
  |K_3  | &\leq C  \theta_R\left(\|\textbf{u}\|_{\textbf{W}^{1,\infty}}\right)\|\nabla \mathcal {J}_ku\|_{L^\infty}\|\Lambda^s \mathcal {J}_k u\| _{L^2}\|\mathcal {J}_ku\| _{H^{s}} \leq CR \|u\| _{H^{s}}^2.
\end{split}
\end{equation}
For $K_4 $, it follows from the BDG inequality that
\begin{align}\label{3.21}
\mathbb{E}\sup_{r\in [0,t]}\left| \int_0^r K_4   \textrm{d} W_t\right|&\leq C\mathbb{E}\left(\sum_{j\geq 1}\int_0^ t (\Lambda^s u(r),   \Lambda^s\textbf{P} f_j(r,u))_{L^2}^2 \textrm{d}r\right)^{1/2}\nonumber\\
&\leq C\mathbb{E}\left[\sup_{r\in [0,t]}\|\Lambda^s u(r)\|_{L^2}\left( \sum_{j\geq 1}\int_0^ t\|\Lambda^s\textbf{P} f_j(r,u)\|_{L^2}^2 \textrm{d}r\right)^{1/2}\right]\nonumber\\
&\leq \frac{1}{2} \mathbb{E}\sup_{r\in [0,t]}\|\Lambda^s u(r)\|_{L^2}^2+ C  \mathbb{E} \int_0^ t \left(1+\|u(r)\|_{H^{s}}^2\right) \textrm{d}r.
\end{align}
 {Plugging the estimates \eqref{3.18}-\eqref{3.21} into \eqref{3.17}},  we get
\begin{equation*}
\begin{split}
 &\mathbb{E}\sup_{r\in [0,t]}\| u(r)\|_{H^s}^2+4  \mathbb{E}\int_0^t\|(-\Delta)^\frac{\alpha}{2}  \mathcal {J}_ku\|_{H^s}^2\textrm{d}t \\
 &\quad \leq 2 \| u_0\|_{H^s}^2+      {C(1+R+\|\nabla\phi\|_{L^\infty})} \mathbb{E} \int _0^t\left(1+ \|(u,n)\|_{\textbf{H}^s}^2\right) \textrm{d}r,
\end{split}
\end{equation*}
which together with \eqref{3.15} and \eqref{3.16} imply
\begin{equation*}
\begin{split}
\mathbb{E}\sup_{r\in [0,t]}\|\textbf{u}(r)\|^2_{\textbf{H}^s} \leq 2\|\textbf{u}(0)\|^2_{\textbf{H}^s}+C( R,\phi, \epsilon)  \mathbb{E}\int_0^t\left(1+\|\textbf{u} \|^2_{\textbf{H}^s}\right)\textrm{d}r.
\end{split}
\end{equation*}
By the Gronwall Lemma, we get
$$
\mathbb{E}\sup_{t\in [0,T]}\|\textbf{u}(t)\|^2_{\textbf{H}^s} \leq C \exp\{C( R,\phi, \epsilon) T\}\|\textbf{u}(0)\|^2_{\textbf{H}^s},\quad\forall T>0.
$$

\emph{Step 2 ($L^p$ momentum estimate).} By applying the chain rule to $\| n(t)\|^p_{H^s}=(\| n(t)\|^2_{H^s})^{ \frac{p}{2}}$, we find
\begin{equation}\label{3.22}
\begin{split}
 \| n(t)\|^p_{H^s}=& \| n_0\|^p_{H^s} -p\int_0^t\| n \|_{H^s}^{p-2}\|\nabla \mathcal {J}_kn\|^2_{H^s} \textrm{d}r +  p\int_0^t\| n \|_{H^s}^{p-2} \|\mathcal {J}_kn\|_{H^s}^2 \textrm{d}r \\
&+p\int_0^t\| n \|_{H^s}^{p-2} (L_1+L_2+L_3) \textrm{d}r.
\end{split}
\end{equation}
In view of the estimates obtained in Step 1 for $L_i$, $i=1,2,3$, we have
\begin{equation}\label{3.23}
\begin{split}
 & |L_1|+|L_2|+|L_3| \leq \frac{1}{2} \|\nabla  \mathcal {J}_kn\| _{H^s}^2 + C( \epsilon)(R+R^2)  \|\textbf{u}\|_{\textbf{H}^s}^2 .
\end{split}
\end{equation}
By \eqref{3.22} and \eqref{3.23}, we get
\begin{equation}\label{3.24}
\begin{split}
& \| n(t)\|^p_{H^s}+\frac{p}{2}\int_0^t\| n \|_{H^s}^{p-2}\|\nabla \mathcal {J}_kn\|^2_{H^s} \textrm{d}r \\
&\quad\leq  \| n_0\|^p_{H^s} +p \int_0^t\| n \|_{H^s}^{p} \textrm{d}r+  {p C( \epsilon)(R+R^2)} \int_0^t\| n \|_{H^s}^{p-2}   \|\textbf{u} \|_{\textbf{H}^s} ^2   \textrm{d}r\\
& \quad \leq \| n_0\|^p_{H^s} +  {p C( \epsilon) (R+R^2)}  \int_0^t \| \textbf{u} \|_{\textbf{H}^s}^{p}  \textrm{d}r.
\end{split}
\end{equation}
Similarly,
\begin{equation}\label{3.25}
\begin{split}
& \| c(t)\|^p_{H^s}+\frac{p}{2}\int_0^t\| c \|_{H^s}^{p-2}\|\nabla \mathcal {J}_k c\|^2_{H^s} \textrm{d}r \leq \| n_0\|^p_{H^s} +C(p,\epsilon, R ) \int_0^t \| \textbf{u} \|_{\textbf{H}^s}^{p} \textrm{d}r.
\end{split}
\end{equation}
Now we apply the It\^{o} formula  to  {$ \|\Lambda^s u(t)\|^p_{L^2}= (\|\Lambda^s u(t)\|^2_{L^2})^{p/2}$}, it follows from \eqref{3.13} that
\begin{equation}
\begin{split}\label{3.26}
& \|\Lambda^s u(t)\|^p_{L^2}+ p \int_0^t\|\Lambda^s u \| _{L^2} ^{p-2} \|(-\Delta)^\frac{\alpha}{2}  \Lambda^s \mathcal {J}_ku\|_{L^2 }^2\textrm{d}r\\
&\quad= \|\Lambda^s u_0\|^p_{L^2} + \frac{p}{2} \int_0^t\|\Lambda^s u \| _{L^2} ^{p-2} (K_1 +K_2 +K_3 ) \textrm{d}r\\
 &\quad\quad+\frac{p(p-2)}{4}\sum_{j \geq 1} \int_0^t \|\Lambda^s u \|_{L^2} ^{p-4} \left(\Lambda^s u,   \Lambda^s\textbf{P} f(r,u)e_j\right)_{L^2}^2 \textrm{d}r\\
 &\quad\quad+\frac{p}{2}\sum_{j \geq 1}\int_0^t  \|\Lambda^s u \| _{L^2} ^{p-2}  \left(\Lambda^s u,   \Lambda^s\textbf{P} f(r,u)e_j\right)_{L^2} \textrm{d} W^j_r \\
 &\quad:= \|\Lambda^s u_0\|^p_{L^2} + J_1+J_2+J_3,
\end{split}
\end{equation}
where $K_i $, $i=1,2,3$, are defined in \eqref{3.17}. From the estimates \eqref{3.18}-\eqref{3.20}, we deduce that
\begin{equation*}
\begin{split}
 \int_0^t\|\Lambda^s u\| _{L^2} ^{p-2} K_1 \textrm{d}r &\leq C  \int_0^t\|\Lambda^s u\| _{L^2} ^{p-2}\left(1+ \|u\|_{H^s}^2\right) \textrm{d}r \leq C p  \int_0^t\left(1+ \|u\|_{H^s}^p\right)\textrm{d}r,\\
 \int_0^t\|\Lambda^s u\| _{L^2} ^{p-2} K_2 \textrm{d}r &\leq  C \|\nabla\phi\|_{L^\infty}  \int_0^t \| u\| _{H^S} ^{p-2}\left(\|u\|_{H^s}^2+ \|n\|_{H^s}^2\right)\textrm{d}r\leq C p \|\nabla\phi\|_{L^\infty} \int_0^t \|(n,u)\|_{\textbf{H}^s}^p \textrm{d}r,\\
 \int_0^t\|\Lambda^s u\| _{L^2} ^{p-2} K_3 \textrm{d}r &\leq C R   \int_0^t\|u\| _{H^{s}}^p \textrm{d}r.
\end{split}
\end{equation*}
Hence, the term $J_1$ can be estimated as
\begin{equation}\label{3.27}
\begin{split}
 |J_1|\leq   {Cp^2 (1+R+\|\nabla\phi\|_{L^\infty})}  \int_0^t\left(1+\|n\|_{H^s}^p+ \|u\|_{H^s}^p\right)\textrm{d}r.
\end{split}
\end{equation}
For $J_2$, we get by the Young inequality that
\begin{equation}\label{3.28}
\begin{split}
|J_2|&\leq \sum_{j \geq 1}\int_0^t \|\Lambda^s u \|_{L^2} ^{p-2}\| \Lambda^s\textbf{P} f(r,u)e_j \|_{L^2} ^2  \textrm{d}r\\
&\leq C \int_0^t \|\Lambda^s u \|_{L^2} ^{p-2}  \left(1+ \|\Lambda^s u \|_{L^2}^2\right) \textrm{d}r \leq C p \int_0^t \left(1+ \| u \|_{H^s}^p\right) \textrm{d}r.
\end{split}
\end{equation}
For $J_3$, we obtain by using the BDG inequality that
\begin{equation}
\begin{split}\label{3.29}
\mathbb{E}\sup_{r \in [0,t]}|J_3|&\leq  {Cp}\mathbb{E}\left(\sum_{j \geq 1}\int_0^t\|\Lambda^s u(r)\| _{L^2} ^{2p-4}   \left(\Lambda^s u,   \Lambda^s\textbf{P} f(r,u)e_j\right)_{L^2}^2 \textrm{d}r\right)^{\frac{1}{2}}\\
&\leq Cp\mathbb{E}\left(\sup_{r\in [0,t]}\|\Lambda^s u(r)\|^p_{L^2}\int_0^t  \|\Lambda^s u \| _{L^2} ^{ p-2} (1+\|\Lambda^s u\|^2_{L^2})\textrm{d}r\right)^{\frac{1}{2}}\\
  & \leq  \frac{1}{2}\mathbb{E} \sup_{r\in [0,t]}\|\Lambda^s u(r)\|^p_{L^2}  + Cp\mathbb{E}\int_0^t\left (1+ \|\Lambda^s u \|^p_{L^2}\right) \textrm{d}r.
\end{split}
\end{equation}
Thereby, by taking the supremum over $[0,t]$ on both sides of \eqref{3.26}, we get from the estimates \eqref{3.27}-\eqref{3.29} that
\begin{equation*}
\begin{split}
&\frac{1}{2} \mathbb{E}\sup_{r \in [0,t]}\|\Lambda^s u(r)\|^p_{L^2}+ p \mathbb{E}\int_0^t\|\Lambda^s u \| _{L^2} ^{p-2} \left\|(-\Delta)^\frac{\alpha}{2}  \Lambda^s \mathcal {J}_ku\right\|_{L^2 }^2\textrm{d}r\\
&\quad \leq \| u_0\|^p_{H^s} +C(\phi,p,R) \int_0^t\left(1+\|(n,u)\|_{\textbf{H}^s}^p\right)\textrm{d}r,
\end{split}
\end{equation*}
which together with \eqref{3.24} and \eqref{3.25} leads to
\begin{equation*}
\begin{split}
  \mathbb{E}\sup_{r \in [0,t]}\|\textbf{u}(r)\|^p_{\textbf{H}^s}\leq \|\textbf{u}(0)\|^p_{\textbf{H}^s} + C(p, R, \phi,\epsilon) \int_0^t \| \textbf{u} \|_{\textbf{H}^s}^{p} \textrm{d}r,\quad \forall t \in [0,T].
\end{split}
\end{equation*}
In view of the Gronwall Lemma, we get
\begin{equation*}
\begin{split}
 \mathbb{E}\sup_{t \in [0,T]}\|\textbf{u}(t)\|^p_{\textbf{H}^s} \leq \|\textbf{u}(0)\|^p_{H^s} \exp\{C(p, R, \phi,\epsilon) T\},\quad \forall T> 0.
\end{split}
\end{equation*}

\emph{Step 3 (H\"{o}lder continuous).}  Since $\textbf{u}=(n,c,u)$ is uniformly bounded in $L^p(\Omega; C([0,T];\textbf{H}^s(\mathbb{R}^2)))$, it follows from the equations \eqref{3.3}$_1$ and \eqref{3.3}$_2$ that
\begin{equation}\label{3.30}
\begin{split}
(n,c)\in L^p\left(\Omega; \textrm{Lip}([0,T]; \textbf{H}^{s-2}(\mathbb{R}^2) \right),\quad \forall T> 0.
\end{split}
\end{equation}
Next we show that $u $ is H\"{o}lder continuous in time.  Indeed, we get from the $u$-equation that
\begin{equation}\label{3.31}
\begin{split}
\|u(t)-u(r)\|_{H^{s-2\alpha}}\leq &\left\|\int_r^t(-\Delta)^\alpha \mathcal {J}_k^2u\textrm{d} \varsigma \right\|_{H^{s-2\alpha}}+ \left\|\int_r^t\textbf{P}\mathcal {J}_k ((n\nabla \phi)*\rho  )\textrm{d} \varsigma\right\|_{H^{s}}\\
 &+\left\|\int_r^t\theta_R\left(\|\textbf{u}\|_{\textbf{W}^{1,\infty}}\right)\textbf{P}\mathcal {J}_k  (\mathcal {J}_k u \cdot \nabla \mathcal {J}_k u  ) \textrm{d} \varsigma\right\|_{H^{s-1}}\\
 &+\left\|\int_r^t\theta_R\left(\|\textbf{u}\|_{\textbf{W}^{1,\infty}}\right)\textbf{P} f(\varsigma,u ) \textrm{d} W_\varsigma\right\|_{H^{s }} \\
:=& M_1+M_2+M_3+M_4(r,t).
\end{split}
\end{equation}
First, we get by the uniform bound in Step 2 that
$$
M_1+M_2+M_3\leq C(\phi,R)\int_r^t  (1+\|n\|_{H^s}+\|u\|_{H^s} ) \textrm{d}r,
$$
which implies that
\begin{equation}\label{3.32}
\begin{split}
\mathbb{E}\left(M_1+M_2+M_3\right)^p
&\leq C( \phi,R)^p\mathbb{E}  \sup_{\varsigma\in [r,t]}\left(1+\|(n,u)(\varsigma)\|_{\textbf{H}^s}^p \right) |t-r|^p \\
&\leq C( \phi,R)^p\|\textbf{u}(0)\|^p_{H^s}\exp\{C(p, R,\phi, \epsilon )T \} |t-r|^p.
\end{split}
\end{equation}
Second, for any $\gamma>0$, there is a subinterval $[r',t']\subset [r,t]$ such that
$
\sup_{t\neq r}\frac{M_4(r,t)}{|t-r|^\sigma}< \sup_{t\neq r}\frac{M_4(r',t')}{|t'-r'|^\sigma}+\gamma^{\frac{1}{p}}.
$
By applying the BDG inequality, we obtain
\begin{equation*}
\begin{split}
\mathbb{E} \left(\sup_{t\neq r}\frac{M_4(r,t)}{|t-r|^\sigma}\right) ^p 
&\leq 2^{p-1}   \left(\frac{\mathbb{E}(\int_{r'}^{t'} \|f(t,u )\|_{L_2(U;L^2)}^2 \textrm{d}r)^\frac{p}{2}}{|t'-r'|^{\sigma p}} + \gamma\right)\\
&\leq C2^{p-1}\left( \frac{\mathbb{E}(\int_{r'}^{t'} (1+ \|u \|_{H^s}^2) \textrm{d}r)^\frac{p}{2}}{|t'-r'|^{\sigma p}} + \gamma\right)\\
&\leq  C2^{p-1} \left(|t'-r'|^{\frac{p}{2}-\sigma p} \mathbb{E}\sup_{t\in [0,T]}\left(1+ \|u(t)\|_{H^s}^p\right)  + \gamma\right)\\
& \leq  {C2^{p-1}\|\textbf{u}(0)\|^p_{H^s} \exp\{C(p, R, \phi,\epsilon) T\} |t'-r'|^{\frac{p}{2}-\sigma p}+C2^{p-1}\gamma.}
\end{split}
\end{equation*}
Since $\gamma>0$ is arbitrary and $\frac{p}{2}-\sigma p>0$, the above estimates implies that
\begin{equation}\label{3.33}
\begin{split}
\mathbb{E}  \left\|\int_r^t \textbf{P} f(\varsigma,u ) \textrm{d} W_\varsigma\right\|_{H^{s }}^p \leq C(p, \textbf{u}_0,R, \epsilon,\phi,T) |t-r|^{\sigma p}.
\end{split}
\end{equation}
Combining \eqref{3.32} with \eqref{3.33} leads to
$$
\mathbb{E}  \|u(t)-u(r)\|_{H^{s-2\alpha}} ^p  \leq C(p, \textbf{u}_0,R, \epsilon,\phi,T) |t-r|^{\sigma p}.
$$
According to the Kolmogorov-\v{C}entsov Continuity Theorem (cf. \cite[Theorem 2.8]{karatzas1991brownian}), $u(t)$ has a H\"{o}lder continuity version in $C^\theta([0,T];H^{s-2\alpha}(\mathbb{R}^2))$ with  $  0\leq \theta\leq\sigma -\frac{1}{p} \leq\frac{1}{2}-\frac{1}{p}$. Moreover, there holds
$$
\mathbb{E} \|u\|_{C^\theta([0,T];H^{s-2\alpha})} ^p  \leq C(p, \textbf{u}_0,R, \epsilon,\phi,T).
$$
The proof of Lemma \ref{lem3} is now completed.
\end{proof}

\subsection{Convergence in $k\in \mathbb{N}^+$}

The aim of this subsection is to show that, for fixed $R>0$ and $0<\epsilon < 1$, the family of $\{\mathbf{u}^{k,R,\epsilon}\}_{k \in \mathbb{N}}$ contains a subsequence that converges in $C([0, T], \textbf{H}^{s}(\mathbb{R}^2))$ almost surely. To this end, we shall first prove the convergence in $C([0, T], \textbf{H}^{s-3}(\mathbb{R}^2))$ by adopting the ideas in \cite{li2021stochastic}, and then achieve the goal by raising the spacial regularity of solutions in $\textbf{H}^{s}(\mathbb{R}^2)$.

For each $k, {\ell\in \mathbb{N}^+}$, it follows from \eqref{3.3} that
\begin{equation}\label{3.34}
  \quad\left\{
\begin{aligned}
&\textrm{d} \textbf{u}^{k, {\ell}}(t)
=\left(\widetilde{\textbf{F}}^k(\textbf{u}^{k})-\widetilde{\textbf{F}}^{ {\ell}}
(\textbf{u}^{ {\ell}} )\right) \textrm{d}t
+\left( \textbf{G} (t,\textbf{u}^{k} )- \textbf{G} (t,\textbf{u}^{ {\ell}} )\right)
\textrm{d} \mathcal {W}_t ,\\
&\textbf{u}^{k, {\ell}}(0)=\textbf{0}.
\end{aligned}
\right.
\end{equation}
 {Here for simplicity, since the parameters $k$ and $R$ are fixed, we have used  the notations $\mathbf{u}^{k}:=\mathbf{u}^{k, R,\epsilon}$, $\mathbf{u}^{k, {\ell}}:=\mathbf{u}^{k, {\ell};R,\epsilon}=\mathbf{u}^{k,R,\epsilon}-\mathbf{u}^{\ell,R,\epsilon}$, and write $\mathbf{u}^{k}$ and $\widetilde{\textbf{F}}^{k}(\cdot)$ instead of $\mathbf{u}^{k,R,\epsilon}$ and $\widetilde{\textbf{F}}^{k,R,\epsilon}(\cdot)$, respectively.} Using these notations, the coefficients in \eqref{3.34} are formulated by
\begin{equation*}
\begin{split}
\widetilde{\textbf{F}}^k(\textbf{u}^{k} )-\widetilde{\textbf{F}}^\ell(\textbf{u}^{\ell} )
&= ( \mathcal {J}_\ell ^2-\mathcal {J}_k^2)\textbf{A}^\alpha\textbf{u}^{\ell}+ \mathcal {J}_k^2 \textbf{A}^\alpha  \textbf{u}^{\ell,k}+(\theta_R(\|\textbf{u}^{\ell}\|_{W^{1,\infty}}) -\theta_R(\|\textbf{u}^{k}\|_{W^{1,\infty}}))  \mathcal {J}_\ell \textbf{B}(\mathcal {J}_{\ell}\textbf{u}^{\ell})\\
& + \theta_R(\|\textbf{u}^{k}\|_{\textbf{W}^{1,\infty}})
  \mathcal {J}_\ell  \big(\textbf{B}( \mathcal {J}_\ell \textbf{u}^{k})-\textbf{B}(\mathcal {J}_k\textbf{u}^{k}) \big) +\theta_R(\|\textbf{u}^{k}\|_{\textbf{W}^{1,\infty}})
  \mathcal {J}_\ell  \big(\textbf{B}( \mathcal {J}_\ell \textbf{u}^{\ell})-\textbf{B}( \mathcal {J}_\ell \textbf{u}^{k}) \big)\\
&+ \theta_R(\|\textbf{u}^{k}\|_{\textbf{W}^{1,\infty}})
( \mathcal {J}_\ell -\mathcal {J}_k)\textbf{B}(\mathcal {J}_k\textbf{u}^{k}) + \theta_R(\|\textbf{u}^{\ell}\|_{\textbf{W}^{1,\infty}})(\mathcal {J}_k - \mathcal {J}_\ell )\textbf{F} ^\epsilon ( \mathcal {J}_\ell \textbf{u}^{\ell})  \\
 &+\theta_R(\|\textbf{u}^{\ell}\|_{\textbf{W}^{1,\infty}})\mathcal {J}_k(\textbf{F} ^\epsilon (\mathcal {J}_k\textbf{u}^{\ell})- \textbf{F} ^\epsilon ( \mathcal {J}_\ell \textbf{u}^{\ell}))  +\theta_R(\|\textbf{u}^{\ell}\|_{\textbf{W}^{1,\infty}})(\mathcal {J}_k\textbf{F} ^\epsilon (\mathcal {J}_k\textbf{u}^{k})-\mathcal {J}_k\textbf{F} ^\epsilon (\mathcal {J}_k\textbf{u}^{\ell}))           \\
 &+  (\theta_R(\|\textbf{u}^{k}\|_{\textbf{W}^{1,\infty}})-\theta_R(\|\textbf{u}^{\ell}\|_{\textbf{W}^{1,\infty}})) \mathcal {J}_k\textbf{F} ^\epsilon (\mathcal {J}_k\textbf{u}^{k})  \\
 & := \textbf{p}_1+\cdots +\textbf{p}_{10},
\end{split}
\end{equation*}
and
\begin{equation*}
\begin{split}
  \textbf{G} (t,\textbf{u}^{k} )- \textbf{G} (t,\textbf{u}^{\ell} ) := \textbf{p}_{11} .
\end{split}
\end{equation*}
To get proper estimates for $\textbf{u}^{k,\ell}$, we apply the It\^{o} formula  to $ \|\textbf{u}^{k,\ell}(t)\|_{\textbf{H}^{s-3}}^2$ to obtain
\begin{equation}\label{3.35}
\begin{split}
\|\textbf{u}^{k,\ell}(t)\|_{\textbf{H}^{s-3}}^2\leq & 2\sum_{i=1}^{10}\int_0^t (\textbf{u}^{k,\ell},\textbf{p}_i)_{\textbf{H}^{s-3}} \textrm{d}r
+  \int_0^t\|\textbf{p}_{11}(r)\|_{L_2(\textbf{U}; \textbf{H}^{s-3})}^2 \textrm{d}r \\
&+2 \sum_{j\geq 1} \int_0^t(\textbf{u}^{k,\ell},\textbf{p}_{11}^j(r)
)_{\textbf{H}^{s-3}}\textrm{d} \mathcal {W}^j_r ,
\end{split}
\end{equation}
where $\textbf{p}_{11}^j(r):=\textbf{p}_{11}(r)\textbf{e}_j$,  and  $\{\textbf{e}_j\}_{j\geq 1}$ is the orthogonal basis in $\textbf{U}$.

\begin{lemma}\label{lem3.5}
Let $s>0$. There holds
\begin{equation} \label{3.36}
\begin{split}
\sum_{i=1}^{10} |(\mathbf{u}^{k, {\ell};R,\epsilon},\textbf{p}_i)_{\textbf{H}^{s-3}}|  &\leq C(\epsilon)\left(1+\| \textbf{u}^{k,R,\epsilon}\| _{\textbf{H}^{s}}^2+\| \textbf{u}^{\ell,R,\epsilon}\| _{\textbf{H}^{s}}^2\right)\|\textbf{u}^{k,\ell;,R,\epsilon}\|_{\textbf{H}^{s-3}}^2\\
&+ C(\epsilon)\max\{\frac{1}{k^2},\frac{1}{ \ell^2 }\}\left(\| \textbf{u}^{k,R,\epsilon}\| _{\textbf{H}^{s}}^2+\| \textbf{u}^{\ell,R,\epsilon}\| _{\textbf{H}^{s}}^2 +\|\textbf{u}^{k,R,\epsilon}\|_{\textbf{H}^s}^6+\|\textbf{u}^{\ell,R,\epsilon}\|_{\textbf{H}^s}^6\right) .
\end{split}
\end{equation}
\end{lemma}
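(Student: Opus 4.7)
The plan is to estimate each of the ten summands $(\textbf{u}^{k,\ell},\textbf{p}_i)_{\textbf{H}^{s-3}}$ individually and then sum. Throughout, set $\textbf{u}^m=\textbf{u}^{m,R,\epsilon}$ for $m\in\{k,\ell\}$. Since $s>5$ one has $s-3>2$, so $\textbf{H}^{s-3}(\mathbb{R}^2)$ is a Banach algebra and $\textbf{H}^{s-3}\hookrightarrow \textbf{W}^{1,\infty}$; together with the Lipschitz property of $\theta_R$ this gives $|\theta_R(\|\textbf{u}^k\|_{\textbf{W}^{1,\infty}})-\theta_R(\|\textbf{u}^\ell\|_{\textbf{W}^{1,\infty}})|\leq C\|\textbf{u}^{k,\ell}\|_{\textbf{H}^{s-3}}$. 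Standard mollifier bounds give $\|\rho^\epsilon\ast f\|_{\textbf{H}^{s-3}}\leq C(\epsilon)\|f\|_{L^2}$, producing the $C(\epsilon)$; and since $\mathcal{J}_\ell-\mathcal{J}_k$ is supported in $\{|\xi|\geq\min(k,\ell)\}$, Plancherel yields the Fourier gap estimate
\begin{equation*}
\|(\mathcal{J}_\ell-\mathcal{J}_k)f\|_{\textbf{H}^{s-3}}\leq C\max\{k^{-j},\ell^{-j}\}\,\|f\|_{\textbf{H}^{s-3+j}},\quad j\geq 0,
\end{equation*}
and likewise for $\mathcal{J}_\ell^2-\mathcal{J}_k^2=(\mathcal{J}_\ell-\mathcal{J}_k)(\mathcal{J}_\ell+\mathcal{J}_k)$.

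The ten terms split into three groups. First, the dissipative $\textbf{p}_2=-\mathcal{J}_k^2\textbf{A}^\alpha\textbf{u}^{k,\ell}$ satisfies $(\textbf{u}^{k,\ell},\textbf{p}_2)_{\textbf{H}^{s-3}}\leq 0$ by positive semidefiniteness of $\mathcal{J}_k^2\textbf{A}^\alpha$; its absolute value is estimated via the interpolation $\|\textbf{u}^{k,\ell}\|_{\textbf{H}^{s-3+\alpha}}^2\leq\|\textbf{u}^{k,\ell}\|_{\textbf{H}^{s-3}}\|\textbf{u}^{k,\ell}\|_{\textbf{H}^{s-3+2\alpha}}$, Young's inequality, and the crude bound $\|\textbf{u}^{k,\ell}\|_{\textbf{H}^{s-3+2\alpha}}\leq C(\|\textbf{u}^k\|_{\textbf{H}^s}+\|\textbf{u}^\ell\|_{\textbf{H}^s})$ (valid because $2\alpha\leq 2$), so it fits the Lipschitz part of \eqref{3.36}. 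Second, the Lipschitz family $\textbf{p}_3,\textbf{p}_5,\textbf{p}_9,\textbf{p}_{10}$ each produces an explicit factor $\|\textbf{u}^{k,\ell}\|_{\textbf{H}^{s-3}}$ either via the cutoff Lipschitz bound (for $\textbf{p}_3,\textbf{p}_{10}$) or via the bilinear expansion $\textbf{B}(a)-\textbf{B}(b)=\textbf{B}(a-b,a)+\textbf{B}(b,a-b)$ and its polynomial analogue for $\textbf{F}^\epsilon$ (for $\textbf{p}_5,\textbf{p}_9$); the companion factor is bounded in $\textbf{H}^{s-3}$ by the algebra property, $\theta_R\leq 1$, and the mollifier estimate, contributing at most $C(\epsilon)(1+\|\textbf{u}^k\|_{\textbf{H}^s}^2+\|\textbf{u}^\ell\|_{\textbf{H}^s}^2)$, and Cauchy--Schwarz gives the desired Lipschitz bound.

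Third, each term in the truncation-gap family $\textbf{p}_1,\textbf{p}_4,\textbf{p}_6,\textbf{p}_7,\textbf{p}_8$ carries $\mathcal{J}_\ell-\mathcal{J}_k$ or $\mathcal{J}_\ell^2-\mathcal{J}_k^2$. Choosing the exponent $j$ to balance the derivative cost of the companion factor (for $\textbf{p}_1$, $j=3-2\alpha\in[1,2]$ so that $\|\textbf{A}^\alpha\textbf{u}^\ell\|_{\textbf{H}^{s-3+j}}\leq\|\textbf{u}^\ell\|_{\textbf{H}^s}$; for the others $j=1$), and using the algebra estimate on $\textbf{B}$ and the mollifier bound on $\textbf{F}^\epsilon$, each satisfies
\begin{equation*}
\|\textbf{p}_i\|_{\textbf{H}^{s-3}}\leq C(\epsilon)R\max\{k^{-1},\ell^{-1}\}\bigl(\|\textbf{u}^k\|_{\textbf{H}^s}+\|\textbf{u}^\ell\|_{\textbf{H}^s}+\|\textbf{u}^k\|_{\textbf{H}^s}^2+\|\textbf{u}^\ell\|_{\textbf{H}^s}^2\bigr).
\end{equation*}
Cauchy--Schwarz combined with Young's inequality $ab\leq\tfrac12 a^2+\tfrac12 b^2$ then bounds $|(\textbf{u}^{k,\ell},\textbf{p}_i)_{\textbf{H}^{s-3}}|$ by $\tfrac12\|\textbf{u}^{k,\ell}\|_{\textbf{H}^{s-3}}^2$, absorbed into the Lipschitz part of \eqref{3.36}, plus $C(\epsilon)\max\{k^{-2},\ell^{-2}\}$ times a polynomial of degree at most four in $\|\textbf{u}^k\|_{\textbf{H}^s},\|\textbf{u}^\ell\|_{\textbf{H}^s}$, which fits the prescribed remainder via $x^4\leq x^2+x^6$ for $x\geq 0$.

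The main obstacle is the bookkeeping: choosing the Fourier-gap exponent $j$ and aligning the algebra, commutator, and mollifier bounds so that the regularity exponents on $\textbf{u}^k,\textbf{u}^\ell$ land exactly at $\textbf{H}^s$ and the gain in $k,\ell$ is $k^{-1}$ (squared to the required $k^{-2}$); a subsidiary subtlety is the treatment of the signed dissipation $\textbf{p}_2$ under the absolute-value convention, which could alternatively be moved to the left-hand side of \eqref{3.35}. Summing the estimates from the three groups yields \eqref{3.36}.
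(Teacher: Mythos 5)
Your overall architecture (term-by-term estimation, splitting the ten pairings into a dissipative term, a Lipschitz family, and a frequency-gap family) matches the paper's proof, and the treatment of $\textbf{p}_1,\textbf{p}_3$--$\textbf{p}_{10}$ is essentially the paper's argument. However, your primary treatment of $\textbf{p}_2$ has a genuine gap. You have $|(\textbf{u}^{k,\ell},\textbf{p}_2)_{\textbf{H}^{s-3}}|=\|\sqrt{\textbf{A}^\alpha}\mathcal {J}_k\textbf{u}^{k,\ell}\|_{\textbf{H}^{s-3}}^2$, and the chain interpolation $\to$ crude bound $\to$ Young produces $\tfrac12\|\textbf{u}^{k,\ell}\|_{\textbf{H}^{s-3}}^2+C\left(\|\textbf{u}^{k}\|_{\textbf{H}^{s}}^2+\|\textbf{u}^{\ell}\|_{\textbf{H}^{s}}^2\right)$. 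The second term carries neither the factor $\|\textbf{u}^{k,\ell}\|_{\textbf{H}^{s-3}}^2$ nor the factor $\max\{k^{-2},\ell^{-2}\}$, so it does not fit either part of the right-hand side of \eqref{3.36}: when $\|\textbf{u}^{k,\ell}\|_{\textbf{H}^{s-3}}$ is small but $\|\textbf{u}^{k}\|_{\textbf{H}^s}$ is of order one, your bound is of order one while the claimed right-hand side is of order $\|\textbf{u}^{k,\ell}\|_{\textbf{H}^{s-3}}^2+\max\{k^{-2},\ell^{-2}\}$. Such a non-vanishing remainder would destroy the Gronwall/Cauchy-sequence argument of Lemma \ref{lem3..4}. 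There is in fact no way to bound the absolute value of this pairing in the form \eqref{3.36} (it costs a full derivative on $\textbf{u}^{k,\ell}$ with no smallness in $k,\ell$); the point, which you relegate to a parenthetical ``alternative,'' is the whole proof for this term: $(\textbf{u}^{k,\ell},\textbf{p}_2)_{\textbf{H}^{s-3}}=-\|\sqrt{\textbf{A}^\alpha}\mathcal {J}_k\textbf{u}^{k,\ell}\|_{\textbf{H}^{s-3}}^2\leq 0$ is signed and must simply be dropped (equivalently, kept on the left of \eqref{3.35}); the absolute value in the statement is to be read as absent for $i=2$, exactly as in the paper.

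A secondary inaccuracy: $\mathcal {J}_\ell-\mathcal {J}_k$ is not supported in $\{|\xi|\geq\min(k,\ell)\}$; for $\ell>k$ its symbol also lives on the low-frequency annulus $\{1/\ell\leq|\xi|<1/k\}$, where trading Sobolev indices gives no decay in $k$. There the gain must instead come from the vanishing of the symbol of $\textbf{A}^\alpha$ at the origin (for $\textbf{p}_1$), the divergence structure of $\textbf{B}$ and of the chemotactic term in $\textbf{F}^\epsilon$, or from the smallness of the annulus combined with an $L^1\to L^\infty$ Fourier bound for the remaining zero-order pieces (e.g.\ the logistic term). This can be repaired, but as written the frequency-gap estimate is incomplete on that annulus.
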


\begin{proof}  Recalling the notations $
\mathbf{u}^{k, {\ell}} =\mathbf{u}^{k, R,\epsilon}
-\mathbf{u}^{l, R,\epsilon}$, $\mathbf{u}^{k}:=\mathbf{u}^{k, R,\epsilon}$ and $ \widetilde{\textbf{F}}^{k}(\cdot):=\widetilde{\textbf{F}}^{k,R,\epsilon}(\cdot)$.
For $(\textbf{u}^{k,\ell},\textbf{p}_1)_{\textbf{H}^{s-3}}$, we have
\begin{equation*}
\begin{split}
(\textbf{u}^{k,\ell},\textbf{p}_1)_{\textbf{H}^{s-3}} &\leq \|\textbf{u}^{k,\ell}\|_{\textbf{H}^{s-3}}  \| (\mathcal {J}_{\ell} +\mathcal {J}_k )(\mathcal {J}_{\ell} -\mathcal {J}_k )\textbf{A}^\alpha\textbf{u}^{\ell} \| _{\textbf{H}^{s-3}}\\
&\leq \|\textbf{u}^{k,\ell}\|_{\textbf{H}^{s-3}}^2+C\max\{\frac{1}{k^2},\frac{1}{ \ell^2 }\}\| \textbf{u}^{\ell}\| _{\textbf{H}^{s}}^2 .
\end{split}
\end{equation*}
For $(\textbf{u}^{k,\ell},\textbf{p}_2)_{\textbf{H}^{s-3}}$, first recall that $\textbf{A}^\alpha$ (see  \eqref{3.2}) is a selfadjoint operator with the unique square root $ \sqrt{\textbf{A}^\alpha}  $. It follows from the {Fourier-Plancherel Formula (cf. \cite[Theorem 1.25]{bahouri2011fourier}}) that
$$
(\textbf{A}^\alpha \textbf{u},\textbf{u})_{\textbf{H}^s}=\|\sqrt{\textbf{A}^\alpha} \textbf{u}\|_{\textbf{H}^s}\geq 0,~~~ \textrm{for all}  ~s>0,
$$
which implies that
\begin{equation*}
\begin{split}
(\textbf{u}^{k,\ell},\textbf{p}_2)_{\textbf{H}^{s-3}}=-(\textbf{u}^{k,\ell},\mathcal {J}_k^2 \textbf{A}^\alpha  \textbf{u}^{k,\ell})_{\textbf{H}^{s-3}}=-\|\sqrt{\textbf{A}^\alpha} \mathcal {J}_k  \textbf{u}^{k,\ell}\|_{\textbf{L}^2}^2 \leq 0.
\end{split}
\end{equation*}
For $(\textbf{u}^{k,\ell},\textbf{p}_3)_{\textbf{H}^{s-3}}$, by virtue of the Mean Value Theorem and the estimate \eqref{A.1}, we have
\begin{equation*}
\begin{split}
(\textbf{u}^{k,\ell},\textbf{p}_3)_{\textbf{H}^{s-3}}&\leq |\theta_R'(\xi^{k,\ell})| \|\textbf{u}^{k,\ell}\|_{W^{1,\infty}}  \| \textbf{u}^{\ell})\|_{\textbf{H}^{s-3}}\|\textbf{u}^{k,\ell}\|_{\textbf{H}^{s-3}}
\\
&\leq C   \|  \textbf{B}( \mathcal {J}_\ell \textbf{u}^{\ell})\|_{\textbf{H}^{s-3}}
\|\textbf{u}^{k,\ell}\|_{\textbf{H}^{s-3}}^2\\
&\leq C   \|  \textbf{u}^{\ell} \|_{\textbf{H}^{s }}^2
\|\textbf{u}^{k,\ell}\|_{\textbf{H}^{s-3}}^2,
\end{split}
\end{equation*}
where $\xi^{k,\ell}$ take values between $\|\textbf{u}^{\ell}\|_{W^{1,\infty}}$ and $ \|\textbf{u}^{k}\|_{W^{1,\infty}}$.

For $(\textbf{u}^{k,\ell},\textbf{p}_4)_{\textbf{H}^{s-3}}$, first note that
\begin{equation*}
\begin{split}
\textbf{B}( \mathcal {J}_\ell \textbf{u}^k)-\textbf{B}(\mathcal {J}_k\textbf{u}^k)=\begin{pmatrix}
 ( \mathcal {J}_\ell u^k\cdot \nabla)  \mathcal {J}_\ell n^k-(\mathcal {J}_ku^k\cdot \nabla) \mathcal {J}_kn^k  \\
  ( \mathcal {J}_\ell u^k\cdot \nabla)  \mathcal {J}_\ell c^k-(\mathcal {J}_ku^k\cdot \nabla) \mathcal {J}_kc^k \\
\textbf{P}( \mathcal {J}_\ell u^k\cdot \nabla)  \mathcal {J}_\ell u^k-\textbf{P}(\mathcal {J}_ku^k\cdot \nabla) \mathcal {J}_ku^k
\end{pmatrix}:=\begin{pmatrix}
    p_{3,1} \\
   p_{3,2}\\
 p_{3,3}
\end{pmatrix},
\end{split}
\end{equation*}
where
\begin{equation*}
\begin{split}
&p_{3,1}=  ((\mathcal {J}_{\ell} -\mathcal {J}_k)u^k\cdot \nabla) \mathcal {J}_{\ell}n^k + (\mathcal {J}_ku^k \cdot \nabla)(\mathcal {J}_{\ell}  -\mathcal {J}_k )n^k,\\
&p_{3,2}=  ((\mathcal {J}_{\ell} -\mathcal {J}_k)u^k\cdot \nabla) \mathcal {J}_{\ell}c^k + (\mathcal {J}_ku^k \cdot \nabla)(\mathcal {J}_{\ell}  -\mathcal {J}_k )c^k,\\
&p_{3,3}=  ((\mathcal {J}_{\ell} -\mathcal {J}_k)u^k\cdot \nabla) \mathcal {J}_{\ell}u^k + (\mathcal {J}_ku^k \cdot \nabla)(\mathcal {J}_{\ell}  -\mathcal {J}_k )u^k.
\end{split}
\end{equation*}
Then, it follows that
\begin{equation*}
\begin{split}
 (\textbf{u}^{k,\ell},\textbf{p}_4)_{\textbf{H}^{s-3}}
 &=\theta_R(\|\textbf{u}^{k}\|_{W^{1,\infty}})
\Big ((n^{k,\ell}, \mathcal {J}_k p_{3,1})_{H^{s-3}}+(c^{k,\ell}, \mathcal {J}_k p_{3,2})_{H^{s-3}}+(u^{k,\ell}, \mathcal {J}_k p_{3,3}) _{H^{s-3}}\Big).
\end{split}
\end{equation*}
By the Sobolev embedding $H^{s-3}(\mathbb{R}^2)\subset W^{1,\infty}(\mathbb{R}^2)$ and the conditions $\textrm{div} u^k=\textrm{div} u^\ell=0$, we infer that
\begin{equation*}
\begin{split}
(n^{k,\ell}, \mathcal {J}_k p_{3,1})_{H^{s-3}} &\leq C\|n^{k,\ell}\|_{H^{s-3}}\left\| (( \mathcal {J}_\ell  -\mathcal {J}_k)u^k\cdot \nabla)  \mathcal {J}_\ell n^k + (\mathcal {J}_ku^k \cdot \nabla)( \mathcal {J}_\ell   -\mathcal {J}_k )n^k\right\|_{H^{s-1}}\\
&\leq  {C\left(\max\{\frac{1}{k},\frac{1}{\ell}\}+\frac{1}{k}\right)}\|n^{k,\ell}\|_{H^{s-3}}\| u^k\|_{H^{s }}\| n^k \|_{H^{s }}\\
&\leq \|n^{k,\ell}\|_{H^{s-3}}^2+ C \max\{\frac{1}{k^2},\frac{1}{ \ell^2 }\} \| u^k\|_{H^{s }}^2\| n^k \|_{H^{s }} ^2.
\end{split}
\end{equation*}
Similarly, one can deduce that
\begin{equation*}
\begin{split}
  (c^{k,\ell}, \mathcal {J}_k p_{3,2})_{H^{s-3}} &\leq  \|c^{k,\ell}\|_{H^{s-3}}^2+ C \max\{\frac{1}{k^2},\frac{1}{ \ell^2 }\} \| u^k\|_{H^{s }}^2\| c^k \|_{H^{s }} ^2,\\
 (u^{k,\ell}, \mathcal {J}_k p_{3,3})_{H^{s-3}} &\leq  \|u^{k,\ell}\|_{H^{s-3}}^2+ C \max\{\frac{1}{k^2},\frac{1}{ \ell^2 }\} \| u^k\|_{H^{s }}^4.
\end{split}
\end{equation*}
Thereby, we get
\begin{equation*}
\begin{split}
 (\textbf{u}^{k,\ell},\textbf{p}_4)_{\textbf{H}^{s-3}}\leq \|\textbf{u}^{k,\ell}\|_{\textbf{H}^{s-3}}^2+ C\max\{\frac{1}{k^2},\frac{1}{ \ell^2 }\} \| \textbf{u}^k\|_{\textbf{H}^{s }}^4.
\end{split}
\end{equation*}
For $(\textbf{u}^{k,\ell},\textbf{p}_5)_{\textbf{H}^{s-3}}$, we use \eqref{A.2} to obtain
\begin{equation*}
\begin{split}
 (\textbf{u}^{k,\ell},\textbf{p}_5)_{\textbf{H}^{s-3}} &= \theta_R(\|\textbf{u}^{k}\|_{W^{1,\infty}})\left(\textbf{u}^{k,\ell},
   \mathcal {J}_\ell (\textbf{B}( \mathcal {J}_\ell \textbf{u}^{\ell})-\textbf{B}( \mathcal {J}_\ell \textbf{u}^{k}))  \right)_{\textbf{H}^{s-3}}\\
 &\leq \left|\left(\mathcal {J}_{\ell}\textbf{u}^{\ell}-\mathcal {J}_{\ell}\textbf{u}^{k},  \textbf{B}(\mathcal {J}_\ell \textbf{u}^{\ell})-\textbf{B}( \mathcal {J}_\ell \textbf{u}^{k} ) \right)_{\textbf{H}^{s-3}} \right|  \\
 &\leq C\left(\| \textbf{u}^k\|_{\textbf{H}^{s }} + \| \textbf{u}^\ell\|_{\textbf{H}^{s }}\right ) \|\textbf{u}^{k,\ell}\|_{\textbf{H}^{s-3}}^2.
\end{split}
\end{equation*}
For $(\textbf{u}^{k,\ell},\textbf{p}_6)_{\textbf{H}^{s-3}}$, we have
\begin{equation*}
\begin{split}
 (\textbf{u}^{k,\ell},\textbf{p}_6)_{\textbf{H}^{s-3}} &\leq \| \textbf{u}^{k,\ell}\|_{\textbf{H}^{s-3}}
\|( \mathcal {J}_\ell -\mathcal {J}_k)\textbf{B}(\mathcal {J}_k\textbf{u}^{k})\|_{\textbf{H}^{s-3}} \leq C \max\{\frac{1}{k^2},\frac{1}{ \ell^2 }\}
\| \textbf{u}^{k} \|_{\textbf{H}^{s }}^2\|\textbf{u}^{k,\ell}\|_{\textbf{H}^{s-3}}.
\end{split}
\end{equation*}
For $(\textbf{u}^{k,\ell},\textbf{p}_7)_{\textbf{H}^{s-3}}$, we have
\begin{equation*}
\begin{split}
(\textbf{u}^{k,\ell},\textbf{p}_7)_{\textbf{H}^{s-3}}\leq C(\epsilon,\phi)\max\{\frac{1}{k},\frac{1}{\ell}\}\|\textbf{u}^{k,\ell}\|_{\textbf{H}^{s-3}}\| \textbf{u}^{\ell} \|_{\textbf{H}^{s}}^2.
\end{split}
\end{equation*}
To estimate $(\textbf{u}^{k,\ell},\textbf{p}_8)_{\textbf{H}^{s-3}}$, we observe that
\begin{equation*}
\begin{split}
\textbf{F} ^\epsilon (\mathcal {J}_k\textbf{u}^{\ell})- \textbf{F} ^\epsilon ( \mathcal {J}_\ell (\mathcal {J}_{\ell}\textbf{u}^{\ell})=\begin{pmatrix}
\textrm{div}\left([(\mathcal {J}_{\ell} -\mathcal {J}_k)n^{\ell}](\nabla c^{\ell}*\rho^{\epsilon})+\mathcal {J}_kn^{\ell}(\nabla ( \mathcal {J}_\ell -\mathcal {J}_k)c^{\ell}*\rho^{\epsilon})\right)\\+  (\mathcal {J}_k- \mathcal {J}_\ell \mathcal {J}_\ell -\mathcal {J}_k)n^{\ell}( \mathcal {J}_\ell +\mathcal {J}_k)n^{\ell}\\
 (( \mathcal {J}_\ell -\mathcal {J}_k)c^{\ell})( \mathcal {J}_\ell n^{\ell}*\rho^{\epsilon})
 +\mathcal {J}_k c^{\ell}(( \mathcal {J}_\ell -\mathcal {J}_k)n^{\ell}*\rho^{\epsilon}) \\
\textbf{P}((\mathcal {J}_k- \mathcal {J}_\ell )n^{\ell}\nabla \phi)*\rho^{\epsilon}
\end{pmatrix}:=\begin{pmatrix}
    p_{8,1} \\
   p_{8,2}\\
 p_{8,3}
\end{pmatrix},
\end{split}
\end{equation*}
which implies
\begin{equation*}
\begin{split}
(\textbf{u}^{k,\ell},\textbf{p}_8)_{\textbf{H}^{s-3}}
=\theta_R(\|\textbf{u}^{\ell}\|_{W^{1,\infty}})\left( (n^{k,\ell},\mathcal {J}_k p_{8,1} )_{H^{s-3}}+(c^{k,\ell},\mathcal {J}_k p_{8,2} )_{H^{s-3}}+(u^{k,\ell},\mathcal {J}_k p_{8,3} )_{H^{s-3}}\right).
\end{split}
\end{equation*}
For the three terms on the R.H.S., we have
\begin{equation*}
\begin{split}
&(n^{k,\ell},\mathcal {J}_k p_{8,1} )_{H^{s-3}}\\
& \quad \leq  C\| n^{k,\ell}\|_{H^{s-3}} \Big( \|(( \mathcal {J}_\ell  -\mathcal {J}_k)n^{\ell})(\nabla \mathcal {J}_{\ell}c^{\ell}*\rho^{\epsilon})\|_{H^{s-2}}+\|\mathcal {J}_kn^{\ell}(\nabla (\mathcal {J}_{\ell}-\mathcal {J}_k)c^{\ell}*\rho^{\epsilon}) \|_{H^{s-2}}\\
&\quad \quad+ \| (\mathcal {J}_k- \mathcal {J}_\ell )n^{\ell}\|_{H^{s-3}}
+\|( \mathcal {J}_\ell -\mathcal {J}_k)n^{\ell}( \mathcal {J}_\ell +\mathcal {J}_k)n^{\ell}\|_{H^{s-3}}\Big)\\
&\quad \leq  C(\epsilon)\max\{\frac{1}{k },\frac{1}{\ell }\}\| n^{k,\ell}\|_{H^{s-3}} \Big( \| n^{\ell} \|_{H^{s}}\| c^{\ell} \|_{H^{s}} + \| n^{\ell}\|_{H^{s}} \Big) .
\end{split}
\end{equation*}
In a similar manner,
\begin{equation*}
\begin{split}
&(c^{k,\ell},\mathcal {J}_k p_{8,2} )_{H^{s-3}} \leq  \| c^{k,\ell}\|_{H^{s-3}}^2+C\max\{\frac{1}{k^2},\frac{1}{ \ell^2 }\} \| n^{\ell} \|_{H^{s}}^2\| c^{\ell} \|_{H^{s}}^2,\\
&(u^{k,\ell},\mathcal {J}_k p_{8,3} )_{H^{s-3}} \leq  \| u^{k,\ell}\|_{H^{s-3}}^2+C\max\{\frac{1}{k^2},\frac{1}{ \ell^2 }\} \| n^{\ell} \|_{H^{s}}^2 .
\end{split}
\end{equation*}
Thereby, we get from the Cauchy inequality that
\begin{equation*}
\begin{split}
(\textbf{u}^{k,\ell},\textbf{p}_8)_{\textbf{H}^{s-3}}
\leq \| \textbf{u}^{k,\ell}\|_{\textbf{H}^{s-3}}^2+ C(\epsilon )\max\{\frac{1}{k^2},\frac{1}{ \ell^2 }\} \left(\| \textbf{u}^{k}\|_{\textbf{H}^{s}}^2+\| \textbf{u}^{\ell}\|_{\textbf{H}^{s}}^2+\| \textbf{u}^{\ell}\|_{\textbf{H}^{s}}^4\right).
\end{split}
\end{equation*}
For $(\textbf{u}^{k,\ell},\textbf{p}_9)_{\textbf{H}^{s-3}}$, we get from \eqref{A.4} that
\begin{equation*}
\begin{split}
 (\textbf{u}^{k,\ell},\textbf{p}_9)_{\textbf{H}^{s-3}}&\leq\left|\left(\mathcal {J}_k\textbf{u}^{k }-\mathcal {J}_k\textbf{u}^{\ell},  \textbf{F} ^\epsilon (\mathcal {J}_k\textbf{u}^{k})-\textbf{F} ^\epsilon (\mathcal {J}_k\textbf{u}^{\ell})\right)_{\textbf{H}^{s-3}}\right|\\
 &\leq  C(\epsilon)\left(\|\textbf{u}^{k }\|_{\textbf{H}^s}+\|\textbf{u}^{\ell}\|_{\textbf{H}^s}\right) \|\textbf{u}^{k,\ell}\|_{\textbf{H}^s}^2.
\end{split}
\end{equation*}
By using the Mean Value Theorem, the term involving $\textbf{p}_{10}$ can be estimated as
\begin{equation*}
\begin{split}
 (\textbf{u}^{k,\ell},\textbf{p}_{10})_{\textbf{H}^{s-3}}&\leq \left|\theta_R(\|\textbf{u}^{k}\|_{\textbf{W}^{1,\infty}})-\theta_R(\|\textbf{u}^{\ell}\|_{\textbf{W}^{1,\infty}})\right| \left|(\textbf{u}^{k,\ell}, \mathcal {J}_k\textbf{F} ^\epsilon (\mathcal {J}_k\textbf{u}^{k}))_{\textbf{H}^{s-3}}\right|\\
 &\leq C \|\textbf{u}^{k}-\textbf{u}^{\ell}\|_{\textbf{W}^{1,\infty}} \|\textbf{u}^{k,\ell}\|_{\textbf{H}^{s-3}}\| \textbf{F} ^\epsilon (\mathcal {J}_k\textbf{u}^{k})\|_{\textbf{H}^{s-3}}\\
 &\leq C(\epsilon)\left(\|\textbf{u}^{k }\|_{\textbf{H}^s}^3+\|\textbf{u}^{\ell }\|_{\textbf{H}^s}^3\right)\|\textbf{u}^{k,\ell}\|_{\textbf{H}^{s-3}}.
\end{split}
\end{equation*}
Collecting all of the above estimates for terms $(\textbf{u}^{k,\ell},\textbf{p}_{i})_{\textbf{H}^{s-3}}$, $i=1,...,10$, we obtain \eqref{3.36}.
\end{proof}

\begin{lemma}\label{lem3..4}
For fixed $R$ and $\epsilon$, assume that $\{\mathbf{u}^{k,R,\epsilon}\}_{k \in \mathbb{N}^+}$ are approximate solutions in Lemma \ref{lem1}, then  {there exists a progressively measurable element  $\textbf{u}^{R,\epsilon} \in L^2(\Omega;L^\infty (0,T;\textbf{H}^{s}(\mathbb{R}^2)))$} and a subsequence of $\{\textbf{u}^{k,R,\epsilon}\}_{k\in \mathbb{N}^+}$, still denoted by itself, such that
\begin{equation*}
\begin{split}
 \textbf{u}^{k,R,\epsilon}\rightarrow \textbf{u}^{R,\epsilon}~~ \textrm{in} ~~C([0,T];\textbf{H}^{s-3}(\mathbb{R}^2))~ ~\textrm{as} ~~k\rightarrow\infty,~~\mathbb{P}\textrm{-a.s.}
\end{split}
\end{equation*}
\end{lemma}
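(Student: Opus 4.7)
The plan is to combine the energy-type identity \eqref{3.35}, the quantitative comparison estimate \eqref{3.36} from Lemma \ref{lem3.5}, the Lipschitz bound (H2) on the noise coefficient, and the uniform $\mathbf{H}^s$-moment estimate \eqref{3.9} to show that $\{\mathbf{u}^{k,R,\epsilon}\}_{k\in\mathbb{N}^+}$ is Cauchy in probability in $C([0,T];\mathbf{H}^{s-3}(\mathbb{R}^2))$, and then upgrade this to almost sure convergence along a subsequence via Borel--Cantelli. The limit $\mathbf{u}^{R,\epsilon}$ will be identified and its $L^2(\Omega;L^\infty(0,T;\mathbf{H}^s))$-regularity obtained using weak-$*$ compactness together with \eqref{3.9}.

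More precisely, I would first fix $M\geq 1$ and introduce the stopping times
\begin{equation*}
\tau_M^{k,\ell}:=\inf\big\{t\geq 0:\ \|\mathbf{u}^{k,R,\epsilon}(t)\|_{\mathbf{H}^s}+\|\mathbf{u}^{\ell,R,\epsilon}(t)\|_{\mathbf{H}^s}\geq M\big\}\wedge T,
\end{equation*}
so that, thanks to \eqref{3.36} and (H2), the random integrands in \eqref{3.35} are controlled, on $[0,\tau_M^{k,\ell}]$, by $C(\epsilon,M)\|\mathbf{u}^{k,\ell}\|_{\mathbf{H}^{s-3}}^2+C(\epsilon)\max\{k^{-2},\ell^{-2}\}(M^2+M^6)$. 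Taking the supremum and expectation in \eqref{3.35}, applying the Burkholder--Davis--Gundy inequality to absorb the stochastic term, and invoking Gronwall's lemma, I would obtain
\begin{equation*}
\mathbb{E}\sup_{t\in[0,\tau_M^{k,\ell}]}\|\mathbf{u}^{k,\ell;R,\epsilon}(t)\|_{\mathbf{H}^{s-3}}^2\leq C(\epsilon,M,T)\max\Big\{\tfrac{1}{k^2},\tfrac{1}{\ell^2}\Big\}.
\end{equation*}
On the other hand, Chebyshev's inequality together with the uniform bound \eqref{3.9} yields $\sup_{k}\mathbb{P}\{\tau_M^{k,\ell}<T\}\leq CM^{-p}$, which can be made arbitrarily small by choosing $M$ large, uniformly in $k,\ell$.

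Combining the two previous displays with a standard $\delta/M$ splitting gives, for any fixed $\delta>0$,
\begin{equation*}
\lim_{k,\ell\to\infty}\mathbb{P}\Big\{\sup_{t\in[0,T]}\|\mathbf{u}^{k,R,\epsilon}(t)-\mathbf{u}^{\ell,R,\epsilon}(t)\|_{\mathbf{H}^{s-3}}>\delta\Big\}=0,
\end{equation*}
so $\{\mathbf{u}^{k,R,\epsilon}\}$ is Cauchy in probability in $C([0,T];\mathbf{H}^{s-3}(\mathbb{R}^2))$. A diagonal Borel--Cantelli argument then extracts a (not relabelled) subsequence converging $\mathbb{P}$-a.s.\ in $C([0,T];\mathbf{H}^{s-3}(\mathbb{R}^2))$ to some progressively measurable process $\mathbf{u}^{R,\epsilon}$.

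Finally, to place the limit in $L^2(\Omega;L^\infty(0,T;\mathbf{H}^s(\mathbb{R}^2)))$, I would use the uniform moment estimate \eqref{3.9} with $p=2$: the sequence is bounded in the dual of $L^2(\Omega;L^1(0,T;\mathbf{H}^{-s}))$, so, up to a further subsequence, it converges weakly-$*$ in $L^2(\Omega;L^\infty(0,T;\mathbf{H}^s))$; the weak-$*$ limit must coincide with $\mathbf{u}^{R,\epsilon}$ by uniqueness of limits in $C([0,T];\mathbf{H}^{s-3})$, and lower semicontinuity of the norm then gives the desired regularity with the bound inherited from \eqref{3.9}. The main obstacle I anticipate is the bookkeeping in Step 1: the cubic/sextic factors $\|\mathbf{u}^{k}\|_{\mathbf{H}^s}^6$ in \eqref{3.36} are not integrable without localization, so the stopping-time truncation combined with the high $L^p$-moment bound \eqref{3.9} must be tuned carefully to keep the Gronwall constant finite while still letting $M\to\infty$ at the final stage.
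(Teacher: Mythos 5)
Your proposal is correct and follows essentially the same route as the paper: localization by stopping times controlling the $\mathbf{H}^s$-norms, the estimate \eqref{3.35}--\eqref{3.36} plus BDG and Gronwall to get a rate $\max\{k^{-2},\ell^{-2}\}$ on the stopped interval, Chebyshev with \eqref{3.9} to remove the localization and obtain convergence in probability in $C([0,T];\mathbf{H}^{s-3})$, extraction of an a.s.\ convergent subsequence, and weak-$*$ compactness from \eqref{3.9} with $p=2$ to place the limit in $L^2(\Omega;L^\infty(0,T;\mathbf{H}^s))$. The only cosmetic differences are that you phrase the intermediate step as a Cauchy-in-probability property with Borel--Cantelli, whereas the paper passes directly to convergence in probability and cites the Riesz theorem for the a.s.\ subsequence; these are equivalent.
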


\begin{proof} {For simplicity, we use the notations $\mathbf{u}=\mathbf{u}^{ R,\epsilon}$, $ \mathbf{u}^{k, {\ell}} = \mathbf{u}^{k, R,\epsilon}
-\mathbf{u}^{l, R,\epsilon}$ and $\mathbf{u}^{k}:=\mathbf{u}^{k, R,\epsilon}$.} For each $N>1$ and $T>0$, define
$$
\textbf{t}_{N,k,\ell}(T) := \textbf{t}_{N,k}(T) \wedge \textbf{t}_{N,\ell}(T),
$$
where
$$
\textbf{t}_{N,k}(T) = \inf  \left\{t\geq 0;~\|\textbf{u}^k(t)\|_{H^s}\geq N \right\}\wedge T .
$$
By \eqref{3.35} and the BDG inequality, we infer that
\begin{equation}\label{3.37}
\begin{split}
&\mathbb{E}\sup_{r \in [0,\textbf{t}_{N,k}(T)]}\|\textbf{u}^{k,\ell}(r)\|_{\textbf{H}^{s-3}}^2\\
&\quad \leq   C(\epsilon)  \mathbb{E}\int_0^{\textbf{t}_{N,k}(T)}\bigg[\left(1+\| \textbf{u}^{k}\| _{\textbf{H}^{s}}^2+\| \textbf{u}^{\ell}\| _{\textbf{H}^{s}}^2\right)\|\textbf{u}^{k,\ell}\|_{\textbf{H}^{s-3}}^2+  \max\{\frac{1}{k^2},\frac{1}{ \ell^2 }\} \\
&\quad\quad \times\left(\| \textbf{u}^{k}\| _{\textbf{H}^{s}}^2+\| \textbf{u}^{\ell}\| _{\textbf{H}^{s}}^2 +\|\textbf{u}^{k }\|_{\textbf{H}^s}^6+\|\textbf{u}^{\ell}\|_{\textbf{H}^s}^6\right) \bigg] \textrm{d}r\\
&\quad\quad +C \mathbb{E} \left( \sup_{r \in [0,\textbf{t}_{N,k}(T)]}\|\textbf{u}^{k,\ell}(r)\|_{\textbf{H}^{s-3}}^2\int_0^{\textbf{t}_{N,k}(T)} \sum_{j\geq 1}\|\textbf{p}_{11}^j(r)
\|_{\textbf{H}^{s-3}} ^2\textrm{d}r\right)^{\frac{1}{2}}\\
&\quad\quad+  \mathbb{E}\int_0^{\textbf{t}_{N,k}(T)}\|\textbf{p}_{11}(r)\|_{L_2(\textbf{U}; \textbf{H}^{s-3})}^2 \textrm{d}r \\
&\quad \leq  \frac{1}{2}\mathbb{E}\sup_{r \in [0,\textbf{t}_{N,k}(T)]}\|\textbf{u}^{k,\ell}(r)\|_{\textbf{H}^{s-3}}^2+ C(\epsilon) \max\{\frac{1}{k^2},\frac{1}{ \ell^2 }\} (N^2+N^6)T \\
&\quad\quad+C(\epsilon)\left( T (1+2N^2)+ 1+N^2 \right) \mathbb{E}\int_0^{\textbf{t}_{N,k}(T)} \|\textbf{u}^{k,\ell}(r)\|_{\textbf{H}^{s-3}}^2\textrm{d}r ,
\end{split}
\end{equation}
where the second inequality used
\begin{equation*}
\begin{split}
 \|\textbf{p}_{11}\|_{L_2(\textbf{U}; \textbf{H}^{s-3})}^2\leq  & C  \|\textbf{u}^{k,\ell}  \|_{\textbf{H}^{s-3}}^2 +C\|\textbf{u}^{k,\ell}  \|_{\textbf{H}^{s-3}}^2 \left(1+\|u^{k}\|_{H^{s}}^2 \right) \\
\leq& C \left(1+N^2  \right)\|\textbf{u}^{k,\ell}  \|_{\textbf{H}^{s-3}}^2,
\end{split}
\end{equation*}
for any $t\in [0,\textbf{t}_{N,k}(T)]$. By applying the Gronwall Lemma to \eqref{3.37}, we get
\begin{equation*}
\begin{split}
&\mathbb{E}\sup_{r \in [0,\textbf{t}_{N,k}(T)]}\|\textbf{u}^{k,\ell}(r)\|_{\textbf{H}^{s-3}}^2\\
 &\quad \leq C(\epsilon)\max\{\frac{1}{k^2},\frac{1}{ \ell^2 }\} T(N^2+N^6)\exp\left\{ C(\epsilon)\left (1+ T (1+2N^2)T+ 1+N^2  \right)\right\},
\end{split}
\end{equation*}
which implies that
\begin{equation}\label{3.38}
\begin{split}
 \lim _{k\rightarrow\infty}\sup_{\ell\geq k}\mathbb{E}\sup_{r \in [0,\textbf{t}_{N,k}(T)]}\|\textbf{u}^{k,\ell}(r)\|_{\textbf{H}^{s-3}}^2=0,\quad \forall N\geq 1,~\epsilon> 0.
\end{split}
\end{equation}
By using the Chebyshev inequality,  we have
\begin{equation*}
\begin{split}
&  \mathbb{P}\left\{\sup_{t \in [0,T]} \|\textbf{u}^{k }-\textbf{u}^{\ell}\|_{\textbf{H}^{s-3}} >\eta\right\}\\
&\quad = \mathbb{P}\left\{([\textbf{t}_{N,k}(T)<T]\cup [\textbf{t}_{N,k}(T)=T]) \cap \{\sup_{t \in [0,T]} \|\textbf{u}^{k }-\textbf{u}^{\ell}\|_{\textbf{H}^{s-3}} >\eta\}\right\}\\
&\quad\leq \mathbb{P} \{[\textbf{t}_{N,k}(T)<T]\} + \mathbb{P} \{[\textbf{t}_{N,k}(T)=T]\} +\mathbb{P}  \left\{\sup_{t \in [0,\textbf{t}_{N,k}(T)]} \|\textbf{u}^{k }-\textbf{u}^{\ell}\|_{\textbf{H}^{s-3}} >\eta\right\} \\
&\quad\leq   \mathbb{P} \left \{\sup_{t \in [0,\textbf{t}_{N,k}(T)]} \|\textbf{u}^{k }-\textbf{u}^{\ell}\|_{\textbf{H}^{s-3}} >\eta\right\}+ \frac{C(p,R,\textbf{u}_0,\chi,\kappa,\epsilon,T)}{N^2}.
\end{split}
\end{equation*}
By \eqref{3.38}, we get from the last estimate that
\begin{equation*}
\begin{split}
 \lim _{k\rightarrow\infty}\sup_{\ell\geq k} \mathbb{P}\left\{\sup_{t \in [0,T]} \|\textbf{u}^{k }-\textbf{u}^{\ell}\|_{\textbf{H}^{s-3}} >\eta\right\} \leq \frac{C(p,R,\textbf{u}_0,\chi,\kappa,\epsilon,T)}{N^2}.
\end{split}
\end{equation*}
Taking $N\rightarrow\infty$ in the last inequality, we deduce that
$$
\textbf{u}^{k }\rightarrow \textbf{u}~~~  \textrm{in}~ C([0,T];\textbf{H}^{s-3}(\mathbb{R}^2))~\textrm{in probability},~~ \textrm{as}~ k\rightarrow\infty.
$$
By the Riesz Theorem, it follows that there exists a subsequence of $\{\textbf{u}^{k }\}_{k\in \mathbb{N}^+}$, still denoted by itself, such that $\textbf{u}^{k }\rightarrow \textbf{u}$ in $C([0,T];\textbf{H}^{s-3}(\mathbb{R}^2))$ as $k\rightarrow\infty$, $\mathbb{P}$-a.s.

 {Now we prove that the limit $\textbf{u}\in  L^2(\Omega;L^\infty(0,T;\textbf{H}^{s}(\mathbb{R}^2) )$. Indeed, in view of the uniform bound in Lemma \ref{lem3}, we get by taking $p=2$ that
$
\sup_{k \in\mathbb{N}^+} \mathbb{E} \sup_{t\in [0,T]}\|\textbf{u}^{k}(t)\|^2_{\textbf{H}^s} \leq C,
$
which indicates that there exists a subsequence of $\{\textbf{u}^k\}_{k\geq1}$, still denoted by itself, such that $\textbf{u}^k\rightarrow \varrho$ weak star in $L^2(\Omega;L^\infty(0,T;\textbf{H}^{s}(\mathbb{R}^2) )$.
Since we have proved in Step 2 that $\textbf{u}^{k }\rightarrow \textbf{u}$ in $C([0,T];\textbf{H}^{s-3}(\mathbb{R}^2))$ as $k\rightarrow\infty$, $\mathbb{P}$-a.s., we infer that $\textbf{u}=\varrho \in L^2(\Omega;L^\infty(0,T;\textbf{H}^{s}(\mathbb{R}^2) )$.}
Finally,  since for each $k\geq1$, $ \textbf{u}^k $ are progressively measurable processes, so is $\textbf{u}$.
\end{proof}

\begin{lemma}\label{lem3.4}
Let $s>5$, $R>0$ and $\epsilon \in (0,1)$. For any  $T>0$, under the assumptions (H1)-(H3),  there exists a unique pathwise solution $ \textbf{u}^{R,\epsilon} \in L^2(\Omega;C([0,T];\textbf{H}^{s}(\mathbb{R}^2)))$ to \eqref{3.1} with cut-off functions, that is,
\begin{equation}\label{3.39}
\begin{split}
& \textbf{u}^{R,\epsilon}(t)-\textbf{u}^{ \epsilon}_0+\int_0^t \textbf{A}^\alpha\textbf{u}^{R,\epsilon}\textrm{d}r+\int_0^t\theta_R(\|\textbf{u}^{R,\epsilon}\|_{\textbf{W}^{1,\infty}}) \textbf{B}(\textbf{u}^{R,\epsilon})\textrm{d}r\\
&\quad =\int_0^t\theta_R(\|\textbf{u}^{R,\epsilon}\|_{\textbf{W}^{1,\infty}}) \textbf{F}^\epsilon (\textbf{u}^{R,\epsilon})\textrm{d}r+\int_0^t \textbf{G}  (r,\textbf{u}^{R,\epsilon})\textrm{d} \mathcal {W}_r,
 \end{split}
\end{equation}
for any $t\in [0,T]$, $\mathbb{P}$-a.s.
\end{lemma}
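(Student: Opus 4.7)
The plan is to obtain Lemma \ref{lem3.4} by passing to the limit $k \to \infty$ in the equation \eqref{3.3} satisfied by $\textbf{u}^{k,R,\epsilon}$, using the pathwise convergence $\textbf{u}^{k,R,\epsilon} \to \textbf{u}^{R,\epsilon}$ in $C([0,T]; \textbf{H}^{s-3}(\mathbb{R}^2))$ established in Lemma \ref{lem3..4}, together with the uniform bounds from Lemma \ref{lem3}. Since $s > 5$, the Sobolev embedding $\textbf{H}^{s-3}(\mathbb{R}^2) \hookrightarrow \textbf{W}^{1,\infty}(\mathbb{R}^2)$ holds, so the convergence in $C([0,T]; \textbf{H}^{s-3})$ yields
$$
\theta_R\bigl(\|\textbf{u}^{k,R,\epsilon}(\cdot)\|_{\textbf{W}^{1,\infty}}\bigr) \longrightarrow \theta_R\bigl(\|\textbf{u}^{R,\epsilon}(\cdot)\|_{\textbf{W}^{1,\infty}}\bigr) \quad \text{uniformly on } [0,T],\; \mathbb{P}\text{-a.s.}
$$
by the Lipschitz continuity of $\theta_R$, which was precisely the obstacle highlighted in Remark (b).

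First I would combine this scalar convergence with the strong convergence $\mathcal{J}_k \to \mathrm{Id}$ on $\textbf{H}^{s-3}(\mathbb{R}^2)$ and the pathwise convergence of $\textbf{u}^{k,R,\epsilon}$ to show that each deterministic term appearing in \eqref{3.3} converges, say, in $L^2(0,T; \textbf{H}^{s-3-2\alpha}(\mathbb{R}^2))$ almost surely; the nonlinear factors such as $\textbf{B}(\mathcal{J}_k \textbf{u}^{k,R,\epsilon})$ and $\textbf{F}^\epsilon(\mathcal{J}_k \textbf{u}^{k,R,\epsilon})$ are handled by the same Moser-type and mollifier estimates employed in the proof of Lemma \ref{lem1}, using the uniform $\textbf{H}^s$-bound of Lemma \ref{lem3} to upgrade the convergence from $\textbf{H}^{s-3}$. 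For the stochastic integral I would apply the BDG inequality to the difference and exploit the global Lipschitz property of $\textbf{G}$ from (H2), obtaining convergence in $L^2(\Omega; C([0,T]; \textbf{H}^{s-3}))$. This identifies $\textbf{u}^{R,\epsilon}$ as a solution of \eqref{3.39}, taking values in $\textbf{H}^{s-3}$ a.s.

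Next I would upgrade the regularity. From the uniform bound in Lemma \ref{lem3} and lower semicontinuity of the $\textbf{H}^s$-norm with respect to weak convergence in $\textbf{H}^{s-3}$, one obtains $\textbf{u}^{R,\epsilon} \in L^\infty(0,T; \textbf{H}^s(\mathbb{R}^2))$, $\mathbb{P}$-a.s., and weak continuity in $\textbf{H}^s$. To promote this to strong continuity in $\textbf{H}^s$, I would apply the It\^o formula to $\|\Lambda^s \textbf{u}^{R,\epsilon}(t)\|_{\textbf{L}^2}^2$ using equation \eqref{3.39}: all commutator and Moser-type estimates go through because $\theta_R(\|\textbf{u}^{R,\epsilon}\|_{\textbf{W}^{1,\infty}})$ controls the Lipschitz norm while the $\textbf{H}^s$-norm is a.s.\ finite. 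Continuity of $t \mapsto \|\textbf{u}^{R,\epsilon}(t)\|_{\textbf{H}^s}$ then follows from the resulting identity, and combined with weak continuity this gives $\textbf{u}^{R,\epsilon} \in C([0,T]; \textbf{H}^s(\mathbb{R}^2))$, $\mathbb{P}$-a.s.; the $L^2(\Omega;\cdot)$ integrability is inherited from Lemma \ref{lem3}.

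For uniqueness, given two solutions $\textbf{u}^{R,\epsilon}_1, \textbf{u}^{R,\epsilon}_2$ of \eqref{3.39}, I would apply the It\^o formula to $\|\textbf{u}^{R,\epsilon}_1 - \textbf{u}^{R,\epsilon}_2\|_{\textbf{H}^{s-3}}^2$ (working at the lower regularity $s-3$ to avoid commutator issues involving the fractional Laplacian without $\mathcal{J}_k$), bound the nonlinear differences by $C(R,\epsilon,\phi)\bigl(1+\|\textbf{u}^{R,\epsilon}_1\|_{\textbf{H}^s}^2+\|\textbf{u}^{R,\epsilon}_2\|_{\textbf{H}^s}^2\bigr)\|\textbf{u}^{R,\epsilon}_1 - \textbf{u}^{R,\epsilon}_2\|_{\textbf{H}^{s-3}}^2$ using arguments analogous to Lemma \ref{lem3.5} (and stopping times localising the $\textbf{H}^s$ norms if necessary), then close via the stochastic Gronwall lemma. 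The main obstacle will be handling the cut-off factor $\theta_R(\|\cdot\|_{\textbf{W}^{1,\infty}})$ in the difference equation, since it depends on the solution globally over $\mathbb{R}^2$; this is addressed by the Lipschitz continuity of $\theta_R$ combined with the embedding $\textbf{H}^{s-3} \hookrightarrow \textbf{W}^{1,\infty}$, so that $|\theta_R(\|\textbf{u}^{R,\epsilon}_1\|_{\textbf{W}^{1,\infty}}) - \theta_R(\|\textbf{u}^{R,\epsilon}_2\|_{\textbf{W}^{1,\infty}})| \leq C\|\textbf{u}^{R,\epsilon}_1 - \textbf{u}^{R,\epsilon}_2\|_{\textbf{H}^{s-3}}$, which produces exactly the quadratic term needed to close the Gronwall argument.
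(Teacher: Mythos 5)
Your overall strategy coincides with the paper's: pass to the limit $k\to\infty$ in \eqref{3.3} using the $\mathbb{P}$-a.s.\ convergence in $C([0,T];\textbf{H}^{s-3}(\mathbb{R}^2))$ from Lemma \ref{lem3..4} (with the embedding $\textbf{H}^{s-3}\hookrightarrow\textbf{W}^{1,\infty}$ for $s>5$ resolving the cut-off factor), recover $\textbf{u}^{R,\epsilon}\in L^\infty(0,T;\textbf{H}^{s})$ from the uniform bounds of Lemma \ref{lem3}, deduce weak continuity in $\textbf{H}^{s}$, and then upgrade to strong continuity via continuity of $t\mapsto\|\textbf{u}^{R,\epsilon}(t)\|_{\textbf{H}^{s}}$. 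The uniqueness sketch via a Gronwall argument at the $\textbf{H}^{s-3}$ level, with the cut-off difference controlled by the Lipschitz continuity of $\theta_R$ exactly as in the $\textbf{p}_{10}$ estimate of Lemma \ref{lem3.5}, is also sound.

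There is, however, one genuine gap in your strong-continuity step. You propose to ``apply the It\^o formula to $\|\Lambda^s\textbf{u}^{R,\epsilon}(t)\|_{\textbf{L}^2}^2$ using equation \eqref{3.39}.'' This is not licensed as stated: the drift in \eqref{3.39} does not take values in $\textbf{H}^{s}$ when $\textbf{u}^{R,\epsilon}$ does --- $\textbf{A}^\alpha\textbf{u}^{R,\epsilon}$ lives only in $\textbf{H}^{s-2}$ and $\textbf{B}(\textbf{u}^{R,\epsilon})$ only in $\textbf{H}^{s-1}$ --- so \eqref{3.39} is not an SDE in $\textbf{H}^{s}$ and the Hilbert-space It\^o formula (e.g.\ \cite[Theorem 4.32]{da2014stochastic}) does not apply directly to the $\textbf{H}^{s}$-norm. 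A variational (Gelfand-triple) It\^o formula is also not immediately available here because of the first-order transport term and the fractional dissipation with $\alpha$ possibly strictly less than $1$. The paper circumvents this by first convolving \eqref{3.39} with a second mollifier $\varrho_\eta$, so that the mollified equation \eqref{3.41} genuinely is an SDE in $\textbf{H}^{s}$; it then applies It\^o to $\|\varrho_\eta*\textbf{u}^{R,\epsilon}\|_{\textbf{H}^{s}}^2$, derives a H\"older-in-time moment bound uniformly in $\eta$ (localized by the stopping times $r^N$), passes $\eta\to0$ via Fatou, and concludes continuity of $t\mapsto\|\textbf{u}^{R,\epsilon}(t\wedge r^N)\|_{\textbf{H}^{s}}^2$ from the Kolmogorov--\v{C}entsov theorem. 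You need this (or an equivalent Friedrichs/Galerkin regularization with a commutator argument for the transport term) to make the norm-continuity step rigorous; once inserted, the rest of your argument goes through.
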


\begin{proof}  We use the notations $\mathbf{u}=\mathbf{u}^{ R,\epsilon}$, $
\mathbf{u}^{k, {\ell}} =\mathbf{u}^{k, R,\epsilon}
-\mathbf{u}^{l, R,\epsilon}$, $\mathbf{u}^{k} =\mathbf{u}^{k, R,\epsilon}$ and $ \widetilde{\textbf{F}}^{k}(\cdot) =\widetilde{\textbf{F}}^{k,R,\epsilon}(\cdot)$ for simplicity. By applying the almost surely convergence in Lemma \ref{lem3..4}, one can take the limit $k\rightarrow\infty$ to find that the limit $\textbf{u}$ satisfies \eqref{3.39} $\mathbb{P}$-a.s. 
It remains to prove that $\textbf{u} \in L^2(\Omega;C([0,T];\textbf{H}^{s}(\mathbb{R}^2)))$. Indeed, recalling that (cf. Lemma \ref{lem3} and Lemma \ref{lem3..4})
$
\textbf{u} \in L^\infty(0,T;\textbf{H}^{s}(\mathbb{R}^2))\cap  C([0,T];\textbf{H}^{s-3}(\mathbb{R}^2)) .
$
It then follows from \cite[Lemma 1.4]{temam2001navier} that $\textbf{u} \in  C_{\textrm{weak}}([0,T];\textbf{H}^{s}(\mathbb{R}^2))$, namely,  for any $r \in [0,T]$ and smooth function $\varphi\in C_0^\infty(\mathbb{R}^2)$, we have
\begin{equation}\label{2.40}
\begin{split}
\lim_{t\rightarrow r} (\textbf{u}(t),\varphi)_{\textbf{H}^{s},(\textbf{H}^{s})'}= (\textbf{u}(r),\varphi)_{\textbf{H}^{s},(\textbf{H}^{s})'}.
 \end{split}
\end{equation}
To prove the continuity of the map $t\mapsto \|\textbf{u}(t)\|_{\textbf{H}^{s}}$,  we  consider another spacial mollifier $\varrho_\eta $ with parameter $\eta> 0$, and apply the convolution operator $ \varrho_\eta* $ to \eqref{3.39} to find
\begin{equation}\label{3.41}
\begin{split}
& \mathrm{d}  \varrho_\eta*\textbf{u} + \textbf{A}^\alpha\varrho_\eta*\textbf{u} \textrm{d}t
+\theta_R(\|\textbf{u} \|_{\textbf{W}^{1,\infty}}) \varrho_\eta*\textbf{B}(\textbf{u} )\textrm{d}t\\
&\quad =\theta_R(\|\textbf{u} \|_{\textbf{W}^{1,\infty}})\varrho_\eta* \textbf{F}^\epsilon (\textbf{u} )\textrm{d}t+ \varrho_\eta*\textbf{G}  (t,\textbf{u} )\textrm{d} \mathcal {W}_t,
 \end{split}
\end{equation}
which can be viewed as a system of SDEs in Hilbert spaces. Utilizing the It\^{o} formula  in Hilbert space {(cf. \cite[Theorem 4.32]{da2014stochastic}) to $ \|\varrho_\eta*\textbf{u}^\epsilon\|_{\textbf{H}^{s} }^2$,} we get from \eqref{3.41} that
\begin{equation*}
\begin{split}
\left|\|\varrho_\eta*\textbf{u}(t_2) \|_{\textbf{H}^s}^{2}-\|\varrho_\eta*\textbf{u} (t_1) \|_{\textbf{H}^s}^{2}\right| &\leq   2\int_{t_1}^{t_2}\theta_R(\|\textbf{u} \|_{\textbf{W}^{1,\infty}})|(\varrho_\eta*\textbf{u} , \varrho_\eta*\textbf{B}(\textbf{u} ))_{\textbf{H}^s}|\textrm{d}t + \int_{t_1}^{t_2} \|\varrho_\eta*\textbf{G}  (t,\textbf{u} )\|_{L_2(\textbf{U};\textbf{H}^s)}^2 \textrm{d}t\\
&+2\int_{t_1}^{t_2}\theta_R(\|\textbf{u} \|_{\textbf{W}^{1,\infty}})(\varrho_\eta*\textbf{u} ,\varrho_\eta* \textbf{F}^\epsilon (\textbf{u} ))_{\textbf{H}^s}\textrm{d}t+2\int_{t_1}^{t_2} (\varrho_\eta*\textbf{u}^\epsilon, \varrho_\eta*\textbf{G}  (t,\textbf{u} )\textrm{d} \mathcal {W}_t)_{\textbf{H}^s}.
 \end{split}
\end{equation*}
For each $N>1$, we set
$$
r^N := \inf\left\{t>0;~\|\textbf{u}(t) \|_{\textbf{H}^s}> N\right\}.
$$
Then it follows from \eqref{3.9} that $r^N\rightarrow \infty$ as $N\rightarrow\infty$. By raising the $3^{th}$ power to the last inequality and taking the expectation, similar to Lemma \ref{lem3}, one can use the assumption on $f$ and the estimates \eqref{A.1} and \eqref{A.3} to derive that
\begin{equation*}
\begin{split}
&\mathbb{E}\left|\|\varrho_\eta*\textbf{u}(t_2\wedge r^N ) \|_{\textbf{H}^s}^{2}-\|\varrho_\eta*\textbf{u} (t_1\wedge r^N ) \|_{\textbf{H}^s}^{2}\right|^3\leq  C(N,R,T) |t_2-t_2|^\frac{3}{2}.
 \end{split}
\end{equation*}
Taking the limit as $\eta\rightarrow 0$,  we get from  {the Fatou Lemma} that
\begin{equation*}
\begin{split}
&\mathbb{E}\left|\| \textbf{u}(t_2\wedge r^N ) \|_{\textbf{H}^s}^{2}-\| \textbf{u} (t_1\wedge r^N ) \|_{\textbf{H}^s}^{2}\right|^3\leq  C(N,R,T) |t_2-t_2|^\frac{3}{2}.
 \end{split}
\end{equation*}
Therefore, the Kolmogorov-\v{C}entsov Continuity Theorem informs us that the process $t\mapsto \| \textbf{u}(t\wedge r^N ) \|_{\textbf{H}^s}^{2}$ has a  continuous version. By taking the limit as $N\rightarrow\infty$ and combining \eqref{2.40}, it holds  that $\textbf{u} \in C([0,T];\textbf{H}^s(\mathbb{R}^2))$, $\mathbb{P}$-a.s. This finishes the proof of Lemma \ref{lem3.4}.
\end{proof}

Before removing the cut-off functions  and constructing local maximal pathwise solutions to the system \eqref{3.1}, let us first establish the following uniqueness result.

\begin{lemma} \label{lem3.7}
Let $(\textbf{u}^\epsilon,\tau^\epsilon)$ and $(\textbf{v}^\epsilon,\bar{\tau}^\epsilon)$ be two local pathwise solutions in $H^s(\mathbb{R}^2)$ ($s>5$) to the system \eqref{3.1} (or \eqref{3.2}) with the same initial data $(n^\epsilon_0,c^\epsilon_0,u^\epsilon_0)$, where $\textbf{u}^\epsilon=(n^\epsilon,c^\epsilon, u ^\epsilon)$ and $\bar{\textbf{u}}^\epsilon=(\bar{n}^\epsilon,\bar{c}^\epsilon,\bar{ u }^\epsilon)$. Then
\begin{equation*}
\begin{split}
\mathbb{P}\{\textbf{u}^\epsilon(t)
=\bar{\textbf{u}}^\epsilon(t) ,~ \forall t\in [0,\tau^\epsilon \wedge \bar{\tau}^\epsilon]\}=1.
\end{split}
\end{equation*}
\end{lemma}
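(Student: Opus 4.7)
The plan is to prove pathwise uniqueness by an energy argument applied to the difference of the two solutions, localized by suitable stopping times. Set $\textbf{w}^\epsilon:=\textbf{u}^\epsilon-\bar{\textbf{u}}^\epsilon=(w^n,w^c,w^u)$ and subtract the two versions of \eqref{3.1} to obtain an SDE for $\textbf{w}^\epsilon$ with zero initial datum. Since $s>5$, the embedding $\textbf{H}^s(\mathbb{R}^2)\hookrightarrow \textbf{W}^{1,\infty}(\mathbb{R}^2)$ gives pointwise control on $n^\epsilon$, $\nabla c^\epsilon$, $\nabla u^\epsilon$ and their barred counterparts. I will therefore introduce, for each $N\geq 1$, the stopping time
\[
\tau_N^\epsilon:=\tau^\epsilon\wedge\bar{\tau}^\epsilon\wedge\inf\bigl\{t\geq 0:\ \|\textbf{u}^\epsilon(t)\|_{\textbf{H}^s}+\|\bar{\textbf{u}}^\epsilon(t)\|_{\textbf{H}^s}\geq N\bigr\},
\]
so that on $[0,\tau_N^\epsilon]$ every quantity that appears as a multiplier of $\textbf{w}^\epsilon$ is bounded by a deterministic constant depending on $N$ and $\epsilon$.

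Next I would apply the It\^o formula to $\|\textbf{w}^\epsilon(t)\|_{\textbf{L}^2}^2=\|w^n\|_{L^2}^2+\|w^c\|_{L^2}^2+\|w^u\|_{L^2}^2$ (using the cylindrical Wiener process in the $u$-component). The dissipation produces the non-negative quantities $\|\nabla w^n\|_{L^2}^2$, $\|\nabla w^c\|_{L^2}^2$ and $\|\dot{\Lambda}^\alpha w^u\|_{L^2}^2$, which will be used to absorb derivatives that appear after integrating the drift terms against $\textbf{w}^\epsilon$. For the convective terms one writes, for instance, $u^\epsilon\cdot\nabla n^\epsilon-\bar{u}^\epsilon\cdot\nabla\bar{n}^\epsilon=w^u\cdot\nabla n^\epsilon+\bar{u}^\epsilon\cdot\nabla w^n$; the second term is killed by $\mathrm{div}\,\bar{u}^\epsilon=0$, and the first is bounded by $\|\nabla n^\epsilon\|_{L^\infty}\|w^u\|_{L^2}\|w^n\|_{L^2}\leq C_N(\|w^u\|_{L^2}^2+\|w^n\|_{L^2}^2)$. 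The analogous analysis handles $u\cdot\nabla c$ and $(u\cdot\nabla)u$. The chemotactic divergence term is split as
\[
\mathrm{div}\bigl(n^\epsilon(\nabla c^\epsilon\ast\rho^\epsilon)-\bar{n}^\epsilon(\nabla\bar{c}^\epsilon\ast\rho^\epsilon)\bigr)=\mathrm{div}\bigl(w^n(\nabla c^\epsilon\ast\rho^\epsilon)\bigr)+\mathrm{div}\bigl(\bar{n}^\epsilon(\nabla w^c\ast\rho^\epsilon)\bigr),
\]
and after integration by parts the first piece gives $C_N\|w^n\|_{L^2}\|\nabla w^n\|_{L^2}$ while the second piece uses $\|\nabla w^c\ast\rho^\epsilon\|_{L^2}=\|w^c\ast\nabla\rho^\epsilon\|_{L^2}\leq C\epsilon^{-1}\|w^c\|_{L^2}$ to produce $C_N\epsilon^{-1}\|w^c\|_{L^2}\|\nabla w^n\|_{L^2}$; both are absorbed by $\tfrac12\|\nabla w^n\|_{L^2}^2$ at the cost of a $C_{N,\epsilon}\|\textbf{w}^\epsilon\|_{\textbf{L}^2}^2$ term. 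The reaction term $n^\epsilon-\bar{n}^\epsilon-((n^\epsilon)^2-(\bar{n}^\epsilon)^2)=w^n-w^n(n^\epsilon+\bar{n}^\epsilon)$ is trivially $\lesssim C_N\|w^n\|_{L^2}^2$, the consumption term $c^\epsilon(n^\epsilon\ast\rho^\epsilon)-\bar{c}^\epsilon(\bar{n}^\epsilon\ast\rho^\epsilon)$ is linear after adding and subtracting, and the forcing $n^\epsilon\nabla\phi\ast\rho^\epsilon-\bar{n}^\epsilon\nabla\phi\ast\rho^\epsilon$ contributes $\leq\|\nabla\phi\|_{L^\infty}(\|w^n\|_{L^2}^2+\|w^u\|_{L^2}^2)$.

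For the stochastic part, assumption (H2) with $s=0$ yields $\|f(t,u^\epsilon)-f(t,\bar{u}^\epsilon)\|_{L_2(U;L^2)}\leq C\|w^u\|_{L^2}$, so the It\^o correction contributes $C\|w^u\|_{L^2}^2$, and the BDG inequality controls the martingale term by $\tfrac12\mathbb{E}\sup_{r\leq t}\|w^u(r)\|_{L^2}^2+C\mathbb{E}\int_0^t\|w^u\|_{L^2}^2\,dr$ after a Cauchy inequality. Collecting everything, I obtain
\[
\mathbb{E}\sup_{r\in[0,t\wedge\tau_N^\epsilon]}\|\textbf{w}^\epsilon(r)\|_{\textbf{L}^2}^2\leq C_{N,\epsilon,\phi}\int_0^t\mathbb{E}\sup_{r\in[0,s\wedge\tau_N^\epsilon]}\|\textbf{w}^\epsilon(r)\|_{\textbf{L}^2}^2\,ds,
\]
and Gronwall's lemma forces $\textbf{w}^\epsilon\equiv 0$ on $[0,\tau_N^\epsilon]$, $\mathbb{P}$-a.s. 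Finally, since both $\tau^\epsilon$ and $\bar{\tau}^\epsilon$ are times up to which the solutions live in $\textbf{H}^s$, the sequence $\tau_N^\epsilon\nearrow\tau^\epsilon\wedge\bar{\tau}^\epsilon$, and a union of null sets yields the claim.

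The main obstacle I foresee is controlling the cross term $\mathrm{div}\bigl(\bar{n}^\epsilon(\nabla w^c\ast\rho^\epsilon)\bigr)$, where one derivative sits on the unknown of the $c$-equation rather than on $w^n$; here the $\epsilon$-regularisation is essential, since $\|\nabla w^c\ast\rho^\epsilon\|_{L^2}\leq\epsilon^{-1}\|w^c\|_{L^2}$ is the only way to transfer the derivative out of $w^c$ without invoking the parabolic gain in the $c$-equation. All other terms are either Lipschitz of order one or can be absorbed by the dissipation $\|\nabla w^n\|_{L^2}^2$, $\|\nabla w^c\|_{L^2}^2$ that is automatically produced on the left-hand side of the energy identity, so the weak dissipation $(-\Delta)^\alpha$ on the velocity with $\alpha\geq 1/2$ does not create difficulties at the $L^2$-level.
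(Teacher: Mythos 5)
Your proposal is correct and follows the same overall strategy as the paper's proof: form the difference, localize with stopping times controlled by the Sobolev norms of the two solutions, run an $L^2$ energy estimate on each component with the mollifier supplying the $\epsilon^{-1}$ gain needed for the cross term $\mathrm{div}\bigl(\bar{n}^\epsilon(\nabla w^c\ast\rho^\epsilon)\bigr)$, control the martingale part by BDG, and conclude with Gronwall. The one place where you genuinely diverge is the velocity convection term: you bound $(w^u,(w^u\cdot\nabla)u^\epsilon)_{L^2}$ directly by $\|\nabla u^\epsilon\|_{L^\infty}\|w^u\|_{L^2}^2$, which is legitimate here because your stopping time caps $\|u^\epsilon\|_{H^s}$ with $s>5$ and hence $\|\nabla u^\epsilon\|_{L^\infty}$; the paper instead distinguishes the cases $\alpha\in[\tfrac12,1)$ and $\alpha=1$ and, in the fractional case, estimates this term via the interpolation $\|\nabla u^\epsilon\|_{L^{2/\alpha}}\leq C\|\Delta^{\alpha/2}u^\epsilon\|_{L^2}^{2\alpha-1}\|\Delta^{(\alpha+1)/2}u^\epsilon\|_{L^2}^{2-2\alpha}$ together with the dissipation $\|(-\Delta)^{\alpha/2}w^u\|_{L^2}^2$. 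For smooth approximate solutions your shortcut is simpler and entirely adequate; the paper's more careful pairing is the version that survives when only low-regularity (energy-level) bounds are available, as in the pathwise uniqueness of the limiting weak solutions, so the two arguments buy robustness at different levels of regularity.
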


\begin{proof}
Since $\textbf{u}^\epsilon$ and $\bar{\textbf{u}}^\epsilon $ belong to $C([0,T];\textbf{H}^s(\mathbb{R}^2))$, $s>5$, $\mathbb{P}$-a.s., there exists a positive constant $M$ depending only on $\epsilon$ such that $\|\textbf{u}^\epsilon(0) \|_{\textbf{H}^3}=\|\bar{\textbf{u}}^\epsilon(0) \|_{\textbf{H}^3}=\|(n^\epsilon_0,c^\epsilon_0,u^\epsilon_0)\|_{\textbf{H}^3}\leq M$.  For all $J>2M$, one can define a sequence of stopping times
\begin{equation}\label{3.42}
\begin{split}
\textbf{t}_J^\epsilon:=   \inf \left\{t>0;~ \|\textbf{u}^\epsilon(t) \|_{\textbf{H}^3}^2
+\|\bar{\textbf{u}}^\epsilon(t) \|_{\textbf{H}^3}^2>J\right\}.
\end{split}
\end{equation}
Then, $\mathbb{P}\{\textbf{t}_J^\epsilon>0\}=1$. Since $\mathbb{E}(\sup_{t\in [0,T]}\|\textbf{u}^\epsilon (t)\|_{\textbf{H}^3}^2+\sup_{t\in [0,T]}\|\bar{\textbf{u}}^\epsilon (t)\|_{\textbf{H}^3}^2)<\infty$, we have $\mathbb{P}\{\liminf_{J\rightarrow\infty} \textbf{t}_J^\epsilon\geq \tau^\epsilon \wedge \bar{\tau}^\epsilon\}=1$.  Set $\bar{\bar{\textbf{u}}}^\epsilon:= (\bar{\bar{n}}^\epsilon,\bar{\bar{c}}^\epsilon,\bar{\bar{u}}^\epsilon)$ with
$
\bar{\bar{n}}^\epsilon=n^\epsilon-\bar{n}^\epsilon$, $\bar{\bar{c}}^\epsilon=c^\epsilon-\bar{c}^\epsilon$ and $\bar{\bar{u}}^\epsilon= u ^\epsilon-\bar{ u }^\epsilon $.
It remains to show that $\bar{\bar{\textbf{u}}}^{\epsilon}\equiv 0$, for all $ t \in [0, \tau^\epsilon \wedge \bar{\tau}^\epsilon]$, $\mathbb{P}$-a.s. Indeed, by virtue of the $n^\epsilon$-equations in \eqref{3.1}, we get
\begin{equation}\label{3.43}
\begin{split}
\textrm{d} \bar{\bar{n}}^{\epsilon} =&\Delta \bar{\bar{n}}^{\epsilon}\textrm{d}r-\left(
 u ^{\epsilon}\cdot \nabla n^{\epsilon}-
\bar{ u }^{\epsilon}\cdot \nabla \bar{n}^{\epsilon}\right)\textrm{d}r-\left(\textrm{div}\left(n^{\epsilon}(\nabla c^{\epsilon}*\rho^{\epsilon})\right) - \textrm{div}\left(\bar{n}^{\epsilon}(\nabla \bar{c}^{\epsilon} *\rho^{\epsilon})\right)\right)\textrm{d}r\\
&+  \bar{\bar{n}}^{\epsilon}\textrm{d}r-   \left( (n^{\epsilon} )^2 -(\bar{n}^{\epsilon} )^2\right) \textrm{d}r \\
:=& (\mathcal {A}_1+\cdot\cdot\cdot +  \mathcal {A}_5)\textrm{d}r.
\end{split}
\end{equation}
Applying the chain rule to $ \|\bar{\bar{n}}^{\epsilon}(t)\|_{L^2}^2$ and integrating by parts, we get from \eqref{3.43} that
\begin{equation}\label{3.44}
\begin{split}
 \|\bar{\bar{n}}^{\epsilon}(t)\|_{L^2}^2+ 2 \int_0^t\|\nabla \bar{\bar{n}}^{\epsilon}(r)\|_{L^2}^2 \textrm{d}r=  2\sum_{i=2}^5 \int_0^t(\bar{\bar{n}}^{\epsilon},  \mathcal {A}_i)_{L^2}\textrm{d}r.
\end{split}
\end{equation}
For  $\mathcal {A}_2$, using $(\bar{\bar{n}}^{\epsilon},
 \bar{\bar{u}} ^{\epsilon}\cdot \nabla \bar{n}^{\epsilon} )_{L^2}  =-(\bar{n}^{\epsilon},\bar{\bar{u}} ^{\epsilon} \cdot\nabla\bar{\bar{n}}^{\epsilon}
 )_{L^2} $ obtained by $\textrm{div} u^{\epsilon}=0$, we can derive that
\begin{equation}\label{x1}
\begin{split}
2(\bar{\bar{n}}^{\epsilon},\mathcal {A}_2)_{L^2} &\leq 2|(\bar{\bar{n}}^{\epsilon},
 \bar{\bar{u}} ^{\epsilon}\cdot \nabla \bar{n}^{\epsilon} )_{L^2} |+2|(\bar{\bar{n}}^{\epsilon},
u^{\epsilon}\cdot \nabla \bar{\bar{n}}^{\epsilon} )_{L^2} | \\
&=2|(\bar{n}^{\epsilon},\bar{\bar{u}} ^{\epsilon} \cdot\nabla\bar{\bar{n}}^{\epsilon})_{L^2} |\\
 & \leq 2 \|\bar{n}^{\epsilon}\|_{L^\infty}\|\bar{\bar{u}}^{\epsilon}\|_{L^2}\|\nabla\bar{\bar{n}}^{\epsilon}\|_{L^2} \\
 &\leq C\|\bar{n}^{\epsilon}\|_{H^2}^2\|\bar{\bar{u}}^{\epsilon}\|_{L^2}^2+\frac{1}{4}\|\nabla\bar{\bar{n}}^{\epsilon}\|_{L^2}^2,
\end{split}
\end{equation}
where the last inequality used the young inequality. For $\mathcal {A}_3$, we have
\begin{equation}\label{x2}
\begin{split}
2(\bar{\bar{n}}^{\epsilon},\mathcal {A}_3)&= 2 (\nabla\bar{\bar{n}}^{\epsilon},\bar{\bar{n}}^{\epsilon}(\nabla c^{\epsilon}*\rho^{\epsilon}))+2  (\nabla\bar{\bar{n}}^{\epsilon},\bar{n}^{\epsilon} ( \nabla \bar{\bar{c}}^{\epsilon} *\rho^{\epsilon}))\\
&\leq \|\nabla\bar{\bar{n}}^{\epsilon}\|_{L^2}\|\bar{\bar{n}}^{\epsilon}\|_{L^2}\| \nabla c^{\epsilon}*\rho^{\epsilon}\|_{L^\infty}+ \| \nabla\bar{\bar{n}}^{\epsilon}\|_{L^2}\|\bar{n}^{\epsilon}\|_{L^2}\| \nabla \bar{\bar{c}}^{\epsilon} *\rho^{\epsilon}\|_{L^\infty}\\
&\leq C(\epsilon)\left(\|\nabla\bar{\bar{n}}^{\epsilon}\|_{L^2}\|\bar{\bar{n}}^{\epsilon}\|_{L^2}\|   c^{\epsilon} \|_{L^2}+ \| \nabla\bar{\bar{n}}^{\epsilon}\|_{L^2}\|\bar{n}^{\epsilon}\|_{L^2}\|  \bar{\bar{c}}^{\epsilon} \|_{L^2}\right)\\
&\leq C(\epsilon)\left(\|\bar{\bar{n}}^{\epsilon}\|_{L^2}^2\|  c^{\epsilon} \|_{L^2}^2+\|\bar{n}^{\epsilon}\|_{L^2}^2\| \bar{\bar{c}}^{\epsilon} \|_{L^2}^2\right)+\frac{1}{4}\|\nabla\bar{\bar{n}}^{\epsilon}\|_{L^2}^2,
\end{split}
\end{equation}
where the last inequality used the Young inequality and the second inequality used the following imortant facts of Friedrichs mollifier (see, e.g., \cite[identity (2.2)]{li2021stochastic}):
$$
\| \nabla \bar{\bar{c}}^{\epsilon} *\rho^{\epsilon}\|_{L^\infty} \leq C\| \bar{\bar{c}}^{\epsilon} *\rho^{\epsilon}\|_{W^{1,\infty}}\leq C\| \bar{\bar{c}}^{\epsilon} *\rho^{\epsilon}\|_{H^{3}}\leq \frac{C}{\epsilon^3}\| \bar{\bar{c}}^{\epsilon}  \|_{L^2},
$$
and
$$
\| \nabla c^{\epsilon}*\rho^{\epsilon}\|_{L^\infty}\leq C \| c^{\epsilon} *\rho^{\epsilon}\|_{H^{3}}\leq \frac{C}{\epsilon}\| c^{\epsilon}  \|_{H^2}.
$$
Apparently, we have
\begin{equation}\label{x3}
\begin{split}
(\bar{\bar{n}}^{\epsilon},\mathcal {A}_4)&= \| \bar{\bar{n}}^{\epsilon}\|_{L^2}^2.
\end{split}
\end{equation}
For $\mathcal {A}_5$, we have
\begin{equation}\label{x4}
\begin{split}
\|(\bar{\bar{n}}^{\epsilon}, \mathcal {A}_5)_{L^2}\|_{L^2}&\leq C  \| n^{\epsilon} + \bar{n}^{\epsilon}  \|_{L^\infty}\| \bar{\bar{n}}^{\epsilon}\|_{L^2}^2\leq C\| \bar{\bar{n}}^{\epsilon}\|_{L^2}^2(\| n^{\epsilon}\|_{H^2}+\| \bar{n}^{\epsilon}\|_{H^2}).
\end{split}
\end{equation}
Putting the estimates \eqref{x1}-\eqref{x4} into \eqref{3.44} and using the definition of $\textbf{t}_J^\epsilon$, we arrive at
\begin{equation} \label{3.48}
\begin{split}
 \mathbb{E}\sup_{r\in [0,\textbf{t}_J^\epsilon\wedge t]}\|\bar{\bar{n}}^{\epsilon}(r)\|_{L^2}^2 \leq C(\epsilon,J)\mathbb{E}\int_0^{\textbf{t}_J^\epsilon \wedge t}\|(\bar{\bar{n}}^{\epsilon},\bar{\bar{c}}^{\epsilon},\bar{\bar{u}}^{\epsilon})\|_{\textbf{L}^{2} } ^2\textrm{d}r,\quad \forall t>0.
\end{split}
\end{equation}
For the $c^\epsilon$-equation, one can make the similar estimate to \eqref{3.48} and deduce that
\begin{equation} \label{3.49}
\begin{split}
  \mathbb{E}\sup_{r\in [0,\textbf{t}_J^\epsilon \wedge t]}\|\bar{\bar{c}}^{\epsilon}(r)\|_{L^2}^2 \leq C(\epsilon,J )\mathbb{E}\int_0^{\textbf{t}_J^\epsilon\wedge t}\|(\bar{\bar{n}}^{\epsilon},\bar{\bar{c}}^{\epsilon},\bar{\bar{u}}^{\epsilon})\|_{\textbf{L}^{2}} ^2\textrm{d}r.
\end{split}
\end{equation}
The discussion of the $u^\epsilon$-equation will be decomposed into two cases.

\textbf{Caes 1: $\alpha\in [\frac{1}{2},1)$.} According to the fluid equations in \eqref{3.1}, we have
\begin{equation*}
\begin{split}
\textrm{d}  \bar{\bar{u}}^{\epsilon}(t)=& - (-\Delta)^\alpha  \bar{\bar{u}}^{\epsilon}\textrm{d}t-\nabla(P-\bar{P}) \textrm{d}t+ (\bar{\bar{n}}^{\epsilon}\nabla \phi)*\rho^{\epsilon}\textrm{d}t -\left[  ( u ^{\epsilon}\cdot \nabla) u ^{\epsilon}- (\bar{ u }^{\epsilon}\cdot \nabla) \bar{ u }^{\epsilon}\right]\textrm{d}t\\
& + \left(   f(t, u ^{\epsilon})-     f(t,\bar{ u }^{\epsilon})\right)\mathrm{d} W_t\\
:= &- (-\Delta)^\alpha  \bar{\bar{u}}^{\epsilon}\textrm{d}t-\nabla(P-\bar{P}) \textrm{d}t+(\mathcal {B}_1+\mathcal {B}_2)\textrm{d}t+ \mathcal {B}_3 \textrm{d} W_t.
\end{split}
\end{equation*}
Then by applying the It\^{o} formula  to  $ \|\bar{\bar{u}}^{\epsilon}(t)\|_{L^2}^2$ and using the divergence-free condition $\textrm{div} \bar{\bar{ u }}^{\epsilon}=0$, we get
\begin{equation}\label{3.50}
\begin{split}
&\|\bar{\bar{u}}^{\epsilon}(t)\|_{L^2}^2+ 2\int_0^t\|(-\Delta)^\frac{\alpha}{2}\bar{\bar{u}}^\epsilon\|_{L^2}^2 \textrm{d}r\\
&\quad =\int_0^t \left(2 ( \bar{\bar{u}}^{\epsilon},\mathcal {B}_1)_{L^2}+2 ( \bar{\bar{u}}^{\epsilon},\mathcal {B}_2)_{L^2}  + \|\mathcal {B}_3\|_{L_2(U; L^2)}^2\right) \textrm{d}r +2\int_0^t( \bar{\bar{u}}^{\epsilon},\mathcal {B}_3 \textrm{d} W_r)_{L^2}.
\end{split}
\end{equation}
For  $\mathcal {B}_1$, we use the property $\|f*\rho^{\epsilon}\|_{L^2}\leq C\| \rho^{\epsilon}\|_{L^2}  $ to obtain
\begin{align*}
 2( \bar{\bar{u}}^{\epsilon},\mathcal {B}_1)_{L^2}&\leq C \|\bar{\bar{u}}^{\epsilon} \|_{L^2}\|(\bar{\bar{n}}^{\epsilon}\nabla \phi)*\rho^{\epsilon}\|_{L^2} \leq C \|\nabla\phi\|_{L^\infty}(\|\bar{\bar{u}}^{\epsilon} \|_{L^2}^2+\| \bar{\bar{n}}^{\epsilon} \|_{L^2}^2).
\end{align*}
For  $\mathcal {B}_2$, we get from the condition $\textrm{div} \bar{ u }^{\epsilon}=0$ and  inequality $ \|\nabla   u^{\epsilon}\|_{\frac{2}{\alpha}}\leq C \|\Delta^{\frac{\alpha}{2}}u^{\epsilon}\|_{L^2}^{2\alpha -1}\|\Delta^{\frac{\alpha+1}{2}}u^{\epsilon}\|_{L^2}^{2-2\alpha }  $, $\alpha\in[1/2,1)$ that
\begin{equation*}
\begin{split}
2( \bar{\bar{u}}^{\epsilon},\mathcal {B}_2)_{L^2}&= 2(\bar{\bar{u}}^{\epsilon}, (\bar{\bar{u}}^{\epsilon}\cdot \nabla)  u ^{\epsilon})_{L^2}\\
&\leq C \|\bar{\bar{u}}^{\epsilon}\|_{L^{ \frac{2}{1-\alpha}}}   \|\nabla   u^{\epsilon}\|_{\frac{2}{\alpha}} \|\bar{\bar{ u }}^{\epsilon}\| _{L^2}\\
&\leq C\|\Delta^{\frac{\alpha}{2}}\bar{\bar{u}}^{\epsilon}\|_{L^2}  \|\Delta^{\frac{\alpha}{2}}u^{\epsilon}\|_{L^2}^{2\alpha -1}\|\Delta^{\frac{\alpha+1}{2}}u^{\epsilon}\|_{L^2}^{2-2\alpha }   \|\bar{\bar{ u }}^{\epsilon}\| _{L^2}\\
&\leq C\|\Delta^{\frac{\alpha}{2}}\bar{\bar{u}}^{\epsilon}\|_{L^2}  \|\Delta^{\frac{\alpha}{2}}u^{\epsilon}\|_{H^1} \|\bar{\bar{ u }}^{\epsilon}\| _{L^2}\\
& \leq C  \|\Delta^{\frac{\alpha}{2}}u^{\epsilon}\|_{H^1}^2 \|\bar{\bar{ u }}^{\epsilon}\| _{L^2}^2+ \|\Delta^{\frac{\alpha}{2}}\bar{\bar{u}}^{\epsilon}\|_{L^2}^2.
\end{split}
\end{equation*}
By using the assumption on $f$, it is clear that $
 \|\mathcal {B}_3\|_{L_2(U; L^2)}^2 \leq C \| \bar{\bar{u}}^{\epsilon}\|_{L^2}^2$. Therefore, we get from \eqref{3.50} that
\begin{equation} \label{x5}
\begin{split}
&\|\bar{\bar{u}}^{\epsilon}(t)\|_{L^2}^2+ \int_0^t\|(-\Delta)^\frac{\alpha}{2}\bar{\bar{u}}^\epsilon\|_{L^2}^2 \textrm{d}r =\int_0^t (1+\|\Delta^{\frac{\alpha}{2}}u^{\epsilon}\|_{H^1}^2)\left(  \|\bar{\bar{u}}^{\epsilon} \|_{L^2}^2+\| \bar{\bar{n}}^{\epsilon} \|_{L^2}^2 \right) \textrm{d}r +2\int_0^t( \bar{\bar{u}}^{\epsilon},\mathcal {B}_3 \textrm{d} W_r)_{L^2}.
\end{split}
\end{equation}
By using the BDG inequality and the assumption on $f$, we have
\begin{equation*}
\begin{split}
\mathbb{E}\sup_{r\in [0, t]}\bigg|2\int_0^r( \bar{\bar{u}}^{\epsilon},\mathcal {B}_3 \textrm{d} W_r)_{L^2}\bigg|&\leq C \mathbb{E} \bigg(\int_0^t\|\bar{\bar{u}}^{\epsilon}\|_{L^2}^2\|\mathcal {B}_3\|_{L_2(U;L^2)}^2 \textrm{d}r\bigg)^{1/2}\\
&\leq  \frac{1}{2}\mathbb{E} \sup_{r\in [0, t]}\|\bar{\bar{u}}^{\epsilon}(r)\|_{L^2}^2+ C\int_0^t\|\bar{\bar{u}}^{\epsilon}(r)\|_{L^2}^2\textrm{d}r.
\end{split}
\end{equation*}
By taking the supremum on both sides of \eqref{x5} and then expectation, we get
\begin{equation*}
\begin{split}
&\mathbb{E} \sup_{r\in [0, t]}\|\bar{\bar{u}}^{\epsilon}(r)\|_{L^2}^2+ 2 \mathbb{E}\int_0^t\|(-\Delta)^\frac{\alpha}{2}\bar{\bar{u}}^\epsilon\|_{L^2}^2 \textrm{d}r\leq\mathbb{E}\int_0^t (1+\|\Delta^{\frac{\alpha}{2}}u^{\epsilon}\|_{H^1}^2)\left(  \|\bar{\bar{u}}^{\epsilon} \|_{L^2}^2+\| \bar{\bar{n}}^{\epsilon} \|_{L^2}^2 \right) \textrm{d}r .
\end{split}
\end{equation*}
Since $\alpha<1$, by replacing $t$ with $\textbf{t}_J^\epsilon \wedge t$ in the last estimate, we get from the definition of $\textbf{t}_J^\epsilon$ that
\begin{equation} \label{x6}
\begin{split}
&\mathbb{E} \sup_{r\in [0, \textbf{t}_J^\epsilon \wedge t]}\|\bar{\bar{u}}^{\epsilon}(r)\|_{L^2}^2+ 2 \mathbb{E}\int_0^{\textbf{t}_J^\epsilon \wedge t}\|(-\Delta)^\frac{\alpha}{2}\bar{\bar{u}}^\epsilon\|_{L^2}^2 \textrm{d}r\leq C(J)\mathbb{E}\int_0^{\textbf{t}_J^\epsilon \wedge t} \left(  \|\bar{\bar{u}}^{\epsilon} \|_{L^2}^2+\| \bar{\bar{n}}^{\epsilon} \|_{L^2}^2 \right) \textrm{d}r .
\end{split}
\end{equation}
Putting \eqref{3.48}, \eqref{3.49} and \eqref{x6} together, we obtain
\begin{equation}  \label{x7}
\begin{split}
&\mathbb{E} \sup_{r\in [0, \textbf{t}_J^\epsilon \wedge t]}\|\bar{\bar{\textbf{u}}}^{\epsilon}(r)\|_{L^2}^2 \leq C(\epsilon,J)\mathbb{E}\int_0^{\textbf{t}_J^\epsilon \wedge t} \|\bar{\bar{\textbf{u}}}^{\epsilon}(r)\|_{L^2}^2 \textrm{d}r.
\end{split}
\end{equation}
By applying the Gronwall inequality to \eqref{x7}, we get
\begin{equation*}
\begin{split}
\mathbb{E} \sup_{r\in [0, \textbf{t}_J^\epsilon \wedge t]}\|\bar{\bar{\textbf{u}}}^{\epsilon}(r)\|_{L^2}^2=0,\quad \forall t> 0,
\end{split}
\end{equation*}
which implies that $\bar{\bar{\textbf{u}}}^{\epsilon} (t)= 0$, for all $ t \in [0,\textbf{t}_J^\epsilon\wedge \tau^\epsilon \wedge \bar{\tau}^\epsilon]$. Sending $J\rightarrow+\infty$ in the last identity leads to the desired result.

\textbf{Caes 2: $\alpha=1$.} The proof  is similar to Case 1, and the only difference is the estimate for $( \bar{\bar{u}}^{\epsilon},\mathcal {B}_2)_{L^2}$. Indeed, it follows from the H\"{o}lder inequality that
$
2( \bar{\bar{u}}^{\epsilon},\mathcal {B}_2)_{L^2} \leq C \|\bar{\bar{u}}^{\epsilon}\|_{L^2}^2 \|\nabla u ^{\epsilon}\|_{L^\infty}\leq C \|\bar{\bar{u}}^{\epsilon}\|_{L^2}^2 \| u ^{\epsilon}\|_{H^3},
$
which together with the same argument in Case 1 lead to \eqref{x7}. The proof of Lemma \ref{lem3.7} is completed.
\end{proof}

\begin{lemma}  \label{lem3.8}
Under the assumptions (H1)-(H3), the system \eqref{3.1} admits a unique local strong pathwise solution. More precisely, there exists an almost surely positive  stopping time $\widetilde{\textbf{t}^\epsilon}$ and a triple $\textbf{u}^\epsilon=(n^\epsilon,c^\epsilon, u ^\epsilon) \in C([0,\widetilde{\textbf{t}^\epsilon}); \textbf{H}^s(\mathbb{R}^2))$, $s>5$, such that the equation
\begin{equation*}
\begin{split}
\textbf{u}^{\epsilon}(t)-\textbf{u}^{ \epsilon}_0+\int_0^t \textbf{A}^\alpha\textbf{u}^{\epsilon}\textrm{d}r+\int_0^t \textbf{B}(\textbf{u}^{\epsilon})\textrm{d}r=\int_0^t\textbf{F}^\epsilon (\textbf{u}^{\epsilon})\textrm{d}r+\int_0^t \textbf{G}  (r,\textbf{u}^{\epsilon})\textrm{d} \mathcal {W}_r
 \end{split}
\end{equation*}
holds for all $t\in [0,\widetilde{\textbf{t}^\epsilon})$, $\mathbb{P}$-a.s.
\end{lemma}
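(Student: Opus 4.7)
The plan is to remove the cut-off $\theta_R$ from the globally defined solutions $\textbf{u}^{R,\epsilon}$ of \eqref{3.39} (Lemma \ref{lem3.4}) by localizing in time. More precisely, since $s>5$, the Sobolev embedding $\textbf{H}^s(\mathbb{R}^2) \hookrightarrow \textbf{W}^{1,\infty}(\mathbb{R}^2)$ holds, so I can introduce the stopping time
\begin{equation*}
\textbf{t}_R^\epsilon := \inf\left\{t \geq 0;~ \|\textbf{u}^{R,\epsilon}(t)\|_{\textbf{W}^{1,\infty}} \geq R\right\} \wedge T.
\end{equation*}
Because $\textbf{u}^{R,\epsilon} \in C([0,T];\textbf{H}^s(\mathbb{R}^2))$ $\mathbb{P}$-a.s.\ by Lemma \ref{lem3.4} and $\textbf{u}^\epsilon_0 = \textbf{u}^{R,\epsilon}(0)$ is fixed with $\|\textbf{u}^\epsilon_0\|_{\textbf{W}^{1,\infty}} < \infty$, for any $R$ with $R > 2\|\textbf{u}^\epsilon_0\|_{\textbf{W}^{1,\infty}}$ the path continuity implies $\mathbb{P}\{\textbf{t}_R^\epsilon > 0\} = 1$.

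The key observation is that on $[0, \textbf{t}_R^\epsilon)$ we have $\|\textbf{u}^{R,\epsilon}(t)\|_{\textbf{W}^{1,\infty}} < R$, hence $\theta_R(\|\textbf{u}^{R,\epsilon}(t)\|_{\textbf{W}^{1,\infty}}) \equiv 1$ on this interval. Plugging this into \eqref{3.39}, the process $\textbf{u}^{R,\epsilon}$ restricted to $[0, \textbf{t}_R^\epsilon)$ satisfies exactly the original regularized system \eqref{3.1} (equivalently \eqref{3.2}) without the cut-off. Thus $(\textbf{u}^{R,\epsilon}|_{[0, \textbf{t}_R^\epsilon)}, \textbf{t}_R^\epsilon)$ is a local pathwise solution.

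Next, I verify that the family $\{\textbf{t}_R^\epsilon\}_{R > 2\|\textbf{u}^\epsilon_0\|_{\textbf{W}^{1,\infty}}}$ is monotone nondecreasing and the solutions are consistent: if $R_1 < R_2$, then on $[0, \textbf{t}_{R_1}^\epsilon)$ both $\textbf{u}^{R_1,\epsilon}$ and $\textbf{u}^{R_2,\epsilon}$ solve \eqref{3.1} with the same initial datum, and applying the uniqueness result of Lemma \ref{lem3.7} (which holds for any two local pathwise solutions to \eqref{3.1}), one obtains $\textbf{u}^{R_1,\epsilon} \equiv \textbf{u}^{R_2,\epsilon}$ on $[0, \textbf{t}_{R_1}^\epsilon)$. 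A bootstrap using the continuity of $\textbf{u}^{R_2,\epsilon}$ and the definition of $\textbf{t}_R^\epsilon$ then yields $\textbf{t}_{R_1}^\epsilon \leq \textbf{t}_{R_2}^\epsilon$ $\mathbb{P}$-a.s.

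I can then define
\begin{equation*}
\widetilde{\textbf{t}^\epsilon} := \lim_{R \to \infty} \textbf{t}_R^\epsilon = \sup_{R > 2\|\textbf{u}^\epsilon_0\|_{\textbf{W}^{1,\infty}}} \textbf{t}_R^\epsilon,
\end{equation*}
which is an a.s.\ positive stopping time, and define $\textbf{u}^\epsilon(t) := \textbf{u}^{R,\epsilon}(t)$ on $[0, \textbf{t}_R^\epsilon)$; consistency guarantees this is well-defined on $[0, \widetilde{\textbf{t}^\epsilon})$ and belongs to $C([0, \widetilde{\textbf{t}^\epsilon}); \textbf{H}^s(\mathbb{R}^2))$. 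Passing to the limit in the integral identity \eqref{3.39} over $[0, \textbf{t}_R^\epsilon)$ (where $\theta_R \equiv 1$) delivers the desired identity on $[0, \widetilde{\textbf{t}^\epsilon})$. Uniqueness of the resulting $(\textbf{u}^\epsilon, \widetilde{\textbf{t}^\epsilon})$ follows directly from Lemma \ref{lem3.7}. The main technical subtlety I anticipate is the verification that consistency of the $\{\textbf{u}^{R,\epsilon}\}_R$ upgrades to monotonicity of $\{\textbf{t}_R^\epsilon\}_R$ and that the supremum $\widetilde{\textbf{t}^\epsilon}$ is a genuine stopping time; this requires carefully invoking path continuity and the right-continuity of the filtration, but poses no fundamental difficulty.
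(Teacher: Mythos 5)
Your proposal is correct and follows essentially the same route as the paper: both exploit that the cut-off $\theta_R$ is inactive up to a stopping time controlled by the solution's size (you stop when $\|\textbf{u}^{R,\epsilon}\|_{\textbf{W}^{1,\infty}}$ reaches $R$; the paper stops when $\|\textbf{u}^{R,\epsilon}\|_{\textbf{H}^s}$ reaches $2M$ and then fixes $R_0>C_{\textrm{emb}}M$), so that $\textbf{u}^{R,\epsilon}$ restricted to that interval solves \eqref{3.1}, and both then invoke the uniqueness of Lemma \ref{lem3.7} to glue and extend. The only real difference is in how the terminal time is produced — you take the monotone limit of the concrete stopping times $\textbf{t}_R^\epsilon$ (note the $\wedge T$ caps this at $T$, so you should also send $T\to\infty$ if $\widetilde{\textbf{t}^\epsilon}$ is to serve as the maximal existence time needed in Lemma \ref{lem3.9}), whereas the paper defines $\widetilde{\textbf{t}^\epsilon}$ abstractly as the essential supremum over the set of all admissible stopping times — and this is a cosmetic rather than substantive distinction.
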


\begin{proof}
 Let $\textbf{u}^{R,\epsilon}$ be the global pathwise solution to \eqref{3.2} constructed in Lemma \ref{lem3.4}.  Note that for initial data $\textbf{u}^{R,\epsilon}(0)$, there exists a constant $M>0$ such that
 $
 \|\textbf{u}^{R,\epsilon}(0)\|_{\textbf{H}^s}\leq M.
 $
Denote by $C_{\textrm{emb}}>0$ the embedding constant $\| \cdot\|_{W^{1,\infty}} \leq C_{\textrm{emb}}\| \cdot\|_{H^s}$. Define
 $$
 \textbf{t}^\epsilon := \inf\left\{t>0;~ \|\textbf{u}^{R,\epsilon}(t)\|_{\textbf{H}^s}>2M\right\}.
 $$
It is clear that $\mathbb{P}\{\textbf{t}^\epsilon>0\}=1$, and for any $t\in [0,\textbf{t}^\epsilon]$
$$
\|\textbf{u}^{R,\epsilon}(t)\|_{\textbf{W}^{1,\infty}} \leq C_{\textrm{emb}}\|\textbf{u}^{R,\epsilon}(t)\|_{\textbf{H}^s} \leq C_{\textrm{emb}} M.
$$
Therefore,
$$
\theta_R(\|\textbf{u}^{R,\epsilon}\|_{W^{1,\infty}})\equiv1,~~\textrm{for all}~ R>C_{\textrm{emb}} M.
$$
By choosing a fixed $R_0>0$ large enough, the process $\textbf{u}^\epsilon:= \textbf{u}^{R_0,\epsilon}$ is a local pathwise solution to \eqref{3.1} or \eqref{3.2} over the interval $[0,\textbf{t}^\epsilon]$.

To extend the solution $  \textbf{u} ^\epsilon $ to a maximal existence time $\widetilde{\textbf{t}^\epsilon}$, we denote by $\textbf{T}^\epsilon$ the set of all strictly positive stopping times with respect to solutions starting from $(n^\epsilon_0,c^\epsilon_0,u^\epsilon_0)$. Clearly, $\textbf{T}^\epsilon\neq \emptyset$ (since $\textbf{t}^\epsilon \in \textbf{T}^\epsilon$), and for any $
\textbf{t}_1, \textbf{t}_2 \in \textbf{T}^\epsilon \Rightarrow \textbf{t}_1 \vee \textbf{t}_2 \in \textbf{T}^\epsilon$, $\textbf{t}_1 \wedge \textbf{t}_2 \in \textbf{T}^\epsilon.$
Define
\begin{equation} \label{3.55}
\begin{split}
\widetilde{\textbf{t}^\epsilon}:=  \operatorname{ess} \sup\{ \textbf{t};~\textbf{t}\in \textbf{T}^\epsilon\},
\end{split}
\end{equation}
which is strictly positive $\mathbb{P}$-a.s., and there is an increasing sequence $\{\textbf{t}_r\} \subset \textbf{T}^\epsilon$ such that $\lim _{r\rightarrow \infty} \textbf{t}_r=\widetilde{\textbf{t}^\epsilon}$. Setting $\textbf{u}^\epsilon := \textbf{u}^\epsilon|_{[0,\textbf{t}_r]}$, we infer from the uniqueness result established in Lemma \ref{lem3.7} that $\textbf{u}^\epsilon$ is a solution defined on $\cup_{r>0} [0, \textbf{t}_r]$. For each $L \in \mathbb{R}_+$, define
$
b_L := \widetilde{\textbf{t}^\epsilon}\wedge \inf \left\{t \in[0, T] \mid\|\textbf{u}^\epsilon(t)\|_{\textbf{W}^{1,\infty}} \geq L\right\},
$
which is a sequence of positive stopping times if $L>M$. Then the quadruple $(\textbf{u}^\epsilon , \textbf{t}_R^{'})$, with $\textbf{t}_L^{'}=\textbf{t}_L \vee b_L$, provides a local pathwise solution to \eqref{3.1}. Assume that $\mathbb{P}(\textbf{t}_L^{'}=\widetilde{\textbf{t}^\epsilon}<T)>0$, then the solution starting from $\textbf{u}^\epsilon(\textbf{t}_L^{'}) $ can be uniquely extended to $[0, \textbf{t}_L^{'}+\sigma]$ for some strictly positive stopping time $\sigma$, which implies that
$
\textbf{t}_L^{'}+\sigma \in \textbf{T}^\epsilon$ and $ \mathbb{P}^{\epsilon}(\widetilde{\textbf{t}^\epsilon} <\textbf{t}_L^{'}+\sigma)>0,
$
which contradicts to the maximality of $b_L $ in \eqref{3.55}, and we infer that $\textbf{t}_L^{'}\rightarrow\widetilde{\textbf{t}^\epsilon}$ as $L\rightarrow\infty$, and hence
$\sup _{t \in[0, \textbf{t}_L^{'}]}\|\textbf{u}^\epsilon(t)\|_{\textbf{W}^{1,\infty}} \geq L$  on $[\widetilde{\textbf{t}^\epsilon}<T]$. The proof of Lemma \ref{lem3.8} is completed.
\end{proof}

\begin{lemma} \label{lem3.9}
The local pathwise solution $ \textbf{u} ^\epsilon $  to the regularized system \eqref{3.1} constructed in Lemma \ref{lem3.8} exists globally, namley, $\mathbb{P}\{\widetilde{\textbf{t}^\epsilon}=+\infty\}=1$.
\end{lemma}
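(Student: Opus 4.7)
The plan is to argue by contradiction. Suppose $\mathbb{P}\{\widetilde{\textbf{t}^\epsilon} \leq T\} > 0$ for some $T > 0$; then the construction in Lemma \ref{lem3.8} forces $\sup_{t \in [0, \widetilde{\textbf{t}^\epsilon})} \|\textbf{u}^\epsilon(t)\|_{\textbf{W}^{1,\infty}} = +\infty$ on that event. For each $N \in \mathbb{N}$ I will introduce the stopping time
\begin{equation*}
\tau_N^\epsilon := \inf\left\{t \geq 0 : \|\textbf{u}^\epsilon(t)\|_{\textbf{W}^{1,\infty}} \geq N \right\} \wedge \widetilde{\textbf{t}^\epsilon} \wedge T,
\end{equation*}
and aim to produce a bound of the form $\mathbb{E}\sup_{t \in [0, \tau_N^\epsilon]} \|\textbf{u}^\epsilon(t)\|_{\textbf{H}^s}^2 \leq C(\epsilon, T, \textbf{u}_0^\epsilon)$ that is uniform in $N$. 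Combined with the blow-up alternative this will force $\tau_N^\epsilon \to T$ almost surely as $N \to \infty$, contradicting the assumption.

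First I would derive basic componentwise a priori estimates. The $c^\epsilon$-equation contains only the nonpositive sink $-c^\epsilon(n^\epsilon \ast \rho^\epsilon)$ together with divergence-free transport and Laplacian diffusion, so iterating an $L^p$-energy identity and letting $p \to \infty$ yields the pointwise bound $\|c^\epsilon(t)\|_{L^\infty} \leq \|c_0\|_{L^\infty}$ almost surely. Integrating the $n^\epsilon$-equation and exploiting the logistic sink $n^\epsilon - (n^\epsilon)^2$ provides a uniform $L^1(\mathbb{R}^2)$ bound, while testing with $n^\epsilon$ and using the mollifier inequality $\|\nabla c^\epsilon \ast \rho^\epsilon\|_{L^\infty} \leq C(\epsilon)\|c^\epsilon\|_{L^2}$, so that the cubic dissipation absorbs the drift after a Young inequality, gives $L^\infty_t L^2_x$ and $L^2_t H^1_x$ control of $n^\epsilon$. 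The It\^o formula and the BDG inequality, together with hypothesis (H2), then deliver an $L^p_\omega L^\infty_t L^2_x \cap L^p_\omega L^2_t H^\alpha_x$ estimate for $u^\epsilon$ in terms of the $n^\epsilon$-bound just obtained.

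The core of the proof is to upgrade this $L^2$-type control of $u^\epsilon$ to a pointwise Lipschitz bound. I will apply the curl operator to the $u^\epsilon$-equation and work with the scalar vorticity $\nu^\epsilon := \nabla \wedge u^\epsilon$, which in two dimensions satisfies
\begin{equation*}
d\nu^\epsilon + (u^\epsilon \cdot \nabla)\nu^\epsilon \, dt + (-\Delta)^\alpha \nu^\epsilon \, dt = \nabla \wedge [(n^\epsilon \nabla \phi) \ast \rho^\epsilon] \, dt + \nabla \wedge f(t, u^\epsilon) \, dW_t,
\end{equation*}
the pressure having been eliminated by incompressibility. Applying the dyadic block $\dot{\triangle}_j$, pairing with a suitable $L^p$-weight and invoking the Bernstein-type lower bound for $(-\Delta)^\alpha$ from Section 2, I expect the transport nonlinearity to be controllable through commutator estimates between $\dot{\triangle}_j$ and $u^\epsilon \cdot \nabla$ together with the bilinear inequalities of Lemma \ref{lem1.6}, while the stochastic term is handled by hypothesis (H3). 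This should close an estimate for $\nu^\epsilon$ in a suitable homogeneous Besov space $\dot B^{s}_{p, r}$ with $s - 2/p > 0$, whose embedding into $L^\infty(\mathbb{R}^2)$ then yields a bound on $\|\nu^\epsilon\|_{L^\infty}$, and the 2D Biot--Savart correspondence gives in turn $\|\nabla u^\epsilon\|_{L^\infty}$ up to lower-order norms.

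With the $\textbf{W}^{1,\infty}$-estimate on $\textbf{u}^\epsilon$ in hand over $[0, \tau_N^\epsilon]$, the $\textbf{H}^s$-bound of Lemma \ref{lem3} with $\theta_R \equiv 1$ (legitimate there for any $R > N$) closes the loop and forces $\tau_N^\epsilon = T$ almost surely for all $N$ large enough. The main obstacle is precisely the Littlewood--Paley vorticity estimate in the subcritical regime $\alpha \in [1/2, 1)$: the fractional dissipation does not by itself dominate the transport term at the $L^\infty$-level, so a careful choice of the triple $(p, r, s)$ balancing the forcing $\nabla \wedge [(n^\epsilon \nabla \phi) \ast \rho^\epsilon]$, the stochastic contribution controlled by (H3), and the commutator remainder will be required to close the estimate on a sufficiently fine Besov scale.
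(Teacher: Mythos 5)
Your scaffolding (stopping times keyed to the $\textbf{W}^{1,\infty}$ blow-up criterion, componentwise $L^2$/$H^1$ energy estimates, passage to the vorticity equation, then closing with the $\textbf{H}^s$ bound) matches the paper's, and your preliminary estimates on $c^\epsilon$, $n^\epsilon$ and $u^\epsilon$ are fine for fixed $\epsilon$. The gap is in the step you yourself flag as the ``main obstacle'': obtaining the Lipschitz bound on $u^\epsilon$ from a vorticity estimate in $\dot{B}^{s}_{p,r}$ with $s-2/p>0$. To close such a supercritical Besov estimate you must control the transport term via commutators $[\dot{\triangle}_j, u^\epsilon\cdot\nabla]\nu^\epsilon$, and every standard bound for these commutators (e.g.\ Lemma 2.100 of Bahouri--Chemin--Danchin, which the paper uses elsewhere) carries a factor $\|\nabla u^\epsilon\|_{L^\infty}$ --- precisely the quantity you are trying to produce. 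Breaking this circularity would require a Beale--Kato--Majda/Osgood-type logarithmic argument, which in turn needs an a priori $L^\infty$ (or $\dot B^0_{\infty,1}$) bound on $\nu^\epsilon$; but no maximum principle survives the It\^o correction of the stochastic forcing, and It\^o calculus at the $L^\infty$ or $\ell^1$-summed Besov level is not available in any form the paper's hypotheses support. So the estimate does not close as proposed.

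The paper sidesteps the $L^\infty$ level entirely. It performs an $L^2$-energy estimate on $\nabla\nu^\epsilon$ (equivalently $\|u^\epsilon\|_{\dot H^2}$), absorbing the stretching/transport term for $\tfrac12\le\alpha<1$ via
\begin{equation*}
2\bigl(\nabla\nu^\epsilon,(\nabla\nu^\epsilon\cdot\nabla)u^\epsilon\bigr)_{L^2}\le \tfrac12\|\nabla\nu^\epsilon\|_{\dot H^\alpha}^2 + C\|\nabla u^\epsilon\|_{\dot H^{1-\alpha}}^2\|\nabla\nu^\epsilon\|_{L^2}^2,
\end{equation*}
using $\dot H^\alpha(\mathbb{R}^2)\subset L^{2/(1-\alpha)}$ and $\dot H^{1-\alpha}\subset L^{2/\alpha}$; the Gronwall factor $\int_0^t\|u^\epsilon\|_{H^{1+\alpha}}^2\,dr$ is already finite from the preceding $H^1$-level estimate (this is why the stopping time $\textbf{t}^\epsilon_J$ is introduced). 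This yields $u^\epsilon\in L^2_tH^{2+\alpha}$, hence $\int_0^t\|\nabla u^\epsilon\|_{L^\infty}\,dr<\infty$ by the Sobolev embedding $H^{1+\alpha}(\mathbb{R}^2)\hookrightarrow L^\infty(\mathbb{R}^2)$ --- and, crucially, the final $H^s$ estimate only needs this \emph{time-integrated} Lipschitz norm in its Gronwall factor (via the stopping time $m^\epsilon_S$), not a sup-in-time bound. The sup-in-time $\textbf{W}^{1,\infty}$ control then falls out of $H^s\hookrightarrow W^{1,\infty}$ at the very end, defeating the blow-up criterion. You would need to replace your Besov/$L^\infty$ vorticity step with this $L^2$-based bootstrap (or something equivalent) for the argument to go through.
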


\begin{proof}
We prove Lemma \ref{lem3.9}  by using some ideas in \cite[Theorem 3.3]{zhai20202d}. First, similar to \cite[Proposition 3.1]{nie2020global}, by using $\textrm{div} u^\epsilon =0$ and the nonnegativity of $c^\epsilon$ and $n^\epsilon$, it is standard to derive that
$$
\mathbb{E}\sup_{t\in [0,T]}\|\textbf{u} ^\epsilon(t)\|_{\textbf{L}^2}^2 +\mathbb{E} \int_0^T (\|( n^\epsilon, c^\epsilon)\|_{\textbf{H}^1}^2+\|  u ^\epsilon  \|_{H^{\alpha}}^2) \textrm{d}t <C(\textbf{u}_0 ,\epsilon,\phi).
$$
Moreover, we have
\begin{equation}\label{3.56}
\begin{split}
&\mathbb{E}\sup_{t\in [0,T\wedge  n _K^\epsilon]}\|\textbf{u} ^\epsilon(t)\|_{\textbf{H}^1}^2 +\mathbb{E} \int_0^{T\wedge  n _K^\epsilon} \left(\|( n^\epsilon, c^\epsilon)(t  )\|_{\textbf{H}^2}^2+\|  u ^\epsilon (t ) \|_{H^{1+\alpha}}^2\right) \textrm{d}t \leq C(\textbf{u}_0 ,\epsilon,\phi,K),
\end{split}
\end{equation}
where
$$
 n _K^\epsilon:= \inf\left\{t>0;~ \sup_{r\in [0,t]}\|\textbf{u} ^\epsilon(r)\|_{\textbf{L}^2}^2>K~ \textrm{or}~  \int_0^t (\|( n^\epsilon, c^\epsilon)\|_{\textbf{H}^1}^2+\|  u ^\epsilon  \|_{H^{\alpha}}^2) \textrm{d}r>K\right\} .
$$
Clearly, $ n _K^\epsilon\rightarrow\infty$ as $K\rightarrow\infty$, $\mathbb{P}$-a.s. To get an estimate of  $\textbf{u} ^\epsilon$ in $\textbf{H}^s(\mathbb{R}^2)$ for all $s>1$, one need first to estimate the norm of $\|\nabla  u ^{\epsilon}\|_{L^\infty}$, which can be achieved by estimating the norm of vorticity $\| v ^{\epsilon}\|_{H^{1+\alpha}}= \|\nabla \wedge  u ^{\epsilon}\|_{H^{1+\alpha}}$. Indeed, we define for each $J>0$
$
 \textbf{t}_ {R}^\epsilon:= \inf\{t>0;  \int_0^t \| u ^\epsilon(r)\|_{H^{1+\alpha}}^2 \textrm{d}r> J\}.
$
By \eqref{3.56}, we infer that $ T\wedge  n _K^\epsilon\wedge \textbf{t}_ {J}^\epsilon\rightarrow T\wedge n _K^\epsilon $ as $J\rightarrow\infty$, $\mathbb{P}$-a.s.  Taking the operator $\nabla(\nabla \wedge)$ to both sides of $\eqref{3.1}_3$ and then applying the It\^{o} formula  to  {$ \|\nabla v ^{\epsilon}(t)\|_{L^2}^2$}, we find
\begin{equation}\label{3.57}
\begin{split}
&\|\nabla v ^{\epsilon}(t)\|_{L^2}^2+ 2\int_0^t\| (-\Delta)^\frac{\alpha}{2} \nabla v ^{\epsilon}\|_{L^2}^2\textrm{d}r  = \|\nabla v ^{\epsilon}_0\|_{L^2}^2\\
&\quad\underbrace{-2\int_0^t(\nabla v ^{\epsilon},\nabla [( u ^{\epsilon}\cdot \nabla) v ^{\epsilon}])_{L^2}\textrm{d}r}_{:= A } +\underbrace{2\int_0^t(\nabla v ^{\epsilon},\textbf{P}\nabla\{\nabla \wedge [(n^{\epsilon}\nabla \phi)*\rho^{\epsilon}]\})_{L^2}\textrm{d}r}_{:= B }\\
& \quad+ \int_0^t\|\textbf{P}\nabla[\nabla \wedge f(r, u ^{\epsilon})]\|_{L_2(U;L^2)}^2\textrm{d}r +2\int_0^t(\nabla v ^{\epsilon},\textbf{P}\nabla[\nabla \wedge f(r, u ^{\epsilon})] \mathrm{d} W_r)_{L^2}.
\end{split}
\end{equation}
For  $\alpha =1$, we get by the Young inequality that
\begin{equation*}
\begin{split}
&  A =2\int_0^t(\Delta v ^{\epsilon},( u ^{\epsilon}\cdot \nabla)  v ^{\epsilon})_{L^2}\textrm{d}r \leq  \frac{1}{2} \int_0^t\|\Delta v ^{\epsilon}\|_{L^2}^2\textrm{d}r+ C \int_0^t\|\nabla  u ^{\epsilon}\|_{L^2}^2\|\nabla v ^{\epsilon}\|_{L^2}^2 \textrm{d}r.
\end{split}
\end{equation*}
For $\frac{1}{2}\leq\alpha <1$, recall the Sobolev embedding in  {\cite[P. 75]{miao2012littlewood}}
$$
\dot{H}^\alpha(\mathbb{R}^2)\subset B_{\frac{2}{1- \alpha},2}^0(\mathbb{R}^2)\subset L^{\frac{2}{1- \alpha}}(\mathbb{R}^2)  ~~ ~\textrm{and} ~~~\dot{H}^{1-\alpha}(\mathbb{R}^2) \subset L^ \frac{2}{\alpha}(\mathbb{R}^2).
$$
It follows that
\begin{equation*}
\begin{split}
 A =2\int_0^t(\nabla v ^{\epsilon},(\nabla  v ^{\epsilon}\cdot \nabla) u ^{\epsilon}  )_{L^2}\textrm{d}r\leq \frac{1}{2}  \int_0^t\|\nabla v ^{\epsilon}\|_{\dot{H}^\alpha}^2\textrm{d}r+ C \int_0^t\|\nabla u ^{\epsilon}\|_{\dot{H}^{1-\alpha}}^2\|\nabla v ^{\epsilon}\|_{L^2}^2 \textrm{d}r.
\end{split}
\end{equation*}
An application of the Young inequality implies
\begin{equation*}
\begin{split}
&  B \leq  C(\epsilon,\phi) \int_0^t (\|\nabla  v ^{\epsilon} \|_{L^2}^2+ \| n^{\epsilon} \|_{L^2}^2) \textrm{d}r \leq  C(\epsilon,\phi,n_0) e^{Ct} + C(\epsilon,\phi)\int_0^t \|\nabla v ^{\epsilon} \|_{L^2}^2\textrm{d}r.
\end{split}
\end{equation*}
Plugging the estimates of $ A $ and $ B $ into \eqref{3.57}, we get from the Gronwall Lemma that
\begin{equation}\label{4.73}
\begin{split}
&\mathbb{E}\sup_{r \in [0,t\wedge  n _K^\epsilon\wedge \textbf{t}_ {R}^\epsilon]}\|\nabla v ^{\epsilon}(r)\|_{L^2}^2+\mathbb{E}\int_0^{t\wedge  n _K^\epsilon\wedge \textbf{t}_ {R}^\epsilon}\| (-\Delta)^\frac{\alpha}{2} \nabla v ^{\epsilon}\|_{L^2}^2\textrm{d}r \\
 &\quad\leq  C(\epsilon,\phi,n_0,u_0,R) e^{C t\wedge  n _K^\epsilon\wedge \textbf{t}_ {R}^\epsilon} \bigg(1  +\mathbb{E}\int_0^{t\wedge  n _K^\epsilon\wedge \textbf{t}_ {R}^\epsilon}\| \nabla[\nabla \wedge f(r, u ^{\epsilon})]\|_{L_2(U;L^2)}^2\textrm{d}r \\
 & \quad\quad+ \mathbb{E}\sup_{r \in [0,t\wedge  n _K^\epsilon\wedge \textbf{t}_ {R}^\epsilon]}\bigg|\int_0^r(\nabla v ^{\epsilon},\textbf{P}\nabla[\nabla \wedge f(\varsigma, u ^{\epsilon})] \mathrm{d} W_\varsigma)_{L^2}\bigg|\bigg).
\end{split}
\end{equation}
Applying the BDG inequality to the last term, we infer from \eqref{4.73} that
\begin{equation}\label{3.59}
\begin{split}
&\mathbb{E}\sup_{r \in [0,T\wedge  n _K^\epsilon\wedge \textbf{t}_ {J}^\epsilon]}\| u ^{\epsilon}(r)\|_{H^2}^2+\mathbb{E}\int_0^{T\wedge  n _K^\epsilon\wedge \textbf{t}_ {J}^\epsilon}\| u ^{\epsilon}\|_{H^{2+\alpha}}^2\textrm{d}r \leq  \exp\{C(\epsilon,\phi,n_0,u_0,J) \exp\{C T \}\},
\end{split}
\end{equation}
where we used the facts of
$$
\|\nabla v ^{\epsilon}\|_{L^2}^2=\|\Delta u ^{\epsilon}\|_{L^2}^2=\|  u ^{\epsilon}\|_{\dot{H}^{2 }}^2 ~~~ \textrm{and} ~~~\|\nabla v ^{\epsilon}\|_{\dot{H}^\alpha}^2=\|\Delta u ^{\epsilon}\|_{\dot{H}^\alpha}^2=\| u ^{\epsilon}\|_{\dot{H}^{2+\alpha }}^2.
$$

For any $S>0$, we define
$$
\textbf{t}_{T,S,J,K}^{''}=T\wedge  m _S^\epsilon\wedge n _K^\epsilon\wedge \textbf{t}_ {J}^\epsilon ,\quad \forall T>0,
$$
where
$
  m _S^\epsilon:=\inf\{t>0; \int_0^{t} \|\nabla  u ^{\epsilon}\|_{L^\infty}\textrm{d}r > S ~\textrm{or}~\int_0^{t} \| n^{\epsilon}\|_{H^2}^2\textrm{d}r > S~\textrm{or}~\int_0^{t} \|  c^{\epsilon}\|_{H^2}^2\textrm{d}r > S \}.
$
Then \eqref{3.59} implies that
$ \textbf{t}_{T,S,J,K}^{''}\rightarrow T\wedge   n _K^\epsilon\wedge \textbf{t}_ {J}^\epsilon$ as $S\rightarrow\infty$, $\mathbb{P}$-a.s.

For the first two random PDEs in \eqref{3.1}, one can derive that
\begin{equation}\label{4.75}
\begin{split}
& \mathbb{E}\sup_{t \in [0,T\wedge\textbf{t}_{T,S,J,K}^{''}]} \|(n^{\epsilon},c^{\epsilon})(t) \|_{\textbf{H}^s}^2
+\mathbb{E}\int_0^{T\wedge\textbf{t}_{T,S,J,K}^{''}}\| (n^{\epsilon},c^{\epsilon}) \|_{\textbf{H}^{s+1}}^2\textrm{d}t \\
&\quad \leq\|(n^{\epsilon}_0,c^{\epsilon}_0)\|_{\textbf{H}^s}^2+ C( \epsilon,S) \mathbb{E}\int_0^{T\wedge\textbf{t}_{T,S,J,K}^{''}} \|(n^{\epsilon},c^{\epsilon}) \|_{\textbf{H}^s}^2 \textrm{d}t.
\end{split}
\end{equation}
Recall the Littlewood-Paley blocks $\triangle_j$ defined in Section 2, we get by the It\^{o} formula that
\begin{align}\label{4.76}
& \sup_{r\in [0,\textbf{t}_{T,S,J,K}^{''}]} \|\triangle_j  u ^\epsilon (r)\|_{L^2}^2+2 \int_0^{\textbf{t}_{T,S,J,K}^{''}}\|\triangle_j (-\Delta)^\frac{\alpha}{2} u ^{\epsilon}\|_{L^2}^2\textrm{d}r\nonumber\\
&\quad = \|\triangle_j u_0^\epsilon\|_{L^2}^2+ \int_0^{\textbf{t}_{T,S,J,K}^{''}}\|\triangle_j\textbf{P} f(r, u ^{\epsilon}) \|_{L_2(U;L^2)}^2 \textrm{d}r\nonumber+2 \int_0^{\textbf{t}_{T,S,J,K}^{''}} (\triangle_j  u ^\epsilon, \triangle_j\textbf{P}(n^{\epsilon}\nabla \phi)*\rho^{\epsilon} )_{L^2}\textrm{d}r\\
&\quad\quad+2  \int_0^{\textbf{t}_{T,S,J,K}^{''}}(\triangle_j  u ^\epsilon,\textbf{P}[ u ^{\epsilon}\cdot \nabla,\triangle_j]  u ^{\epsilon}  )_{L^2}\textrm{d}r +2  \sup_{r\in [0,\textbf{t}_{T,S,J,K}^{''}]} \left|\int_0^r(\triangle_j  u ^\epsilon,  \triangle_j\textbf{P} f(\varsigma, u ^{\epsilon}) \mathrm{d} W_\varsigma)_{L^2}\right| \nonumber\\
 &\quad := \|\triangle_j u_0^\epsilon\|_{L^2}^2+ \int_0^{\textbf{t}_{T,S,J,K}^{''}}\left(\mathcal {Q}_1^j+\mathcal {Q}_2^j+\mathcal {Q}_3^j\right)\textrm{d}r+ \mathcal {Q}_4^j.
\end{align}
By using the Minkowski inequality (cf.  \cite[Proposition 1.3]{bahouri2011fourier}), we have
\begin{equation*}
\begin{split}
 \|\{2^{2js}\mathcal {Q}_2^j\}_{j\geq -1}\|_{l^1}
   &\leq C \left\|\{2^{js} \|\triangle_j  u ^\epsilon\|_{L^2}\cdot 2^{js}\| \triangle_j\textbf{P}(n^{\epsilon}\nabla \phi)*\rho^{\epsilon} \|_{L^2}\}_{j\geq -1}\right\|_{l^1}\\
   &\leq C  \left\|\{2^{ js} \|\triangle_j  u ^\epsilon\|_{L^2}\}_{j\geq -1}\right\|_{ \ell^2 }\left\|\{2^{ js} \| \triangle_j (n^{\epsilon}\nabla \phi)*\rho^{\epsilon} \|_{L^2}\}_{j\geq -1}\right\|_{ \ell^2 }\\
   &\leq C  \|  u ^{\epsilon}\|_{H^s} \| (n^{\epsilon}\nabla \phi)*\rho^{\epsilon} \|_{H^s} \\
   &\leq C \|\phi\|_{W^{1,\infty}} \| ( u ^{\epsilon},n^{\epsilon})\|_{H^s}^2,
\end{split}
\end{equation*}
for all $t\in[0,\textbf{t}_{T,S,J,K}^{''}]$. By using the discrete H\"{o}lder inequality and {the commutator estimate (cf. {\cite[Lemma 2.100]{bahouri2011fourier}})}, we get
\begin{equation*}
\begin{split}
 \|\{2^{2js}\mathcal {Q}_3^j\}_{j\geq -1}\|_{l^1}
 &\leq 2  \big\|\{2^{ js}\|\triangle_j  u ^\epsilon\|_{L^2} \cdot2^{js}\| [ u ^{\epsilon}\cdot \nabla,\triangle_j]  u ^{\epsilon}\|_{L^2} \}_{j\geq -1}\big\|_{l^1}\\
 &\leq C \| u ^{\epsilon}\|_{H^s} \|\{ 2^{js}\| [ u ^{\epsilon}\cdot \nabla,\triangle_j] u ^{\epsilon}\|_{L^2} \}_{j\geq -1}\big\|_{ \ell^2 } \\
  &\leq  CS \| u ^{\epsilon}\|_{H^s}^2 .
\end{split}
\end{equation*}
Multiplying both sides of \eqref{4.76} by $2^{2js}$ and summing up with respect to $j\geq-1$. After taking the mathematical expectation, we get from the Monotone Convergence Theorem that
\begin{equation*}
\begin{split}
&\mathbb{E}\sup_{r\in [0,\textbf{t}_{T,S,J,K}^{''}]}\|  u ^{\epsilon}(r)\|_{H^s}^2  \leq \| u_0^{\epsilon}\|_{H^s}^2+C(\phi,S) \mathbb{E}\int_0^{\textbf{t}_{T,S,J,K}^{''}} \left(1+ \| ( u ^{\epsilon},n^{\epsilon})\|_{\textbf{H}^s}^2\right)\textrm{d}r\\
& \quad +C\sum_{j\geq -1} 2^{2js}\mathbb{E}\sup_{r\in [0,\textbf{t}_{T,S,J,K}^{''}]} \left|\int_0^r(\triangle_j  u ^\epsilon,  \triangle_j\textbf{P} f(\varsigma, u ^{\epsilon}) \mathrm{d} W_\varsigma)_{L^2}\right|\\
&\leq\| u_0^{\epsilon}\|_{H^s}^2+C(\phi,S) \mathbb{E}\int_0^{\textbf{t}_{T,S,J,K}^{''}}\left (1+ \| ( u ^{\epsilon},n^{\epsilon})\|_{\textbf{H}^s}^2\right)\textrm{d}r\\
& \quad+\frac{1}{2}\sum_{j\geq -1} 2^{2js}\mathbb{E} \sup_{r\in [0,\textbf{t}_{T,S,J,K}^{''}]}\|\triangle_j  u ^\epsilon\|_{L^2}^2
 + C\sum_{j\geq -1} 2^{2js}\mathbb{E}  \int_0^{\textbf{t}_{T,S,J,K}^{''}} \|\triangle_j  f(r, u ^{\epsilon})\|_{L_2(U;L^2)}^2 \textrm{d}r \\
&\quad\leq \| u_0^{\epsilon}\|_{H^s}^2+\frac{1}{2}\mathbb{E}\sup_{r\in [0,\textbf{t}_{T,S,J,K}^{''}]}\|  u ^{\epsilon}(r)\|_{H^s}^2 +C(\phi,S) \mathbb{E}\int_0^{\textbf{t}_{T,S,J,K}^{''}} \left(1+ \| ( u ^{\epsilon},n^{\epsilon})\|_{\textbf{H}^s}^2\right)\textrm{d}r,
\end{split}
\end{equation*}
which implies
$$
\mathbb{E}\sup_{t\in [0,\textbf{t}_{T,S,J,K}^{''}]}\| u ^{\epsilon}(t)\|_{H^s}^2\leq 2 \| u_0^{\epsilon}\|_{H^s}^2 +C(\phi,S) \mathbb{E}\int_0^{\textbf{t}_{T,S,J,K}^{''}}\left (1+ \| ( u ^{\epsilon},n^{\epsilon})\|_{\textbf{H}^s}^2\right)\textrm{d}t.
$$
Adding this estimate to \eqref{4.75} leads to
\begin{equation} \label{4.77}
\begin{split}
& \mathbb{E}\sup_{t \in [0,T\wedge\textbf{t}_{T,S,J,K}^{''}]} \| \textbf{u} ^{\epsilon} (t)\|_{\textbf{H}^s}^2
 \leq \exp\{C(\textbf{u}_0, \epsilon,T,\phi,S,J,K)\}.
\end{split}
\end{equation}
Define
$
\textbf{u}_{J,S,K}^\epsilon(t):=\textbf{u}^\epsilon(t\wedge m _S^\epsilon\wedge n _K^\epsilon\wedge \textbf{t}_ {J}^\epsilon)$, $\forall J,S,K>0.
$
Then from estimate \eqref{4.77}, we infer that $\textbf{u}_{J,S,K}^\epsilon(t)$ is a solution to \eqref{3.1} over $[0,T]$, such that
$\mathbb{E}\sup_{t \in [0,T ]} \| \textbf{u}_{J,S,K} ^{\epsilon} (t)\|_{\textbf{H}^s}^2
 \leq \exp\{C(\textbf{u}_0,\epsilon,\phi, T,S,J,K)\}.
$
Therefore, it follows from the last inequality that  for any given $ J,S,K>0$
\begin{equation} \label{3.64}
\begin{split}
\widetilde{\textbf{t}^\epsilon}\geq T\wedge  m _S^\epsilon\wedge n _K^\epsilon\wedge \textbf{t}_ {J}^\epsilon,\quad \mathbb{P}\textrm{-a.s.,}
\end{split}
\end{equation}
where $\widetilde{\textbf{t}^\epsilon}$ is the maximal existence time. Sending $S\rightarrow \infty$, $J\rightarrow \infty$, $K\rightarrow \infty$ and $T\rightarrow+\infty$ successively in \eqref{3.64}, we obtain $\mathbb{P}\{\widetilde{\textbf{t}^\epsilon}=\infty\}=1$, which implies that the solution $\textbf{u} ^{\epsilon} $ exists globally. This finishes the proof of Lemma \ref{lem3.9}.
\end{proof}

\section{Identification of the limit as $\epsilon\rightarrow 0$}
Let $\mathbf{u}^\epsilon=\left(n^\epsilon, c^\epsilon, u^\epsilon\right) \in L^p\left(\Omega ; C\left([0, T] ; \mathbf{H}^s\left(\mathbb{R}^2\right)\right)\right.$, $s>5$, be smooth approximate solutions constructed in the Section 3. The objective of this section is to establish the main result, i.e., Theorem \ref{main}, by identifying the limit as $\epsilon \rightarrow 0$ (up to a subsequence). The proof relies on a series of entropy and energy estimates that are uniform in $\epsilon$, as well as the stochastic compactness method.

\subsection {A priori estimates}\label{sec3.1}
\begin{lemma}\label{lem4.0}
  {Under the assumptions of (H1)-(H3),} for any $T>0$, there holds
$$
 {n^\epsilon (t,x)> 0 ,~~~c^\epsilon (t,x) > 0,}
$$
for all $(t,x)\in [0,T]\times \mathbb{R}^2$, $\mathbb{P}$-a.s.
\end{lemma}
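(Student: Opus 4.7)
Since the stochastic forcing in \eqref{3.1} appears only in the fluid equation, for $\mathbb{P}$-a.e.\ $\omega$ the first two equations reduce to classical deterministic parabolic PDEs with smooth, bounded coefficients. Indeed, Lemmas \ref{lem3.8}--\ref{lem3.9} yield $\textbf{u}^\epsilon(\omega)\in C([0,T];\textbf{H}^s(\mathbb{R}^2))$ with $s>5$, so $u^\epsilon$, $n^\epsilon$, $c^\epsilon$, $\nabla(c^\epsilon*\rho^\epsilon)$ and $\Delta(c^\epsilon*\rho^\epsilon)$ all lie in $L^\infty((0,T)\times\mathbb{R}^2)$ pathwise via Sobolev embedding, and vanish at infinity. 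The plan is to argue pathwise, in two steps.

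To obtain nonnegativity I would test each equation by its negative part $(v)_-:=\max\{-v,0\}$ (after a routine $C^1$-smoothing of $x\mapsto x_-^2/2$ to justify the chain rule). Using $\textrm{div}\, u^\epsilon = 0$ and integration by parts, the $c^\epsilon$-equation gives
\begin{equation*}
\frac{1}{2}\frac{d}{dt}\|(c^\epsilon)_-\|_{L^2}^2 + \|\nabla(c^\epsilon)_-\|_{L^2}^2 = -\int_{\mathbb{R}^2}(n^\epsilon*\rho^\epsilon)[(c^\epsilon)_-]^2\ddx \leq \|n^\epsilon\|_{L^\infty_{t,x}}\|(c^\epsilon)_-\|_{L^2}^2,
\end{equation*}
while an analogous computation for the $n^\epsilon$-equation---where the chemotactic term contributes $\tfrac12\int[(n^\epsilon)_-]^2\Delta(c^\epsilon*\rho^\epsilon)\ddx$ and the logistic source an expression dominated by $(1+\|n^\epsilon\|_{L^\infty_{t,x}})\|(n^\epsilon)_-\|_{L^2}^2$---produces a differential inequality of the same form. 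By (H1) and the nonnegativity of $\rho^\epsilon$, both $n_0*\rho^\epsilon$ and $c_0*\rho^\epsilon$ are strictly positive on $\mathbb{R}^2$, so $(n^\epsilon)_-(0)=(c^\epsilon)_-(0)\equiv 0$; Gronwall then forces $n^\epsilon,c^\epsilon\geq 0$ on $[0,T]\times\mathbb{R}^2$.

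Strict positivity would then follow from the classical strong parabolic maximum principle (or equivalently a Feynman--Kac representation via the associated diffusion), applied to the linear equation $\partial_t c^\epsilon - \Delta c^\epsilon + u^\epsilon\cdot\nabla c^\epsilon + (n^\epsilon*\rho^\epsilon)c^\epsilon = 0$ and to the analogous linear form of the $n^\epsilon$-equation,
\begin{equation*}
\partial_t n^\epsilon - \Delta n^\epsilon + (u^\epsilon+\nabla(c^\epsilon*\rho^\epsilon))\cdot\nabla n^\epsilon + \bigl(\Delta(c^\epsilon*\rho^\epsilon) - 1 + n^\epsilon\bigr)n^\epsilon = 0.
\end{equation*}
Combined with the pointwise strict positivity of the mollified initial data, this yields $n^\epsilon(t,x),c^\epsilon(t,x)>0$ for every $(t,x)\in[0,T]\times\mathbb{R}^2$. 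The main technical concern is the unbounded domain: the strong maximum principle is usually stated on bounded regions, but applying it on each ball $B_R\subset\mathbb{R}^2$ suffices here, since any interior zero of $c^\epsilon$ or $n^\epsilon$ in $[0,T]\times\mathbb{R}^2$ would by propagation force a vanishing initial datum on the corresponding ball, contradicting the strict positivity of $c_0*\rho^\epsilon$ and $n_0*\rho^\epsilon$. All constants in the Gronwall and maximum-principle steps remain finite for $\mathbb{P}$-a.e.\ $\omega$ thanks to the global $\textbf{H}^s$-regularity in Lemma \ref{lem3.9}.
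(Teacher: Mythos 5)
Your proof is correct, and its first half coincides with the paper's argument: the paper likewise tests the $n^\epsilon$-equation with $n^{\epsilon,-}=\max\{-n^\epsilon,0\}$, uses $\textrm{div}\,u^\epsilon=0$ to kill the transport term, absorbs the chemotactic cross term via Young's inequality (keeping it in the form $\int(\nabla c^\epsilon*\rho^\epsilon)\,n^{\epsilon,-}\nabla n^{\epsilon,-}\,\ddx$ rather than integrating by parts once more as you do — both are fine since the mollified quantities are pathwise bounded), and concludes $n^{\epsilon,-}\equiv 0$ by Gronwall; the $c^\epsilon$-equation is handled the same way using $-(c^{\epsilon,-})^2(n^\epsilon*\rho^\epsilon)\le 0$. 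Where you genuinely add something is the second step: the paper passes directly from $\|n^{\epsilon,-}\|_{L^2}=0$ to the strict inequality $n^\epsilon>0$, which as written only yields $n^\epsilon\ge 0$. Your explicit appeal to the strong parabolic minimum principle for the linearized operators (with bounded zeroth-order coefficients $n^\epsilon*\rho^\epsilon$ and $\Delta(c^\epsilon*\rho^\epsilon)-1+n^\epsilon$, which any sign can be accommodated by the usual exponential substitution), applied on balls and combined with the strict positivity of the mollified initial data, is exactly the missing ingredient that upgrades nonnegativity to the pointwise strict positivity claimed in the statement. So your route is the more complete one; the only caution is the standard one you already flag, namely that the propagation-of-zeros argument must be localized to bounded parabolic cylinders, which your ball-by-ball contradiction handles correctly.
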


\begin{proof}
 Define $n^{\epsilon,-}:= \max\{-n^{\epsilon},0\}$. Multiplying both sides of \eqref{3.1}$_1$ by $n^{\epsilon,-}$, integrating by parts over $\mathbb{R}^2$ and using the fact of $\textrm{div} u^\epsilon=0$, we find
\begin{equation*}
\begin{split}
\frac{1}{2} \frac{\textrm{d}}{\textrm{d}t} \|n^{\epsilon,-}(t)\|_{L^2}^2  + \|\nabla n^{\epsilon,-}(t)\|_{L^2}^2
&= \int_{\mathbb{R}^2}\left[-   n^{\epsilon,-}\textrm{div} (n^{\epsilon}(\nabla c^{\epsilon} *\rho^{\epsilon}) )  +   (n^{\epsilon,-})^2-     n^{\epsilon,-}(n^{\epsilon} )^2 \right]\textrm{d}x\\
& \leq \| n^{\epsilon,-}(t)\|_{L^2}^2-\int_{\mathbb{R}^2}  n^{\epsilon,-}\textrm{div} (n^{\epsilon}(\nabla c^{\epsilon} *\rho^{\epsilon}) )  \textrm{d}x,\quad \mathbb{P}\textrm{-a.s.}
\end{split}
\end{equation*}
Integrating by parts and using the H\"{o}lder inequality, we have
\begin{equation*}
\begin{split}
-\int_{\mathbb{R}^2}  n^{\epsilon,-}\textrm{div} (n^{\epsilon}(\nabla c^{\epsilon} *\rho^{\epsilon}) )  \textrm{d}x&= \int_{\mathbb{R}^2}(\nabla c^{\epsilon} *\rho^{\epsilon}) n^{\epsilon,-}\nabla n^{\epsilon,-}  \textrm{d}x\\
&\leq \frac{1}{2}\|\nabla n^{\epsilon,-}(t)\|_{L^2}^2+ \frac{1}{2} \|\nabla c^{\epsilon} *\rho^{\epsilon} \|_{L^\infty}^2\| n^{\epsilon,-}(t)\|_{L^2}^2,
\end{split}
\end{equation*}
which combined with the last inequality leads to
\begin{equation*}
\begin{split}
 \frac{\textrm{d}}{\textrm{d}t} \|n^{\epsilon,-}(t)\|_{L^2}^2  +  \|\nabla n^{\epsilon,-}(t)\|_{L^2}^2
 \leq \left(2+  \|\nabla c^{\epsilon} *\rho^{\epsilon} \|_{L^\infty}^2\right)\| n^{\epsilon,-}(t)\|_{L^2}^2,\quad \mathbb{P}\textrm{-a.s.}
\end{split}
\end{equation*}
Applying the Gronwall Lemma to the last inequality leads to
\begin{equation*}
\begin{split}
 \|n^{\epsilon,-}(t)\|_{L^2}^2 \leq \|n_0^{\epsilon,-}\|_{L^2}^2 \exp\left\{\int_0^t \left(2+  \|\nabla c^{\epsilon} *\rho^{\epsilon} (r)\|_{L^\infty}^2\right)  \textrm{d}r\right\}\equiv0 , \quad \mathbb{P}\textrm{-a.s.,}
\end{split}
\end{equation*}
which implies that $n^{\epsilon}(t,x)> 0$ for almost all $(x,t)\in \mathbb{R}^2 \times [0,T]$, $\mathbb{P}$-a.s. Moreover, due to $H^s(\mathbb{R}^2)\hookrightarrow C(\mathbb{R}^2)$ with $s>1$, we have that $n^{\epsilon}(t,x)> 0$ for all $(x,t)\in \mathbb{R}^2 \times [0,T]$, $\mathbb{P}$-a.s.

 {Similarly, multiplying \eqref{3.1}$_2$ by $c^{\epsilon,-}:= \max\{-c^{\epsilon},0\}$ and integrating by parts over $\mathbb{R}^2$, one can easily deduce from the facts of $-  (c^{\epsilon,-})^2(n^{\epsilon}*\rho^{\epsilon})\leq 0$ and $\textrm{div} u^\epsilon =0$ that $c^{\epsilon,-}(t,x)=0$, for all $(x,t)\in \mathbb{R}^2 \times [0,T]$, $\mathbb{P}$-a.s., which implies the desired result. This finishes the proof of Lemma \ref{lem4.0}.}
\end{proof}

\begin{lemma}\label{lem4.1}
For any $T>0$, we have $\mathbb{P}$-a.s.
\begin{equation} \label{4.1}
\begin{split}
 \sup_{t\in [0,T]}\|c^\epsilon (t)\|_{ L^2 } +\int_0^T\| c^\epsilon (t)\|_{ H^1}^2 \textrm{d}t \leq \|c_0 \|_{L^2}  ,
 \end{split}
\end{equation}
\begin{equation} \label{4.2}
\begin{split}
\sup_{t\in [0,T]} \|c^\epsilon (t)\|_{ L^1\cap L^\infty}\leq\|c_0\|_{L^1\cap L^\infty}.
\end{split}
\end{equation}
Moreover, there is a constant $C>0$ independent of $\epsilon$ such that
 \begin{equation} \label{4.3}
\begin{split}
\sup_{t\in [0,T]}\| n^\epsilon(t) \|_{ L^1}+ \int_0^T\| n^\epsilon (t) \|_{ L^2}^2 \textrm{d}t \leq    \| n_0\|_{L^1}\exp\{CT\},~~\mathbb{P}\textrm{-a.s}.
\end{split}
\end{equation}
\end{lemma}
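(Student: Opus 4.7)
\medskip
\noindent\textbf{Proof plan for Lemma \ref{lem4.1}.} Since noise only enters the $u^\epsilon$-equation, the first two components of \eqref{3.1} are, pathwise, classical smooth parabolic equations driven by the random vector field $u^\epsilon$. All three estimates will therefore be obtained by pathwise energy/$L^p$ arguments, using in a crucial way the positivity $n^\epsilon,c^\epsilon>0$ from Lemma \ref{lem4.0} and the incompressibility $\operatorname{div}u^\epsilon=0$.

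\medskip
\noindent\emph{Step 1: Estimates on $c^\epsilon$.} To get \eqref{4.1}, I multiply $\eqref{3.1}_2$ by $c^\epsilon$ and integrate over $\mathbb{R}^2$. The transport term $\int(u^\epsilon\!\cdot\!\nabla c^\epsilon)c^\epsilon\,\mathrm{d}x$ vanishes by $\operatorname{div}u^\epsilon=0$, the diffusion term yields $\|\nabla c^\epsilon\|_{L^2}^2$, and the reaction term produces $-\int (c^\epsilon)^2(n^\epsilon*\rho^\epsilon)\,\mathrm{d}x\le 0$ thanks to $n^\epsilon\ge 0$ and $\rho^\epsilon\ge 0$. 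This yields
\begin{equation*}
\tfrac{1}{2}\tfrac{\mathrm{d}}{\mathrm{d}t}\|c^\epsilon\|_{L^2}^2+\|\nabla c^\epsilon\|_{L^2}^2\le 0,
\end{equation*}
and integration in time gives \eqref{4.1} (combined with the trivial bound $\int_0^T\|c^\epsilon\|_{L^2}^2\,\mathrm{d}t\le T\|c_0\|_{L^2}^2$ to upgrade $\|\nabla c^\epsilon\|_{L^2}^2$ to $\|c^\epsilon\|_{H^1}^2$). For \eqref{4.2}, the $L^1$ bound follows by simply integrating $\eqref{3.1}_2$ in $x$: the convective and diffusive terms integrate to zero, leaving $\tfrac{\mathrm{d}}{\mathrm{d}t}\int c^\epsilon\,\mathrm{d}x=-\int c^\epsilon(n^\epsilon*\rho^\epsilon)\,\mathrm{d}x\le 0$. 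For the $L^\infty$ bound I will apply the parabolic maximum principle to $\eqref{3.1}_2$, rewritten as $\partial_t c^\epsilon+u^\epsilon\!\cdot\!\nabla c^\epsilon-\Delta c^\epsilon=-c^\epsilon(n^\epsilon*\rho^\epsilon)\le 0$: testing the positive part $(c^\epsilon-\|c_0\|_{L^\infty})_+$ (or equivalently using the standard Stampacchia truncation) yields $\|c^\epsilon(t)\|_{L^\infty}\le\|c_0\|_{L^\infty}$.

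\medskip
\noindent\emph{Step 2: Estimates on $n^\epsilon$ — the point where the logistic source matters.} Integrating $\eqref{3.1}_1$ over $\mathbb{R}^2$, every flux term ($u^\epsilon\!\cdot\!\nabla n^\epsilon$, $\Delta n^\epsilon$, and $\operatorname{div}(n^\epsilon(\nabla c^\epsilon*\rho^\epsilon))$) vanishes because it is a perfect divergence (integrability is guaranteed by the Schwartz regularity from Lemma \ref{lem3.9}). What remains is the crucial identity
\begin{equation*}
\tfrac{\mathrm{d}}{\mathrm{d}t}\|n^\epsilon(t)\|_{L^1}+\|n^\epsilon(t)\|_{L^2}^2=\|n^\epsilon(t)\|_{L^1},
\end{equation*}
where we used positivity $n^\epsilon\ge 0$ to write $\int n^\epsilon\,\mathrm{d}x=\|n^\epsilon\|_{L^1}$ and $\int(n^\epsilon)^2\,\mathrm{d}x=\|n^\epsilon\|_{L^2}^2$. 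Gronwall's inequality applied to the $L^1$ norm then yields $\|n^\epsilon(t)\|_{L^1}\le\|n_0\|_{L^1}e^t$, and integrating the identity in time together with this bound delivers
\begin{equation*}
\sup_{t\in[0,T]}\|n^\epsilon(t)\|_{L^1}+\int_0^T\|n^\epsilon\|_{L^2}^2\,\mathrm{d}t\le\|n_0\|_{L^1}(1+Te^T)\le\|n_0\|_{L^1}\exp\{CT\},
\end{equation*}
which is \eqref{4.3}.

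\medskip
\noindent\emph{Main obstacle.} There is essentially no genuine obstacle here — every argument is a classical pathwise estimate on smooth approximations. The one conceptual point worth emphasizing, rather than a technical difficulty, is that without the quadratic death term $-(n^\epsilon)^2$ one would only obtain exponential $L^1$ control of $n^\epsilon$ and no spacetime $L^2$ integrability; the identity in Step 2 is the precise mechanism by which the logistic source $l(n)=n-n^2$ enters the a priori theory, and it will be the foundation on which all the higher-order entropy--energy bounds of Section 4 (and ultimately the tightness of $\{\mathbf{u}^\epsilon\}$) are built.
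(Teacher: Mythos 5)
Your proof is correct and follows essentially the same route as the paper: the paper likewise obtains \eqref{4.1}--\eqref{4.2} by the standard maximum-principle/energy arguments (which it only cites) and derives \eqref{4.3} by integrating the $n^\epsilon$-equation over $\mathbb{R}^2$ and applying the Gronwall Lemma. The only cosmetic remark is that absorbing the $\int_0^T\|c^\epsilon\|_{L^2}^2\,\mathrm{d}t$ part of the $H^1$ norm in \eqref{4.1} introduces a $T$-dependent constant, an imprecision already present in the statement itself rather than in your argument.
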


\begin{proof}
The estimates \eqref{4.1} and \eqref{4.2} can be obtained by the maximum principle (cf. \cite[Lemma 2.1]{winkler2012global}; \cite[Proposition 4.1]{nie2020global}). Integrating both sides of $\eqref{3.1}_1$ on $\mathbb{R}^2$ and using the identity $\textrm{div} ( u ^\epsilon n^\epsilon)= u ^\epsilon \cdot \nabla n^\epsilon$ lead  to
$$
\frac{\textrm{d} }{\textrm{d}t} \| n^\epsilon(\cdot,t)\|_{L^1}+ \| n^\epsilon(\cdot,t)\|_{L^2}^2\leq \| n^\epsilon(\cdot,t)\|_{L^1},\quad \mathbb{P}\textrm{-a.s.},
$$
which implies \eqref{4.3} by applying the Gronwall Lemma. The proof is completed.
\end{proof}

\begin{lemma} \label{lem4.2}
For any $p\geq1$ and $T>0$, we have
 \begin{equation}\label{3.117}
\begin{split}
 &\mathbb{E}\sup_{t\in [0,T]}\| u ^{\epsilon}(t) \|_{L^2}^{2p} + \mathbb{E}\left(\int_0^T\| u ^{\epsilon}(t)\|_{\dot{H}^\alpha}^2\textrm{d}t\right)^p\leq C \exp\{C T\},
\end{split}
\end{equation}
where the positive constant $C$ is independent of $\epsilon$.
\end{lemma}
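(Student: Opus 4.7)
The plan is to apply It\^{o}'s formula to $\|u^\epsilon(t)\|_{L^2}^2$ using the $u^\epsilon$-equation in \eqref{3.1}. Because $\textrm{div}\, u^\epsilon=0$, the convective contribution $(u^\epsilon,\textbf{P}(u^\epsilon\cdot\nabla)u^\epsilon)_{L^2}$ and the pressure gradient both vanish, yielding the pathwise energy identity
\begin{equation*}
\|u^\epsilon(t)\|_{L^2}^2 + 2\int_0^t\|\dot{\Lambda}^\alpha u^\epsilon\|_{L^2}^2\,\textrm{d}s = \|u^\epsilon_0\|_{L^2}^2 + 2\int_0^t(u^\epsilon,(n^\epsilon\nabla\phi)*\rho^\epsilon)_{L^2}\,\textrm{d}s + \int_0^t\|\textbf{P}f(s,u^\epsilon)\|_{L_2(U;L^2)}^2\,\textrm{d}s + 2\int_0^t(u^\epsilon,\textbf{P}f(s,u^\epsilon)\,\mathrm{d} W_s)_{L^2}.
\end{equation*}
Since both terms on the left are nonnegative, raising the identity to the $p$-th power and using $(A+B)^p\leq 2^{p-1}(A^p+B^p)$ places the target quantity $\|u^\epsilon(t)\|_{L^2}^{2p}+\bigl(\int_0^t\|\dot{\Lambda}^\alpha u^\epsilon\|_{L^2}^2\,\textrm{d}s\bigr)^p$ on the LHS and reduces the proof to $L^p(\Omega)$-bounds on the four RHS contributions.

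For the deterministic forcing, $\phi\in W^{1,\infty}(\mathbb{R}^2)$ together with the $L^2$-contraction property of $\rho^\epsilon*$ and Cauchy--Schwarz produce a control of the form $C\|\nabla\phi\|_{L^\infty}T^{1/2}\sup_{s\leq T}\|u^\epsilon\|_{L^2}\bigl(\int_0^T\|n^\epsilon\|_{L^2}^2\,\textrm{d}s\bigr)^{1/2}$. The crucial point is that the pathwise, $\epsilon$-independent, purely deterministic bound $\int_0^T\|n^\epsilon\|_{L^2}^2\,\textrm{d}s\leq \|n_0\|_{L^1}e^{CT}$ from Lemma \ref{lem4.1} renders this term harmless once we take expectation. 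The It\^{o} correction is controlled by (H2) via $\|\textbf{P}f(s,u^\epsilon)\|_{L_2(U;L^2)}^2\leq C(1+\|u^\epsilon\|_{L^2}^2)$, while the martingale part is tamed through the Burkholder--Davis--Gundy inequality combined with (H2):
\begin{equation*}
\mathbb{E}\sup_{s\leq T}\left|\int_0^s(u^\epsilon,\textbf{P}f(r,u^\epsilon)\,\mathrm{d} W_r)_{L^2}\right|^p\leq \tfrac{1}{4}\mathbb{E}\sup_{s\leq T}\|u^\epsilon\|_{L^2}^{2p}+C\mathbb{E}\int_0^T(1+\|u^\epsilon\|_{L^2}^{2p})\,\textrm{d}s,
\end{equation*}
where Young's inequality is used to absorb the extremum factor into the LHS.

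Collecting all these bounds, applying Young once more to absorb the $\sup_{s\leq T}\|u^\epsilon\|_{L^2}^p$ factor produced by the forcing estimate, and taking expectation, one arrives at
\begin{equation*}
\mathbb{E}\sup_{t\leq T}\|u^\epsilon(t)\|_{L^2}^{2p}+\mathbb{E}\left(\int_0^T\|u^\epsilon\|_{\dot{H}^\alpha}^2\,\textrm{d}t\right)^p\leq C_0+C\int_0^T\mathbb{E}\sup_{r\leq s}\|u^\epsilon(r)\|_{L^2}^{2p}\,\textrm{d}s,
\end{equation*}
with $C_0$ depending on $\|u_0\|_{L^2}$, $\|n_0\|_{L^1}$, $\|\phi\|_{W^{1,\infty}}$, $T$ and $p$ but independent of $\epsilon$. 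An application of the Gronwall Lemma then delivers the asserted exponential bound $C\exp\{CT\}$. The only mildly delicate point is keeping both $\sup_t\|u^\epsilon\|_{L^2}^{2p}$ and the $p$-th moment of the time-integrated dissipation simultaneously on the left-hand side; this is handled cleanly by raising the pathwise energy identity to the $p$-th power \emph{before} any averaging, so that the two nonnegative quantities appear additively and the powered BDG bound plus the Gronwall closure proceed without obstruction.
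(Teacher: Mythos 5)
Your proposal is correct and follows essentially the same route as the paper: It\^{o}'s formula for $\|u^\epsilon\|_{L^2}^2$, cancellation of the convection term by incompressibility, positivity of $(u^\epsilon,(-\Delta)^\alpha u^\epsilon)_{L^2}=\|u^\epsilon\|_{\dot H^\alpha}^2$, control of the buoyancy term through the pathwise $\epsilon$-independent bound on $\int_0^T\|n^\epsilon\|_{L^2}^2\,\textrm{d}t$ from Lemma \ref{lem4.1}, BDG plus Young for the martingale, and Gronwall after raising to the $p$-th power. The only cosmetic slip is invoking $(A+B)^p\leq 2^{p-1}(A^p+B^p)$ for the left-hand side, where the needed direction is $A^p+B^p\leq 2(A+B)^p$ for nonnegative $A,B$; this does not affect the argument.
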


\begin{proof}
Applying the It\^{o} formula  to $\| u ^{\epsilon}(r) \|_{L^2}^2$, we find that
\begin{equation}\label{4.5}
\begin{split}
 & \mathbb{E}\sup_{r\in [0,t]}\| u ^{\epsilon}(r) \|_{L^2}^{2p}  +2\mathbb{E}\left(\int_0^t\|(-\Delta)^{\frac{\alpha}{2}} u ^{\epsilon}\|_{L^2}^2\textrm{d}r\right)^p\\
 &\quad\leq C(p)\|u_0 \|_{L^2}^{2p} +C(p,n_0,\phi ,T) \left(1+ \mathbb{E}\int_0^t\|  u ^{\epsilon}(r)\|_{L^2 } ^{2p} \textrm{d}r\right)\\
 &\quad\quad+C(p)\mathbb{E}\sup_{r\in [0,t]}\left|\int_0^r( u ^{\epsilon},  \textbf{P} f(\varsigma, u ^{\epsilon}) \mathrm{d} W_\varsigma)_{L^2}\right|^p,
\end{split}
\end{equation}
where we used
$
( u ^{\epsilon},  (-\Delta)^\alpha  u ^{\epsilon}  )_{L^2}= \|(-\Delta)^{\frac{\alpha}{2}}  u ^{\epsilon}\|_{L^2}^2\geq 0.
$
By applying the BDG inequality, we have
\begin{equation}\label{4.6}
\begin{split}
\mathbb{E}\sup_{r\in [0,t]}\left|\int_0^r( u ^{\epsilon},  \textbf{P} f(\tau, u ^{\epsilon}) \mathrm{d} W_\varsigma )_{L^2}\right|^p &\leq C \mathbb{E} \bigg(\int_0^t \| u ^{\epsilon}\|_{L^2}^2  \|\textbf{P} f(r, u ^{\epsilon})\|_{L_2(U;L^2)}^2 \textrm{d}r\bigg)^{p/2}\\
&\leq \frac{1}{2}\mathbb{E}\sup_{r\in [0,t]}\| u ^{\epsilon}(r) \|_{L^2}^{2p} + C(p) \mathbb{E} \int_0^t\left (1+\| u ^{\epsilon}\|_{L^2}^{2p} \right) \textrm{d}r.
\end{split}
\end{equation}
By \eqref{4.5} and \eqref{4.6}, the desired estimate \eqref{3.117} follows from the Gronwall Lemma.
\end{proof}


\begin{lemma} \label{lem4.3}
For any $T>0$, we have
\begin{equation}\label{4.7}
\begin{split}
\mathbb{E} \sup_{t \in [0,T]}\left(1+\| u ^{\epsilon} (t)\|^4_{L^4}\right)\leq C\exp\{\exp\{ \exp\{C T\}\}\} ,
\end{split}
\end{equation}
where the positive constant $C$ is independent of $\epsilon$.
\end{lemma}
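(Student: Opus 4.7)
The plan is to apply It\^o's formula to $\|u^\epsilon(t)\|_{L^4}^4 = \int_{\mathbb{R}^2}|u^\epsilon|^4\,dx$, handle the resulting terms using the positivity of the fractional Laplacian and a 2D Sobolev interpolation adapted to $\alpha\in[\tfrac12,1]$, and close via a stochastic Gronwall argument fed by Lemmas \ref{lem4.1} and \ref{lem4.2}.

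Since $u^\epsilon\in C([0,T];H^s)$ with $s>5$, It\^o's formula yields
\[
d\|u^\epsilon\|_{L^4}^4 = 4\!\int|u^\epsilon|^2 u^\epsilon\!\cdot\!du^\epsilon\,dx + \Big[2\!\sum_j\!\int|u^\epsilon|^2|f_j|^2\,dx + 4\!\sum_j\!\int(u^\epsilon\!\cdot\!f_j)^2\,dx\Big]dt,
\]
with $f_j := f(t,u^\epsilon)e_j$. The convection contribution $\int|u^\epsilon|^2u^\epsilon\!\cdot\!(u^\epsilon\!\cdot\!\nabla)u^\epsilon\,dx = \tfrac14\int(u^\epsilon\!\cdot\!\nabla)|u^\epsilon|^4\,dx$ vanishes since $\operatorname{div}u^\epsilon = 0$, and the pressure is killed by the Leray projection. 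For the fractional dissipation I would invoke the integrated C\'ordoba--C\'ordoba-type inequality $\int|u|^2 u\!\cdot\!(-\Delta)^\alpha u\,dx \geq c\,\|(-\Delta)^{\alpha/2}|u|^2\|_{L^2}^2$ to place a positive dissipative quantity on the left-hand side.

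For the cross term, H\"older gives $|4\!\int|u^\epsilon|^2 u^\epsilon\!\cdot\!(n^\epsilon\nabla\phi)\!*\!\rho^\epsilon\,dx| \leq C\|\nabla\phi\|_{L^\infty}\|u^\epsilon\|_{L^6}^3\|n^\epsilon\|_{L^2}$. Combining the 2D embedding $\dot{H}^\alpha\subset L^{2/(1-\alpha)}$ with Gagliardo--Nirenberg interpolation of $\||u^\epsilon|^2\|_{L^3}$ between $L^2$ and $L^{2/(1-\alpha)}$, one obtains (valid precisely for $\alpha\geq\tfrac12$)
\[
\|u^\epsilon\|_{L^6}^3 \leq C\|u^\epsilon\|_{L^4}^{3-1/\alpha}\,\||u^\epsilon|^2\|_{\dot{H}^\alpha}^{1/(2\alpha)}.
\]
Young's inequality with conjugate exponents $4\alpha$ and $4\alpha/(4\alpha-1)$ then absorbs the dissipation and leaves a residual $\leq C(1+\|u^\epsilon\|_{L^4}^4)(1+\|n^\epsilon\|_{L^2}^2)$, which is pathwise integrable in time by Lemma \ref{lem4.1}. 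The It\^o correction $\sum_j\int|u^\epsilon|^2|f_j|^2\,dx$ is bounded via Minkowski and $H^{1/2}(\mathbb{R}^2)\subset L^4(\mathbb{R}^2)$ together with (H2) at $s=\tfrac12$ by $C\|u^\epsilon\|_{L^4}^2(1+\|u^\epsilon\|_{H^{1/2}}^2)$, and $\|u^\epsilon\|_{H^{1/2}}$ is interpolated between $\|u^\epsilon\|_{L^2}$ and $\|u^\epsilon\|_{\dot{H}^\alpha}$ (both controlled by Lemma \ref{lem4.2}). The martingale is handled by BDG with half of the resulting bound absorbed into $\mathbb{E}\sup\|u^\epsilon\|_{L^4}^4$.

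Collecting all estimates one arrives at a Gronwall-type inequality
\[
\mathbb{E}\sup_{r\leq t}\!(1+\|u^\epsilon(r)\|_{L^4}^4) \leq C + C\,\mathbb{E}\!\int_0^t\!\Phi^\epsilon(r)(1+\|u^\epsilon(r)\|_{L^4}^4)\,dr,
\]
with $\Phi^\epsilon$ grouping $\|n^\epsilon\|_{L^2}^2$, $\|u^\epsilon\|_{L^2}^{2-1/\alpha}\|u^\epsilon\|_{\dot{H}^\alpha}^{1/\alpha}$, and a constant. The three iterated exponentials arise in sequence: Lemma \ref{lem4.1} gives $\int_0^T\|n^\epsilon\|_{L^2}^2\,dt \leq \|n_0\|_{L^1}e^{CT}$ (first $\exp$); a pathwise Gronwall with this exponentially-large weight produces an almost-sure bound of order $\exp(Ce^{CT})$ (second $\exp$); and taking $\mathbb{E}\sup$ together with an exponential-moment bound for $\Phi^\epsilon$ via Jensen yields the third $\exp$. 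The principal difficulty is the interaction between the weak fractional dissipation ($\alpha<1$) and the only pathwise $L^2_t$ control available for $n^\epsilon$: the interpolation displayed above breaks when $\alpha<\tfrac12$ (which motivates the hypothesis \eqref{dissipation}), and for $\alpha\in[\tfrac12,1)$ it forces the exponential-in-$T$ bound on $\|n^\epsilon\|_{L^2}$ to be propagated through the nested Gronwall, producing the triply iterated exponential stated in the lemma.
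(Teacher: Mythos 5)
Your overall skeleton coincides with the paper's: It\^o's formula for $\|u^\epsilon\|_{L^4}^4$, vanishing of the convection term by $\operatorname{div}u^\epsilon=0$, the C\'ordoba--C\'ordoba-type positivity $\int |u|^2u\cdot(-\Delta)^\alpha u\,dx\geq c\,\|(-\Delta)^{\alpha/2}|u^\epsilon|^2\|_{L^2}^2$, absorption of the $n^\epsilon\nabla\phi$ term into this dissipation via $\dot H^{\alpha}(\mathbb{R}^2)\hookrightarrow L^4(\mathbb{R}^2)$ for $\alpha\geq\tfrac12$ (the paper uses the split $\||u^\epsilon|^2\|_{L^4}\|u^\epsilon\|_{L^4}\|n^\epsilon\|_{L^2}$ rather than your $\|u^\epsilon\|_{L^6}^3\|n^\epsilon\|_{L^2}$ interpolation, but both close), a pathwise Gronwall with the weight $\exp\{-C\int_0^t\|n^\epsilon\|_{L^2}^2\,dr\}$, and BDG. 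The three iterated exponentials arise exactly as you describe, because Lemma \ref{lem4.1} gives the \emph{almost-sure deterministic} bound $\int_0^T\|n^\epsilon\|_{L^2}^2\,dt\leq\|n_0\|_{L^1}e^{CT}$, so the exponential weight can be pulled out of the expectation as a constant.

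The genuine problem is your treatment of the It\^o correction. By routing $\sum_j\int|u^\epsilon|^2|f_j|^2\,dx$ through $H^{1/2}\hookrightarrow L^4$ you put $\|u^\epsilon\|_{H^{1/2}}^2$ (equivalently, after interpolation, $\|u^\epsilon\|_{L^2}^{2-1/\alpha}\|u^\epsilon\|_{\dot H^\alpha}^{1/\alpha}$) into the Gronwall weight $\Phi^\epsilon$. Unlike $\int_0^T\|n^\epsilon\|_{L^2}^2\,dt$, the quantity $\int_0^T\|u^\epsilon\|_{\dot H^\alpha}^{1/\alpha}\|u^\epsilon\|_{L^2}^{2-1/\alpha}\,dt$ is \emph{not} bounded pathwise by a deterministic constant: Lemma \ref{lem4.2} only provides polynomial moments $\mathbb{E}(\int_0^T\|u^\epsilon\|_{\dot H^\alpha}^2\,dt)^p\leq C_pe^{C_pT}$, and Jensen's inequality cannot upgrade polynomial moments to the exponential moment $\mathbb{E}\exp\{C\int_0^T\Phi^\epsilon\,dt\}<\infty$ that your final step requires (one would need control of $C_p$ in $p$, which is not available). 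A stochastic Gronwall lemma would only give $\mathbb{E}(\sup_t\|u^\epsilon\|_{L^4}^4)^q$ for $q<1$, short of the stated estimate. The paper's proof sidesteps this entirely by bounding the quadratic-variation term by $C(1+\|u^\epsilon\|_{L^4}^4)$ with a deterministic constant (in effect reading the hypothesis on $f$ as $\sum_k\|f(t,u)e_k\|_{L^4}^2\leq C(1+\|u\|_{L^4}^2)$, as holds for the linear example $f(t,u)=\lambda u$), so that the only random weight inside the exponential is $\|n^\epsilon\|_{L^2}^2$. You should either adopt that bound on the noise or find a way to absorb $\|u^\epsilon\|_{L^4}^2\|u^\epsilon\|_{\dot H^{1/2}}^2$ into the dissipation $\||u^\epsilon|^2\|_{\dot H^\alpha}^2$; as written, the last step of your argument does not close. (A minor side remark: your $L^6$ interpolation is in fact valid for $\alpha\geq\tfrac13$; it is the subsequent Young exponent on $\|n^\epsilon\|_{L^2}$ that forces $\alpha\geq\tfrac12$.)
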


\begin{proof}
Applying the It\^{o} formula  pointiwse in $x$ and the stochastic Fubini theorem (cf.  {\cite[Theorem 4.33]{da2014stochastic}}), we obtain the following $L^p$ version of the It\^{o} Lemma (cf.  {\cite[Theorem 2.1]{kry2010}})
\begin{equation*}
\begin{split}
\textrm{d} \| u ^{\epsilon}(r)\|^4_{L^4}= &  - 4\int_{\mathbb{R}^2} | u ^{\epsilon} |^2 u ^{\epsilon}\textbf{P} ( u ^{\epsilon}\cdot \nabla)  u ^{\epsilon}\textrm{d}x\textrm{d}r  -4 \int_{\mathbb{R}^2}| u ^{\epsilon} |^2  u ^{\epsilon} \cdot \textbf{P}(-\Delta)^\alpha  u ^{\epsilon}\textrm{d}x\textrm{d}r\\
&+ 4 \int_{\mathbb{R}^2}| u ^{\epsilon} |^2  u ^{\epsilon} \cdot\textbf{P}(n^{\epsilon}\nabla \phi)*\rho^{\epsilon}\textrm{d}x\textrm{d}r + 6 \int_{\mathbb{R}^2}| u ^{\epsilon}(s)|^2 u ^{\epsilon} \cdot f(s, u ^{\epsilon} )  \textrm{d}x\textrm{d}r\\
&+ 4 \int_{\mathbb{R}^2}| u ^{\epsilon} |^2  u ^{\epsilon} \cdot\textbf{P} f(r, u ^{\epsilon})\textrm{d}x \mathrm{d} W_r\\
:= &(\mathcal {I} _1 +\mathcal {I} _2+\mathcal {I} _3+\mathcal {I} _4) \textrm{d}r+\mathcal {I} _5 \textrm{d} W_r.
\end{split}
\end{equation*}
Integrating by parts, we have
\begin{equation*}
\begin{split}
\mathcal {I} _1= \int_{\mathbb{R}^2}  u ^{\epsilon} \cdot \nabla | u ^{\epsilon} |^4\textrm{d}x= \int_{\mathbb{R}^2}  | u ^{\epsilon} |^4 \textrm{div}  u ^{\epsilon}\textrm{d}x=0.
\end{split}
\end{equation*}
By virtue of the generalized positive estimate  {for $(-\Delta)^\frac{\alpha}{2}$ (cf. \cite[Proposition 5.5]{miao2012littlewood})}, we find
\begin{equation*}
\begin{split}
-\mathcal {I} _2 \geq 4 \int_{\mathbb{R}^2}\left|(-\Delta)^\frac{\alpha}{2} | u ^{\epsilon}|^2\right|^2\textrm{d}x  = 4 \left\|| u ^{\epsilon}|^2\right\|_{\dot{H}^\alpha}^2\geq 0.
\end{split}
\end{equation*}
For $\mathcal {I} _3$, due to the embedding $\dot{H}^\alpha(\mathbb{R}^2) \subset L^4(\mathbb{R}^2) $ for $ \alpha \in [\frac{1}{2},1] $, we have
\begin{equation*}
\begin{split}
\mathcal {I} _3 
&\leq C\left\|| u ^{\epsilon} |^2\|_{\dot{H}^\alpha} \right\|  u ^{\epsilon} \|_{L^4}\|\textbf{P}(n^{\epsilon}\nabla \phi)*\rho^{\epsilon}\|_{L^{2}}\\
&\leq  \frac{1}{2}\left\|| u ^{\epsilon} |^2\right\|_{\dot{H}^\alpha}^2+ C( \phi)\|n^{\epsilon}\|_{L^{2}}^2\left (1+ \| u ^{\epsilon} \|_{L^4}^4\right).
\end{split}
\end{equation*}
For $\mathcal {I} _4$, we get from the assumption of $ f $ that
\begin{equation*}
\begin{split}
\mathcal {I} _4  &\leq  C \sum_{k\geq1}\| u ^{\epsilon} \|_{L^4}^2 \| u ^{\epsilon}\cdot f(r, u ^{\epsilon} )e_k \|_{L^2} \leq C \left(1+\| u ^{\epsilon} \|_{L^4}^4\right).
\end{split}
\end{equation*}
Collecting the above estimates and using the Gronwall Lemma yield that
\begin{equation}\label{4.13}
\begin{split}
& \exp\left\{-C(\phi) \int_0^t \|n^{\epsilon}\|_{L^{2}}^2\textrm{d}r\right\}(1+\| u ^{\epsilon} \|^4_{L^4})\\
&\quad \leq 1+\| u ^{\epsilon}_0 \|^4_{L^4}+4  \int_0^t\exp\left\{-C(\phi) \int_0^s \|n^{\epsilon}\|_{L^{2}}^2\textrm{d}r\right\}\int_{\mathbb{R}^2}| u ^{\epsilon} |^2  u ^{\epsilon} \cdot\textbf{P} f(r, u ^{\epsilon}) \textrm{d}x \mathrm{d} W _r.
\end{split}
\end{equation}
By applying the BDG inequality, we have
\begin{equation}\label{4.14}
\begin{split}
&\mathbb{E}\sup_{t \in [0,T]}\left(1+\| u ^{\epsilon} \|^4_{L^4}\right)\\
&\quad \leq C\exp\{  \exp\{ C T\}\} \Bigg(1+\| u ^{\epsilon}_0 \|^4_{L^4}+ \mathbb{E} \sup_{t \in [0,T]}\left|\int_0^t\exp\{ \exp\{ C r\}\}\int_{\mathbb{R}^2}| u ^{\epsilon} |^2  u ^{\epsilon} \cdot\textbf{P} f(r, u ^{\epsilon}) \textrm{d}x \mathrm{d} W_r \right|\Bigg) \\
&\quad\leq C\exp\{  \exp\{ C T\}\} \left(1+\| u ^{\epsilon}_0 \|^4_{L^4}+ \mathbb{E} \left(\sum_{k\geq1}\int_0^T\left(\int_{\mathbb{R}^2}| u ^{\epsilon} |^2  u ^{\epsilon} \cdot\textbf{P} f(r, u ^{\epsilon})e_k\textrm{d}x \right)^2\textrm{d}r\right)^{\frac{1}{2}}\right) \\
&\quad\leq \frac{1}{2}\mathbb{E}\sup_{t\in [0,T]}\left(1+\| u ^{\epsilon} \|^4_{L^4}\right) +      C \exp\{ \exp\{  CT\}\}\left(1 + \mathbb{E} \int_0^T (1+\| u ^{\epsilon} \|^4_{L^4})  \textrm{d}r  \right).
\end{split}
\end{equation}
Absorbing the first term on the R.H.S. of \eqref{4.14} and using the Gronwall Lemma, we obtain the desired estimate \eqref{4.7}.
\end{proof}

Based on the Lemma \ref{lem4.3}, one can derive the following key energy inequality for $c^\epsilon$.

\begin{lemma} \label{lem4.4}
Let $T>0$. Then we have
\begin{equation*}
\begin{split}
 &  \|\nabla \sqrt{c^{\epsilon}} (t) \|_{L^2} ^2+  \int_0^t\|\Delta \sqrt{c^{\epsilon}}\|_{L^2}^2\textrm{d}r + \int_0^t\int_{\mathbb{R}^2} \frac{|\nabla\sqrt{c^{\epsilon}}|^4}{  c^{\epsilon}}\textrm{d}x\textrm{d}r \\
 &\quad \leq C(\|n_0\|_{L^1},\|c_0\|_{L^\infty}, T)  \left(1+\sup_{r \in [0,t]}\|  u ^\epsilon(r)\|_{L^4}^4 \right),
\end{split}
\end{equation*}
for all $t \in [0,T]$, $\mathbb{P}$-a.s. Moreover, there exists a constant $C>0$ independent of $\epsilon$ such that
\begin{equation*}
\begin{split}
 & \mathbb{E}\left(\sup_{t\in[0,T]}\|\nabla \sqrt{c^{\epsilon}} (t) \|_{L^2} ^2+ \int_0^T\|\Delta \sqrt{c^{\epsilon}}\|_{L^2}^2\textrm{d}t + \int_0^T\int_{\mathbb{R}^2} \frac{|\nabla\sqrt{c^{\epsilon}}|^4}{  c^{\epsilon}}\textrm{d}x\textrm{d}t\right) \leq  C \exp\{\exp\{\exp\{CT\} \}\}.
\end{split}
\end{equation*}
\end{lemma}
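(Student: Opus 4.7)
\medskip

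\noindent\textbf{Proof plan.} The key structural observation is that the $c^\epsilon$-equation \eqref{3.1}$_2$ carries no stochastic forcing, so for each fixed $\omega$ outside a null set (on which Lemma \ref{lem4.0} already gives $c^\epsilon>0$ and $n^\epsilon>0$), one is dealing with a classical parabolic PDE and may work pathwise. Setting $w:=\sqrt{c^\epsilon}$, the chain rule together with $\Delta c^\epsilon=2w\Delta w+2|\nabla w|^2$ converts \eqref{3.1}$_2$ into the pointwise identity
\[
\partial_t w + u^\epsilon\!\cdot\!\nabla w \;=\; \Delta w + \frac{|\nabla w|^2}{w} - \frac{w}{2}(n^\epsilon*\rho^\epsilon),
\]
which is the analogue used in the deterministic KS--NS literature.

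\medskip

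The strategy is to multiply this equation by $-\Delta w$, integrate over $\mathbb{R}^2$, and obtain the energy identity
\[
\tfrac{1}{2}\tfrac{d}{dt}\|\nabla w\|_{L^2}^2 + \|\Delta w\|_{L^2}^2 \;=\; I_1 + I_2 + I_3,
\]
with $I_1=-\int \frac{|\nabla w|^2}{w}\Delta w\,dx$, $I_2=\int(u^\epsilon\!\cdot\!\nabla w)\Delta w\,dx$, and $I_3=\tfrac{1}{2}\int w(n^\epsilon*\rho^\epsilon)\Delta w\,dx$. The hard part will be $I_1$, since it is precisely this term that must generate the quartic Fisher-type quantity $\int|\nabla w|^4/w^2\,dx=\int|\nabla\sqrt{c^\epsilon}|^4/c^\epsilon\,dx$ on the left. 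I will handle it via an integration by parts followed by the Winkler-type rearrangement (cf.\ \cite{winkler2012global,nie2020global}): using
\[
\nabla\!\left(\frac{|\nabla w|^2}{w}\right)=\frac{2D^2 w\!\cdot\!\nabla w}{w}-\frac{|\nabla w|^2}{w^2}\nabla w,
\]
and a further integration by parts on the Hessian piece, the good sign-definite term $+\int|\nabla w|^4/w^2\,dx$ emerges on the left modulo a small multiple of $\|\Delta w\|_{L^2}^2$ that is absorbed.

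\medskip

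For $I_2$, Cauchy--Schwarz gives $|I_2|\leq\eta\|\Delta w\|_{L^2}^2+C_\eta\|u^\epsilon\|_{L^4}^2\|\nabla w\|_{L^4}^2$, and the Gagliardo--Nirenberg-type identity $\|\nabla w\|_{L^4}^4\leq\|w\|_{L^\infty}^2\int|\nabla w|^4/w^2\,dx$ combined with $\|w\|_{L^\infty}^2=\|c^\epsilon\|_{L^\infty}\leq\|c_0\|_{L^\infty}$ from \eqref{4.2} allows me to split this via Young's inequality into a small multiple of $\int|\nabla w|^4/w^2\,dx$ (absorbable) and a term of the form $C\|c_0\|_{L^\infty}\|u^\epsilon\|_{L^4}^4\|\nabla w\|_{L^2}^2$. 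For $I_3$, $|I_3|\leq\eta\|\Delta w\|_{L^2}^2+C\|c_0\|_{L^\infty}\|n^\epsilon*\rho^\epsilon\|_{L^2}^2\leq\eta\|\Delta w\|_{L^2}^2+C\|c_0\|_{L^\infty}\|n^\epsilon\|_{L^2}^2$ by $\|w\|_{L^\infty}\leq\|c_0\|_{L^\infty}^{1/2}$ and standard mollifier estimates.

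\medskip

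Choosing $\eta$ small and absorbing, I arrive at the pathwise differential inequality
\[
\tfrac{d}{dt}\|\nabla w\|_{L^2}^2 + \|\Delta w\|_{L^2}^2 + \int\frac{|\nabla w|^4}{w^2}\,dx \;\leq\; C\bigl(1+\|u^\epsilon\|_{L^4}^4\bigr)\|\nabla w\|_{L^2}^2 + C\|n^\epsilon\|_{L^2}^2,
\]
with $C=C(\|c_0\|_{L^\infty})$. A pathwise Gronwall argument, together with the bound $\int_0^T\|n^\epsilon\|_{L^2}^2\,dr\leq C(\|n_0\|_{L^1},T)$ from \eqref{4.3} and $\|\nabla\sqrt{c_0}\|_{L^2}\in L^2$ from (H1), produces the first (pathwise) conclusion. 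Finally, taking expectations and invoking Lemma \ref{lem4.3} (i.e.\ the triple-exponential bound \eqref{4.7} on $\mathbb{E}\sup_{t\in[0,T]}\|u^\epsilon(t)\|_{L^4}^4$) inside the Gronwall factor delivers the claimed $\mathbb{E}$-estimate.
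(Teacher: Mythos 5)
Your overall architecture is the same as the paper's: the paper works with $h(c^\epsilon)=2\sqrt{c^\epsilon}$, tests the resulting deterministic parabolic equation against $-\Delta h(c^\epsilon)$, extracts the Fisher-type quantity $\int |\nabla\sqrt{c^\epsilon}|^4/c^\epsilon\,\mathrm{d}x$ from the term $g(c^\epsilon)|\nabla h|^2\Delta h$ by exactly the integration-by-parts rearrangement you describe, uses Ladyzhenskaya plus $\|c^\epsilon\|_{L^\infty}\le\|c_0\|_{L^\infty}$ on the transport term, and controls the consumption term through $\|n^\epsilon\|_{L^2}$ and the pathwise bound \eqref{4.3}. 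Your treatment of $I_3$ is in fact a little more direct than the paper's (which integrates by parts once more to exploit the sign of $-\tfrac12 n^\epsilon|\nabla h|^2$), and that shortcut is fine. One caution on $I_1$: the rearrangement does not leave a ``small'' multiple of $\|\Delta w\|_{L^2}^2$ but the specific coefficient $\tfrac23\|\nabla^2 w\|_{L^2}^2=\tfrac23\|\Delta w\|_{L^2}^2$, so the bookkeeping of the constants $\delta_1,\delta_2,\tfrac23$ against the dissipation coefficient $1$ has to be done explicitly; it closes, but only because $\tfrac23<1$.

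There is, however, a genuine gap in your $I_2$ estimate and everything downstream of it. Young's inequality applied to $C_\eta\|u^\epsilon\|_{L^4}^2\|\nabla w\|_{L^4}^2\le C_\eta\|u^\epsilon\|_{L^4}^2\,\|w\|_{L^\infty}\bigl(\int|\nabla w|^4/w^2\,\mathrm{d}x\bigr)^{1/2}$ yields $\delta\int|\nabla w|^4/w^2\,\mathrm{d}x+C(\delta)\|c_0\|_{L^\infty}\|u^\epsilon\|_{L^4}^4$ with \emph{no} factor $\|\nabla w\|_{L^2}^2$; your extra factor is spurious. This matters: with the correct bound the right-hand side of the differential inequality is $C\|u^\epsilon\|_{L^4}^4+C\|n^\epsilon\|_{L^2}^2$, containing no $\|\nabla w\|_{L^2}^2$ at all, so one simply integrates in time (no Gronwall) and obtains the bound \emph{linear} in $1+\sup_{r\le t}\|u^\epsilon(r)\|_{L^4}^4$ asserted in the lemma, after which taking expectations is legitimate by linearity together with Lemma \ref{lem4.3}. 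With your version, Gronwall produces the factor $\exp\{C\int_0^t(1+\|u^\epsilon\|_{L^4}^4)\,\mathrm{d}r\}$, which (i) is not the linear pathwise bound claimed, and (ii) cannot be handled in expectation: ``invoking the triple-exponential bound on $\mathbb{E}\sup_t\|u^\epsilon\|_{L^4}^4$ inside the Gronwall factor'' would require exponential moments of $\sup_t\|u^\epsilon\|_{L^4}^4$, which are not available, and $\mathbb{E}[e^{CX}Y]$ is not controlled by $e^{C\mathbb{E}X}\mathbb{E}Y$. Remove the spurious factor and drop the Gronwall step, and the argument matches the paper's.
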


\begin{proof}
Consider $
h(x)=2\sqrt{x}$ and $g(x)=-\frac{1}{2\sqrt{x}}$, for any $ x>0$.
We apply the chain rule to  {$ h(c^\epsilon(t))$} to get
\begin{equation}\label{4.15}
\begin{split}
\textrm{d} \emph{h} (c^{\epsilon}) = \left(- u ^{\epsilon}\cdot \nabla \emph{h}(c^{\epsilon})  + \Delta \emph{h}(c^{\epsilon})-\emph{h} ''(c^{\epsilon}) |\nabla c^{\epsilon}|^2 - \emph{h}'(c^{\epsilon}) c^{\epsilon}(n^{\epsilon}*\rho^{\epsilon})\right)\textrm{d}t,
\end{split}
\end{equation}
where we used
$
\Delta \emph{h}(c^{\epsilon})=\emph{h}''(c^{\epsilon}) |\nabla c^{\epsilon}|^2+\emph{h}'(c^{\epsilon})\Delta c^{\epsilon}.
$
By \eqref{4.15}, we have
\begin{equation} \label{4.16}
\begin{split}
&\frac{1}{2}\textrm{d} \|\nabla\emph{h} (c^{\epsilon}) \|_{L^2} ^2+ \|\Delta \emph{h} (c^{\epsilon})\|_{L^2}^2\textrm{d}t \\
&\quad = \int_{\mathbb{R}^2} \left( u ^{\epsilon}\cdot \nabla \emph{h}(c^{\epsilon})\right)  \Delta \emph{h} (c^{\epsilon})\textrm{d}x\textrm{d}t+\int_{\mathbb{R}^2} \emph{h}'(c^{\epsilon}) c^{\epsilon}(n^{\epsilon}*\rho^{\epsilon})  \Delta \emph{h} (c^{\epsilon})\textrm{d}x\textrm{d}t\\
&\quad\quad+\int_{\mathbb{R}^2} g(c^\epsilon)  |\nabla \emph{h} (c^{\epsilon})|^2  \Delta \emph{h} (c^{\epsilon})\textrm{d}x\textrm{d}t\\
&\quad:= \left(\mathcal {J}_1+\mathcal {J}_2+\mathcal {J}_3 \right)\textrm{d}t.
\end{split}
\end{equation}
For $\mathcal {J}_1$, we get by the Ladyzhenskaya  inequality (cf. \cite[Lemma 2 in Chapter 1]{ladyzhenskaya1969mathematical} that
\begin{equation}\label{4.17}
\begin{split}
\mathcal {J}_1
&\leq \delta_1 \|\Delta \emph{h} (c^{\epsilon})\|_{L^2}^2+C(\delta_1, \|c_0\|_{L^\infty})\| u ^\epsilon\|_{L^4}^2 \left\| \sqrt {|g(c^\epsilon)|}\nabla \emph{h}(c^{\epsilon}) \right\|_{L^4}^2\\
&\leq \delta_1 \|\Delta \emph{h} (c^{\epsilon})\|_{L^2}^2+\delta_2 \left\|\sqrt {|g(c^\epsilon)|} \nabla \emph{h}(c^{\epsilon}) \right\|_{L^4}^4+   C(\delta_1,\delta_2, \|c_0\|_{L^\infty})  \|  u ^\epsilon\|_{L^4}^4,
\end{split}
\end{equation}
where $ \delta_1,\delta_2>0$ will be determined later. For $\mathcal {J}_2$, we get by integrating by parts that
\begin{equation}\label{4.18}
\begin{split}
\mathcal {J}_2=&-\int_{\mathbb{R}^2} \frac{1}{\emph{h}' (c^{\epsilon})}\frac{\textrm{d}}{\textrm{d} c^{\epsilon}}(\emph{h}'(c^{\epsilon}) c^{\epsilon})(n^{\epsilon}*\rho^{\epsilon}) |\nabla \emph{h} (c^{\epsilon})|^2\textrm{d}x\\
& - \int_{\mathbb{R}^2}  \emph{h}'(c^{\epsilon}) c^{\epsilon} \nabla(n^{\epsilon}*\rho^{\epsilon})  \cdot \nabla \emph{h} (c^{\epsilon})\textrm{d}x \\
\leq& -\int_{\mathbb{R}^2}  \emph{h}'(c^{\epsilon}) c^{\epsilon} \nabla(n^{\epsilon}*\rho^{\epsilon})  \cdot \nabla \emph{h} (c^{\epsilon})\textrm{d}x,
\end{split}
\end{equation}
where we use the facts of $n^\epsilon \geq0$ and
$
- \frac{1}{\emph{h}' (c^{\epsilon})}\frac{\textrm{d}}{\textrm{d} c^{\epsilon}}(\emph{h}'(c^{\epsilon}) c^{\epsilon}) = -\frac{1}{2}<0.
$
For $\mathcal {J}_3$, the direct calculation shows that
\begin{equation*}
\begin{split}
 \mathcal {J}_3
 =& -\sum_{i,j=1}^2 \int_{\mathbb{R}^2} \left(g'(c^\epsilon) \partial_j c^\epsilon (\partial_i \emph{h} (c^{\epsilon}))^2+2g(c^\epsilon)\partial_i \emph{h} (c^{\epsilon})\partial_i \partial_j\emph{h} (c^{\epsilon}) \right) \partial_j \emph{h} (c^{\epsilon})\textrm{d}x\\
=&\underbrace{- 2\sum_{i=1}^2   \int_{\mathbb{R}^2} g(c^\epsilon)(\partial_i \emph{h} (c^{\epsilon}))^2\partial_{i}^2 \emph{h} (c^{\epsilon})\textrm{d}x}_{ {  {:= \textbf{A}}}}-2\sum_{i\neq j }  \int_{\mathbb{R}^2}g(c^\epsilon) \partial_i \emph{h} (c^{\epsilon})\partial_j  \emph{h} (c^{\epsilon})\partial_{i }\partial_{j } \emph{h} (c^{\epsilon})\textrm{d}x
\\
& -\sum_{i,j=1 }^2 \int_{\mathbb{R}^2}  (g(c^\epsilon))^2 (\partial_i \emph{h} (c^{\epsilon}))^2(\partial_j  \emph{h} (c^{\epsilon}))^2 \frac{g'(c^\epsilon)}{(g(c^\epsilon))^2h'(c^\epsilon)}\textrm{d}x .
\end{split}
\end{equation*}
From the definition of $\mathcal {J}_3$, we observe that
\begin{equation*}
\begin{split}
  {\textbf{A}}= -2\mathcal {J}_3 +2\sum_{i\neq j}  \int_{\mathbb{R}^2}g(c^\epsilon)(\partial_i \emph{h} (c^{\epsilon}))^2\partial_{ j}^2 \emph{h} (c^{\epsilon})\textrm{d}x,
\end{split}
\end{equation*}
which combined with the last equality leads to
\begin{equation}\label{4.19}
\begin{split}
  \mathcal {J}_3=& \frac{2}{3}\sum_{i\neq j}  \int_{\mathbb{R}^2}g(c^\epsilon)(\partial_i \emph{h} (c^{\epsilon}))^2\partial_{ j}^2 \emph{h} (c^{\epsilon})\textrm{d}x-\frac{2}{3}\sum_{i\neq j }  \int_{\mathbb{R}^2}g(c^\epsilon) \partial_i \emph{h} (c^{\epsilon})\partial_j  \emph{h} (c^{\epsilon})\partial_{i }\partial_{j } \emph{h} (c^{\epsilon})\textrm{d}x  \\
& -\frac{1}{3}\sum_{i,j=1}^2 \int_{\mathbb{R}^2} (g(c^\epsilon))^2 (\partial_i \emph{h} (c^{\epsilon}))^2(\partial_j  \emph{h} (c^{\epsilon}))^2 \textrm{d}x\\
:= & \frac{2}{3}\mathcal {K}_1+ \frac{2}{3}\mathcal {K}_2+ \mathcal {K}_3.
\end{split}
\end{equation}
For $\mathcal {K}_1$, it follows from the Young inequality that
\begin{equation*}
\begin{split}
\mathcal {K}_1&=  \int_{\mathbb{R}^2}\left(g(c^\epsilon)|\partial_1 \emph{h} (c^{\epsilon})|^2\partial_{ 2}^2 \emph{h} (c^{\epsilon})+g(c^\epsilon)|\partial_2 \emph{h} (c^{\epsilon})|^2\partial_{1}^2 \emph{h} (c^{\epsilon})\right)\textrm{d}x\\
&\leq \int_{\mathbb{R}^2} \left(\frac{1}{4} (g(c^\epsilon))^2(|\partial_1 \emph{h} (c^{\epsilon})|^4+|\partial_2 \emph{h} (c^{\epsilon})|^4) + (|\partial_{ 2}^2 \emph{h} (c^{\epsilon})|^2+ |\partial_{1}^2 \emph{h} (c^{\epsilon})|^2)\right)\textrm{d}x\\
&\leq \frac{1}{4}\sum_{i=1}^2\int_{\mathbb{R}^2} (g(c^\epsilon))^2  |\partial_i \emph{h} (c^{\epsilon})|^4  \textrm{d}x + \sum_{i=1}^2\int_{\mathbb{R}^2}|\partial_{i}^2 \emph{h} (c^{\epsilon})|^2 \textrm{d}x .
\end{split}
\end{equation*}
For $\mathcal {K}_2$, we have
\begin{equation*}
\begin{split}
\mathcal {K}_2 &\leq\frac{1}{4}\sum_{i\neq j }  \int_{\mathbb{R}^2}(g(c^\epsilon ))^2 (\partial_i \emph{h} (c^{\epsilon}))^2(\partial_j  \emph{h} (c^{\epsilon}))^2\textrm{d}x+ \sum_{i\neq j }  \int_{\mathbb{R}^2}(\partial_{i }\partial_{j } \emph{h} (c^{\epsilon}))^2\textrm{d}x.
\end{split}
\end{equation*}
Substituting the estimates for $\mathcal {K}_1$ and $\mathcal {K}_2$ into \eqref{4.19}, we get
\begin{equation}\label{4.20}
\begin{split}
\mathcal {J}_3&\leq-\frac{1}{6}\sum_{i, j=1}^2\int_{\mathbb{R}^2}g^2(c^\epsilon) (\partial_i \emph{h} (c^{\epsilon}))^2(\partial_j  \emph{h} (c^{\epsilon}))^2\textrm{d}x  + \frac{2}{3}\sum_{i, j=1}^2  \int_{\mathbb{R}^2}(\partial_{i }\partial_{j } \emph{h} (c^{\epsilon}))^2\textrm{d}x.
\end{split}
\end{equation}
Note that
$$
\sum_{i, j=1}^2 (\partial_i \emph{h} (c^{\epsilon}))^2(\partial_j  \emph{h} (c^{\epsilon}))^2= |\nabla \emph{h} (c^{\epsilon})|^4,
$$
and
$$
\|\Delta f\|_{L^2}^2=\int_{\mathbb{R}^2}\left((\partial_1^2f)^2+(\partial_2^2f)^2+2\partial_1^2f\partial_2^2f\right)\textrm{d}x=\|\nabla^2 f\|_{L^2}^2.
$$
By choosing $\delta_1=\frac{1}{4}$, it follows from \eqref{4.15}, \eqref{4.17}, \eqref{4.18} and \eqref{4.20}  that
\begin{equation*}
\begin{split}
&\textrm{d} \|\nabla \sqrt{c^{\epsilon}}  \|_{L^2} ^2+ \frac{1}{6}\|\Delta \sqrt{c^{\epsilon}}\|_{L^2}^2\textrm{d}t \\
&\quad  \leq C( \delta_2,\|c_0\|_{L^\infty})  \|  u ^\epsilon\|_{L^4}^4-\int_{\mathbb{R}^2}   \frac{2}{\sqrt{c^{\epsilon}}}\nabla(n^{\epsilon}*\rho^{\epsilon})  \cdot \nabla \sqrt{c^{\epsilon}}\textrm{d}x+\int_{\mathbb{R}^2} \left(\delta_2-\frac{1}{6}\right) \frac{|\nabla\sqrt{ c^{\epsilon}}|^4}{  c^{\epsilon}}  \textrm{d}x.
\end{split}
\end{equation*}
By taking $\delta_2=\frac{1}{12}$, we further obtain
\begin{equation} \label{4.21}
\begin{split}
 &\textrm{d} \|\nabla \sqrt{c^{\epsilon}}  \|_{L^2} ^2+ \frac{1}{6}\|\Delta \sqrt{c^{\epsilon}}\|_{L^2}^2\textrm{d}t +\frac{1}{12}\int_{\mathbb{R}^2} \frac{|\nabla\sqrt{c^{\epsilon}}|^4}{  c^{\epsilon}}\textrm{d}x\textrm{d}t \\
 &\quad \leq C(\|c_0\|_{L^\infty})  \|  u ^\epsilon\|_{L^4}^4 \underbrace{-\int_{\mathbb{R}^2}   \frac{2}{\sqrt{c^{\epsilon}}}\nabla(n^{\epsilon}*\rho^{\epsilon})  \cdot \nabla \sqrt{c^{\epsilon}}\textrm{d}x}_{ {:=\textbf{B}}}.
\end{split}
\end{equation}
By integrating by parts,  the term $ {\textbf{B}}$ can be estimated as
\begin{equation} \label{4.22}
\begin{split}
 {| {\textbf{B}}|}&=\left|2\int_{\mathbb{R}^2}(n^{\epsilon}*\rho^{\epsilon})\left(2\sqrt{c^{\epsilon} } \Delta \sqrt{c^{\epsilon}}+  2| \nabla  \sqrt{c^{\epsilon}}|^2\right) \textrm{d}x\right| \\
& \leq 4 \|n^{\epsilon}*\rho^{\epsilon}\|_{L^2}\left(\|\sqrt{c^{\epsilon}}\|_{L^\infty}\|\Delta \sqrt{c^{\epsilon}}\|_{L^2}+  \|\sqrt{c^{\epsilon}}\|_{L^\infty} \left\|  \frac{\nabla\sqrt{c^{\epsilon}}}{\sqrt[4]{c^{\epsilon}}}\right \|_{L^4}^2\right) \\
& \leq \frac{1}{12}\|\Delta \sqrt{c^{\epsilon}}\|_{L^2}^2 +\frac{1}{24} \left\| \frac{\nabla\sqrt{c^{\epsilon}}}{\sqrt[4]{c^{\epsilon}}}\right \|_{L^4}^4 + C(\|c_0\|_{L^\infty}) \|n^{\epsilon} \|_{L^2}^2.
\end{split}
\end{equation}
According to the Biot-Savart law (cf. \cite [Proposition 2.16]{majda2002vorticity}) and the boundedness of singular integral operator in $L^p$ spaces (cf. \cite[Proposition 4 of Chapter VI]{stein1993harmonic},  ), there holds
$$
\|\nabla u ^\epsilon\|_{L^p}\leq C \| v ^\epsilon\|_{L^p},~~  \textrm{for all}~1<p<\infty.
$$
Thanks to the Sobolev embedding
$$
H^\alpha(\mathbb{R}^2)\hookrightarrow L^4(\mathbb{R}^2), ~~\forall\alpha\geq \frac{1}{2} ~~~\textrm{and}~~~ \dot{W}^{1,p}(\mathbb{R}^2)\hookrightarrow L^4(\mathbb{R}^2), ~~\forall p\geq 2.
$$
We deduce from \eqref{4.21} and \eqref{4.22} that
\begin{equation} \label{4.23}
\begin{split}
 &  \|\nabla \sqrt{c^{\epsilon}} (t) \|_{L^2} ^2+ \frac{1}{12}\int_0^t\|\Delta \sqrt{c^{\epsilon}}\|_{L^2}^2\textrm{d}r +\frac{1}{24}\int_0^t\int_{\mathbb{R}^2} \frac{|\nabla\sqrt{c^{\epsilon}}|^4}{  c^{\epsilon}}\textrm{d}x\textrm{d}r \\
 &\quad \leq C(\|n_0\|_{L^1},\|c_0\|_{L^\infty}, T)  \left(1+\sup_{r \in [0,t]}\|  u ^\epsilon(r)\|_{L^4}^4 \right)  ,~~\mathbb{P}\textrm{-a.s.}
\end{split}
\end{equation}
By taking the expectation on both sides of \eqref{4.23} and utilizing  the estimates \eqref{4.3} and \eqref{4.7}, we obtain the desired inequality. The proof is finished.
\end{proof}

Now, one could establish the following entropy-type estimate associated to the component $n^\epsilon$.
\begin{lemma}\label{lem4.5}
For any $T>0$, there holds
\begin{equation*}
\begin{split}
\mathbb{E} \sup_{t\in [0,T]} \left(\||x| n^{\epsilon}(t)\|_{L^1}+\int_{\mathbb{R}^2} n^\epsilon \ln n^\epsilon \textrm{d}x \right) + \mathbb{E}\int_0^T\|\nabla \sqrt{n^{\epsilon}}\|^2_{ L^2}\textrm{d}t \leq C \exp \{\exp\{\exp\{CT\}\}\} ,
\end{split}
\end{equation*}
where $C>0$ is independent of $\epsilon$.
\end{lemma}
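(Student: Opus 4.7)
The plan is to split into two parallel pathwise estimates for the (deterministic) $n^\epsilon$-equation---a moment estimate and an entropy estimate---and then to combine them and take expectation, drawing on Lemmas \ref{lem4.1}--\ref{lem4.4} for the necessary uniform controls. For the moment estimate, multiply the $n^\epsilon$-equation by the weight $\phi(x):=\sqrt{1+|x|^2}$ and integrate over $\mathbb{R}^2$. Since $\textrm{div}\, u^\epsilon=0$ and both $|\nabla\phi|\leq 1$ and $|\Delta\phi|\leq C$, integration by parts yields the differential inequality
\begin{equation*}
\frac{d}{dt}\int n^\epsilon\phi\,dx+\int(n^\epsilon)^2\phi\,dx\leq C\|n^\epsilon\|_{L^1}+C\|n^\epsilon\|_{L^2}\bigl(\|u^\epsilon\|_{L^2}+\|\nabla c^\epsilon\|_{L^2}\bigr)+\int n^\epsilon\phi\,dx.
\end{equation*}
A Gronwall-in-time argument, combined with the pathwise bounds on $\|n^\epsilon\|_{L^1}$, $\int_0^T\|n^\epsilon\|_{L^2}^2\,dt$ and $\int_0^T\|c^\epsilon\|_{H^1}^2\,dt$ from Lemma \ref{lem4.1} and the expectation bound on $\mathbb{E}\sup_{[0,T]}\|u^\epsilon\|_{L^2}^2$ from Lemma \ref{lem4.2}, should yield $\mathbb{E}\sup_{[0,T]}\||x|n^\epsilon\|_{L^1}\leq C\exp\{CT\}$.

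For the entropy estimate, the positivity $n^\epsilon>0$ provided by Lemma \ref{lem4.0} permits applying the chain rule to $n^\epsilon\ln n^\epsilon$ and integrating. The transport term cancels by $\textrm{div}\,u^\epsilon=0$, the diffusion produces the dissipation $-4\int|\nabla\sqrt{n^\epsilon}|^2\,dx$, and the logistic contribution expands as
\begin{equation*}
\int(1+\ln n^\epsilon)\bigl(n^\epsilon-(n^\epsilon)^2\bigr)\,dx=\|n^\epsilon\|_{L^1}-\|n^\epsilon\|_{L^2}^2+\int n^\epsilon(1-n^\epsilon)\ln n^\epsilon\,dx,
\end{equation*}
in which the last integrand is pointwise $\leq 0$ since $(1-s)\ln s\leq 0$ for every $s>0$; this sign property allows one to discard it and obtain
\begin{equation*}
\frac{d}{dt}\int n^\epsilon\ln n^\epsilon\,dx+4\int|\nabla\sqrt{n^\epsilon}|^2\,dx\leq \int\nabla n^\epsilon\cdot(\nabla c^\epsilon\ast\rho^\epsilon)\,dx+\|n^\epsilon\|_{L^1}.
\end{equation*}

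The hard part is controlling the chemotactic cross term. Writing $\nabla n^\epsilon=2\sqrt{n^\epsilon}\nabla\sqrt{n^\epsilon}$ and applying Cauchy--Schwarz followed by Young's inequality splits it into $\|\nabla\sqrt{n^\epsilon}\|_{L^2}^2$ (absorbable by the dissipation on the left) plus $\int n^\epsilon|\nabla c^\epsilon\ast\rho^\epsilon|^2\,dx$. For the latter, the factorisation $\nabla c^\epsilon=2\sqrt{c^\epsilon}\nabla\sqrt{c^\epsilon}$, the $L^\infty$ bound on $c^\epsilon$ from Lemma \ref{lem4.1}, Young's convolution inequality for $\rho^\epsilon$, and the 2D Gagliardo--Nirenberg interpolation $\|\nabla\sqrt{c^\epsilon}\|_{L^4}^2\leq C\|\nabla\sqrt{c^\epsilon}\|_{L^2}\|\Delta\sqrt{c^\epsilon}\|_{L^2}$ give
\begin{equation*}
\int n^\epsilon|\nabla c^\epsilon\ast\rho^\epsilon|^2\,dx\leq C\|n^\epsilon\|_{L^2}\|\nabla\sqrt{c^\epsilon}\|_{L^2}\|\Delta\sqrt{c^\epsilon}\|_{L^2}.
\end{equation*}
It is precisely here that Lemma \ref{lem4.4} enters, supplying the $\epsilon$-uniform control on $\sqrt{c^\epsilon}$; this is also where the triple-exponential rate of the target bound originates.

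To conclude, integrate the entropy ODE in $t$, take expectation, apply Cauchy--Schwarz in $(t,\omega)$, and invoke the pathwise bounds on $\|n^\epsilon\|_{L^1}$ and $\int_0^T\|n^\epsilon\|_{L^2}^2\,dt$ from Lemmas \ref{lem4.1}--\ref{lem4.3} together with the expectation bound from Lemma \ref{lem4.4}. This delivers both $\mathbb{E}\sup_{[0,T]}\int n^\epsilon\ln n^\epsilon\,dx$ and $\mathbb{E}\int_0^T\|\nabla\sqrt{n^\epsilon}\|_{L^2}^2\,dt$ at the claimed triple-exponential rate; to extract the dissipation bound one also needs a Jensen-type lower bound $\int n^\epsilon\ln n^\epsilon\,dx\geq -C\|n^\epsilon\|_{L^1}-C\||x|n^\epsilon\|_{L^1}$ (obtained by comparing against an exponential reference density on $\mathbb{R}^2$), which is supplied by the first step. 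Summing the two contributions completes the proof.
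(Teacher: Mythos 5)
Your proposal is correct and follows the same overall architecture as the paper: an entropy identity for $\int n^\epsilon\ln n^\epsilon$ coupled with a weighted first-moment estimate, with the cross term reduced to the quantities $\|\Delta\sqrt{c^\epsilon}\|_{L^2}$, $\|\nabla\sqrt{c^\epsilon}\|_{L^4}$ controlled by Lemma \ref{lem4.4}, and the triple exponential entering through $\mathbb{E}\sup_t\|u^\epsilon\|_{L^4}^4$ from Lemma \ref{lem4.3}. The one genuine difference is your treatment of the logistic contribution: you group it as $\int n^\epsilon(1-n^\epsilon)\ln n^\epsilon\,\textrm{d}x\leq 0$, using the pointwise sign of $(1-s)\ln s$, which is correct and cleaner than the paper's route. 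The paper instead isolates $-\int (n^\epsilon)^2\ln n^\epsilon\,\textrm{d}x$, bounds it by $C+\|\sqrt{|x|}\,n^\epsilon\|_{L^2}^2$ via a split over $\{n^\epsilon<e^{-|x|}\}$ and its complement, and must therefore extract $\int_0^T\|\sqrt{|x|}\,n^\epsilon\|_{L^2}^2\,\textrm{d}t$ from the weighted moment estimate (where the quadratic death term supplies exactly $\int |x|(n^\epsilon)^2$ with a good sign) and close a coupled Gronwall argument on $\||x|n^\epsilon\|_{L^1}+\int n^\epsilon\ln n^\epsilon$. Your version decouples the two estimates, with the moment needed only for the statement itself and for the lower bound $\int n^\epsilon\ln n^\epsilon\geq -C-C\||x|n^\epsilon\|_{L^1}$ that releases the dissipation term; you correctly identify that this lower bound is indispensable. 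Minor remarks: the double integration by parts $\int\phi\,\Delta n^\epsilon=\int n^\epsilon\Delta\phi$ is legitimate here since $\Delta\sqrt{1+|x|^2}$ is bounded and $n^\epsilon$ is an $H^s$ solution, though the paper's one-sided version with $\varphi_\eta$ avoids any decay discussion; and your Gagliardo--Nirenberg route to $\int n^\epsilon|\nabla c^\epsilon\ast\rho^\epsilon|^2\,\textrm{d}x\leq C\|n^\epsilon\|_{L^2}\|\nabla\sqrt{c^\epsilon}\|_{L^2}\|\Delta\sqrt{c^\epsilon}\|_{L^2}$ is an acceptable substitute for the paper's extra integration by parts onto $\Delta c^\epsilon\ast\rho^\epsilon$, since both land on the same Lemma \ref{lem4.4} quantities.
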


\begin{proof}
Applying the chain rule to  {$ (n^\epsilon \ln n^\epsilon )(t)$} with respect to $\eqref{3.1}_1$ and integrating the resulting identity on $\mathbb{R}^2$, we infer that
\begin{equation}\label{4.24}
\begin{split}
&\textrm{d} \int_{\mathbb{R}^2} n^\epsilon \ln n^\epsilon \textrm{d}x+\left(  \| n^\epsilon \|_{L^2}^2 + 4 \|\nabla \sqrt{n^{\epsilon}}\|^2_{L^2}\right) \textrm{d}t\\
&\quad =  \int _{\mathbb{R}^2} n^\epsilon \ln n^\epsilon\textrm{d}x\textrm{d}t+\int _{\mathbb{R}^2} \nabla n^{\epsilon}\cdot(\nabla c^{\epsilon}*\rho^{\epsilon}) \textrm{d}x\textrm{d}t
 -  \int_{\mathbb{R}^2}   (n^{\epsilon} )^2 \ln n^{\epsilon}  \textrm{d}x\textrm{d}t.
\end{split}
\end{equation}
By integrating by parts, we get
\begin{equation*}
\begin{split}
\int _{\mathbb{R}^2} \nabla n^{\epsilon} (\nabla c^{\epsilon}*\rho^{\epsilon})   \textrm{d}x& = 2\int _{\mathbb{R}^2}   n^{\epsilon} \Big( |\nabla \sqrt{c^{\epsilon}}|^2  *\rho^{\epsilon}  +  ( \sqrt{c^{\epsilon}}\Delta \sqrt{c^{\epsilon}} )*\rho^{\epsilon}\Big)  \textrm{d}x \\
&\leq  C(\|c _0\|_{L^\infty}) \left(\| \Delta \sqrt{c^{\epsilon}}\|_{L^2}^2+   \bigg\| \frac{\nabla \sqrt{c^{\epsilon}} }{\sqrt[4]{c^{\epsilon} }} \bigg\|_{L^4}^4+ \| n^{\epsilon}\|_{L^2}^2\right).
\end{split}
\end{equation*}
Since $ x \ln \frac{1}{x}< 1$ for all $x>0$, we have
\begin{equation*}
\begin{split}
-  \int_{\mathbb{R}^2}   (n^{\epsilon} )^2 \ln n^{\epsilon}  \textrm{d}x &=  \int_{\{0 < n^{\epsilon} <e^{-|x|}\}\cup \{1\geq n^{\epsilon}\geq e^{-|x|}\}}   (n^{\epsilon} )^2 \ln \frac{1}{n^{\epsilon}}\textrm{d}x \leq C+   \|\sqrt{|x|}n^{\epsilon}\|_{L^2}^2.
\end{split}
\end{equation*}
Substituting the last two estimates into \eqref{4.24}, we gain from \eqref{4.23} that
\begin{equation}\label{4.244}
\begin{split}
& \sup_{t\in[0,T]}\int_{\mathbb{R}^2} n^\epsilon \ln n^\epsilon \textrm{d}x+   \int_0^T \left(\| n^\epsilon \|_{ L^2}^2  + 4 \|\nabla \sqrt{n^{\epsilon}}\|^2_{ L^2}\right)\textrm{d}t \\
&\quad \leq C \exp\{CT\} +\int_{\mathbb{R}^2} n^\epsilon_0 \ln n^\epsilon_0 \textrm{d}x+    \int_0^T\int _{\mathbb{R}^2} n^\epsilon \ln n^\epsilon\textrm{d}x\textrm{d}t+   \int_0^T\|\sqrt{|x|}n^{\epsilon}\|_{ L^2}^2\textrm{d}t\\
&\quad\quad+ C \int_0^T\left(\| \Delta \sqrt{c^{\epsilon}}\|_{L^2}^2+   \bigg\| \frac{\nabla \sqrt{c^{\epsilon}} }{\sqrt[4]{c^{\epsilon} }} \bigg\|_{L^4}^4 \right)\textrm{d}t\\
 &\quad\leq \int_{\mathbb{R}^2} n^\epsilon_0 \ln n^\epsilon_0 \textrm{d}x+    \int_0^T\int _{\mathbb{R}^2} n^\epsilon \ln n^\epsilon\textrm{d}x\textrm{d}t \\
 &\quad\quad+   \int_0^T\|\sqrt{|x|}n^{\epsilon}\|_{ L^2}^2\textrm{d}t+   C \exp\{CT\}\left(1+ \sup_{t\in [0,T]} \|  u ^\epsilon(t)\|_{L^4}^4\right).
\end{split}
\end{equation}
To deal with the term $\int_0^T\|\sqrt{|x|}n^{\epsilon}\|_{ L^2}^2\textrm{d}t$, we consider a smooth function
$
\varphi_\eta (x)=\sqrt{|x|^2+\eta}$, $\eta >0 .
$
Then we apply the chain rule to $ (\varphi_\eta n^\epsilon) (t) $ to obtain
\begin{equation}\label{4.25}
\begin{split}
&\int_{\mathbb{R}^2} \varphi_\eta n^{\epsilon} \textrm{d}x  + \int_0^t\int_{\mathbb{R}^2}\varphi_\eta  (n^{\epsilon} )^2 \textrm{d}x \textrm{d}r\\
&\quad =  \int_{\mathbb{R}^2}  \varphi_\eta n^{\epsilon}_0 \textrm{d}x+\int_0^t\int_{\mathbb{R}^2}n^{\epsilon}( u ^{\epsilon}\cdot \nabla \varphi_\eta)\textrm{d}x  \textrm{d}r+ \int_0^t\int_{\mathbb{R}^2}\varphi_\eta  n^{\epsilon} \textrm{d}x \textrm{d}r \\
&\quad\quad+ \int_0^t\int_{\mathbb{R}^2}n^{\epsilon}\nabla\varphi_\eta\cdot(\nabla c^{\epsilon} *\rho^{\epsilon} ) \textrm{d}x \textrm{d}r-\int_0^t\int_{\mathbb{R}^2}\nabla\varphi_\eta \cdot \nabla n^{\epsilon}\textrm{d}x  \textrm{d}r.
\end{split}
\end{equation}
Note that $|\nabla \varphi_\eta|\leq 1$, it follows from the Lemma \ref{lem4.1} and Young inequality that
\begin{equation*}
\begin{split}
\int_0^t\int_{\mathbb{R}^2}n^{\epsilon}( u ^{\epsilon}\cdot \nabla \varphi_\eta)\textrm{d}x  \textrm{d}r
 \leq C \left (1+t \left(\int_0^t\|n^\epsilon\|_{ L^2}^2\textrm{d}r\right)^{\frac{1}{2}}+  \sup_{r\in [0,t]}\| u ^\epsilon(r)\|_{L^4}^4 \right),\\
\end{split}
\end{equation*}
\begin{equation*}
\begin{split}
\int_0^t\int_{\mathbb{R}^2}n^{\epsilon}\nabla\varphi_\eta\cdot( \nabla c^{\epsilon} *\rho^{\epsilon}) \textrm{d}x \textrm{d}r
\leq C \sqrt[4]{t} \left(\int_0^t\|n^\epsilon\|_{ L^2}^2\textrm{d}r\right)^{\frac{1}{4}} \left(\int_0^t\bigg\| \frac{\nabla \sqrt{c^{\epsilon}} }{\sqrt[4]{c^{\epsilon} }}\bigg\|_{ L^4}^4\textrm{d}r\right)^{\frac{1}{2}},\\
\end{split}
\end{equation*}
and
\begin{equation*}
\begin{split}
\int_0^t\int_{\mathbb{R}^2}\nabla\varphi_\eta \cdot \nabla n^{\epsilon}\textrm{d}x  \textrm{d}r  
\leq C t  \sup_{r\in [0,t]}\|n^\epsilon(r)\|_{ L^1} \left(\int_0^t\|\nabla \sqrt{n^{\epsilon}}\|_{ L^2}^2\textrm{d}r\right)^{\frac{1}{2}}.
\end{split}
\end{equation*}
By using the facts of $n^\epsilon\geq0$ and $\varphi_\eta\searrow|x|$ as $\eta\rightarrow 0$, we get by the Monotone Convergence Theorem that
$
 \int_{\mathbb{R}^2}  \varphi_\eta n^{\epsilon}_0 \textrm{d}x \rightarrow \int_{\mathbb{R}^2} |x| n^{\epsilon}_0 \textrm{d}x,
$
and
$
\int_0^t\int_{\mathbb{R}^2}\varphi_\eta  n^{\epsilon} \textrm{d}x \textrm{d}r \rightarrow  \int_0^t\int_{\mathbb{R}^2}|x|  n^{\epsilon} \textrm{d}x \textrm{d}r$, as $\eta\rightarrow 0 .
$
Taking the limit as $\eta \rightarrow0$ in \eqref{4.25}, we are able to derive that
\begin{equation}\label{4.26}
\begin{split}
&  \sup_{t\in[0,T]}\||x| n^{\epsilon}(t)\|_{L^1}  +   \int_0^T\|\sqrt{|x|}n^{\epsilon}\|_{ L^2}^2 \textrm{d}t\\
&\quad\leq  \| |x| n^{\epsilon}_0 \|_{L^1}+C \sqrt{T} \|n^\epsilon\|_{L^2_TL^2}^{\frac{1}{2}} \sup_{t\in [0,T]}\| u ^\epsilon(t)\|_{L^2} +C  \sqrt{T} \|n^\epsilon\|_{L^2_TL^2} +C T^2  \|n^\epsilon\|_{L^\infty_TL^1}^2\\
&\quad\quad +\bigg\| \frac{\nabla \sqrt{c^{\epsilon}} }{\sqrt[4]{c^{\epsilon} }}\bigg\|_{L^4_tL^4}^4+ 3\|\nabla \sqrt{n^{\epsilon}}\|_{L^2_TL^2}^2+  \int_0^T\||x|  n^{\epsilon} \|_{L^1} \textrm{d}r\\
&\quad\leq  \| |x| n^{\epsilon}_0 \|_{L^1} +C \exp\{CT\}\left(1+ \sup_{t\in [0,T]} \|  u ^\epsilon(t)\|_{L^4}^4\right)+  \int_0^T\||x|  n^{\epsilon} \|_{L^1} \textrm{d}t.
\end{split}
\end{equation}
Combining the inequalities \eqref{4.244} and \eqref{4.26}, we find
\begin{equation}\label{4.27}
\begin{split}
& \mathbb{E} \sup_{t\in [0,T]} \left(\||x| n^{\epsilon}(t)\|_{L^1}+\int_{\mathbb{R}^2} n^\epsilon \ln n^\epsilon \textrm{d}x \right) +  \mathbb{E}\int_0^T \|\nabla \sqrt{n^{\epsilon}}\|^2_{ L^2}\textrm{d}t\\
&\quad\leq \| |x| n^{\epsilon}_0 \|_{L^1}+\int_{\mathbb{R}^2} n^\epsilon_0 \ln n^\epsilon_0 \textrm{d}x+C \exp\{CT\}\mathbb{E}\left(1+ \sup_{t\in [0,T]} \|  u ^\epsilon(t)\|_{L^4}^4\right)  \\
&\quad\quad  +    \mathbb{E}\int_0^T\bigg(\||x| n^{\epsilon}\|_{L^1}+\int_{\mathbb{R}^2} n^\epsilon \ln n^\epsilon \textrm{d}x \bigg) \textrm{d}t .
\end{split}
\end{equation}
By applying the Gronwall Lemma to  \eqref{4.27}  and using
$
\||x| n_0^{\epsilon}\|_{L^1}+\int_{\mathbb{R}^2} n_0^\epsilon \ln n^\epsilon_0 \textrm{d}x \leq \left\| \sqrt{1+|x|^2} n_0^{\epsilon}\right\|_{L^1}+ \|n_0^\epsilon\|_{L^2}^2\leq C,
$
we get
\begin{equation*}
\begin{split}
 \mathbb{E} \sup_{t\in [0,T]} \left(\||x| n^{\epsilon}(t)\|_{L^1}+\int_{\mathbb{R}^2} n^\epsilon \ln n^\epsilon \textrm{d}x \right) \leq   C\exp\{\exp\{ \exp\{C T\}\}\}.
\end{split}
\end{equation*}
Inserting the last inequality into \eqref{4.27} in turn implies the desired inequality.
\end{proof}


Let $T>0$. For any $N>1$, define
$$
\Omega_{N}^\epsilon:= \left\{\omega \in \Omega;~\int_0^T\| \Delta \sqrt{c^\epsilon}\|_{ L^2}^2\textrm{d}t\vee\int_0^T\bigg\|\frac{\nabla \sqrt{c^\epsilon}}{\sqrt[4]{c^\epsilon}} \bigg\|_{ L^4}^4\textrm{d}t\vee\sup_{t \in [0,T]}\|  u ^\epsilon(t)\|_{L^4}^4 \leq N\right\}.
$$
By the Lemmas \ref{lem4.3}-\ref{lem4.4} and Chebyshev inequality, we find
\begin{equation}\label{4..25}
\begin{split}
 \mathbb{P}\{\Omega_{N}^\epsilon\}\geq& 1- \mathbb{P}\left\{\int_0^T\| \Delta \sqrt{c^\epsilon}\|_{ L^2}^2\textrm{d}t>N\right\}- \mathbb{P}\left\{\int_0^T\bigg\|\frac{\nabla \sqrt{c^\epsilon}}{\sqrt[4]{c^\epsilon}} \bigg\|_{ L^4}^4\textrm{d}t >N\right\}\\
 &- \mathbb{P}\left\{ \sup_{r \in [0,t]}\|  u ^\epsilon(r)\|_{L^4}^4 {> N}\right\} \geq 1-\frac{C}{N},
\end{split}
\end{equation}
for some constant $C>0$ independent of $\epsilon$. This fact will be applied to verify the tightness of the sequence $\{n^\epsilon\}_{\epsilon > 0}$ later.

\begin{lemma}\label{lem4.6}
For any $T>0$, we have
\begin{equation} \label{4.28}
\begin{split}
 \sup_{t\in[0,T]}\|n^\epsilon(t)\|_{ L^2 }^2+ \int_0^T\|n^\epsilon\|_{  H^1 }^2\textrm{d}t+ \int_0^T\|n^\epsilon\|_{ L^3}^3\textrm{d}t \leq C\exp \{ C(1+T+N) \},
\end{split}
\end{equation}
for all $\omega\in \Omega_{N}^\epsilon$. Moreover, there hold
\begin{equation} \label{4.29}
\begin{split}
\mathbb{E}\sup_{t\in[0,T]}\|c^\epsilon(t)\|_{ H^1 }^2 +\mathbb{E}\int_0^T\|c^\epsilon\|_{ H^2}^2 \textrm{d}t \leq C ,
\end{split}
\end{equation}
\begin{equation} \label{4.30}
\begin{split}\mathbb{E}\sup_{t\in[0,T]}\| v ^\epsilon(t)\|_{ L^2 }^2+\mathbb{E}\sup_{t\in[0,T]}\| v ^\epsilon(t)\|_{ L^{\frac{4}{3}}}^{ \frac{4}{3}}+\mathbb{E}\int_0^T\| v ^\epsilon\|_{ \dot{H}^\alpha }^2\textrm{d}t  \leq C,
\end{split}
\end{equation}
for some positive constant $C $  independent of $\epsilon$.
\end{lemma}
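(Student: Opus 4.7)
The plan is to establish the three bounds in order, exploiting the pathwise/expectation character of each one and reusing the outputs of Lemmas \ref{lem4.1}--\ref{lem4.5}. Since $n^\epsilon$-equation is deterministic in form (randomness only enters through the coefficients $u^\epsilon,c^\epsilon$), I will treat \eqref{4.28} by a pathwise energy argument on the good event $\Omega_N^\epsilon$. For \eqref{4.29}, I will avoid a direct PDE estimate for $c^\epsilon$ and instead read it off from the quantities already controlled in Lemma \ref{lem4.4} using the chain-rule identities between $c^\epsilon$ and $\sqrt{c^\epsilon}$. For \eqref{4.30}, I will derive the vorticity equation and use It\^{o} both in $L^2$ and in $L^{4/3}$; the latter is the whole point of the unusual third hypothesis in (H3).

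For \eqref{4.28}: multiply $\eqref{3.1}_1$ by $n^\epsilon$ and integrate in $x$. The transport term vanishes by $\mathrm{div}\,u^\epsilon=0$, the diffusion provides $\|\nabla n^\epsilon\|_{L^2}^2$, and the logistic source contributes $\|n^\epsilon\|_{L^2}^2-\|n^\epsilon\|_{L^3}^3$. The cross-diffusion term, after integration by parts, equals $\int \nabla n^\epsilon\cdot n^\epsilon(\nabla c^\epsilon\!*\!\rho^\epsilon)\,dx$, which I bound by $\delta\|\nabla n^\epsilon\|_{L^2}^2 + C_\delta\|n^\epsilon\|_{L^4}^2\|\nabla c^\epsilon\|_{L^4}^2$. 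Applying Ladyzhenskaya $\|n^\epsilon\|_{L^4}^2\le C\|n^\epsilon\|_{L^2}\|n^\epsilon\|_{H^1}$ and the pointwise identity $|\nabla c^\epsilon|^2=4c^\epsilon|\nabla\sqrt{c^\epsilon}|^2$ gives $\|\nabla c^\epsilon\|_{L^4}^4\le 16\|c_0\|_{L^\infty}^3\bigl\|\nabla\sqrt{c^\epsilon}/\sqrt[4]{c^\epsilon}\bigr\|_{L^4}^4$, whose time integral is $\le CN$ on $\Omega_N^\epsilon$. After absorbing $\delta\|\nabla n^\epsilon\|_{L^2}^2$, pathwise Gronwall yields the $\sup_t\|n^\epsilon\|_{L^2}^2$ bound with the claimed constant $\exp\{C(1+T+N)\}$; re-integrating in time delivers the $L^2_tH^1$ and $L^3_tL^3$ norms.

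For \eqref{4.29}: the $L^1\cap L^\infty$ bound is already Lemma \ref{lem4.1}. Combining the identities $\nabla c^\epsilon=2\sqrt{c^\epsilon}\nabla\sqrt{c^\epsilon}$ and $\Delta c^\epsilon=2\sqrt{c^\epsilon}\Delta\sqrt{c^\epsilon}+2|\nabla\sqrt{c^\epsilon}|^2$ with $\|c^\epsilon\|_{L^\infty}\le \|c_0\|_{L^\infty}$ gives
\begin{equation*}
\|\nabla c^\epsilon\|_{L^2}^2\le 4\|c_0\|_{L^\infty}\|\nabla\sqrt{c^\epsilon}\|_{L^2}^2,\qquad \|\Delta c^\epsilon\|_{L^2}^2\le C\|c_0\|_{L^\infty}\Bigl(\|\Delta\sqrt{c^\epsilon}\|_{L^2}^2+\|c_0\|_{L^\infty}\!\!\int\!\tfrac{|\nabla\sqrt{c^\epsilon}|^4}{c^\epsilon}\,dx\Bigr),
\end{equation*}
so \eqref{4.29} is an immediate corollary of Lemma \ref{lem4.4} after taking $\sup_{t}$/$\int_0^T$ and expectation.

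For \eqref{4.30}: take the curl of $\eqref{3.1}_3$; in 2D the vortex-stretching term is absent and $\nabla\wedge[(u^\epsilon\!\cdot\!\nabla)u^\epsilon]=u^\epsilon\!\cdot\!\nabla v^\epsilon$, so
\begin{equation*}
dv^\epsilon+(u^\epsilon\!\cdot\!\nabla)v^\epsilon\,dt+(-\Delta)^\alpha v^\epsilon\,dt=\nabla\wedge[(n^\epsilon\nabla\phi)*\rho^\epsilon]\,dt+\nabla\wedge f(t,u^\epsilon)\,dW_t.
\end{equation*}
Itô on $\|v^\epsilon\|_{L^2}^2$ (the transport term vanishes by $\mathrm{div}\,u^\epsilon=0$, the dissipation gives $\|(-\Delta)^{\alpha/2}v^\epsilon\|_{L^2}^2$), combined with the first inequality of (H3) with $s=1$, Young's inequality on the $n^\epsilon\nabla\phi$ contribution, BDG and expectation Gronwall, yields the $L^2$ part of \eqref{4.30}. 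For the $L^{4/3}$ part, I apply the $L^p$-Itô formula (as in Lemma \ref{lem4.3}) to $\|v^\epsilon\|_{L^{4/3}}^{4/3}$ after regularising $v\mapsto(\eta^2+|v|^2)^{2/3}$ and passing $\eta\to 0$. The transport term again vanishes, the dissipation is non-negative, the drift is controlled via $\|n^\epsilon\|_{L^{4/3}}\le \|n^\epsilon\|_{L^1}^{1/2}\|n^\epsilon\|_{L^2}^{1/2}$ and Lemmas \ref{lem4.1}, \ref{lem4.6}(i), and the critical It\^{o} correction
\begin{equation*}
\tfrac{(4/3)(4/3-1)}{2}\sum_{k}\int|v^\epsilon|^{-2/3}(\nabla\wedge f(t,u^\epsilon)e_k)^2\,dx
\end{equation*}
is controlled \emph{exactly} by the third inequality in (H3) by $C(1+\|v^\epsilon\|_{L^{4/3}}^{4/3})$. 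BDG on the martingale part plus Gronwall close the estimate. The principal obstacle is the rigorous justification of the $L^p$-It\^{o} formula for the sub-quadratic exponent $p=4/3$ and the meaningfulness of the singular integrand $|v^\epsilon|^{-2/3}$, which forces the $\eta$-regularisation and explains the precise form of the third assumption in (H3); all the other steps are routine given the preceding lemmas.
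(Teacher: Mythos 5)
Your treatment of \eqref{4.28} and \eqref{4.29} is correct and essentially the paper's argument: for \eqref{4.28} the paper integrates by parts once more so that the cross term is measured through $\|\Delta c^\epsilon\|_{L^2}$ rather than $\|\nabla c^\epsilon\|_{L^4}$, but both reduce to the quantities $\|\Delta\sqrt{c^\epsilon}\|_{L^2}^2$ and $\|\nabla\sqrt{c^\epsilon}/\sqrt[4]{c^\epsilon}\|_{L^4}^4$ defining $\Omega_N^\epsilon$, so this is cosmetic; \eqref{4.29} is read off from Lemma \ref{lem4.4} exactly as you do. The architecture of your $L^{4/3}$ vorticity estimate --- the $\varphi_\eta$-regularisation of $|v|^{4/3}$, the observation that the third inequality of (H3) is tailored to the It\^o correction $\sum_k\int|v^\epsilon|^{-2/3}(\nabla\wedge f(t,u^\epsilon)e_k)^2\,dx$, and BDG plus Gronwall --- is also exactly the paper's.

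There is, however, a genuine gap in your control of the drift $\int\varphi_\eta^{-2/3}(v^\epsilon)v^\epsilon\,\nabla\wedge[(n^\epsilon\nabla\phi)*\rho^\epsilon]\,dx$. Since $\nabla\wedge(n^\epsilon\nabla\phi)=\nabla n^\epsilon\wedge\nabla\phi$, the quantity produced by H\"older (using $|\varphi_\eta^{-2/3}(v)v|\le|v|^{1/3}\in L^4$) is $\|\nabla n^\epsilon\|_{L^{4/3}}$, not $\|n^\epsilon\|_{L^{4/3}}$; the only way to avoid the gradient of $n^\epsilon$ is to put the curl on the mollifier, which costs a factor $\epsilon^{-1}$ and destroys the $\epsilon$-uniformity that \eqref{4.30} asserts. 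Worse, you propose to close the estimate with ``Lemma \ref{lem4.6}(i)'', i.e.\ \eqref{4.28}: that bound holds only on $\Omega_N^\epsilon$ with a constant of size $e^{CN}$, while $\mathbb{P}(\Omega\setminus\Omega_N^\epsilon)$ decays only like $1/N$, so it cannot be converted into an expectation bound over all of $\Omega$ --- invoking it here is precisely the move the event $\Omega_N^\epsilon$ was introduced to avoid. The fix (and the paper's route) is the interpolation $\|\nabla n^\epsilon\|_{L^{4/3}}\le 2\|\sqrt{n^\epsilon}\|_{L^4}\|\nabla\sqrt{n^\epsilon}\|_{L^2}=2\|n^\epsilon\|_{L^2}^{1/2}\|\nabla\sqrt{n^\epsilon}\|_{L^2}$, after which the pathwise bound $\int_0^T\|n^\epsilon\|_{L^2}^2\,dt\le C$ from Lemma \ref{lem4.1} and the entropy bound $\mathbb{E}\int_0^T\|\nabla\sqrt{n^\epsilon}\|_{L^2}^2\,dt\le C$ from Lemma \ref{lem4.5} close the term via a Young inequality with exponents $4$ and $4/3$. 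The same caution applies to your unspecified ``Young's inequality on the $n^\epsilon\nabla\phi$ contribution'' in the $L^2$ vorticity step, where the forcing again enters through $\nabla n^\epsilon$ and its expectation must come from the entropy-type estimates, not from \eqref{4.28}.
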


\begin{proof}
Applying the chain rule to $\frac{1}{2} \|n^\epsilon(t)\|_{L^2}^2 $, it follows from $\eqref{3.1}_1$ that
\begin{equation*}
\begin{split}
\|n^\epsilon (t)\|_{L^2}^2 + 2\int_0^t\left(\|\nabla n^\epsilon \|_{ L^2}^2  + \| n^\epsilon \|_{ L^3}^3\right)\textrm{d}r= \|n^\epsilon_0\|_{L^2}^2+2   \int_0^t\| n^\epsilon \|_{ L^2}^2 \textrm{d}r + 2 \int_0^t(n^{\epsilon}(\nabla c^{\epsilon} *\rho^{\epsilon}),\nabla n^\epsilon)_{L^2}  \textrm{d}r.
\end{split}
\end{equation*}
Integrating by parts and using  {the Gagliardo-Nirenberg (GN) inequality (cf. \cite[Lecture II]{nirenberg1959})}, we have
\begin{equation*}
\begin{split}
2 (n^{\epsilon}(\nabla c^{\epsilon} *\rho^{\epsilon}),\nabla n^\epsilon)_{L^2}  
&\leq  \|\Delta c^{\epsilon } *\rho^{\epsilon}\|_{L^2} \| n^\epsilon \|_{L^2} \|\nabla n^\epsilon \|_{L^2}\\
&\leq  \|\nabla n^\epsilon \|_{L^2}^2+C\left(\| \Delta \sqrt{c^\epsilon}\|_{ L^2}^2+\bigg\|\frac{\nabla \sqrt{c^\epsilon}}{\sqrt[4]{c^\epsilon}} \bigg\|_{ L^4}^4 \right) \| n^\epsilon \|_{L^2}^2.
\end{split}
\end{equation*}
By \eqref{4.23} and the Gronwall Lemma, we arrive at
\begin{equation*}
\begin{split}
  \|n^\epsilon (t)\|_{L^2}^2 +  \int_0^t\left(\|\nabla n^\epsilon \|_{ L^2}^2  + \| n^\epsilon \|_{ L^3}^3\right)\textrm{d}r\leq \|n^\epsilon_0\|_{L^2}^2  \exp\bigg\{ t+C \bigg(1+\sup_{r \in [0,t]}\|  u ^\epsilon(r)\|_{L^4}^4  \bigg) \bigg\},~~\mathbb{P}\textrm{-a.s.,}
\end{split}
\end{equation*}
which implies \eqref{4.28} by using the definition of $\Omega_{N}^\epsilon $.

For the $c^\epsilon$-component, we get from (4.1) and the Lemma \ref{lem4.4} that
\begin{equation*}
\begin{split}
\mathbb{E}\sup_{t\in [0,T]}\|\nabla c^\epsilon(t)\|_{L^2}^2
  \leq 2\| c^\epsilon \|_{L^\infty}\mathbb{E}\sup_{t\in [0,T]}\| \nabla  \sqrt{c^\epsilon} (t)\|_{L^2}^2\leq C\exp\{\exp\{\exp\{C T\} \}\}.
\end{split}
\end{equation*}
Since
$
\Delta c^\epsilon= 2 \sqrt{c^\epsilon}\Delta \sqrt{c^\epsilon}+2 |\nabla \sqrt{c^\epsilon}|^2,
$
we have
\begin{equation*}
\begin{split}
\mathbb{E}\int_0^T\|\Delta c^\epsilon\|_{ L^2}^2 \textrm{d}t&\leq C \mathbb{E}\int_0^T\left(\| \Delta \sqrt{c^\epsilon}\|_{ L^2}^2+\bigg\|\frac{\nabla \sqrt{c^\epsilon}}{\sqrt[4]{c^\epsilon}} \bigg\|_{ L^4}^4 \right)\textrm{d}t\leq C \exp\{\exp\{\exp\{CT\} \}\},
\end{split}
\end{equation*}
which together with \eqref{4.1} imply \eqref{4.29}.

Now we apply the It\^{o} formula  to  {$ \| v ^\epsilon(t)\|_{L^2}^2= \|\nabla\wedge u ^\epsilon(t)\|_{L^2}^2 $} and then integrate by parts over $\mathbb{R}^2$,  it follows that
\begin{equation*}
\begin{split}
\| v ^\epsilon(t)\|_{L^2}^2  +2\int_0^t\|(-\Delta)^\frac{\alpha}{2}  v ^{\epsilon}\|_{L^2} ^2\textrm{d}r\leq & \| v ^\epsilon_0\|_{L^2}^2 + C(\phi)\int_0^t\|\nabla n^\epsilon \|_{ L^2}^2 \textrm{d}r+  C \int_0^t (1+\|  v ^{\epsilon} \|_{L^2} ^2)\textrm{d}r \\
 & +2 \int_0^t( v ^\epsilon,\nabla\wedge\textbf{P} f(r, u ^{\epsilon}) \mathrm{d} W_r)_{L^2}.
\end{split}
\end{equation*}
Utilizing the It\^{o} chain rule to $ e^{-Ct}(1+\| v ^\epsilon(t)\|_{L^2}^2) $, we see that
\begin{equation}\label{5.5}
\begin{split}
&\sup_{t\in [0,T]}\left(1+\| v ^\epsilon(t)\|_{L^2}^2\right)+\int_0^Te^{ C(T-t)} \| v ^{\epsilon}\|_{\dot{H}^\alpha} ^2\textrm{d}t\leq e^{CT}\left( 1+\| v ^\epsilon_0\|_{L^2}^2\right)\\
&\quad + C(\phi)  \int_0^T e^{ C(T-r)} \|\nabla n^{\epsilon}\|_{L^2}^2\textrm{d}r +\sup_{t\in [0,T]}\bigg|\int_0^te^{ C(t-r)}( v ^\epsilon,\nabla\wedge\textbf{P} f(r, u ^{\epsilon}) \mathrm{d} W_r)_{L^2}\bigg|.
\end{split}
\end{equation}
Applying the BDG inequality to \eqref{5.5},  we get
\begin{equation*}
\begin{split}
&\mathbb{E}\sup_{t\in [0,T]}\left(1+\| v ^\epsilon (t)\|_{L^2}^2\right)+\mathbb{E} \int_0^T\| v ^{\epsilon}\|_{ \dot{H}^\alpha} ^2 \textrm{d}t\leq C\exp\{\exp\{\exp\{C T\} \}\}.
\end{split}
\end{equation*}
To estimate the term $\|  v ^\epsilon(t)\| _{ L^{ \frac{4}{3}}}$, we utilize the It\^{o} formula  to $ \varphi^{\frac{4}{3}}_\eta ( v ^\epsilon(t))$ to find
\begin{align}\label{4..30}
\mathbb{E}\sup_{r\in [0,t]}\|\varphi _\eta ( v ^\epsilon)\|_{L^{ \frac{4}{3}}}^{ \frac{4}{3}} &\leq\mathbb{E}\|\varphi _\eta ( v ^\epsilon_0)\|_{L^{ \frac{4}{3}}}^{ \frac{4}{3}}+\frac{4}{3}\mathbb{E}\int_0^t\int_{\mathbb{R}^2} \left|\varphi_\eta ^{-\frac{2}{3}}( v ^\epsilon)  v ^\epsilon  \textbf{P}\nabla \wedge [(n^{\epsilon}\nabla \phi)*\rho^{\epsilon}] \right| \textrm{d}x \textrm{d}r \nonumber\\
 &\quad+\frac{4}{3}\mathbb{E} \int_0^t\int_{\mathbb{R}^2}\left|\left(\varphi_\eta ^{-\frac{2}{3}}( v ^\epsilon) v ^\epsilon -| v ^\epsilon|^{-\frac{2}{3}} v ^\epsilon\right)    (-\Delta)^\alpha v ^{\epsilon}\right| \textrm{d}x \textrm{d}r \nonumber\\
 &\quad + \mathbb{E} \int_0^t\sum_{k\geq 1}\int_{\mathbb{R}^2}\left|\frac{2}{3}\varphi_\eta^{-\frac{2}{3}}( v ^\epsilon)- \frac{4}{9} \varphi_\eta^{-\frac{8}{3}}( v ^\epsilon) | v ^\epsilon|^2\right|\left(\textbf{P} \nabla \wedge f(r, u ^{\epsilon})e_k \right)^2\textrm{d}x\textrm{d}r \nonumber\\
 &\quad+ \frac{4}{3} \mathbb{E}\sup_{r\in [0,t]}\bigg|\int_0^r \sum_{k\geq 1}\int_{\mathbb{R}^2}\varphi_\eta ^{-\frac{2}{3}}( v ^\epsilon)  v ^\epsilon\textbf{P} \nabla \wedge f(\varsigma, u ^{\epsilon})e_k \textrm{d}x \textrm{d} W^k  _\varsigma\bigg| \nonumber\\
 &:=\mathbb{E}\|\varphi _\eta ( v ^\epsilon_0)\|_{L^{ \frac{4}{3}}}^4 + \mathcal {L}_1+ \mathcal {L}_2+ \mathcal {L}_3+ \mathcal {L}_4.
\end{align}
Let us estimate each term on both sides of \eqref{4..30}. First, it follows from the fact of $|x|\leq \varphi_\eta (x)=\sqrt{|x|^2+\eta}$ that
\begin{equation*}
\begin{split}
\mathbb{E}\sup_{r\in [0,t]}\| v ^\epsilon(r)\|_{L^{ \frac{4}{3}}}^{ \frac{4}{3}}\leq \mathbb{E}\sup_{r\in [0,t]}\|\varphi _\eta ( v ^\epsilon)\|_{L^{ \frac{4}{3}}}^{ \frac{4}{3}}.
\end{split}
\end{equation*}
Noting that $\varphi^{\frac{4}{3}}_\eta ( v ^\epsilon(0))\downarrow | v ^\epsilon(0)|$ as $\eta\downarrow 0$, the Monotone Convergence Theorem implies that
\begin{equation*}
\begin{split}
\lim_{\eta\rightarrow 0} \int_{\mathbb{R}^2}\varphi^{\frac{4}{3}}_\eta (v^\epsilon(0))\textrm{d}x= \|v^\epsilon(0)\|_{L^{\frac{4}{3}}}^{\frac{4}{3}}\leq \|\nabla \wedge u_0\|_{L^{\frac{4}{3}}}^{\frac{4}{3}}.
\end{split}
\end{equation*}
For $\mathcal {L}_1$, note that
$$
|\varphi_\eta ^{-\frac{2}{3}}(v^\epsilon) v ^\epsilon|\leq | v ^\epsilon|^{\frac{1}{3}} ,~~ \|\rho^{\epsilon}\|_{L^1}=1~~ \textrm{and} ~~\nabla \wedge [(n^{\epsilon}\nabla \phi)*\rho^{\epsilon}]  = [(\nabla \wedge n^{\epsilon})\nabla \phi]*\rho^{\epsilon}.
$$
For any $\delta>0$, we have
\begin{equation*}
\begin{split}
\mathcal {L}_1
&\leq C (\phi)\sup_{r\in [0,t]}\| v ^\epsilon(r)\|_{L^{\frac{4}{3}}}^{\frac{1}{3}} \int_0^t\|\sqrt{n^{\epsilon}}\|_{L^{4}}\|\nabla \sqrt{n^{\epsilon}}\|_{L^{2}} \textrm{d}r\\
&\leq \delta \sup_{r\in [0,t]}\| v ^\epsilon(r)\|_{L^{\frac{4}{3}}}^{\frac{4}{3}}  +  \int_0^t \|\nabla \sqrt{n^{\epsilon}}\|_{L^{2}}^2  \textrm{d}r + C(\delta, \phi)\int_0^t \| n^{\epsilon} \|_{L^{2}}^2 \textrm{d}r.
\end{split}
\end{equation*}
For $\mathcal {L}_2$, since the term $|\varphi_\eta ^{-2/3}( v ^\epsilon)v^\epsilon -| v ^\epsilon|^{-2/3} v ^\epsilon| |(-\Delta)^\alpha  v ^{\epsilon}|$ is a monotone nonnegative sequence with respect to $\eta$ over the set
$$
Z_{\neq}:= \left\{(r,x)\in [0,t]\times \mathbb{R}^2| ~  v ^{\epsilon}(r,x)\neq0\right\}.
$$
We get from the Monotone Convergence Theorem that
$$
\iint_{Z_{\neq}}\left| \varphi_\eta ^{-\frac{2}{3}}( v ^\epsilon) v ^\epsilon -| v ^\epsilon|^{-\frac{2}{3}} v ^\epsilon \right|    |(-\Delta)^\alpha  v ^{\epsilon}| \textrm{d}x \textrm{d}r \rightarrow 0,~~ \textrm{as}~\eta\rightarrow0.
$$
Moreover, note that
$$
\left|\left(\varphi_\eta ^{-\frac{2}{3}}( v ^\epsilon) v ^\epsilon -| v ^\epsilon|^{-\frac{2}{3}} v ^\epsilon\right)    (-\Delta)^\alpha  v ^{\epsilon}\right|\leq | v ^\epsilon|^{\frac{1}{3}} |(-\Delta)^\alpha  v ^{\epsilon}| \in L^1((0,t)\times\mathbb{R}^2),\quad \mathbb{P}\textrm{-a.s. }
$$
The Dominated Convergence Theorem implies that
$$
\iint_{Z_{=}}\left| \varphi_\eta ^{-\frac{2}{3}}( v ^\epsilon) v ^\epsilon -| v ^\epsilon|^{-\frac{2}{3}} v ^\epsilon \right|    |(-\Delta)^\alpha  v ^{\epsilon}| \textrm{d}x \textrm{d}r = 0 ,
$$
where $Z_{=}:=\{(r,x)\in [0,t]\times \mathbb{R}^2|~   v ^{\epsilon}(r,x)=0\}$. As a result, we obtain
\begin{equation*}
\begin{split}
\lim_{\eta \downarrow0} \mathcal {L}_2\leq \frac{4}{3}\lim_{\eta \downarrow0}\mathbb{E}\iint_{Z_{\neq}}\left|\varphi_\eta ^{-\frac{2}{3}}( v ^\epsilon) v ^\epsilon -| v ^\epsilon|^{-\frac{2}{3}} v ^\epsilon\right| \left|(-\Delta)^\alpha  v ^{\epsilon}\right| \textrm{d}x \textrm{d}r=0.
\end{split}
\end{equation*}
For $\mathcal {L}_3$, the assumption (H3) guarantees that
\begin{equation*}
\begin{split}
\mathcal {L}_3\leq C\mathbb{E} \int_0^t\sum_{k\geq 1}\int_{\mathbb{R}^2} \varphi_\eta^{-\frac{2}{3}}( v ^\epsilon) \left(\textbf{P} \nabla \wedge f_k(r, u ^{\epsilon})  \right)^2\textrm{d}x\textrm{d}r
\leq C\mathbb{E} \int_0^t \left(1+\| v ^\epsilon \|_{L^{\frac{4}{3}}}^{\frac{4}{3}}\right) \textrm{d}r.
\end{split}
\end{equation*}
For $\mathcal {L}_4$, it follows from the BDG inequality and the Minkowski inequality that, for any $\zeta>0$,
\begin{equation*}
\begin{split}
\mathcal {L}_4&\leq  C\mathbb{E}  \left[\int_0^t \sum_{k\geq 1}\left(\int_{\mathbb{R}^2}\varphi_\eta ^{-\frac{2}{3}}( v ^\epsilon) | v ^\epsilon|  |\nabla \wedge f(r, u ^{\epsilon})e_k(x)|\textrm{d}x\right)^2 \textrm{d}r \right]^{\frac{1}{2}}\\
&\leq  \zeta\mathbb{E}  \sup_{r \in [0,t]}\| v ^\epsilon (r)\|_{L^{\frac{4}{3}}}^{\frac{4}{3}} + C(\zeta) \mathbb{E} \int_0^t \left(1+\| v ^\epsilon \|_{L^{\frac{4}{3}}}^{\frac{4}{3}}\right) \textrm{d}r.
\end{split}
\end{equation*}
Plugging the above estimates  into \eqref{4.28} and choosing $\delta=\zeta=\frac{1}{4}$,  we get
\begin{equation*}
\begin{split}
&\mathbb{E}\sup_{t\in [0,T]}\| v ^\epsilon(t)\|_{L^{\frac{4}{3}}}^{\frac{4}{3}}\leq C \exp\{C T\}\left(1+  \mathbb{E} \int_0^T \|\nabla \sqrt{n^{\epsilon}}\|_{L^{2}}^2  \textrm{d}r +   \mathbb{E} \int_0^T \| v ^\epsilon \|_{L^{\frac{4}{3}}}^{\frac{4}{3}} \textrm{d}t\right),
\end{split}
\end{equation*}
which implies the desired estimate. The proof of Lemma \ref{lem4.6} is completed.
\end{proof}

\subsection{Pathwise solution for KS-SNS system}

Based on the uniform bounds derived in the previous subsection, one can now prove the existence of martingale solution to the KS-SNS system \eqref{KS-SNS}.

\begin{lemma}[Martingale weak solution]\label{lem4.7}
Suppose that the assumptions (H1)-(H3) hold, then the KS-SNS system \eqref{KS-SNS} possesses at least one global martingale weak solution $\left( (\widetilde{\Omega} , \widetilde{\mathcal {F}}, \widetilde{\mathbb{P}}),\widetilde{W},\widetilde{n},\widetilde{c},\widetilde{u}\right)$.
\end{lemma}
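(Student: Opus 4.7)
The plan is to implement the stochastic compactness method in three stages: first establish tightness of the laws of the approximations $\textbf{u}^\epsilon=(n^\epsilon,c^\epsilon,u^\epsilon)$ by combining the uniform estimates in Section~4.1 with classical compactness criteria; second, apply the Jakubowski--Skorokhod representation theorem to obtain, on a new probability space, a subsequence that converges almost surely; and third, pass to the limit in the weak formulation of \eqref{3.1} to identify the limit with a martingale weak solution of \eqref{KS-SNS}.

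For the tightness step, guided by Lemmas \ref{lem4.1}--\ref{lem4.6}, I would introduce path spaces of the form
$$
\mathcal{X}_n=L^2(0,T;H^1_{\mathrm{loc}}(\mathbb{R}^2))\cap C([0,T];H^{-\sigma}_{\mathrm{loc}}(\mathbb{R}^2)), \qquad \sigma>2,
$$
and analogously $\mathcal{X}_c,\mathcal{X}_u$, augmented with the weak-$*$ topologies coming from the $L^\infty_tL^2_x$, $L^\infty_tH^1_x$ and $L^\infty_t(\dot W^{1,4/3})$ bounds, respectively; $\mathcal{X}_W=C([0,T];U_0)$. Tightness of the marginals reduces to an Aubin--Lions type criterion: the spatial regularity follows from the uniform bounds derived in Lemmas \ref{lem4.4}--\ref{lem4.6}, while fractional H\"older-in-time regularity with values in a larger negative Sobolev space follows by integrating the drifts and applying the BDG inequality to the stochastic integral, using assumption (H2) and the uniform moment bound \eqref{3.117}. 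A single subtlety is that \eqref{4.28} is only an $\omega$-wise bound on $\Omega_N^\epsilon$; however, $\mathbb{P}(\Omega_N^\epsilon)\geq 1-C/N$ by \eqref{4..25}, so splitting $\Omega=\Omega_N^\epsilon\cup (\Omega\setminus \Omega_N^\epsilon)$ and sending $N\to\infty$ still yields tightness of $\{\mathrm{Law}(n^\epsilon)\}_\epsilon$ on $\mathcal{X}_n$.

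The Jakubowski--Skorokhod theorem (needed because some factors carry non-metrizable weak topologies) then furnishes a new stochastic basis $(\widetilde{\Omega},\widetilde{\mathcal{F}},(\widetilde{\mathcal{F}}_t),\widetilde{\mathbb{P}})$, random variables $(\widetilde{n}^\epsilon,\widetilde{c}^\epsilon,\widetilde{u}^\epsilon,\widetilde{W}^\epsilon)$ with the same laws as the original ones, and a limit $(\widetilde{n},\widetilde{c},\widetilde{u},\widetilde{W})$ to which they converge $\widetilde{\mathbb{P}}$-a.s.\ in the product topology. The equidistribution together with Fatou / weak lower-semicontinuity transfers all the uniform bounds of Section~4.1 to both $\widetilde{\textbf{u}}^\epsilon$ and the limit $\widetilde{\textbf{u}}$, yielding the regularity asserted in Theorem~\ref{main}(1). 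To argue that $\widetilde{\textbf{u}}^\epsilon$ still satisfies the regularized system driven by $\widetilde{W}^\epsilon$ one uses the equality-in-law of the multi-dimensional functional $\Phi(\textbf{u}^\epsilon,W)$ that encodes the integral identity, exactly as in \cite{bensoussan1995stochastic,breit2018stochastically}.

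The main obstacle is the limit passage $\epsilon\to 0$ in the nonlinearities, the mollifiers, and the stochastic integral, all in the unbounded domain $\mathbb{R}^2$. For the quadratic and bilinear terms $\widetilde{u}^\epsilon\otimes \widetilde{u}^\epsilon$, $\widetilde{u}^\epsilon\widetilde{n}^\epsilon$, $\widetilde{u}^\epsilon\widetilde{c}^\epsilon$, the mollified chemotactic drift $\widetilde{n}^\epsilon(\nabla\widetilde{c}^\epsilon\ast\rho^\epsilon)$, the logistic sink $(\widetilde{n}^\epsilon)^2$, and the production $\widetilde{c}^\epsilon(\widetilde{n}^\epsilon\ast\rho^\epsilon)$, one pairs strong convergence in $L^2(0,T;L^2_{\mathrm{loc}})$ of the low-regularity factor with weak convergence of its partner; the standard mollifier properties $\rho^\epsilon\ast f\to f$ dispose of the convolutions. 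The \emph{unbounded} domain forces an additional tightness-at-infinity argument, which is exactly where the momentum bound $\sup_t\||x|n^\epsilon\|_{L^1}<\infty$ from Lemma~\ref{lem4.5} is used, ruling out escape of mass; for $u^\epsilon$ the $L^\infty_tL^2_x\cap L^2_tH^\alpha_x$ control combined with the vorticity bounds in \eqref{4.30} plays the analogous role. Finally, the convergence of the stochastic integral $\int_0^t f(r,\widetilde{u}^\epsilon)\,d\widetilde{W}^\epsilon_r \to \int_0^t f(r,\widetilde{u})\,d\widetilde{W}_r$ in probability is obtained through the standard martingale characterization: one verifies that the relevant process and its quadratic variation (computed via (H2)) pass to the limit, then invokes a L\'evy-type identification to conclude that $\widetilde{W}$ is cylindrical Wiener with respect to the filtration generated by $(\widetilde{n},\widetilde{c},\widetilde{u},\widetilde{W})$. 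Assembling these convergences in the weak formulation of \eqref{3.1} and sending $\epsilon\to 0$ yields the integral identities in Theorem~\ref{main}(2), producing the desired martingale weak solution.
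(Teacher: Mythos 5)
Your proposal is correct and follows essentially the same route as the paper: tightness from the uniform bounds of Section~4.1 (including the same $\Omega_N^\epsilon$ splitting to handle the fact that \eqref{4.28} is only an $\omega$-wise bound on a high-probability event), a Skorokhod-type representation on a product space mixing local strong and weak/weak-$*$ topologies, transfer of the estimates by equality in law, and identification of the limit in the weak formulation. The one substantive divergence is the treatment of the stochastic integral: you propose the martingale-identification route (passing to the limit in the martingale, its quadratic variation and its cross-variation with $\widetilde W^\epsilon$, then a L\'evy-type characterization), whereas the paper uses Bensoussan's time-mollification argument, regularizing $\Theta^{\epsilon_j}(t)=f(t,\widetilde u^{\epsilon_j}(t))$ in time, integrating by parts to remove the stochastic integral, and passing to the limit term by term. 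Both are standard and work here under (H2); the martingale method is slightly more robust to low time regularity of the integrand, while Bensoussan's method avoids having to verify the filtration-adaptedness issues inherent in the L\'evy characterization. Your explicit invocation of Jakubowski's version of the Skorokhod theorem is in fact the more careful choice given the non-metrizable weak topologies in the path space, and your use of the first-moment bound $\sup_t\||x|\,n^\epsilon\|_{L^1}$ to prevent escape of mass at infinity is consistent with how the paper compensates for working on $\mathbb{R}^2$.
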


\begin{proof}
Let us first confirm the tightness of $(n^\epsilon,c^\epsilon,u^\epsilon)$ by virtue of the uniform estimates obtained in last subsection.

$\bullet$ Since $\mathcal {L}[W^\epsilon]$ is a single Radon measure on the Polish space $\chi_{W^\epsilon}:= C([0,T];U_0)$, it is tight.

$\bullet$ We denote by $\mathcal {L}[n^\epsilon]$ the law of $n^\epsilon$ on the phase space $\chi_{n^\epsilon}:= L^2(0,T;L^2_{\textrm{loc}}(\mathbb{R}^2))$. According to \eqref{4.28} in Lemma \ref{lem4.6} and the $n^\epsilon$-equation in \eqref{3.1}, one can easily verify that $n^\epsilon$ is uniformly bounded (with respect to $\epsilon$) in $W^{1,2}(0,T;H^{-1}(\mathbb{R}^2))$ over $\Omega^\epsilon_N$.

 {Let us denote by $B_m\subseteq\mathbb{R}^2$ the open ball with radius $m\in \mathbb{N}$ centered at $\textbf{0}$}, then for each $m>0$, there exists a constant $a_m>0$ such that the bound
$$
 \| n^\epsilon  \|_{L^2(0,T;H^{1}(B_m))}  + \| \partial_t n^\epsilon  \|_{L^2(0,T;H^{-1}(B_m))} \lesssim a_m,\quad \textrm{for all}~ \omega\in  \Omega  ^\epsilon _N,
$$
holds  uniformly in $\epsilon$. Moreover, by the Aubin-Lions Theorem  {(\cite[Theorem 2.1 in Chapter III]{temam2001navier}}), for any sequence of balls $\{B_m\}_{m\in\mathbb{N}}$, the space
$$
\mathcal {V}:= \left\{f\in \chi_{n^\epsilon};~ f\in L^2\left(0,T;H^1(B_m)\right) ,~f\in W^{1,2}\left(0,T;L^2(B_m)\right) \right\}
$$
is relatively compact in $L^2(0,T;L^2(B_m))$. By \eqref{4..25}, we have
\begin{equation*}
\begin{split}
 \mathcal {L}[n^\epsilon]\{\|n^\epsilon\|_{\mathcal {V}}\leq a_m\} \geq \mathbb{P}\{\Omega_N^\epsilon\}  \geq 1- \frac{C}{N}.
\end{split}
\end{equation*}
By choosing $N>1$ as large as we wish, one can prove the tightness of $\{\mathcal {L}[n^\epsilon]:\epsilon \in (0,1)\}$ on $L^2 (0,T;L^2_{\textrm{loc}}(\mathbb{R}^2))$. Similarly, one can also obtain the tightness on $(L^2(0,T;H^{1}(\mathbb{R}^2)),\textrm{weak})$  by virtue of the bound \eqref{4..25}, where $(G,\textrm{weak})$ denotes the Banach space $G$ equipped with the weak topology.

Hence, the family of probability measures $\{\mathcal {L}[n^\epsilon]:\epsilon \in (0,1)\}$ is tight on
$$
\chi_{n^\epsilon}:= L^2 \left(0,T;L^2_{\textrm{loc}}(\mathbb{R}^2)\right)\cap  \left(L^2(0,T;H^{1}(\mathbb{R}^2)),\textrm{weak}\right).
$$

$\bullet$ Due to the uniform bound \eqref{4.29} in Lemma \ref{4.7} and the $c^\epsilon$-equation in \eqref{3.1}, one can easily verify that $\{\partial_t c^\epsilon\}_{\epsilon \in (0,1)}$ is uniformly bounded in $L^2(\Omega; L^2(0,T;L^2(\mathbb{R}^2)))$, and the family of the measures  $\{\mathcal {L}[c^\epsilon]:\epsilon \in (0,1)\}$ is tight on
$$
\chi_{c^\epsilon}:= L^\infty \left(0,T;L^2_{\textrm{loc}}(\mathbb{R}^2)\right) \cap L^2\left(0,T;H^1_{\textrm{loc}}(\mathbb{R}^2)\right).
$$

$\bullet$ To prove the tightness of $\{ u ^\epsilon\}_{\epsilon \in (0,1)}$, we first show that there is a $C>0$ independent of $\epsilon$ such that for some $\gamma \in (0,1)$
\begin{equation}\label{4...30}
\begin{split}
\mathbb{E}\| u ^\epsilon\|_{ W^{\gamma,2}(0,T;H^{1-\alpha}) }^2 \leq C,
\end{split}
\end{equation}
where
$$
W^{\gamma,2}\left(0,T;H^{1-\alpha}(\mathbb{R}^2)\right):= \left\{f \in L^2\left(0,T;H^{1-\alpha}(\mathbb{R}^2)\right);~\int_0^T \int_0^T  \frac{\|f(t)-f(r)\|_{H^{1-\alpha}}^2}{|t-r|^{1+2 \gamma}}   \textrm{d}t\textrm{d}r<\infty\right\},
$$
and it is endowed with the norm
$$
\|f\|_{W^{\gamma,2}(0,T;H^{1-\alpha})}^2 = \int_0^T\|f(t)\|_{ H^{1-\alpha}}^2\textrm{d}t+ \int_0^T \int_0^T \frac{  \|f(t)-f(s)\|_{H^{1-\alpha}}^2}{|t-s|^{1+2 \gamma}} \textrm{d}t\textrm{d}r.
$$
Indeed, we deduce from Lemma \ref{lem4.2} that, for any $\gamma\in (0,1)$, the inclusions
\begin{equation*}
\begin{split}
&\int_0^t \textbf{P} ( u ^{\epsilon}\cdot \nabla)  u ^{\epsilon} \textrm{d}r, ~\int_0^t \textbf{P}(-\Delta)^\alpha  u ^{\epsilon} \textrm{d}r,~\int_0^t \textbf{P}(n^{\epsilon}\nabla \phi)*\rho^{\epsilon} \textrm{d}r\\
&\quad \in L^2\left(\Omega; W^{1,2}(0,T;H^{1-\alpha}(\mathbb{R}^2))\right)\subseteq L^2\left(\Omega; W^{\gamma,2}(0,T;H^{1-\alpha}(\mathbb{R}^2))\right)
\end{split}
\end{equation*}
are uniformly bounded in $\epsilon$. For the stochastic integration term in \eqref{3.1}$_3$, it follows from the Lemma \ref{lem4.6} and the same argument in Lemma 2.1 of \cite{flandoli1995martingale} that
\begin{equation*}
\begin{split}
\mathbb{E}\bigg\|\int_0^t\textbf{P} f(r, u ^{\epsilon}) \mathrm{d} W^\epsilon_r\bigg\|_{W^{\gamma,2}(0,T;H^{1-\alpha} )}^2 &\leq C \mathbb{E} \int_0^T \| f(r, u ^{\epsilon})\|_{L_2(U; H^{1-\alpha})}^2 \textrm{d}r  \\
&\leq C \mathbb{E} \left(1+\sup_{t \in [0,T]}\| v ^{\epsilon}(t) \|_{ L^{2} }^2\right) \leq C,
\end{split}
\end{equation*}
for some $\gamma \in (0,1)$. This proves \eqref{4...30}.

 {For each open ball $B_m \subset \mathbb{R}^2$ with radius $m\in \mathbb{N}$ centered at $\textbf{0}$}, we get from the Lemma \ref{lem4.6} and \eqref{4...30} that there exists $a_m>0$ such that
\begin{equation} \label{4.31}
\begin{split}
\mathbb{E}\left(\| u ^\epsilon\|_{L^2 (0,T;H^{1+\alpha}(B_m))}^2+ \| u ^\epsilon\|_{W^{\gamma,2}(0,T;H^{1-\alpha}(B_m)) }^2 \right)\leq a_m.
\end{split}
\end{equation}
Note that the following embedding
$$
L^2\left(0,T;H^{1+\alpha}(B_m)\right)\bigcap W^{\gamma,2}\left(0,T;H^{1-\alpha}(B_m)\right)\subset  L^2\left(0,T;H^1(B_m)\right)
$$
is compact, we deduce from the bound  \eqref{4.31} and the Chebyshev inequality that the family of measures $\mathcal {L}[u^\epsilon]$ is tight on $L^2(0,T;H^1_{\textrm{loc}}(\mathbb{R}^2))$.

Moreover, the momentum estimate \eqref{4.30} implies that the family of measures $\mathcal {L}[u^\epsilon]$ is also tight on the spaces $\left(L^\infty(0,T;\dot{W}^{1,\frac{4}{3}}(\mathbb{R}^2)),\textrm{weak}*\right)$ and $\left(L^2(0,T;H^{1+\alpha}(\mathbb{R}^2)),\textrm{weak}\right)$. Here $(H,\textrm{weak}*)$ stands for the Banach space $H$ equipped with the weak-star topology. Therefore, the family of measures $\mathcal {L}[u^\epsilon]$ is tight on
$$
\chi_{u^\epsilon}:= L^2\left(0,T;H^1_{\textrm{loc}}(\mathbb{R}^2)\right) \cap \left(L^\infty(0,T;\dot{W}^{1,\frac{4}{3}}(\mathbb{R}^2)),\textrm{weak}*\right)\cap \left(L^2(0,T;H^{1+\alpha}(\mathbb{R}^2)),\textrm{weak}\right).
$$

In conclusion, we have proved that the sequence $\{(W^\epsilon,n^\epsilon,c^\epsilon, u ^\epsilon)\}_{\epsilon> 0}$ is tight on the phase space
\begin{equation*}
\begin{split}
\textbf{X} := &C([0,T];U_0)\times L^2 \left(0,T;L^2_{\textrm{loc}}(\mathbb{R}^2)) \cap \left(L^2(0,T;H^{1 }(\mathbb{R}^2)\right),\textrm{weak}\right)\\
&\times L^2 \left(0,T;H_{\textrm{loc}}^{1 }(\mathbb{R}^2)\right)\cap \left(L^2(0,T;H^{2}(\mathbb{R}^2)),\textrm{weak}\right) \\
&\times L^2 \left(0,T;H_{\textrm{loc}}^{1 }(\mathbb{R}^2)\right)\cap \left(L^\infty(0,T;W^{1,\frac{4}{3}}(\mathbb{R}^2)),\textrm{weak}*\right)\cap \left(L^2(0,T;H^{1+\alpha}(\mathbb{R}^2)),\textrm{weak}\right).
\end{split}
\end{equation*}
In view of  the Prokhov Theorem (cf. \cite[Theorem 2.3]{da2014stochastic}), there exists a subsequence $\{\epsilon_j\}_{j\in\mathbb{N}}$ of $\{\epsilon\}_{\epsilon>0}$ and a probability measure $\pi$ defined on $\textbf{X}$ such that
$$
\pi^{\epsilon_j}=\mathcal {L}\left[W^{\epsilon_j},n^{\epsilon_j},c^{\epsilon_j}, u^{\epsilon_j}\right]\rightharpoonup\pi ~~~ \textrm{as}~~~j\rightarrow\infty.
$$
According to the Skorokhod Theorem (cf. \cite[Theorem 2.4]{da2014stochastic}), there exists a stochastic basis $( \widetilde{\Omega} , \widetilde{\mathcal {F}}, (\widetilde{\mathcal {F}}_t)_{t\in [0,T]}, \widetilde{\mathbb{P}})$, on which a sequence of $\textbf{X}$-valued random variables $ (\widetilde{W}^{\epsilon_j},\widetilde{n}^{\epsilon_j}, \widetilde{c}^{\epsilon_j},\widetilde{u}^{\epsilon_j})_{j\geq 1}$ and an element $(\widetilde{W},\widetilde{n},\widetilde{c},\widetilde{u})$ can be defined,  such that

\textbf{(a$_1$)} the joint laws $
\mathcal {L}\left[\widetilde{W}^{\epsilon_j},\widetilde{n}^{\epsilon_j},\widetilde{c} ^{\epsilon_j},\widetilde{u} ^{\epsilon_j}\right]$ and $\mathcal {L}\left[W^{\epsilon_j},n^{\epsilon_j},c ^{\epsilon_j},u ^{\epsilon_j}\right]$ coincide  on $\textbf{X}$;

\textbf{(a$_2$)}  the law $\mathcal {L}\left[\widetilde{W},\widetilde{n},\widetilde{c},\widetilde{u}\right]=\pi$ is a Radon measure on $\textbf{X}$, and we have $ \widetilde{\mathbb{P}}$-a.s.
\begin{subequations}\label{4.32}
\begin{align}
 \widetilde{W}^{\epsilon_j}\rightarrow\widetilde{W} \quad\textrm{in}\quad& C([0,T];U),\tag{4.32a}\\
 \widetilde{n}^{\epsilon_j}\rightarrow\widetilde{n} \quad\textrm{in}\quad& L^2 \left(0,T;L^2_{\textrm{loc}}(\mathbb{R}^2)\right) \cap \left(L^2(0,T;H^{1 }(\mathbb{R}^2)),\textrm{weak}\right) ,\tag{4.32b}\\
 \widetilde{c}^{\epsilon_j}\rightarrow\widetilde{c} \quad\textrm{in}\quad& L^2 \left(0,T;H_{\textrm{loc}}^{1 }(\mathbb{R}^2)\right)\cap \left(L^2(0,T;H^{2}(\mathbb{R}^2)),\textrm{weak}\right),\tag{4.32c}\\
 \widetilde{u}^{\epsilon_j}\rightarrow\widetilde{u} \quad\textrm{in}\quad& L^2 \left(0,T;H_{\textrm{loc}}^{1 }(\mathbb{R}^2)\right)\cap \left (L^\infty(0,T;\dot{W}^{1,\frac{4}{3}}(\mathbb{R}^2)),\textrm{weak}*\right)\nonumber\\
 &\cap \left (L^2(0,T;H^{1+\alpha}(\mathbb{R}^2)),\textrm{weak}\right);\tag{4.32d}
 \end{align}
 \end{subequations}

\textbf{(a$_3$)} the quadruple $\left(\widetilde{W}^{\epsilon_j},\widetilde{n}^{\epsilon_j},\widetilde{c}^{\epsilon_j},
\widetilde{u}^{\epsilon_j}\right)$ satisfies the \eqref{KS-SNS} in the sense of distribution, $\widetilde{\mathbb{P}}$-a.s.

\vspace{2mm}
Now we show that the quantity $(( \widetilde{\Omega} , \widetilde{\mathcal {F}}, (\widetilde{\mathcal {F}}_t)_{t\in [0,T]}, \widetilde{\mathbb{P}}),\widetilde{n}, \widetilde{c}, \widetilde{u})$ is a martingale solution to the system \eqref{KS-SNS} by taking the limits as $j\rightarrow\infty$. Here we just treat the fluid equation in \eqref{KS-SNS}, since the other two equations can be treated in a similar manner. This will be done by four steps $1)-5)$.


1) Since the process $\widetilde{W}^{\epsilon_j}$ has the same law as $W$, by applying the \cite[Lemma 2.1.35 and Corollary 2.1.36]{breit2018stochastically} and  \cite[Proof of Lemma 4.8]{Debussche2016}, one can conclude that $\widetilde{W}^{\epsilon_j}=\widetilde{W}$ is a $(\widetilde{\mathcal {F}}_t)_{t\in [0,T]}$-cylindrical Wiener process. Hence,  there exists a family of mutually independent real-valued $(\widetilde{\mathcal {F}}_t)_{t\in [0,T]}$-Wiener processes $(\widetilde{W}^k)_{k \geq 1}$ such that
$$
\widetilde{W}(t,x,\omega)= \sum_{k\geq 1} e_k(x) \widetilde{W}^k(t,\omega).
$$

2) As the triple $\left(n^{\epsilon_j}, c^{\epsilon_j}, u ^{\epsilon_j}\right)$ has the same law with $\left(\widetilde{n}^{\epsilon_j}, \widetilde{c}^{\epsilon_j},\widetilde{u}^{\epsilon_j}\right)$ (by \textbf{(a$_1$)}), if the expectation $\mathbb{E}$ is replaced by $\widetilde{\mathbb{E}}$, then the previous proof means that $\left(\widetilde{n}^{\epsilon_j}, \widetilde{c}^{\epsilon_j},\widetilde{u}^{\epsilon_j}\right)$ satisfies the same uniform estimates in Lemmas \ref{lem4.6}, i.e.,
\begin{equation*}
\begin{split}
\sup_{j\geq 1}\left(\widetilde{\mathbb{E}}\sup_{t\in[0,T]}\| \widetilde{u}^{\epsilon_j}(t)\|_{ H^1}^2+\widetilde{\mathbb{E}}\sup_{t\in[0,T]}\|\widetilde{u}^{\epsilon_j}(t)\|_{ \dot{W}^{1,\frac{4}{3}}}^{ \frac{4}{3}}+\widetilde{\mathbb{E}}\int_0^T\|\widetilde{u}^{\epsilon_j}(t)\|_{ H^{1+\alpha} }^2\textrm{d}t\right)  \leq C,
\end{split}
\end{equation*}
for some constant $C>0$  independent of $\epsilon_j$, which implies that there exists a subsequence of $(\widetilde{u}^{\epsilon_j})_{j\geq1}$, still denoted by itself from simplicity, convergent weak star in $L^2(\Omega;L^\infty(0,T; H^1(\mathbb{R}^2) ))\cap L^{4/3}(\Omega;L^\infty(0,T; \dot{W}^{1,4/3}(\mathbb{R}^2) ))$ and weakly in $ L^2(\Omega \times [0,T];H^{1+\alpha}(\mathbb{R}^2))$ to an element $\varrho$ as $j\rightarrow\infty$. Since $\widetilde{u}^{\epsilon_j}\rightarrow\widetilde{u}$ in the phase space $\chi_{u^\epsilon} $ by conclusion \textbf{(a$_2$)}, we infer that $\varrho=\widetilde{u}$, and so
\begin{equation*}
\begin{split}
\widetilde{u} \in L^\infty \left (0,T; H ^1(\mathbb{R}^2)\cap \dot{W}^{1,\frac{4}{3}}(\mathbb{R}^2)\right) \cap L^2 \left(0,T; H^{1+\alpha}(\mathbb{R}^2)\right),\quad \widetilde{\mathbb{P}}\textrm{-a.s.}
\end{split}
\end{equation*}

3) By using the convergence in (4.32d), we get that for any $\varphi \in C^\infty_0(\mathbb{R}^2; \mathbb{R}^2)$ with $\textrm{div} \varphi =0$
\begin{equation*}
\begin{split}
&\int_0^T( \widetilde{u}^{\epsilon_j}\otimes \widetilde{u}^{\epsilon_j},\nabla\varphi)_{L^2}\textrm{d}t\rightarrow\int_0^T( \widetilde{u}\otimes \widetilde{u},\nabla\varphi)_{L^2}\textrm{d}t;\quad \int_0^T(\widetilde{n}^{\epsilon_j}\nabla \phi,\varphi)_{L^2}\textrm{d}t\rightarrow\int_0^T(\widetilde{n}\nabla \phi,\varphi)_{L^2}\textrm{d}t,\\
& \int_0^T\left( (-\Delta)^\frac{\alpha}{2}\widetilde{u}^{\epsilon_j},(-\Delta)^\frac{\alpha}{2}\varphi\right)_{L^2}\textrm{d}t\rightarrow\int_0^T\left( (-\Delta)^\frac{\alpha}{2}\widetilde{u},(-\Delta)^\frac{\alpha}{2}\varphi\right)_{L^2}\textrm{d}t,\quad \textrm{as} \ j\rightarrow\infty,
\end{split}
\end{equation*}

4) Let us verify the convergence of the stochastic integral. It will be done by applying Bensoussan's method in \cite{bensoussan1995stochastic}; see also \cite{chen2019martingale}. Indeed, for each $  \delta>0$, introducing the functions $$\Theta(t):=f(t,\widetilde{u}(t)),\quad \Theta _\delta(t):=\varrho_\delta \star \Theta(t)= \frac{1}{\delta}\int_0^t \varrho  (\frac{t-r}{\delta} )\Theta(r) \textrm{d}r,$$ where $\varrho (\cdot)$ is a standard mollifier. Note that
\begin{equation}\label{b1}
\begin{split}
\widetilde{\mathbb{E}}\int_0^T \|\Theta _\delta(t)\|_{L_2(U;L^2)}^2\textrm{d}t\leq \widetilde{\mathbb{E}}\int_0^T \|\Theta(t)\|_{L^2}^2\textrm{d}t,
\end{split}
\end{equation}
and
\begin{equation}\label{b2}
\begin{split}
\Theta _\delta(t)\rightarrow \Theta(t) \quad \textrm{in}~ L^2(\widetilde{\Omega};L^2(0,T;L^2(\mathbb{R}^2)))~ \textrm{as}~\delta\rightarrow 0.
\end{split}
\end{equation}
Set
$
\Theta ^{\epsilon_j}(t):= f(t,\widetilde{u}^{\epsilon_j}(t)).
$
By \eqref{4.32}, the Lemma \ref{lem4.2} and the BDG inequality, we infer that the sequence $\int_0^t \Theta ^{\epsilon_j}(r)\textrm{d} \widetilde{W}_r^{\epsilon_j}$ is uniformly bounded in $ L^2(\Omega;L^2(\mathbb{R}^2))$. Therefore, there exists a subsequence of $\{\epsilon_j\}$ (still denoted by itself) and an element $\xi\in L^2(\Omega;L^2(\mathbb{R}^2))$ such that, for any $\psi\in L^2(\Omega;L^2(\mathbb{R}^2))$,
\begin{equation}\label{b7}
\begin{split}
\widetilde{\mathbb{E}}\langle  \int_0^t \Theta ^{\epsilon_j}(r)\textrm{d} \widetilde{W}_r^{\epsilon_j},\psi\rangle \rightarrow \widetilde{\mathbb{E}}\langle \xi,\psi\rangle ~ \textrm{as}~j\rightarrow\infty,
\end{split}
\end{equation}
where $\widetilde{\mathbb{E}} \langle\cdot,\cdot\rangle $ denotes the inner product in $L^2(\Omega;L^2(\mathbb{R}^2))$. To finish the proof, it remains to show that $\xi=\int_0^t f(r,\widetilde{u} (r))\textrm{d} \widetilde{W}_r$. Indeed, since $\Theta_\delta^{\epsilon_j}(t)= \varrho_\delta\star\Theta^{\epsilon_j}(t)$ is smooth in time, one can get by integrating by parts that
\begin{equation*}
\begin{split}
 \int_0^t \Theta_\delta^{\epsilon_j}(r)\textrm{d} \widetilde{W}_r^{\epsilon_j}=\Theta_\delta^{\epsilon_j}(t) \widetilde{W}_t^{\epsilon_j}-\int_0^t \widetilde{W}_r^{\epsilon_j}\frac{\partial}{\partial r}\Theta_\delta^{\epsilon_j}(r) \textrm{d}r.
\end{split}
\end{equation*}
By adopting the same argument as in \cite[Theorem 1, P.2046-2047]{chen2019martingale}, one can prove that for any $\psi\in L^2(\widetilde{\Omega};L^2(\mathbb{R}^2))$,
\begin{equation}\label{b4}
\begin{split}
\widetilde{\mathbb{E}}\langle\int_0^t \Theta_\delta^{\epsilon_j}(r)\textrm{d} \widetilde{W}_r^{\epsilon_j},\psi \rangle \rightarrow \widetilde{\mathbb{E}}\langle\int_0^t \Theta_\delta (r) \textrm{d} \widetilde{W}_r,\psi \rangle~  \textrm{as}~j\rightarrow\infty.
\end{split}
\end{equation}
Observing that for any $\psi\in L^2(\widetilde{\Omega};L^2(\mathbb{R}^2))$,
\begin{equation}\label{b5}
\begin{split}
&\widetilde{\mathbb{E}}\langle\int_0^t \Theta^{\epsilon_j}(r)\textrm{d} \widetilde{W}_r^{\epsilon_j},\psi  \rangle - \langle\int_0^t \Theta (r) \textrm{d} \widetilde{W}_r,\psi \rangle\\
&\quad =\widetilde{\mathbb{E}}\langle\int_0^t (\Theta^{\epsilon_j}(r)-\Theta^{\epsilon_j}_\delta(r))\textrm{d} \widetilde{W}_r^{\epsilon_j},\psi  \rangle+\widetilde{\mathbb{E}}\langle\int_0^t  \Theta^{\epsilon_j}_\delta(r) \textrm{d} \widetilde{W}_r^{\epsilon_j}-\int_0^t  \Theta _\delta(r) \textrm{d} \widetilde{W}_r ,\psi  \rangle \\
&\quad\quad+\widetilde{\mathbb{E}}\langle\int_0^t ( \Theta _\delta(r)-\Theta  (r) ) \textrm{d} \widetilde{W}_r ,\psi  \rangle\\
&\quad:= A_1+A_2+A_3.
\end{split}
\end{equation}
For $A_1$, we get by the Cauchy-Schwarz inequality, \eqref{4.32} and \eqref{b1} that
\begin{equation}\label{b6}
\begin{split}
 |A_1|  &\leq \left(\widetilde{\mathbb{E}}\|\psi\|_{L^2}^2\right)^{1/2} \left(\widetilde{\mathbb{E}}\left\| \int_0^t (\Theta^{\epsilon_j}(r)-\Theta^{\epsilon_j}_\delta(r))\textrm{d} \widetilde{W}_r^{\epsilon_j}\right\|_{L^2}^2\right)^{1/2}\\
 &\leq \left(\widetilde{\mathbb{E}}\|\psi\|_{L^2}^2\right)^{1/2} \left(\widetilde{\mathbb{E}} \int_0^t \left\|\Theta^{\epsilon_j}(r)-\Theta^{\epsilon_j}_\delta(r) \right\|_{L_2(U;L^2)}^2\textrm{d}r \right)^{1/2}\rightarrow0,
\end{split}
\end{equation}
as $j\rightarrow\infty$, $\delta\rightarrow0$. Similar as $A_1$, for the term $A_3$, it follows from the Cauchy-Schwarz inequality and \eqref{b2} that $|A_3|\rightarrow0$ as $\delta\rightarrow 0$, which together with \eqref{b7}, \eqref{b4}-\eqref{b6} imply that $\xi=\int_0^t f(r,\widetilde{u} (r))\textrm{d} \widetilde{W}_r$.

5) Since the quadruple $(\widetilde{W}^{\epsilon_j},\widetilde{n}^{\epsilon_j},\widetilde{c}^{\epsilon_j},
\widetilde{u}^{\epsilon_j})$ satisfies the system \eqref{KS-SNS} in the sense of distribution (by conclusion \textbf{(a$_3$)}), one can now take the limit as  $j\rightarrow\infty$ in the $\widetilde{u}^{\epsilon_j}$-equation in \eqref{KS-SNS} to find
\begin{equation}\label{4.34}
\begin{split}
(\widetilde{u},\varphi)_{L^2}  =&(\widetilde{u}_0,\varphi)_{L^2} +\int_0^t  ( \widetilde{u}\otimes \widetilde{u},\nabla\varphi)_{L^2} \textrm{d}r-\int_0^t \left( (-\Delta)^\frac{\alpha}{2}\widetilde{u},(-\Delta)^\frac{\alpha}{2}\varphi\right)_{L^2}\textrm{d}r \\
 &+ \int_0^t(\widetilde{n}\nabla \phi,\varphi)_{L^2}\textrm{d}r+ \sum_{k\geq1}\int_0^t (f(r,\widetilde{u})e_k ,\varphi)_{L^2} \mathrm{d} \widetilde{W}^k_r,
\end{split}
\end{equation}
$\widetilde{\mathbb{P}}$-a.s., for any $t\in [0,T]$ and any $\varphi \in C^\infty_0(\mathbb{R}^2; \mathbb{R}^2)$ with $\textrm{div} \varphi =0$.

Moreover, we have all of the spatio-temporal regularity of solutions to take the limit as $j\rightarrow\infty$ to obtain that $(\widetilde{n},\widetilde{c})$ satisfies the first two random PDEs of \eqref{KS-SNS} in the sense of distribution. More precisely, the following identities hold  $\widetilde{\mathbb{P}}$-a.s.:
\begin{equation*}
\begin{split}
(\widetilde{n}(t) ,\varphi_1)_{L^2}&=(\widetilde{n} _0,\varphi_1)_{L^2}+\int_0^t\left(\widetilde{u}  \widetilde{n} -\nabla \widetilde{n} + \widetilde{n} \nabla \widetilde{c}  ,\nabla\varphi_1\right)_{L^2}\textrm{d}t +\int_0^t\left( \widetilde{n} -   \widetilde{n} ^2 ,\varphi_1\right)_{L^2} \textrm{d}t,
\end{split}
\end{equation*}
 and
 \begin{equation*}
\begin{split}
(\widetilde{c}(t) ,\varphi_2)_{L^2}&=(\widetilde{c} _0,\varphi_2)_{L^2}+\int_0^t \left(\widetilde{u}  \widetilde{c} -\nabla \widetilde{c} , \nabla\varphi_2\right)_{L^2} \textrm{d}t -\int_0^t\left(\widetilde{n}  \widetilde{c} ,\varphi_2\right)_{L^2}\textrm{d}t,
\end{split}
\end{equation*}
for any $t \in [0,T]$ and for all $\varphi_1,\varphi_2\in C_0^\infty(\mathbb{R}^2;\mathbb{R})$.

Now we need to verify the regularity satisfied by the solution $\widetilde{u}$. First, by \eqref{4.30}, it follows from the compactness criteria  ({cf. \cite[Theorem 5]{simon1986compact}}) that
$
\widetilde{u} \in C \left([0,T]; H^{1}(\mathbb{R}^2)\right) $, $\widetilde{\mathbb{P}}$-a.s.
Second, we demonstrate that
$$
\widetilde{u} \in C \left([0,T]; \dot{W}^{1,\frac{4}{3}}(\mathbb{R}^2)\right) ~~ \textrm{or}~~
 \widetilde{v}=\nabla \wedge \widetilde{u} \in C \left([0,T]; L^{ \frac{4}{3}}(\mathbb{R}^2)\right), ~~\widetilde{\mathbb{P}}\textrm{-a.s.}
$$
Indeed, by applying the It\^{o} formula  to $\varphi_\eta^{ \frac{4}{3}}(\widetilde{u}(t))$ with $\varphi_\eta=\sqrt{|x|^2+\eta}$,  we infer that
\begin{align}
\widetilde{\mathbb{E}}\sup_{r\in [s,t]}\|\varphi _\eta (\widetilde{v} )\|_{L^{ \frac{4}{3}}}^{ \frac{4}{3}}
\leq& \widetilde{\mathbb{E}}\|\varphi _\eta (\widetilde{v}(s))\|_{L^{ \frac{4}{3}}}^{ \frac{4}{3}} +C\widetilde{\mathbb{E}}\int_s^t\left|(\varphi_\eta ^{-\frac{2}{3}}(\widetilde{v})\widetilde{v} -|\widetilde{v}|^{-\frac{2}{3}}\widetilde{v},    (-\Delta)^\alpha \widetilde{v})_{L^2}\right|\textrm{d}r \nonumber\\
 &+C\widetilde{\mathbb{E}}\int_s^t\left(\varphi_\eta ^{-\frac{2}{3}}(\widetilde{v}) \widetilde{v},  \textbf{P}\nabla \wedge (\widetilde{n}\nabla \phi)\right)_{L^2} \textrm{d}r\label{4..45}\\
&+ C\widetilde{\mathbb{E}} \int_s^t \left\| \varphi_\eta^{-\frac{2}{3}}(\widetilde{v})\textbf{P} \nabla \wedge f(r, u ^{\epsilon})\right\|_{L_2(U;L^2)}^2\textrm{d}x\textrm{d}r \nonumber\\
 &+ C \widetilde{\mathbb{E}}\sup_{r\in [s,t]}\bigg|\int_0^r (\varphi_\eta ^{-\frac{2}{3}}(\widetilde{v}) \widetilde{v},\textbf{P} \nabla \wedge f(\varsigma,\widetilde{u}))_{L^2}\textrm{d} W_\varsigma \bigg|.\nonumber
\end{align}
Similar to the arguments in the proof of Lemma \ref{lem4.5}, by using the uniform bounds in Lemma \ref{lem4.6}, one can deduce from \eqref{4..45} that
$$
\widetilde{\mathbb{E}} \|\widetilde{v}(t)\|_{L^{\frac{4}{3}}}^{\frac{4}{3}}
 \leq \widetilde{\mathbb{E}}\|\widetilde{v}(s)\|_{L^{ \frac{4}{3}}}^{\frac{4}{3}} +C\left(|t-s|+|t-s|^{\frac{1}{2}}\right),
$$
which implies
$$
\widetilde{\mathbb{E}}\|\widetilde{v}(s)\|_{L^{ \frac{4}{3}}}^{\frac{4}{3}}\geq \limsup _{t\downarrow s}\widetilde{\mathbb{E}} \|\widetilde{v}(t)\|_{L^{\frac{4}{3}}}^{\frac{4}{3}} \geq \liminf _{t\downarrow s}\widetilde{\mathbb{E}} \|\widetilde{v}(t)\|_{L^{\frac{4}{3}}}^{\frac{4}{3}}\geq \widetilde{\mathbb{E}} \|\widetilde{v}(s)\|_{L^{\frac{4}{3}}}^{\frac{4}{3}}.
$$
Then we get
$
\widetilde{\mathbb{E}} \|\widetilde{v}(s)\|_{L^{\frac{4}{3}}}^{\frac{4}{3}}= \lim  _{t\downarrow s}\widetilde{\mathbb{E}} \|\widetilde{v}(t)\|_{L^{\frac{4}{3}}}^{\frac{4}{3}}.
$
The proof for the case of $t\uparrow s$ is similar.  Therefore, we have proved that
$
\widetilde{\mathbb{E}} \|\widetilde{v}(s)\|_{L^{\frac{4}{3}}}^{\frac{4}{3}}= \lim  _{t\rightarrow s}\widetilde{\mathbb{E}} \|\widetilde{v}(t)\|_{L^{\frac{4}{3}}}^{\frac{4}{3}},
$
which together with the uniform bound \eqref{4.30} implies the desired result.  The proof of Lemma \ref{lem4.7} is completed.
\end{proof}

\begin{lemma}[Pathwise uniqueness]\label{lem4.8}
For any $T>0$, suppose that $(n_1,c_1,u_1)$ and $(n_2,c_2,u_2)$ are two martingale solutions to \eqref{KS-SNS} under the stochastic basis $(\widetilde{\Omega},\widetilde{\mathcal {F}},(\widetilde{\mathcal {F}}_t)_{t\in [0,T]},\widetilde{\mathbb{P}})$ with respect to the same initial data $(u_0,c_0,u_0)$. Then we have
$$
\widetilde{\mathbb{P}}\left\{(n_1,c_1,u_1)(t)=(n_2,c_2,u_2)(t),~\forall t\in [0,T]  \right\}=1.
$$
\end{lemma}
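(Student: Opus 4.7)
My plan is to establish pathwise uniqueness through a Gronwall-type estimate for the single energy functional
\[
\Phi(t):=\|\bar n(t)\|_{L^{2}}^{2}+\|\bar c(t)\|_{H^{1}}^{2}+\|\bar u(t)\|_{L^{2}}^{2},
\]
where $\bar n:=n_{1}-n_{2}$, $\bar c:=c_{1}-c_{2}$, $\bar u:=u_{1}-u_{2}$. Since the two martingale solutions share the driving noise and the initial data, subtracting the two copies of \eqref{KS-SNS} yields a coupled system for $(\bar n,\bar c,\bar u)$ with null initial data, in which $\bar u$ is divergence-free and the only stochastic forcing is the Lipschitz difference $f(t,u_{1})-f(t,u_{2})$. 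Invoking the regularity asserted in Theorem \ref{main}, I localise by
\[
\tau_{N}:=T\wedge\inf\!\Bigl\{t\ge 0:\ \sum_{i=1}^{2}\!\Bigl[\sup_{r\le t}\!\bigl(\|(n_{i},c_{i})(r)\|_{H^{1}}^{2}+\|u_{i}(r)\|_{H^{1}\cap\dot W^{1,4/3}}^{2}\bigr)+\!\int_{0}^{t}\!\bigl(\|c_{i}\|_{H^{2}}^{2}+\|n_{i}\|_{H^{1}}^{2}+\|u_{i}\|_{H^{1+\alpha}}^{2}\bigr)dr\Bigr]>N\Bigr\},
\]
which increases to $T$ a.s.\ and makes every time-dependent coefficient below $L^{1}(0,\tau_{N})$-valued.

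For $\bar n$ and $\bar c$ I would apply the chain rule to $\|\bar n\|_{L^{2}}^{2}$ and $\|\bar c\|_{H^{1}}^{2}$, exploit $\mathrm{div}\,u_{i}=0$ to cancel the symmetric transport pairings, and bound the cross-diffusion contributions $(\nabla\bar n,\bar n\nabla c_{1})_{L^{2}}$, $(\nabla\bar n,n_{2}\nabla\bar c)_{L^{2}}$ and the difference-transport $(\bar u\cdot\nabla n_{2},\bar n)_{L^{2}}$ through the two-dimensional Gagliardo-Nirenberg inequality $\|f\|_{L^{4}}^{2}\lesssim\|f\|_{L^{2}}\|\nabla f\|_{L^{2}}$ (which converts $\|\nabla c_{1}\|_{L^{4}}^{4}$ into the integrable quantity $\|c_{1}\|_{H^{1}}^{2}\|c_{1}\|_{H^{2}}^{2}$) together with $\|\bar u\|_{L^{4}}^{2}\lesssim\|\bar u\|_{L^{2}}^{2-1/\alpha}\|\bar u\|_{\dot H^{\alpha}}^{1/\alpha}$, which is valid on $\mathbb{R}^{2}$ precisely for $\alpha\in[1/2,1]$. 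The logistic remainder $-(n_{1}+n_{2})\bar n^{2}\le 0$ is favourable. After absorbing small multiples of $\|\nabla\bar n\|_{L^{2}}^{2}$, $\|\bar c\|_{H^{2}}^{2}$ and a fraction of $\|\bar u\|_{\dot H^{\alpha}}^{2}$ into the left-hand side, I obtain
\[
\tfrac{d}{dt}\!\bigl(\|\bar n\|_{L^{2}}^{2}+\|\bar c\|_{H^{1}}^{2}\bigr)+\tfrac12\bigl(\|\nabla\bar n\|_{L^{2}}^{2}+\|\bar c\|_{H^{2}}^{2}\bigr)\le K_{1}(t)\,\Phi(t)+\tfrac14\|\bar u\|_{\dot H^{\alpha}}^{2},\qquad K_{1}\in L^{1}(0,\tau_{N}).
\]

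The principal obstacle is the fluid equation when $\alpha<1$: the dissipation $\|\bar u\|_{\dot H^{\alpha}}^{2}$ is too weak to absorb $\bigl|((\bar u\!\cdot\!\nabla)u_{2},\bar u)_{L^{2}}\bigr|$ through the naive $L^{2}$-energy method because $u_{2}\notin L^{2}(0,T;H^{2})$. To bypass this I apply It\^o's formula to $\|\bar u\|_{L^{2}}^{2}$, kill the pairing $(u_{1}\!\cdot\!\nabla\bar u,\bar u)_{L^{2}}$ via $\mathrm{div}\,u_{1}=0$, and estimate the surviving convection in duality as $\bigl|((\bar u\!\cdot\!\nabla)u_{2},\bar u)_{L^{2}}\bigr|\le \|\bar u\!\cdot\!\nabla u_{2}\|_{\dot H^{-\alpha}}\|\bar u\|_{\dot H^{\alpha}}$. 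Specialising the Bony-paraproduct estimate \eqref{2.4} of Lemma \ref{lem1.6} to $(p,r)=(2,2)$ yields $\|\bar u\!\cdot\!\nabla u_{2}\|_{\dot H^{-\alpha}}\lesssim \|\bar u\|_{\dot H^{2-2\alpha}}\|u_{2}\|_{\dot H^{\alpha}}$; a second Besov interpolation of $\|\bar u\|_{\dot H^{2-2\alpha}}$ against the available norms of $\bar u$ (between $L^{2}$ and $\dot H^{\alpha}$ when $\alpha\in[2/3,1]$, and using the time-uniform $\dot W^{1,4/3}$ bound together with the embeddings \eqref{2.2}-\eqref{2.3} when $\alpha\in[1/2,2/3)$, with the variant \eqref{2.5} replacing \eqref{2.4} at the endpoint $\alpha=1/2$) followed by two Young inequalities leaves only a small multiple of $\|\bar u\|_{\dot H^{\alpha}}^{2}$ on the right. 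Combined with $|(\bar n\nabla\phi,\bar u)_{L^{2}}|\le \|\phi\|_{W^{1,\infty}}(\|\bar n\|_{L^{2}}^{2}+\|\bar u\|_{L^{2}}^{2})$ and the It\^o correction controlled by (H2), this delivers
\[
d\|\bar u\|_{L^{2}}^{2}+\tfrac12\|\bar u\|_{\dot H^{\alpha}}^{2}\,dt \le K_{2}(t)\,\Phi(t)\,dt + 2\bigl(\bar u,[f(t,u_{1})-f(t,u_{2})]\,dW_{t}\bigr)_{L^{2}},
\]
with $K_{2}\in L^{1}(0,\tau_{N})$ a.s.

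Summing the two bulk inequalities, absorbing the spurious $\tfrac14\|\bar u\|_{\dot H^{\alpha}}^{2}$, passing to the supremum on $[0,t\wedge\tau_{N}]$ and invoking the Burkholder-Davis-Gundy inequality with the Lipschitz bound (H2) produces
\[
\widetilde{\mathbb{E}}\sup_{r\le t\wedge\tau_{N}}\Phi(r)\le C\,\widetilde{\mathbb{E}}\int_{0}^{t\wedge\tau_{N}}\!(K_{1}+K_{2})(r)\,\Phi(r)\,dr.
\]
The stochastic Gronwall lemma then forces $\Phi\equiv 0$ on $[0,\tau_{N}]$ a.s.; letting $N\to\infty$ concludes pathwise uniqueness on $[0,T]$. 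I expect the genuine difficulty to lie in the Besov interpolation for the $\bar u$-estimate: the indices in Lemma \ref{lem1.6} have to be tuned so as to simultaneously recover $\|\bar u\|_{L^{2}}^{2}$ as the Gronwall variable and to keep the multiplier against $u_{2}$ in a genuinely $L^{1}_{t}$ class under the sole regularity $u_{2}\in L^{2}(0,T;\dot H^{1+\alpha})\cap L^{\infty}(0,T;H^{1}\cap\dot W^{1,4/3})$ furnished by Section 4.
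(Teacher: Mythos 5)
Your treatment of $\bar n$ and $\bar c$, the localization, the BDG/Gronwall closure, and the duality pairing $|((\bar u\cdot\nabla)u_2,\bar u)_{L^2}|\le \|\bar u\cdot\nabla u_2\|_{\dot H^{-\alpha}}\|\bar u\|_{\dot H^\alpha}$ via \eqref{2.4} all match the paper in spirit. The genuine gap is in the fluid estimate for $\alpha\in[\tfrac12,\tfrac23]$, and it is not a matter of ``tuning indices.'' Estimate \eqref{2.4} with $p=r=2$ gives $\|\bar u\cdot\nabla u_2\|_{\dot H^{-\alpha}}\lesssim\|\bar u\|_{\dot H^{2-2\alpha}}\|u_2\|_{\dot H^{\alpha}}$, and your Gronwall variable $\Phi$ contains $\bar u$ only through $\|\bar u\|_{L^2}^2$, so the factor $\|\bar u\|_{\dot H^{2-2\alpha}}$ must be interpolated between $\|\bar u\|_{L^2}$ (to feed Gronwall) and $\|\bar u\|_{\dot H^{\alpha}}$ (to be absorbed by the dissipation). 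This requires $2-2\alpha<\alpha$, i.e.\ $\alpha>\tfrac23$ (at $\alpha=\tfrac23$ the Young step degenerates to $\|\bar u\|_{\dot H^\alpha}^2\|u_2\|_{\dot H^\alpha}$, which cannot be absorbed since $\|u_2\|_{\dot H^\alpha}$ is only bounded, not small). For $\alpha<\tfrac23$ your proposed substitutes --- the time-uniform $\dot W^{1,4/3}$ bound, which only embeds into $\dot H^{1/2}$, or any norm of $\bar u$ at regularity above $\alpha$ that is not part of $\Phi$ --- yield a factor that is merely \emph{bounded} on $[0,\tau_N]$ rather than controlled by $\Phi^{1/2}$. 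The resulting differential inequality is $\dot\Phi\le K\Phi+C$ or $\dot\Phi\le K\Phi+C\Phi^{\beta}$ with $\beta<1$, neither of which forces $\Phi\equiv0$ from $\Phi(0)=0$. The same obstruction kills the endpoint $\alpha=\tfrac12$: using \eqref{2.5} you arrive at $\|\bar u\|_{\dot H^{3/4}}^2\|u_2\|_{\dot H^{1/2}}$, while the dissipation only provides $\|\bar u\|_{\dot H^{1/2}}^2$.

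The paper's resolution is structural: it enlarges the energy functional so that the troublesome factor is itself a Gronwall variable. For $\alpha\in(\tfrac12,1]$ it adds $\|\bar v\|_{L^2}^2$ (the vorticity difference, i.e.\ $\|\bar u\|_{\dot H^1}^2$) to $\mathcal E$, works with the vorticity equation, and uses $\|\bar u\cdot\nabla v_1\|_{\dot H^{-\alpha}}\lesssim\|\bar u\|_{H^1}\|v_1\|_{\dot H^\alpha}\lesssim(\|\bar u\|_{L^2}+\|\bar v\|_{L^2})\|v_1\|_{\dot H^\alpha}$, so the prefactor is $\mathcal E^{1/2}$ times an $L^2_t$ multiplier. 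For $\alpha=\tfrac12$ it instead tracks $\|\bar v\|_{\dot H^{-1/4}}^2\sim\|\bar u\|_{\dot H^{3/4}}^2$ via a Littlewood--Paley decomposition of the vorticity equation, so that the dissipation seen at that level is $\|\bar v\|_{\dot H^{1/4}}^2$ and the estimates \eqref{2.5}--\eqref{2.6} close the loop. You would need to import this augmentation of the energy functional (and the accompanying commutator estimates for $[\dot\triangle_q,u_1\cdot\nabla]$) to make your argument go through on the full range $\alpha\in[\tfrac12,1]$; as written it proves uniqueness only for $\alpha\in(\tfrac23,1]$.
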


\begin{proof}
 For simplicity, we set
$$
(\bar{\bar{n}} ,\bar{\bar{c}},\bar{\bar{u}})=(n_1- {n_2},c_1- c_2,u_1-u_2)~~\textrm{and}~~\bar{\bar{v}}= v_1-v_2 ,~ v_i= \nabla \wedge u_i,~ i=1,2.
$$
For each $R>0$, define
$
\textbf{t} ^R= \textbf{t}_1^R\wedge \textbf{t}_2^R,
$
where
\begin{equation*}
\begin{split}
\textbf{t}_i^R:=& T \wedge\inf \left \{t>0;~ \sup_{s\in [0,t]}\|n_i\|_{L^2}^2\vee \int_0^t \|n_i\|_{L^2}^2 \textrm{d}r \vee\sup_{s\in [0,t]}\|c_i\|_{H^1}^2\vee \int_0^t \|c_i\|_{H^2}^2 \textrm{d}r\right.\\
&\quad \quad\quad  \left. \vee\sup_{s\in [0,t]}\|u_i\|_{L^2}^2\vee \int_0^t \|u_i\|_{L^2}^2 \textrm{d}r \vee \int_0^t \|v_i\|_{H^\alpha}^2 \textrm{d}r \geq R \right\} ,\quad i=1,2.
\end{split}
\end{equation*}
Then by \eqref{4.3}, \eqref{4.29} and \eqref{4.30},  we see that
$\textbf{t} ^R\rightarrow T $ as $R\rightarrow\infty$, $\widetilde{\mathbb{P}}$-a.s.
We shall prove the result by considering the two cases of $\alpha\in (\frac{1}{2},1]$ and $\alpha=\frac{1}{2}$, respectively.

\textbf{The case of $\alpha\in (\frac{1}{2},1]$.} Consider the functional
$$
\mathcal {E}(t)  := \|(\bar{\bar{n}},\bar{\bar{c}},\bar{\bar{u}},\nabla\bar{\bar{c}} ,\bar{\bar{v}} ) \|_{\textbf{L}^2}^2~~~ \textrm{and}~~~
  \mathcal {F}^\alpha (t)  :=   \|(\nabla\bar{\bar{n}},\nabla\bar{\bar{c}} ,(-\Delta)^\frac{\alpha}{2}\bar{\bar{u}},\Delta\bar{\bar{c}},(-\Delta)^\frac{\alpha}{2}\bar{\bar{v}} ) \|_{\textbf{L}^2}^2.
$$
Applying the chain rule to  {$  \|\bar{\bar{{n}}}(t)\|_{L^2}^2 $  and $ \|\bar{\bar{c}}(t)\|_{L^2}^2 $}, respectively, it is standard to derive that for any $\eta >0$
\begin{equation}\label{4.4.6}
\begin{split}
&\|\bar{\bar{{n}}}(t)\|_{L^2}^2+(2-\eta)\int_0^t \|\nabla\bar{\bar{{n}}} \|_{L^2}^2\textrm{d}r\leq C\int_0^t F_1(r)\mathcal {E}(r)  \textrm{d}r + \eta \int_0^t \|\Delta\bar{\bar{c}} \|_{L^2}^2\textrm{d}r,
\end{split}
\end{equation}
\begin{equation}\label{4.4.7}
\begin{split}
& \|\bar{\bar{c}}(t)\|_{L^2}^2 +(2-\eta)\int_0^t \|\nabla\bar{\bar{c}} \|_{L^2}^2\textrm{d}r\leq C\int_0^t  F_2(r) \mathcal {E}(r) \textrm{d}r,
\end{split}
\end{equation}
where
\begin{equation*}
\begin{split}
&F_1(t)=1+ \|n_1 \|_{L^2}^2\|\nabla n_1 \|_{L^2}^2+\|\nabla c_1 \|_{L^2}^2 \|\Delta c _1 \|_{L^2}^2+\|  {n_2}  \|_{L^2}^2\| \nabla {n_2}  \|_{L^2}^2 ,\\
&F_2(t)=1+ \|\nabla c_1 \|_{L^2} \|\Delta c _1\|_{L^2} +  \|c_2\|_{L^2}^2+\|n_1 \|_{L^2}^2.
\end{split}
\end{equation*}
To estimate the term  $\|\Delta \bar{\bar{c}} (t)\|_{L^2} $ on the R.H.S. of \eqref{4.4.6}, we apply the chain rule to $ \|\nabla\bar{\bar{c}}(t)\|_{L^2}^2 $ to obtain
\begin{equation}\label{4.4.8}
\begin{split}
 \|\nabla\bar{\bar{c}}(t)\|_{L^2}^2  +(2-\eta)\int_0^t \|\Delta \bar{\bar{c}} \|_{L^2}^2\textrm{d}r\leq C\int_0^tF_3(r) \mathcal {E}(r) \textrm{d}r,
\end{split}
\end{equation}
where
$$F_3(t)=\|\nabla c_1\|_{L^2}^2\|\Delta c_1\|_{L^2}^2+ \|\nabla u_2\|_{L^2}^ 2 +  \| c_2\|_{H^2}^2+ \|n_1\|_{L^2}^2\|\nabla n_1\|_{L^2}^2+1.
$$
By applying  the It\^{o} formula  to  {$ \| \bar{\bar{u}}(t)\|_{L^2}^2$}, we infer that
\begin{equation}\label{4.4.9}
\begin{split}
\|\bar{\bar{u}}(t)\|_{L^2}^2  +2\int_0^t \| \bar{\bar{u}} \|_{\dot{H}^\alpha}^2\textrm{d}r \leq C\int_0^t (\|\nabla u_1\|_{L^2}  +1)\mathcal {E}(r)\textrm{d}r + 2 \int_0^t \left(\bar{\bar{u}},(f(r,u)-f(r,\bar{u})) \right)_{L^2}\textrm{d} W _r.
\end{split}
\end{equation}
Taking the operator $\nabla \wedge$ to $\eqref{KS-SNS}_3$ and utilizing the It\^{o} formula  to $  \|\bar{\bar{v}}(t) \|_{L^2}^2 $, we get
\begin{equation}\label{4.5.0}
\begin{split}
\|\bar{\bar{v}}(t)\|_{L^2}^2  +(2-\eta)\int_0^t \| \bar{\bar{v}} \|_{\dot{H}^\alpha}^2\textrm{d}r\leq& C\int_0^t \left(\| v_1\|_{\dot{H}^{ \alpha}}^2+1\right)\mathcal {E}(r) \textrm{d}r+ \eta\int_0^t \|\nabla\bar{\bar{n}}\|_{L^2}^2\textrm{d}r\\
 & + 2 \int_0^t (\bar{\bar{v}},\nabla \wedge(f(r,u)-f(r,\bar{u})) )_{L^2}\textrm{d} W _r,
\end{split}
\end{equation}
where we used the estimate
$$
\|\bar{\bar{u}}\cdot \nabla v_1\|_{\dot{H}^{-\alpha}}\leq C \|\bar{\bar{u}}\|_{H^1} \| v_1\|_{\dot{H}^{ \alpha}} \leq C \|\bar{\bar{v}}\|_{L^2} \| v_1\|_{\dot{H}^{ \alpha}},
$$
by taking $p=r=2$ in \eqref{2.5} of Lemma \ref{lem1.6}. Putting the estimates \eqref{4.4.6}-\eqref{4.5.0} together and choosing $\eta>0$ small enough, we obtain
\begin{equation}\label{ddd}
\begin{split}
\mathcal {E}(t) + \int_0^t \mathcal {F}^\alpha(r)  \textrm{d}r &\leq C\int_0^t H(r)\mathcal {E}(r)  \textrm{d}r+  2 \sum_{k\geq 1}\int_0^t \mathcal {G}_k(r)\textrm{d} W^k  _r ,
\end{split}
\end{equation}
where
\begin{equation*}
\begin{split}
H (t)=& 1+  \|n_1 \|_{L^2}^2\|\nabla n_1 \|_{L^2}^2+\|\nabla c_1 \|_{L^2}^2 \|\Delta c _1 \|_{L^2}^2+\|  {n_2}  \|_{L^2}^2\| \nabla {n_2}  \|_{L^2}^2+ \|\nabla c_1 \|_{L^2} \|\Delta c _1\|_{L^2}\\
& +  \|c_2\|_{L^2}^2+\|n_1 \|_{L^2}^2 + \|\nabla u_2\|_{L^2}^ 2 +  \| c_2\|_{H^2}^2+ \| v_1\|_{\dot{H}^{ \alpha}}^2+\|\nabla u_1\|_{L^2} ,\\
 \mathcal {G}_k(t)=& (\bar{\bar{u}},(f(t,u_1)-f(t,u_2))e_k)_{L^2}+ (\bar{\bar{v}},\nabla \wedge(f(t,u_1)-f(t,u_2))e_k)_{L^2}.
\end{split}
\end{equation*}
By the Gronwall Lemma, it follows from \eqref{ddd}  and the definition of $\textbf{t}_i^R$ that
\begin{equation*}
\begin{split}
\mathcal {E}(t \wedge \textbf{t}_i^R)
&\leq   {Ce^{CR^2}}\sup_{r \in [0,t \wedge \textbf{t}_i^R]}\bigg|  \sum_{k\geq 1}\int_0^t \mathcal {G}_k(r)\textrm{d} W^k  _r \bigg|.
\end{split}
\end{equation*}
Applying the BDG inequality, we get from the last inequality that
\begin{align}\label{4.5.1}
\widetilde{\mathbb{E}}\sup _{r\in[0,t \wedge \textbf{t}_i^R]} \mathcal {E}(r)\leq&   {Ce^{CR^2}}\widetilde{\mathbb{E}} \Bigg (  \sum_{k\geq 1}\int_0^{t \wedge \textbf{t}_i^R}\Big(\|\bar{\bar{u}}\|_{L^2}^2 \|(f(r,u_1)-f(r,u_2))e_k\|_{L^2}^2\nonumber \\
& + \|\bar{\bar{v}}\|_{L^2}^2 \|\nabla \wedge (f(r,u_1)-f(r,u_2))e_k\|_{L^2}^2\Big)\textrm{d}r\Bigg)^{\frac{1}{2}}\\
\leq & \frac{1}{2} \widetilde{\mathbb{E}}\sup_{r\in[0,t \wedge \textbf{t}_i^R]}\left( \|\bar{\bar{u}}\|_{L^2}^2+   \|\bar{\bar{v}}\|_{L^2}^2\right) + {Ce^{CR^2}}\int_0^{t \wedge \textbf{t}_i^R  }\left (\|\bar{\bar{u}}(r)\|_{L^2}^2+ \|\bar{\bar{v}}(r)\|_{L^2}^2\right)    \textrm{d}r.\nonumber
\end{align}
Absorbing the first term on the R.H.S. of \eqref{4.5.1} by the term on the L.H.S.,  {it follows from the Gronwall Lemma  that
$$
\widetilde{\mathbb{E}}\sup _{r\in[0, t\wedge \textbf{t}_i^R]} \mathcal {E}(r)\equiv0,~~~\forall t\in [0,T].
$$}
By taking the limit as $R\rightarrow\infty$, we get $\mathcal {E}(t)\equiv0$ for all $t \in [0,T]$, $\widetilde{\mathbb{P}}$-a.s.

\textbf{The case of $\alpha=\frac{1}{2}$.} For simplicity, we set
$$
\widetilde{\mathcal {E}}(t)  := \|(\bar{\bar{n}},\bar{\bar{c}},\bar{\bar{u}},\nabla\bar{\bar{c}} ,(-\Delta)^{-\frac{1}{8}}\bar{\bar{v}} )  \|_{\textbf{L}^2}^2 ~~ \textrm{and} ~~ \widetilde{\mathcal {F}} (t)  := \|(\nabla\bar{\bar{n}},\nabla\bar{\bar{c}} ,(-\Delta)^\frac{1}{4}\bar{\bar{u}},\Delta\bar{\bar{c}},(-\Delta)^\frac{1}{8}\bar{\bar{v}} )  \|_{\textbf{L}^2}^2.
$$
First, we take the Littlewood-Paley operators $\dot{\triangle}_q$ to both sides of the vorticity equation and applying the It\^{o} formula  to  {$ \|\dot{\triangle}_q \bar{\bar{v}}(t)\|_{L^2}^2 $}.  Then, after multiplying both sides of the resulting equation by $2^{-\frac{1}{2}q }$ and summing up with respect to $q\geq -1$, we get
\begin{align}\label{4.5.4}
\| \bar{\bar{v}}(t)\|_{\dot{H}^{-\frac{1}{4}}}^2 +2\int_0^t\| \bar{\bar{v}} \|_{\dot{H}^{ \frac{1}{4}}}^2 \textrm{d}r
&\leq 2\int_0^t \| \bar{\bar{v}} \|_{\dot{H}^{ \frac{1}{4}}}   \| (\bar{\bar{u}}\cdot\nabla)\cdot v_1 \|_{\dot{H}^{-\frac{3}{4}}} \textrm{d}r +2\int_0^t\| \bar{\bar{v}} \|_{\dot{H}^{- \frac{1}{4}}}\| \nabla\wedge (\bar{\bar{n}}\nabla \phi)\|_{\dot{H}^{-\frac{1}{4}}} \textrm{d}r\nonumber\\
 &+ 2\int_0^t\| \bar{\bar{v}} \|_{\dot{H}^{ \frac{1}{4}}} \left\|\{2^{-\frac{3}{4}q }\|[\dot{\triangle}_q,u_1 \cdot \nabla ]\bar{\bar{v}}  \|_{L^2 }\}_{q \in\mathbb{Z}}\right\|_{ \ell^2 } \textrm{d}r\\
 &+ \int_0^t\| \nabla \wedge (f(r,u_1)-f(r,u_2))\|_{L_2(U;\dot{H}^{-\frac{1}{4}})}^2 \textrm{d}r\nonumber\\
 &+ 2\sum_{k\geq 1}\sum_{q\in \mathbb{Z}}\int_0^t2^{-\frac{1}{2}q }(\dot{\triangle}_q\bar{\bar{v}},\dot{\triangle}_q \nabla \wedge (f(r,u_1)-f(r,u_2))e_k)_{L^2 }\textrm{d} W^k  _r \nonumber\\
&:= J_1+J_2+J_3+J_4+J_5 .\nonumber
\end{align}
For $J_1$, we get by taking $p=r=2$ in \eqref{2.6} of Lemma \ref{lem1.6} that
\begin{equation*}
\begin{split}
 J_1 \leq \frac{1}{2} \int_0^t \| \bar{\bar{v}} \|_{\dot{H}^{ \frac{1}{4}}} ^2 \textrm{d}r +   C \int_0^t   \| \bar{\bar{v}} \|_{\dot{H}^{-\frac{1}{4}}} ^2\| v_1 \|_{\dot{H}^{ \frac{1}{2}}} ^2 \textrm{d}r.
\end{split}
\end{equation*}
For $J_3$, it follows from the commutator estimate   {(cf. \cite[Lemma 2.97]{bahouri2011fourier})} that
\begin{equation*}
\begin{split}
 J_3  &\leq\frac{1}{2} \int_0^t \| \bar{\bar{v}} \|_{\dot{H}^{ \frac{1}{4}}} ^2 \textrm{d}r +C\sum_{q \in\mathbb{Z}}\int_0^t 2^{-\frac{3}{2}q }\|[\dot{\triangle}_q,u_1 \cdot \nabla ]\bar{\bar{v}}  \|_{L^2 }^2 \textrm{d}r\\
& \leq \frac{1}{2} \int_0^t\| \bar{\bar{v}} \|_{\dot{H}^{ \frac{1}{4}}} ^2 \textrm{d}r +C \int_0^t \|u_1  \|_{H^{\frac{3}{2}}}^2\|\bar{\bar{v}} \|_{H^{-\frac{1}{4}}}^2 \textrm{d}r.
\end{split}
\end{equation*}
For $J_2$ and $J_4$, we get from the Lemma \ref{lem1.6} that
\begin{equation*}
\begin{split}
 J_2 +J_4   &\leq C\|\nabla\phi\|_{L^\infty}\int_0^t\left(\| \bar{\bar{v}} \|_{\dot{H}^{- \frac{1}{4}}}^2+\| \bar{\bar{n}} \|_{L^2}^2+\eta\|\nabla \bar{\bar{n}} \|_{L^2}^2\right ) \textrm{d}r+C\int_0^t\| \bar{\bar{v}} \|_{\dot{H}^{-\frac{1}{4}}}^2 \textrm{d}r.
\end{split}
\end{equation*}
Plugging the estimates for $J_1\sim J_4$ into \eqref{4.5.4}, we infer that
\begin{equation}
\begin{split}
\| \bar{\bar{v}}(t)\|_{\dot{H}^{-\frac{1}{4}}}^2 + \int_0^t\| \bar{\bar{v}} \|_{\dot{H}^{ \frac{1}{4}}}^2 \textrm{d}r\leq C \int_0^t  \left[\| \bar{\bar{v}} \|_{\dot{H}^{-\frac{1}{4}}} ^2\left(\|u_1 \|_{H^{\frac{3}{2}}}^2+\| v_1 \|_{\dot{H}^{ \frac{1}{2}}} ^2+1\right)+  \eta\|\nabla \bar{\bar{n}} \|_{L^2}^2 \right]\textrm{d}r+ J_5 (t).
\end{split}
\end{equation}
Thanks to the GN inequality in the form of
$
\| \bar{\bar{u}} \|_{L^4}\leq C\| \bar{\bar{u}} \|_{L^2}^{\frac{1}{3}}\| \bar{\bar{v}} \|_{\dot{H}^{-\frac{1}{4}}}^{\frac{2}{3}},
$
we have
\begin{equation}\label{4.56}
\begin{split}
 & \| \bar{\bar{n}}(t)\|_{L^2}^2 + \int_0^t\| \nabla \bar{\bar{n}} \|_{L^2}^2 \textrm{d}r\leq \int_0^t G_2 (r) \mathcal {E}(r) \textrm{d}r + \eta \int_0^t \| \nabla \bar{\bar{n}} \|_{L^2}^2 \textrm{d}r+ \eta \int_0^t \|\Delta \bar{\bar{c}} \|_{L^2}^2 \textrm{d}r,
\end{split}
\end{equation}
and
\begin{equation}\label{4.57}
\begin{split}
 & \| \nabla \bar{\bar{c}}(t)\|_{L^2}^2 + \int_0^t\| \Delta \bar{\bar{c}}  \|_{L^2}^2 \textrm{d}r\leq \int_0^t G_3 (r) \mathcal {E}(r) \textrm{d}r + \eta \int_0^t \| \Delta \bar{\bar{c}} \|_{L^2}^2 \textrm{d}r,
\end{split}
\end{equation}
where
\begin{equation*}
\begin{split}
G_2 (r)&= \|n_1\|_{L^2} ^{\frac{3}{2}}\|\nabla n_1\|_{L^2} ^{\frac{3}{2}}+ \|n_2\|_{L^2} ^2\|\nabla n_2\|_{L^2} ^2+\|\Delta c_1\|_{L^2} ^2+1,\\
G_3 (r)&=  \|\nabla c_1\|_{L^2} ^{\frac{3}{2}}\|\Delta c_1\|_{L^2} ^{\frac{3}{2}} +\|\nabla u_2\|_{L^2} ^2+\|c_2\|_{L^\infty} ^2+\|n_1\|_{L^2} ^2\|\nabla n_1\|_{L^2} ^2 +1.
\end{split}
\end{equation*}
By applying  the It\^{o} formula  to  {$ \| \bar{\bar{u}}(t)\|_{L^2}^2$}, there holds
\begin{equation} \label{4.58}
\begin{split}
 &\|\bar{\bar{u}}(t)\|_{L^2}^2  +2\int_0^t \| \bar{\bar{u}} \|_{\dot{H}^ \frac{1}{2}}^2\textrm{d}r \\
 &\quad\leq C\int_0^t \left(\|\nabla u_1\|_{L^2} ^{\frac{3 }{2}} +1\right)\mathcal {E}(r)\textrm{d}r + 2 \int_0^t (\bar{\bar{u}},(f(r,u)-f(r,\bar{u}))\textrm{d} W  _r)_{L^2} .
\end{split}
\end{equation}
Putting the estimates \eqref{4.56}-\eqref{4.58} together, we obtain
\begin{equation}\label{4.59}
\begin{split}
\mathcal {E}(t) + \int_0^t \mathcal {F}^ \frac{1}{2}(r)  \textrm{d}r &\leq C\int_0^t G(r)\mathcal {E}(r)  \textrm{d}r+  2 \sum_{k\geq 1}\int_0^t \textbf{T}_k(r)\textrm{d} W^k  _r ,
\end{split}
\end{equation}
where
\begin{equation*}
\begin{split}
G(r)=& \|n_1\|_{L^2} ^{\frac{3}{2}}\|\nabla n_1\|_{L^2} ^{\frac{3}{2}}+ \|n_2\|_{L^2} ^2\|\nabla n_2\|_{L^2} ^2+\|\Delta c_1\|_{L^2} ^2 + \|\nabla c_1\|_{L^2} ^{\frac{3}{2}}\|\Delta c_1\|_{L^2} ^{\frac{3}{2}}+\|\nabla u_2\|_{L^2} ^2 \\
&+\|c_2\|_{L^\infty} ^2+\|n_1\|_{L^2} ^2\|\nabla n_1\|_{L^2} ^2 +1,\\
 \textbf{T}_k(r)=&\sum_{q\in \mathbb{Z}}2^{-\frac{1}{2}q }(\dot{\triangle}_q\bar{\bar{v}},\dot{\triangle}_q \nabla \wedge (f(r,u_1)-f(r,u_2))e_k)_{L^2 }+(\bar{\bar{u}},(f(r,u)-f(r,\bar{u})) e_k)_{L^2}.
\end{split}
\end{equation*}
By using the Young inequality, we infer that
\begin{equation}\label{4.60}
\begin{split}
 \sum_{k\geq 1}|\textbf{T}_k |^2
 &\leq C \sum_{k\geq 1}\| \bar{\bar{v}} \|_{\dot{H}^{-\frac{1}{4}}}^2 \|\nabla \wedge (f(r,u_1)-f(r,u_2))e_k\|_{\dot{H}^{-\frac{1}{4}}}^2+  C\|\bar{\bar{u}} \|_{L^2}^4\\
 & \leq C \left(\| \bar{\bar{v}} \|_{\dot{H}^{-\frac{1}{4}}}^4+   \|\bar{\bar{u}} \|_{L^2}^4\right).
\end{split}
\end{equation}
Applying the Gronwall lemma to \eqref{4.59}, we get from the definition of $\textbf{t}_i^R$ and \eqref{4.60} that
\begin{equation*}
\begin{split}
\widetilde{\mathbb{E}}\sup _{r\in[0,t \wedge \textbf{t}_i^R]}\widetilde{\mathcal {E}}(r) + \int_0^{t\wedge \textbf{t}_i^R} \widetilde{\mathcal {F}} (r)  \textrm{d}r &\leq C \widetilde{\mathbb{E}}\sup _{r\in[0,t \wedge \textbf{t}_i^R]}\left (\exp\{C\int_0^{r} G(\tau)  \textrm{d} \tau \} \left |\sum_{k\geq 1}\int_0^r \textbf{T}_k(\varsigma)\textrm{d} W^k_\varsigma \right|\right)\\
&\leq   {Ce^{C(R^2+1)}}\widetilde{\mathbb{E}}\sup _{r\in[0,t \wedge \textbf{t}_i^R]}\left |\sum_{k\geq 1}\int_0^{r} \textbf{T}_k(\varsigma)\textrm{d} W^k  _\varsigma \right |\\
& \leq \frac{1}{2}\widetilde{\mathbb{E}}\sup _{r\in[0,t \wedge \textbf{t}_i^R]}\widetilde{\mathcal {E}}(r) + {Ce^{C(R^2+1)}} \int_0^{t\wedge \textbf{t}_i^R} \widetilde{\mathcal {E}}(r) \textrm{d}r,
\end{split}
\end{equation*}
which implies that for any $T>0$
$
\widetilde{\mathbb{E}}\sup _{r\in[0,t\wedge \textbf{t}_i^R]}\widetilde{\mathcal {E}}(r)=0$, $\forall t\in [0,T]$ and $R> 0$.
By taking $R\rightarrow\infty$, we get $\widetilde{\mathcal {E}}(t)=0$ for all $t\in [0,T]$, $\widetilde{\mathbb{P}}$-a.s. The proof of Lemma  \ref{lem4.8} is now completed.
\end{proof}

\begin{proof}[\emph{\textbf{Proof of Theorem \ref{main}}}]

By utilizing Lemma \ref{lem4.7} and Lemma \ref{lem4.8}, we can establish the existence and uniqueness of a global pathwise solution for system \eqref{KS-SNS} by applying the classical Watanable-Yamada Theorem in \cite{watanabe1971uniqueness}, which is based on an elementary characterization of convergence in probability (cf. \cite[Lemma 1.1]{gyongy1980stochastic}). This characterization has been previously employed to study other SPDEs, such as the stochastic Euler equation \cite[P. 112-113]{glatt2014local} and the stochastic dispersive equation \cite[Proposition 2.11]{zhang2020local}. Since the argument is very similar to the aforementioned works, we shall omit the details here. Thus, the proof of the main result is now completed.
\end{proof}

\section{Appendix}
Let $s>2$,  and $\textbf{B}(\cdot)$, $\textbf{F}^\epsilon(\cdot)$ be defined in \eqref{3.2}. Then for any $\textbf{u}=(u,v,h)$, $\textbf{u}_i=(u_i,v_i,h_i) \in \cap_{s>0}\textbf{H}^s(\mathbb{R}^2)$, $i=1,2$, the following basic properties hold:
\begin{equation} \label{A.1}
|(\textbf{B}(\textbf{u}),\textbf{u})_{\textbf{H}^s}| \leq C \|\nabla\textbf{u}\|_{L^\infty}\|\textbf{u}\|_{\textbf{H}^s}^2 ,\tag{A.1}
\end{equation}
\begin{equation} \label{A.2}
| (\textbf{B}(\textbf{u}_1)-\textbf{B}(\textbf{u}_2),\textbf{u}_1-\textbf{u}_2)_{\textbf{H}^s} |  \leq C (\|\textbf{u}_1\|_{\textbf{H}^{s+1}}+\|\textbf{u}_2\|_{\textbf{H}^{s+1}}) \|\textbf{u}_1-\textbf{u}_2\|_{\textbf{H}^s}^2, \tag{A.2}
\end{equation}
\begin{equation} \label{A.3}
|(\textbf{F}^\epsilon(\textbf{u} ),\textbf{u} )_{\textbf{H}^s}|\leq C( \epsilon,\phi,\|c_0\|_{L^\infty})  \|\textbf{u}\|_{W^{1,\infty}}\|\textbf{u}\|_{\textbf{H}^s}^2,\tag{A.3}
\end{equation}
\begin{equation} \label{A.4}
|(\textbf{F}^\epsilon(\textbf{u}_1)-\textbf{F}^\epsilon(\textbf{u}_2)
,\textbf{u}_1-\textbf{u}_2)_{\textbf{H}^s}| \leq   C(\epsilon)(\|\textbf{u}_1\|_{\textbf{H}^s}+\|\textbf{u}_2\|_{\textbf{H}^s}) \|\textbf{u}_1-\textbf{u}_2\|_{\textbf{H}^s}^2.\tag{A.4}
\end{equation}

\begin{proof}
To deal with \eqref{A.1}, note that
$
\textbf{P} \Lambda ^s=\Lambda ^s\textbf{P}$ and $(\textbf{P}u,v)_{L^2}=(u,\textbf{P}v)_{L^2}$,
where $\Lambda^s= (1-\Delta)^{s/2}$ denotes the Bessel potentials, we have
\begin{equation*}
\begin{split}
(\textbf{B}(\textbf{u}),\textbf{u})_{\textbf{H}^s}=&([\Lambda^s, u\cdot \nabla] n ,\Lambda^sn)_{L^2}+(u\cdot \nabla \Lambda^s  n ,\Lambda^sn)_{L^2}+([\Lambda^s, u\cdot \nabla] c ,\Lambda^sc)_{L^2}\\
&+(u\cdot \nabla \Lambda^s  c ,\Lambda^sc)_{L^2} +([\Lambda^s, u\cdot \nabla] u ,\Lambda^su)_{L^2}+(u\cdot \nabla \Lambda^s  u ,\Lambda^su)_{L^2}.
\end{split}
\end{equation*}
By using the divergence-free condition, we have
$$
(u\cdot \nabla \Lambda^s  n ,\Lambda^sn)_{L^2}=(u\cdot \nabla \Lambda^s  c ,\Lambda^sc)_{L^2}=(u\cdot \nabla \Lambda^s  u,\Lambda^su)_{L^2}=0.
$$
In virtue of the commutator estimate (cf. \cite[Lemma 2.97]{bahouri2011fourier}), we have
\begin{equation*}
\begin{split}
([\Lambda^s, u\cdot \nabla] n ,\Lambda^sn)_{L^2}&\leq  C(\|\Lambda^su\|_{L^2}\|\nabla n\|_{L^\infty}+\|\nabla u\|_{L^\infty}\|\Lambda^{s-1}\nabla u\|_{L^2})\|\Lambda^sn\|_{L^2}\\
&\leq C(\|\nabla u\|_{L^\infty}+\|\nabla n\|_{L^\infty})\|u\|_{H^s}\|n\|_{H^s}.
\end{split}
\end{equation*}
Similarly, we infer that
\begin{equation*}
\begin{split}
&([\Lambda^s, u\cdot \nabla] c ,\Lambda^sc)_{L^2}\leq   C(\|\nabla u\|_{L^\infty}+\|\nabla c\|_{L^\infty})\|u\|_{H^s}\|c\|_{H^s},\\
&([\Lambda^s, u\cdot \nabla] u ,\Lambda^su)_{L^2}\leq   C \|\nabla u\|_{L^\infty} \|u\|_{H^s}^2.
\end{split}
\end{equation*}
Putting the above estimates together leads to \eqref{A.1}.

To prove \eqref{A.2}, we set $\textbf{u}^{1,2}=(n^{1,2},c^{1,2},u^{1,2}):=\textbf{u}_1-\textbf{u}_2$. Note that
\begin{equation}\label{5.1}
\begin{split}
&(\textbf{B}(\textbf{u}_1)-\textbf{B}(\textbf{u}_2),\textbf{u}_1-\textbf{u}_2)_{\textbf{H}^s}
\\&\quad =
([\Lambda^s ,u^{1,2}\cdot \nabla] n_1, \Lambda^sn^{1,2})_{L^2}
+(u^{1,2}\cdot \nabla \Lambda^sn_1, \Lambda^sn^{1,2})_{L^2}
+ ([\Lambda^s,u_2\cdot \nabla] n^{1,2}, \Lambda^sn^{1,2})_{L^2} \\
&\quad\quad+ (u_2\cdot \nabla\Lambda^s  n^{1,2} , \Lambda^sn^{1,2})_{L^2}
+([\Lambda^s ,u^{1,2}\cdot \nabla] c_1, \Lambda^sc^{1,2})_{L^2}
+(u^{1,2}\cdot \nabla \Lambda^sc_1, \Lambda^sc^{1,2})_{L^2}\\
&\quad\quad
+ ([\Lambda^s,u_2\cdot \nabla] c^{1,2}, \Lambda^sc^{1,2})_{L^2}
+ (u_2\cdot \nabla\Lambda^s  c^{1,2} , \Lambda^sc^{1,2})_{L^2}
+([\Lambda^s ,u^{1,2}\cdot \nabla] u_1, \Lambda^su^{1,2})_{L^2}\\
&\quad\quad
+(u^{1,2}\cdot \nabla \Lambda^su_1, \Lambda^su^{1,2})_{L^2}
+ ([\Lambda^s,u_2\cdot \nabla] u^{1,2}, \Lambda^su^{1,2})_{L^2}
+ (u_2\cdot \nabla\Lambda^s  u^{1,2} , \Lambda^su^{1,2})_{L^2}\\
&\quad = I_1+\cdots +I_{12}.
\end{split}
\end{equation}
Since $\textrm{div} u_1=\textrm{div} u_2=\textrm{div} u^{1,2}=0$, there holds
$$
I_4=I_8=I_{12}=0.
$$
By using the commutator estimate and the embedding $H^s(\mathbb{R}^2)\hookrightarrow W^{1,\infty}(\mathbb{R}^2)$ with $s>2$, we get
\begin{equation*}
\begin{split}
|I_1|&\leq \|[\Lambda^s ,u^{1,2}\cdot \nabla] n_1\|_{L^2}\| \Lambda^sn^{1,2}\|_{L^2}\\
&\leq C(\|\Lambda^su^{1,2}\|_{L^2} \|\nabla n_1\|_{L^\infty}+ \|\nabla u^{1,2}\|_{L^\infty} \|\Lambda^{s-1}\nabla n_1\|_{L^2})\|n^{1,2}\|_{H^s}\\
&\leq C \|n_1\|_{H^s}\|u^{1,2}\|_{H^s}\|n^{1,2}\|_{H^s},\\
|I_2|&\leq  \| u^{1,2}\|_{L^\infty}\| \nabla \Lambda^sn_1\|_{L^2}  \| \Lambda^sn^{1,2}\|_{L^2} \leq C\| n_1\|_{H^{s+1}} \|n^{1,2}\|_{H^s}\| u^{1,2}\|_{H^s},
\end{split}
\end{equation*}
and
\begin{equation*}
\begin{split}
|I_3|&\leq   \|[\Lambda^s,u_2\cdot \nabla] n^{1,2}\|_{L^2}\| \Lambda^sn^{1,2}\|_{L^2}\\
&\leq C(\|\Lambda^su_2\|_{L^2} \|\nabla n^{1,2}\|_{L^\infty}+ \|\nabla u_2\|_{L^\infty} \|\Lambda^{s-1}\nabla n^{1,2}\|_{L^2})\| n^{1,2}\|_{H^s}\\
&\leq C \| u_2\|_{H^s} \| n^{1,2}\|_{H^s}^2.
\end{split}
\end{equation*}
Similar to the estimates for $I_1\sim I_3$, one can deduce that
\begin{equation*}
\begin{split}
|I_5|,|I_6|&\leq C \|c_1\|_{H^{s+1}}\|u^{1,2}\|_{H^s}\|c^{1,2}\|_{H^s},\\
|I_7|&\leq  C \| u_2\|_{H^s} \| c^{1,2}\|_{H^s}^2,\\
|I_9|,|I_{10}|&\leq  C  \|u_1\|_{H^{s+1}}\|u^{1,2}\|_{H^s}^2,\\
|I_{11}|&\leq  C \| u_2\|_{H^s} \| u^{1,2}\|_{H^s}^2.
\end{split}
\end{equation*}
Putting the above estimates for terms $I_i$, $i=1,...,12$, into \eqref{5.1} leads to \eqref{A.2}.

Now let us deal with the estimates with respect to $\textbf{F}^\epsilon(\cdot)$. First note that
\begin{equation*}
\begin{split}
 (\textbf{F}^\epsilon(\textbf{u} ),\textbf{u} )_{\textbf{H}^s}=&-(\Lambda^s\textrm{div}\left(n  ( \nabla c *\rho^{\epsilon}) \right) ,\Lambda^s n)_{L^2}+(\Lambda^s(   n -  n^2),\Lambda^s n)_{L^2}\\
 &-(\Lambda^s(c(n *\rho^{\epsilon})),\Lambda^s c )_{L^2}+(\Lambda^s(\textbf{P}(n \nabla \phi)*\rho^{\epsilon}),\Lambda^s u)_{L^2}.
\end{split}
\end{equation*}
The terms on the R.H.S. of last inequality can be estimated as
\begin{equation*}
\begin{split}
|(\Lambda^s\textrm{div}\left(n (\nabla c *\rho^{\epsilon}) \right) ,\Lambda^s n)_{L^2}|&\leq \|\Lambda^s\textrm{div}\left(n (\nabla c *\rho^{\epsilon}) \right)\|_{L^2}\|\Lambda^s n\|_{L^2}\\
& \leq\frac{C}{\epsilon^2} \|n (\nabla c *\rho^{\epsilon}) \|_{H^{s-1}} \|n\|_{H^s}\\
& \leq C(\epsilon) (\|n \|_{L^\infty}\| \nabla c\|_{H^{s-1}}+\|n \|_{H^{s-1}}\| \nabla c\|_{L^\infty}) \|n\|_{H^s}\\
& \leq C(\epsilon) (\|n \|_{L^\infty} + \| \nabla c\|_{L^\infty})  (\|n\|_{H^s}^2+\|c\|_{H^s}^2),
\end{split}
\end{equation*}
\begin{equation*}
\begin{split}
|(\Lambda^s(   n -  n^2),\Lambda^s n)_{L^2}| \leq C (\|n\|_{H^s}+\|n^2\|_{H^s})  \|n\|_{H^s} \leq C (1+\|n \|_{L^\infty})  \|n\|_{H^s}^2,
\end{split}
\end{equation*}
\begin{equation*}
\begin{split}
|(\Lambda^s(\textbf{P}(n \nabla \phi)*\rho^{\epsilon}),\Lambda^s u)_{L^2}| &\leq C\| (n \nabla \phi)*\Lambda^s\rho^{\epsilon} \|_{L^2} \|u\|_{H^s}\\
&\leq C\| n \nabla \phi \|_{L^2} \|\Lambda^s\rho^{\epsilon} \|_{L^1}\|u\|_{H^s} \leq C(\epsilon,\phi)\| n \|_{L^2} \|u\|_{H^s},
\end{split}
\end{equation*}
and
\begin{equation*}
\begin{split}
|(\Lambda^s(c(n *\rho^{\epsilon})),\Lambda^s c )_{L^2}| &\leq C(\|c\|_{L^\infty} \|n *\rho^{\epsilon}\|_{H^s}+\|c\|_{H^s} \|n *\rho^{\epsilon}\|_{L^\infty}) \|c\|_{H^s}\\
&\leq C(\|c\|_{L^\infty} \|n \|_{H^s}+\|c\|_{H^s} \|n \|_{L^\infty}) \|c\|_{H^s}\\
&\leq C(\|c\|_{L^\infty} + \|n \|_{L^\infty}) (\|c\|_{H^s}^2+\|n \|_{H^s}^2).
\end{split}
\end{equation*}
Putting the above estimates together leads to \eqref{A.3}.

To prove \eqref{A.4}, we observe that
\begin{equation}\label{5.2}
\begin{split}
&|(\textbf{F}^\epsilon(\textbf{u}_1)-\textbf{F}^\epsilon(\textbf{u}_2)
,\textbf{u}_1-\textbf{u}_2)_{\textbf{H}^s}|\\
 &\leq  |(\Lambda^s \textrm{div} [n^{1,2} (\nabla  c_1 *\rho^{\epsilon})],\Lambda^s n^{1,2})_{L^2}| +|(\Lambda^s\textrm{div} [ n _2(\nabla c^{1,2}  *\rho^{\epsilon} )],\Lambda^s n^{1,2})_{L^2}|\\
&\quad + \|n^{1,2}\|_{H^s}^2+   |\Lambda^s ((n_1+n_2)n^{1,2}) ,\Lambda^s n^{1,2})_{L^2}|+ |(\Lambda^s [ c_2 (n^{1,2}*\rho^{\epsilon})],\Lambda^s c^{1,2})_{L^2}|\\
&\quad+ |(\Lambda^s [c^{1,2}(n_2*\rho^{\epsilon})] ,\Lambda^s c^{1,2})_{L^2}| + |(\Lambda^s (\textbf{P}(n ^{1,2} \nabla \phi)*\rho^{\epsilon} ) ,\Lambda^s u^{1,2})_{L^2}|\\
& := J_1+\cdots + J_7.
\end{split}
\end{equation}
For $J_1$, we have
\begin{equation*}
\begin{split}
J_1 &\leq \frac{C}{\epsilon}\|\nabla  c_1 *\rho^{\epsilon}\|_{H^{s}} \| n^{1,2}\|_{H^{s }}^2 \leq \frac{C}{\epsilon^2} \| c_1 \|_{H^{s}} \| n^{1,2}\|_{H^{s }}^2.
\end{split}
\end{equation*}
For $J_2$, we use the convolution inequality to derive
\begin{equation*}
\begin{split}
J_2&  \leq  \frac{C}{\epsilon} \| n _2\|_{H^{s}}\|\nabla c^{1,2}  *\Lambda^s\rho^{\epsilon} \|_{L^2}  \|n^{1,2}\|_{H^s} \\
&\leq \frac{C}{\epsilon } \| n _2\|_{H^{s }} \|\nabla c^{1,2}\|_{L^2} \|\Lambda^s\rho^{\epsilon}\|_{L^1} \|n^{1,2}\|_{H^s} \leq \frac{C}{\epsilon^{s+1}}\| n _2\|_{H^{s }}  \|c^{1,2}\|_{H^s} \|n^{1,2}\|_{H^s}.
\end{split}
\end{equation*}
For $J_4$ and $J_7$, we have
$$
J_4   \leq  C (\| n_1\|_{H^s}+\| n_2 \|_{H^s}) \|n^{1,2}\|_{H^s}^2,
$$
$$
J_7  \leq C(\epsilon,\phi)\|n^{1,2}\|_{H^s}\|u^{1,2}\|_{H^s}.
$$
For $J_5$ and $J_6$, we have
$$
J_5   \leq   C\|c_2 (n^{1,2}*\rho^{\epsilon})\|_{H^s}\|c^{1,2}\|_{H^s} \leq C\|c_2 \|_{H^s}\| n^{1,2} \|_{H^s}\|c^{1,2}\|_{H^s},
$$
$$
J_6   \leq   C\|c^{1,2}(n_2*\rho^{\epsilon})\|_{H^s}\|c^{1,2}\|_{H^s} \leq  C\|n_2 \|_{H^s}\| c^{1,2} \|_{H^s}^2.
$$
Plugging the estimates for $J_1\sim J_7$ into \eqref{5.2} leads to \eqref{A.4}.
\end{proof}

\section*{Conflict of interest statement}

The authors declared that they have no conflicts of interest to this work.

\section*{Data availability}

No data was used for the research described in the article.

\section*{Acknowledgements}

The authors thank the anonymous referees for their constructive comments and suggestions which improved the quality of this article significantly. This work was partially supported by the National Natural Science Foundation of China (No. 12231008).

\bibliographystyle{plain}%
\bibliography{SCNS}

\end{document}